\theoremstyle{plain}
\newtheorem{Thm}[equation]{Theorem}
\newtheorem{Cor}[equation]{Corollary}
\newtheorem{Prop}[equation]{Proposition}
\newtheorem{Lem}[equation]{Lemma}
\newtheorem{Rmk}[equation]{Remark}
\numberwithin{equation}{section}
\newcommand{\Sym}{\operatorname{Sym}}
\newcommand{\St}{\operatorname{St}}
\newcommand{\st}{\operatorname{st}}
\newcommand{\Isom}{\operatorname{Isom}}
\newcommand{\OO}{\operatorname{O}}
\newcommand{\GO}{\operatorname{GO}}
\newcommand{\GSO}{\operatorname{GSO}}
\newcommand{\SO}{\operatorname{SO}}
\newcommand{\PGSp}{\operatorname{PGSp}}
\newcommand{\GSp}{\operatorname{GSp}}
\newcommand{\Sp}{\operatorname{Sp}}
\newcommand{\Ind}{\operatorname{Ind}}
\newcommand{\ind}{\operatorname{ind}}
\newcommand{\Hom}{\operatorname{Hom}}
\newcommand{\GL}{\operatorname{GL}}
\newcommand{\PGL}{\operatorname{PGL}}
\newcommand{\SL}{\operatorname{SL}}
\newcommand{\Span}{\operatorname{span}}
\newcommand{\C}{\mathbb{C}}
\newcommand{\N}{\mathbb{N}}
\newcommand{\W}{\mathbb{W}}
\newcommand{\Y}{\mathbb{Y}}
\renewcommand{\H}{\mathbb{H}}
\newcommand{\bm}{\begin{multline*}}
\newcommand{\tu}{\end  {multline*}}
\newcommand{\simi}{\operatorname{sim}}
\newcommand{\tT}{\tilde{T}}
\title{Theta Correspondences for $\GSp(4)$}
\author{Wee Teck Gan and Shuichiro Takeda}
\address{Mathematics Department, University of California, San Diego, 9500 Gilman Drive, La Jolla,
92093}
\address{Department of Mathematics, Purdue University, 150 N. University Street, West Lafayette, IN 47907-2067}
\email{wgan@math.ucsd.edu}
\email{stakeda@math.purdue.edu}
\begin{document}
\maketitle

\begin{abstract}
We explicitly determine the theta correspondences for $\GSp_4$ and orthogonal similitude groups associated to various quadratic spaces of rank $4$ and $6$. The results are needed in our proof of the local Langlands correspondence for $\GSp_4$ (\cite{GT1}).
  \end{abstract}

\section{\bf Introduction}  \label{S:intro}

This is a companion paper to \cite{GT1}, in which we proved the local Langlands conjecture for $\GSp_4$ over a non-archimedean local  field $F$ of characteristic zero and residue characteristic $p$.  The results of \cite{GT1} depended on a study of the theta correspondences between $\GSp_4$ and the three orthogonal similitude groups in the following diagram:
\vskip 5pt
\[
\xymatrix@R=2pt{
&&\GSO_{3,3}\\
&\GSp_4 \ar@{-}[ru]\ar@{-}[ld]\ar@{-}[rd]&\\
\GSO_{4,0}&&\GSO_{2,2}.}
\]
\vskip 5pt
For example, one of the results needed in \cite{GT1} is that every irreducible representation of $\GSp_4(F)$ participates in theta correspondence with exactly one of $\GSO_{4,0}$ or $\GSO_{3,3}$. This dichotomy follows from a fundamental result of Kudla-Rallis \cite{KR} (which we recall below) for a large class of representations of $\GSp_4(F)$, but to take care of the remaining representations, the results of [KR] needs to be supplemented by the results of this paper.
Moreover, the complete determination of the above theta correspondences serves to make the Langlands parametrization for non-supercuspidal representations
completely explicit and transparent.
\vskip 10pt

To state a sample result in the introduction, let us note that the orthogonal similitude groups mentioned above are closely related to the groups $\GL_2$ and $\GL_4$. Indeed, one has:
\[  \begin{cases}
\GSO_{2,2}  \cong (\GL_2 \times \GL_2)/\{ (z,z^{-1}): z \in \GL_1 \} \\
\GSO_{4,0} \cong (D^{\times} \times D^{\times})/\{ (z,z^{-1}): z \in \GL_1 \} \\
\GSO_{3,3} \cong (\GL_4 \times \GL_1)/ \{ (z,z^{-2}): z \in \GL_1 \} \end{cases} \]
where $D$ is the quaternion division algebra over $F$.
Via these isomorphisms, an irreducible representation of $\GSO_{2,2}(F)$ (resp. $\GSO_{4,0}(F)$) has the form $\tau_1 \boxtimes \tau_2$ where $\tau_1$ and $\tau_2$  are representations of $\GL_2(F)$ (resp. $D^{\times}$) with the same central characters. Similarly, an irreducible representation of $\GSO_{3,3}(F)$ has the form $\Pi \boxtimes \mu$ with $\Pi$ is a representation of $\GL_4(F)$ with central character $\omega_{\Pi} = \mu^2$.
\vskip 10pt

For a reductive group $G$ over $F$, we write $\Pi(G)$ for the set of isomorphism classes of irreducible smooth representations of $G(F)$. Now one of the results we show is the following theorem, which was stated in \cite[Thm. 5.6]{GT1}.
\vskip 5pt

\begin{Thm}  \label{T:summary}
(i) The theta correspondence for $\GSO(D) \times \GSp_4$
 defines a bijection
 \[  \Pi(\GSO_{4,0})/\sim\; \longleftrightarrow \Pi(\GSp_4)_{ng}^{temp}, \]
 where $\sim$ is the equivalence relation defined by the action of $\GO_{4,0}(F)$ on $\Pi(\GSO_{4,0})$ and
 \[ \Pi(\GSp_4)_{ng}^{temp}: =
 \{ \text{non-generic essentially tempered representations of $\GSp_4(F)$} \}.\]
 Moreover, the image of the subset of $\tau^D_1 \boxtimes \tau^D_2$'s,  with $\tau^D_1 \ne \tau^D_2$, is precisely the subset of non-generic supercuspidal representations of $\GSp_4(F)$. The other representations in the image are the non-discrete series representations in Table 1, NDS(c).
\vskip 5pt

\noindent (ii) The theta correspondence for $\GSO_{2,2} \times \GSp_4$ defines an injection
\[  \Pi(\GSO_{2,2})/\sim\; \longrightarrow \Pi(\GSp_4). \]
The image is disjoint from $\Pi(\GSp_4)_{ng}^{temp}$ and consists of:
\begin{itemize}
 \item[(a)]  the generic discrete series representations (including supercuspidal ones) whose standard $L$-factor $L(s, \pi, std)$ has a pole at $s = 0$.

\item[(b)] the non-discrete series representations in [Table 1, NDS(b, d,e)].
\end{itemize}
 Moreover, the images of the representations $\tau_1 \boxtimes \tau_2$'s, with $\tau_1 \ne \tau_2$ discrete series representations of $\GL_2(F)$, are precisely the representations in (a).
\vskip 5pt

\noindent (iii) The theta correspondence for $\GSp_4 \times \GSO_{3,3}$ defines an injection
\[  \Pi(\GSp_4) \smallsetminus \Pi(\GSp_4)_{ng}^{temp} \longrightarrow \Pi(\GSO_{3,3}) \subset \Pi(\GL_4) \times \Pi(\GL_1). \]
Moreover, the representations  of $\GSp_4(F)$ not accounted for by (i) and (ii) above are
\begin{itemize}
\item[(a)]  the generic discrete series representations $\pi$ whose standard factor $L(s,\pi, std)$ is holomorphic at $s=0$.   The images of these representations  under the above map are precisely the discrete series representations $\Pi \boxtimes \mu$ of
$\GL_4(F) \times  \GL_1(F)$ such that $L(s, {\bigwedge}^2 \phi_{\Pi} \otimes \mu^{-1})$ has a pole at $s = 0$. Here, $\phi_{\Pi}$ is the $L$-parameter of $\Pi$.

\vskip 5pt
\item[(b)] the non-discrete series representations in [Table 1, NDS(a)]. The images of these under the above map consists of non-discrete series representations  $\Pi \boxtimes \mu$ such that
\[ \phi_{\Pi} = \rho \oplus \rho \cdot \chi \quad \text{and} \quad \mu = \det \rho \cdot \chi,  \]
for  an irreducible two dimensional $\rho$ and a character $\chi \ne 1$.
\end{itemize}
\vskip 5pt

\noindent (iv) If a representation $\pi$ of $\GSp_4(F)$ participates in theta correspondence with  $\GSO(V_2)  $, so that
\[  \pi = \theta(\tau_1 \boxtimes \tau_2) = \theta(\tau_2 \boxtimes \tau_1),\]
then $\pi$ has a nonzero theta lift to $\GSO_{3,3}$.
If $\Pi \boxtimes \mu$ is the small theta lift of $\pi$ to $\GSO_{3,3}(F)$, with $\Pi$ a representation of  $\GL_4(F)$, then
\[  \phi_{\Pi} = \phi_{\tau_1} \oplus \phi_{\tau_2} \quad \text{and} \quad  \mu = \omega_{\pi} = \det \phi_{\tau_1} = \det \phi_{\tau_2}. \]
  \end{Thm}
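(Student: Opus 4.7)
The argument has two stages: establishing that $\Theta_{V_3, W}(\pi) \ne 0$, then identifying the Langlands parameter of the small theta lift $\sigma = \Pi \boxtimes \mu$. Nonvanishing is essentially immediate from what precedes: if $\pi$ is in the image of theta from $\GSO(V_2)$, then by part (ii) one has $\pi \notin \Pi(\GSp_4)_{ng}^{temp}$, hence by part (i) $\pi$ is not in the image of theta from $\GSO_{4,0}$, so the Kudla--Rallis dichotomy recalled in the introduction forces $\pi$ to have nonzero theta lift to $\GSO_{3,3}$. One could also argue directly via the tower property applied to $V_3 = V_2 \oplus H$ with $H$ a hyperbolic plane.

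The central character $\mu$ is then pinned down by tracking similitude factors. Under the isomorphism $\GSO_{3,3} \cong (\GL_4 \times \GL_1)/\{(z, z^{-2})\}$, the similitude character of $\sigma = \Pi \boxtimes \mu$ is precisely $\mu$, and since the Weil representation intertwines the similitude characters of the two groups forming the dual pair, one is forced to $\mu = \omega_\pi$. Because the theta correspondence preserves central characters and $\pi = \theta(\tau_1 \boxtimes \tau_2)$, the character $\omega_\pi$ equals the common central character of $\tau_1$ and $\tau_2$, which by the local Langlands correspondence for $\GL_2$ is $\det \phi_{\tau_i}$.

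To identify $\Pi$, I would invoke the see-saw
\[
\begin{array}{ccc}
\GO(V_3) & & \GSp(W) \times \GSp(W) \\
\cup & \times & \cup \\
\GO(V_2) \times \GO(H) & & \GSp(W)^{\Delta}
\end{array}
\]
inside $\Sp(W \otimes V_3)$, with $V_3 = V_2 \oplus H$ as above. The see-saw identity yields
\[
\Hom_{\GO(V_2) \times \GO(H)}(\Theta_{V_3, W}(\pi), \tau_1 \boxtimes \tau_2 \boxtimes \chi) \cong \Hom_{\GSp(W)}(\Theta_{V_2, W}(\tau_1 \boxtimes \tau_2) \otimes \Theta_{H, W}(\chi), \pi)
\]
for characters $\chi$ of $\GO(H)$. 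The right-hand side is nonzero for a specific $\chi$, since $\Theta_{V_2, W}(\tau_1 \boxtimes \tau_2) = \pi$ by hypothesis and the theta lift $\Theta_{H, W}$ from a rank-$2$ orthogonal group is entirely explicit. Hence $\tau_1 \boxtimes \tau_2 \boxtimes \chi$ appears as a $\GO(V_2) \times \GO(H)$-quotient of $\sigma$. Translating under the isomorphism $\GSO_{3,3} \cong (\GL_4 \times \GL_1)/\{(z, z^{-2})\}$, this becomes a Jacquet-module statement for $\Pi$ along the $(2,2)$-parabolic of $\GL_4$, placing $\tau_1 \otimes \tau_2$ as a constituent. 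Combined with the already-identified $\mu$, the Langlands classification for $\GL_4$ then forces $\phi_\Pi = \phi_{\tau_1} \oplus \phi_{\tau_2}$.

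The main obstacle is uniqueness in this final step: the see-saw delivers $\tau_1 \otimes \tau_2$ only as a subquotient of a specific Jacquet module, so one must rule out alternative irreducibles of $\GL_4$ carrying the same central character and an analogous Jacquet datum. This is immediate when $\tau_1 \not\cong \tau_2$ and the parabolic induction $\tau_1 \times \tau_2$ is irreducible as a $\GL_4$-representation, but requires case-by-case verification against Table~1 and the explicit descriptions in parts (i)--(iii) whenever this induction is reducible --- notably when both $\tau_i$ are discrete series with matching twist --- to confirm that the correct irreducible constituent is the one being picked out.
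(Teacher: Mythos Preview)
Your treatment of nonvanishing (via the dichotomy already established in parts (i)--(iii)) and of the central character $\mu$ is correct and matches the paper. The identification of $\Pi$, however, is handled quite differently, and your see-saw route has two genuine gaps.

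The paper's argument is more direct. It computes the normalized Jacquet module $R_{P(J)}(\Omega)$ of the induced Weil representation along the maximal parabolic $P(J) \subset \GSO(V_3)$ stabilizing an isotropic line $J$ in the hyperbolic plane $H$. Kudla's filtration shows the top quotient of $R_{P(J)}(\Omega)$ is exactly $\Omega_{W, V_2}$. By Frobenius reciprocity, if $\pi = \theta_{V_2, W}(\tau_1 \boxtimes \tau_2)$ then there is a nonzero $\GSp(W) \times \GSO(V_3)$-equivariant map
\[
\Omega \longrightarrow \pi \boxtimes I_{P(J)}(1, \tau_1 \boxtimes \tau_2) = \pi \boxtimes \bigl( I_P(\tau_1, \tau_2) \boxtimes \omega_{\tau_1} \bigr),
\]
so $I_P(\tau_1, \tau_2) \boxtimes \omega_{\tau_1}$ is a quotient of $\Theta(\pi)$. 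When $\pi$ is discrete series, both $\tau_i$ are discrete series with the same central character, and one checks that $I_P(\tau_1, \tau_2)$ is then irreducible; combined with the irreducibility of $\theta(\pi)$ already established, this gives $\Pi = I_P(\tau_1, \tau_2)$ immediately. The non-discrete-series cases are then read off by comparing the explicit tables for the two theta lifts.

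Your see-saw argument runs into two difficulties. First, the right side of your see-saw identity is $\Hom_{\GSp(W)}\bigl(\pi \otimes \Theta_{H, W}(\chi),\, \pi\bigr)$, and $\Theta_{H, W}(\chi)$ is an infinite-dimensional degenerate principal series of $\GSp_4$; showing this Hom space is nonzero is a nontrivial branching problem, not something that follows from $\Theta_{V_2, W}(\tau_1 \boxtimes \tau_2) \ni \pi$ alone. Second, even granting nonvanishing, what the see-saw produces is that $(\tau_1 \boxtimes \tau_2) \boxtimes \chi$ is a quotient of the \emph{restriction} $\sigma|_{\GO(V_2) \times \GO(H)}$, and this does not translate into a Jacquet-module constraint along the $(2,2)$-parabolic of $\GL_4$: restriction to the non-parabolic subgroup $\GO(V_2) \times \GO(H)$ and the Jacquet functor along $P(J)$ are genuinely different operations, related only through a further Mackey-type analysis you have not supplied. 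The paper sidesteps both issues by taking the Jacquet module on the Weil-representation side rather than on $\Theta(\pi)$, which is precisely what makes the Frobenius step clean.
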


\vskip 10pt

Indeed, our main results give much more complete and explicit information than the above theorem.  In particular, we take note of Thms. \ref{T:local-theta-SO(D)},
\ref{T:local-theta-GSO(2,2)} and \ref{T:local-theta}, Cors. \ref{C:comp} and \ref{C:unram}, as well as Props. \ref{P:L-par} and \ref{P:generic}.
\vskip 10pt

\noindent{\bf Acknowledgments:} W. T. Gan is partially supported by NSF grant DMS-0801071.

\vskip 10pt

\section{\bf Similitude Theta Correspondences}  \label{S:theta}

In this section, we shall describe some basic properties of the theta correspondence for similitude groups. The definitive reference for this subject matter is the paper \cite{Ro1} of B. Roberts.
However, the results of \cite{Ro1} are not sufficient for our purposes and need to be somewhat extended.

\vskip 10pt
Consider the dual pair $\OO(V) \times \Sp(W)$; for simplicity, we assume that $\dim V$ is even.
For each non-trivial additive character $\psi$, let $\omega_{\psi}$ be the Weil representation for $\OO(V) \times \Sp(W)$, which can be described as follows. Fix a Witt decomposition
$W = X \oplus Y$ and let $P(Y) = \GL(Y) \cdot N(Y)$ be the parabolic subgroup stabilizing the maximal isotropic subspace $Y$. Then
\[  N(Y) = \{ b \in Hom(X,Y) : b^t  = b \}, \]
where $b^t \in Hom(Y^*, X^*) \cong Hom(X,Y)$.
The Weil representation $\omega_{\psi}$  can be realized on $S(X \otimes V)$ and the action of $P(Y) \times \OO(V)$ is
given by the usual formulas:
\vskip 5pt
\[  \begin{cases}
\omega_{\psi}(h)\phi(x) = \phi(h^{-1} x), \quad \text{for $h \in \OO(V)$;} \\
\omega_{\psi}(a)\phi(x) =  \chi_V(\det_Y(a)) \cdot |\det_Y(a)|^{\frac{1}{2} \dim V} \cdot \phi(a^{-1} \cdot x), \quad \text{for $a \in \GL(Y)$;} \\
\omega_{\psi}(b) \phi(x) = \psi( \langle bx, x \rangle) \cdot \phi(x), \quad \text{for $b \in N(Y)$,} \end{cases}
\]
\vskip 5pt
\noindent where  $\chi_V$ is the quadratic character associated to $\operatorname{disc} V \in F^{\times}/ F^{\times 2}$ and $\langle -, -\rangle$ is the natural symplectic form on $W \otimes V$.
To describe the full action of $\Sp(W)$, one needs to specify the action of a Weyl group element, which acts by a Fourier transform.

\vskip 10pt

If $\pi$ is an irreducible representation of $\OO(V)$ (resp. $\Sp(W)$), the maximal $\pi$-isotypic quotient has the form
\[  \pi \boxtimes  \Theta_{\psi}(\pi) \]
for some smooth representation of $\Sp(W)$ (resp. $\OO(V)$). We call $\Theta_{\psi}(\pi)$ the big theta lift of $\pi$. It is known that $\Theta_{\psi}(\pi)$ is of finite length and hence is admissible. Let
$\theta_{\psi}(\pi)$ be the maximal semisimple quotient of $\Theta_{\psi}(\pi)$; we call it the small theta lift of $\pi$.  Then it was a conjecture of Howe that
\vskip 5pt

\begin{itemize}
\item $\theta_{\psi}(\pi)$ is irreducible whenever $\Theta_{\psi}(\pi)$ is non-zero.
\vskip 5pt
\item the map $\pi \mapsto \theta_{\psi}(\pi)$ is injective on its domain.
\end{itemize}
\vskip 5pt

\noindent This has been proved by Waldspurger when the residual characteristic $p$ of $F$ is not $2$ and can be checked in many low-rank cases, regardless of the residual characteristic of $F$. If the Howe conjecture is true in general, our treatment for the rest of the paper can be somewhat simplified. However, because we would like to include the case $p =2$ in our discussion, we shall refrain from assuming Howe's conjecture in this paper.
\vskip 10pt

With this in mind, we take note of the following result which was shown by Kudla \cite{K} for any residual characteristic $p$:
\vskip 5pt

\begin{Prop} \label{P:Kudla}
\noindent (i) If $\pi$ is supercuspidal, $\Theta_{\psi}(\pi) = \theta_{\psi}(\pi)$ is irreducible or zero.
\vskip 5pt

\noindent (ii) If $\theta_{\psi}(\pi_1) = \theta_{\psi}(\pi_2) \ne 0$ for two supercuspidal representations
$\pi_1$ and $\pi_2$, then $\pi_1 = \pi_2$.
\end{Prop}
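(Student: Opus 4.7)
The strategy rests on a single structural observation: supercuspidal representations are both projective and injective in the category of smooth representations. Consequently, for a supercuspidal $\pi$ of $\OO(V)$, the $\pi$-isotypic component $\omega_\psi[\pi]$ of the Weil representation (viewed as a smooth $\OO(V)$-module) splits off as a direct summand, and since the isotypic projector is canonical it is automatically $\Sp(W)$-equivariant. Writing $\omega_\psi \cong \omega_\psi[\pi] \oplus \omega_\psi^\perp$ as $\OO(V) \times \Sp(W)$-modules and $\omega_\psi[\pi] \cong \pi \boxtimes M_\pi$ with $M_\pi = \Hom_{\OO(V)}(\pi, \omega_\psi)$, one identifies $M_\pi$ with $\Theta_\psi(\pi)$; moreover, it is simultaneously the maximal $\pi$-isotypic quotient and the maximal $\pi$-isotypic subrepresentation of $\omega_\psi$.

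\textbf{Step A (irreducibility, i.e.\ part (i)).} Assume $M_\pi \neq 0$. By the splitting above plus Schur's lemma for $\pi$,
\[
\Hom_{\Sp(W)}(\sigma, M_\pi) \cong \Hom_{\OO(V)\times \Sp(W)}(\pi\boxtimes\sigma, \omega_\psi), \qquad \Hom_{\Sp(W)}(M_\pi, \sigma) \cong \Hom_{\OO(V)\times \Sp(W)}(\omega_\psi, \pi\boxtimes\sigma)
\]
for every irreducible smooth $\sigma$ of $\Sp(W)$. The MVW involution provides an isomorphism $\omega_\psi^\vee \cong \omega_\psi \circ \iota$, with $\iota$ an outer automorphism of $\OO(V) \times \Sp(W)$ sending each irreducible representation to its contragredient; passing to contragredients identifies the two hom-spaces above for every $\sigma$. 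Hence every irreducible quotient of $M_\pi$ is also an irreducible subrepresentation and vice versa, so the socle and cosocle of $M_\pi$ coincide and consist of a single irreducible constituent. To upgrade this to actual irreducibility of $M_\pi$ — ruling out nontrivial self-extensions — one uses the Kudla filtration of $\operatorname{Jac}_{N(Y_k)}(\omega_\psi)$ along a maximal parabolic $P(Y_k) = \GL(Y_k) \cdot N(Y_k)$ of $\Sp(W)$. The successive quotients of this filtration are built from smaller Weil representations compactly induced from proper parabolics of $\OO(V)$; supercuspidality of $\pi$ kills every layer except the top after taking the $\pi$-isotypic piece, which determines $\operatorname{Jac}_{N(Y_k)}(M_\pi)$ exactly and leaves no room for a nontrivial extension.

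\textbf{Step B (Howe duality, i.e.\ part (ii)).} Suppose $\theta_\psi(\pi_1) = \theta_\psi(\pi_2) = \sigma \neq 0$ for supercuspidals $\pi_1, \pi_2$ of $\OO(V)$. By (i), $\Theta_\psi(\pi_i) = \sigma$, and each $\pi_i \boxtimes \sigma = \omega_\psi[\pi_i]$ sits in $\omega_\psi$ as a direct summand. If $\pi_1 \not\cong \pi_2$, the summands are disjoint, and $(\pi_1 \oplus \pi_2) \boxtimes \sigma$ embeds into $\omega_\psi$ as a direct summand. Projecting to the $\sigma$-isotypic part on the $\Sp(W)$-side produces an embedding of smooth $\OO(V)$-modules $\pi_1 \oplus \pi_2 \hookrightarrow \Hom_{\Sp(W)}(\sigma, \omega_\psi)$. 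A seesaw argument combined with the same Jacquet-module computation from Step~A (applied symmetrically to bound the $\sigma$-multiplicity space of $\omega_\psi$) shows that this multiplicity space contains at most one supercuspidal $\OO(V)$-type, which forces $\pi_1 \cong \pi_2$.

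\textbf{Main obstacle.} The substantive difficulty lies in Step~A, namely in turning the abstract ``socle equals cosocle'' output of adjunction and MVW into genuine irreducibility of $M_\pi = \Theta_\psi(\pi)$; formal arguments alone can leave behind nontrivial self-extensions or multiplicities. Eliminating these requires the detailed structural analysis of $\operatorname{Jac}_{N(Y_k)}(\omega_\psi)$ via the Kudla filtration, where supercuspidality of $\pi$ is precisely what annihilates all but the top layer and pins down the Jacquet module to a form incompatible with a reducible $M_\pi$. Once this is established, part (ii) follows from the same analysis applied on the $\Sp(W)$-side.
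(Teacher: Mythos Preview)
The paper does not give its own proof of this proposition; it is simply quoted as a result of Kudla \cite{K}, valid in any residual characteristic. So there is no in-paper argument to compare against---only Kudla's original.

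Your sketch does isolate the two ingredients that drive Kudla's proof: the projectivity (and injectivity) of supercuspidal representations in the smooth category, which yields the canonical $\OO(V)\times\Sp(W)$-equivariant splitting $\omega_\psi \cong (\pi\boxtimes M_\pi)\oplus\omega_\psi^\perp$ with $M_\pi=\Theta_\psi(\pi)$; and the Jacquet-module filtration of $\omega_\psi$ along parabolics of $\Sp(W)$ (cf.\ Theorems~\ref{T:Jacquet_module1}--\ref{T:Jacquet_module2} in the appendix, which reproduce Kudla's computation). That much is on target.

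The MVW step in your Step~A, however, is both unnecessary and not doing what you claim. The MVW involution gives $\omega_\psi^\vee\cong\omega_\psi^\delta$ for a suitable element $\delta$, which relates $\Theta_\psi(\pi)$ to $\Theta_\psi(\pi^\vee)$ (or to a twist thereof); it does \emph{not} directly exchange subrepresentations and quotients of $M_\pi$ for the \emph{same} $\pi$, so the ``socle $=$ cosocle'' conclusion does not follow from MVW alone. (It is also anachronistic: \cite{MVW} postdates \cite{K}.) Kudla's actual route bypasses this entirely. From the filtration, the Jacquet module $R_{Q(Y_k)}(\Theta_\psi(\pi))$ is computed exactly in terms of theta lifts of $\pi$ to smaller symplectic groups; at the first occurrence these all vanish, so $\Theta_\psi(\pi)$ is itself supercuspidal, hence semisimple. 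Irreducibility (and simultaneously the injectivity in (ii)) then comes from a direct multiplicity-one bound $\dim\Hom_{\OO(V)\times\Sp(W)}(\omega_\psi,\pi\boxtimes\sigma)\le 1$ for supercuspidal $\pi$ and $\sigma$, obtained by an explicit computation in the mixed Schr\"odinger model rather than by any self-duality trick. Above the first occurrence one reduces to this case via the same filtration.

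In short: keep the projectivity splitting and the Kudla filtration, drop the MVW detour, and replace it with the direct multiplicity-one argument from \cite{K}; that is the proof the paper is citing.
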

\vskip 5pt

\noindent One of the main purposes of this section is to extend this result of Kudla to the case of similitude groups.

\vskip 5pt

Let $\lambda_V$ and $\lambda_W$ be the similitude factors of $\GO(V)$ and $\GSp(W)$ respectively.
We shall consider the group
\[  R = \GO(V) \times \GSp(W)^+ \]
where $\GSp(W)^+$ is the subgroup of $\GSp(W)$ consisting of elements $g$ such that $\lambda_W(g)$ is in the image of $\lambda_V$. In fact, for the cases of interest in this paper (see the next section),  $\lambda_V$ is surjective, in which case $\GSp(W)^+ = \GSp(W)$.
\vskip 5pt

The group $R$ contains the subgroup
\[  R_0 = \{ (h,g) \in R: \lambda_V(h) \cdot \lambda_W(g) = 1\}. \]
 The Weil representation $\omega_{\psi}$ extends naturally to the group $R_0$ via
 \[  \omega_{\psi}(g,h)\phi = |\lambda_V(h)|^{- \frac{1}{8}\dim V \cdot \dim W} \omega(g_1, 1)(\phi \circ h^{-1}) \]
where
\[  g_1 = g \left(  \begin{array}{cc}
\lambda(g)^{-1} & 0 \\
0 & 1  \end{array} \right) \in \Sp(W). \]
Note that this differs from the normalization used in [Ro1].
Observe in particular that the central elements $(t,t^{-1}) \in R_0$ act by the quadratic character
$\chi_V(t)^{\frac{\dim W}{2}}$.
 \vskip 5pt

Now consider the (compactly) induced representation
\[  \Omega = ind_{R_0}^R \omega_{\psi}. \]
As a representation of $R$, $\Omega$ depends only on the orbit of $\psi$ under the evident action of  $Im \lambda_V \subset F^{\times}$. For example, if $\lambda_V$ is surjective, then $\Omega$ is independent of $\psi$. For any irreducible representation
$\pi$ of $\GO(V)$ (resp. $\GSp(W)^+$),   the maximal $\pi$-isotypic quotient of $\Omega$ has the form
\[    \pi \otimes  \Theta(\pi) \]
where $\Theta(\pi)$ is some smooth representation of $\GSp(W)^+$ (resp. $\GO(V)$). Further, we let
$\theta(\pi)$ be the maximal semisimple quotient of $\Theta(\pi)$. Note that though $\Theta(\pi)$
may be reducible, it has a central character $\omega_{\Theta(\pi)}$ given by
\[  \omega_{\Theta(\pi)} = \chi_V^{\frac{\dim W}{2}} \cdot \omega_{\pi}. \]
The extended Howe conjecture for similitudes says that $\theta(\pi)$ is irreducible whenever
$\Theta(\pi)$ is non-zero, and the map $\pi \mapsto \theta(\pi)$ is injective on its domain.
It was shown by Roberts [Ro1] that this follows from the Howe conjecture for isometry groups, and thus holds if $p \ne 2$.
\vskip 5pt

In any case, we have the following lemma which relates the theta correspondence for isometries and similitudes; the proof is given in [GT1, Lemma 2.2].
\vskip 5pt

\begin{Lem} \label{L:similitude1}

(i) Suppose that $\pi$ is an irreducible representation of a similitude group and $\tau$ is a constituent of the restriction of $\pi$ to the isometry group. Then $\theta_{\psi}(\tau) \ne 0$ implies that $\theta(\pi) \ne 0$.

\vskip 10pt

\noindent (ii) Suppose that
\[  \Hom_R(\Omega, \pi_1 \boxtimes \pi_2) \ne 0. \]
Suppose further that for each constituent $\tau_1$ in the restriction of $\pi_1$ to $\OO(V)$,
$\theta_{\psi}(\tau_1)$ is irreducible and
the map $\tau_1 \mapsto \theta_{\psi}(\tau_1)$ is injective on the set of irreducible constituents of
$\pi_1|_{\OO(V)}$. Then there is a uniquely determined bijection
\vskip 5pt
\[ f:  \{ \text{irreducible summands of $\pi_1|_{\OO(V)}$} \}  \longrightarrow
 \{ \text{irreducible summands of $\pi_2|_{\Sp(W)}$} \}\]
\vskip 10pt
\noindent such that for any irreducible summand $\tau_i$ in the restriction of $\pi_i$ to the relevant isometry group,
\vskip 3pt
\[  \tau _2 = f(\tau_1)  \Longleftrightarrow
\Hom_{\OO(V) \times \Sp(W)} (\omega_{\psi}, \tau_1  \boxtimes \tau_2) \ne 0 . \]
\vskip 5pt
\noindent One has the  analogous statement with the roles of $\OO(V)$ and $\Sp(W)$ exchanged.
 \vskip 10pt

\noindent (iii)  If $\pi$ is a representation of $\GO(V)$  (resp. $\GSp(W)^+$) and the restriction of $\pi$ to the relevant isometry group is $\oplus_i  \tau_i$, then as representations of $\Sp(W)$ (resp. $\OO(V)$),
\[  \Theta(\pi)  \cong  \bigoplus_i \Theta_{\psi}(\tau_i). \]
In particular, $\Theta(\pi)$ is admissible of finite length. Moreover,
if $\Theta_{\psi}(\tau_i) = \theta_{\psi}(\tau_i)$ for each  $i$, then
\[  \Theta(\pi) = \theta(\pi). \]
\end{Lem}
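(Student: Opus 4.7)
The plan is to exploit Frobenius reciprocity for the compact induction $\Omega = \ind_{R_0}^R \omega_\psi$: for any smooth representation $\Pi$ of $R$,
\[ \Hom_R(\Omega,\Pi) \;=\; \Hom_{R_0}\bigl(\omega_\psi,\Pi|_{R_0}\bigr). \]
Since $R_0 \supset \OO(V)\times\Sp(W)$ with quotient sitting in the center of $R$ and acting on $\omega_\psi$ by the explicit character $\chi_V(t)^{\dim W/2}$ along the anti-diagonal $(t,t^{-1})$, such Hom spaces reduce further to $\OO(V)\times\Sp(W)$-Hom spaces subject only to a central-character compatibility. This is the common engine for all three parts.

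For (i), I would pick an irreducible constituent $\sigma \subset \theta_\psi(\tau)$; the hypothesis guarantees one exists, and produces a nonzero $\OO(V)\times\Sp(W)$-equivariant map $\omega_\psi\twoheadrightarrow \tau\boxtimes\sigma$. Using $\omega_\pi$ together with the formula $\omega_{\Theta(\pi)} = \chi_V^{\dim W/2}\cdot\omega_\pi$, one extends $\tau\boxtimes\sigma$ to an $R_0$-representation compatibly with $\omega_\psi$, then induces back up to $R$ to produce a nonzero map $\Omega\to \pi\boxtimes\Pi'$ for some smooth $\Pi'$ of $\GSp(W)^+$, forcing $\Theta(\pi)\neq 0$.

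For (ii), after decomposing $\pi_i$ on restriction as $\bigoplus_j \tau_i^{(j)}$, Frobenius reciprocity and further restriction to the isometry pair yield
\[ 0 \neq \Hom_{\OO(V)\times\Sp(W)}(\omega_\psi,\pi_1\boxtimes\pi_2) \;=\; \bigoplus_{j,k} \Hom_{\OO(V)\times\Sp(W)}\bigl(\omega_\psi,\tau_1^{(j)}\boxtimes\tau_2^{(k)}\bigr). \]
The irreducibility/injectivity hypothesis makes each summand at most one-dimensional and nonzero precisely when $\tau_2^{(k)}=\theta_\psi(\tau_1^{(j)})$. Setting $f(\tau_1^{(j)}) := \theta_\psi(\tau_1^{(j)})$ gives a candidate bijection; well-definedness, injectivity and surjectivity follow because the outer groups $\GO(V)/\OO(V)$ and $\GSp(W)^+/\Sp(W)$ act transitively on the respective sets of isometry-summands in $\pi_1$ and $\pi_2$, and these actions intertwine via the $R$-equivariance of the Hom space.

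For (iii), the same Frobenius computation, applied to $\Theta(\pi)$ tested against an arbitrary irreducible $\sigma$ of $\Sp(W)$ (and similarly on the other side), gives
\[ \Hom_{\Sp(W)}\bigl(\Theta(\pi),\sigma\bigr) \;=\; \Hom_{\OO(V)\times\Sp(W)}\bigl(\omega_\psi,\pi|_{\OO(V)}\boxtimes\sigma\bigr) \;=\; \bigoplus_i \Hom_{\Sp(W)}\bigl(\Theta_\psi(\tau_i),\sigma\bigr), \]
identifying $\Theta(\pi) \cong \bigoplus_i\Theta_\psi(\tau_i)$; admissibility and finite length transfer from the isometry lifts, and $\Theta(\pi)=\theta(\pi)$ under the stronger hypothesis is immediate. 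The main obstacle is (ii): showing that $f$ is a genuine bijection rather than merely a well-defined map, which requires tracking multiplicities in the isometry-restrictions of $\pi_1$ and $\pi_2$ and verifying that the outer-group actions intertwine via $\theta_\psi$. The central-character bookkeeping needed for the extension step in (i) and for the compatibility in (iii) is routine but must be carried out with care, especially when $\lambda_V$ is not surjective so that $\GSp(W)^+$ is a proper subgroup of $\GSp(W)$.
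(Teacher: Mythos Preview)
The paper does not prove this lemma here; it simply says ``the proof is given in [GT1, Lemma 2.2].'' Your Frobenius-reciprocity strategy is exactly the standard one that underlies such arguments, and parts (i) and (ii) are set up correctly. The key structural fact you are implicitly using, and should make explicit, is the Mackey decomposition
\[
\Omega\big|_{\GO(V)\times\Sp(W)} \;\cong\; \ind_{\OO(V)\times\Sp(W)}^{\GO(V)\times\Sp(W)}\bigl(\omega_\psi\big|_{\OO(V)\times\Sp(W)}\bigr),
\]
which follows from $(\GO(V)\times\Sp(W))\cdot R_0 = R$ and $(\GO(V)\times\Sp(W))\cap R_0 = \OO(V)\times\Sp(W)$. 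This is what makes your Hom computations in (ii) and (iii) go through.

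There is one genuine gap in your argument for (iii). Equating $\Hom_{\Sp(W)}(\Theta(\pi),\sigma)$ with $\bigoplus_i \Hom_{\Sp(W)}(\Theta_\psi(\tau_i),\sigma)$ for all \emph{irreducible} $\sigma$ only shows that the two sides have the same maximal semisimple quotient; it does not give an isomorphism of the representations themselves (think of a nonsplit extension versus a direct sum). The fix is easy: your chain of equalities is valid for an arbitrary smooth $\sigma$, not just irreducible ones, and is natural in $\sigma$. Running it for arbitrary $\sigma$ and invoking Yoneda then yields $\Theta(\pi)|_{\Sp(W)} \cong \bigoplus_i \Theta_\psi(\tau_i)$. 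Alternatively, take $\pi$-isotypic quotients directly in the Mackey isomorphism above: since $\pi|_{\OO(V)} = \bigoplus_i \tau_i$ is multiplicity-free, the $\pi$-isotypic quotient on the left matches $\bigoplus_i (\tau_i\text{-isotypic quotient of }\omega_\psi)$ on the right, which is the desired statement without any Yoneda detour.
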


 \vskip 10pt

In addition, we have [GT1, Prop. 2.3]:
\vskip 5pt

\begin{Prop} \label{P:super}
Suppose that $\pi$ is a supercuspidal representation of $\GO(V)$ (resp. $\GSp(W)^+$). Then we have:
\vskip 5pt

\noindent (i) $\Theta(\pi)$ is either zero or is an irreducible representation of $\GSp(W)^+$ (resp. $\GO(V)$).
\vskip 5pt

\noindent (ii) If $\pi'$ is another supercuspidal representation such that $\Theta(\pi') = \Theta(\pi) \ne 0$, then $\pi' = \pi$.
\end{Prop}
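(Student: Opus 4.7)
The plan is to lift Kudla's Proposition \ref{P:Kudla} for isometry groups to the similitude setting by restriction, using Lemma \ref{L:similitude1} as the main bridge. Let $\pi$ be supercuspidal on $\GO(V)$; the case of $\GSp(W)^+$ is symmetric.

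For part (i), apply Clifford theory to $\OO(V) \subset \GO(V)$ (normal, with abelian quotient $\cong \mathrm{Im}\,\lambda_V$). Modulo the central character, $\GO(V)/(F^{\times}\cdot \OO(V))$ is finite, so $\pi|_{\OO(V)} = \bigoplus_{i=1}^r \tau_i$ is a finite direct sum of irreducible supercuspidal representations, transitively permuted by $\GO(V)$. Kudla's Proposition \ref{P:Kudla}(i) says each $\Theta_{\psi}(\tau_i)$ is irreducible or zero, and the transitivity of the $\GO(V)$-action on $\Omega$ (coupled through $R$ to the $\GSp(W)^+$-action) forces these lifts to be all zero or all nonzero. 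In the nonzero case, Kudla's Proposition \ref{P:Kudla}(ii) makes the $\theta_{\psi}(\tau_i)=\Theta_{\psi}(\tau_i)$ pairwise distinct, and Lemma \ref{L:similitude1}(iii) gives
\[
\Theta(\pi)|_{\Sp(W)} \;=\; \theta(\pi)|_{\Sp(W)} \;=\; \bigoplus_{i=1}^r \theta_{\psi}(\tau_i),
\]
a multiplicity-free semisimple $\Sp(W)$-representation. Any irreducible $\GSp(W)^+$-constituent $\pi_2$ of $\Theta(\pi)$ satisfies $\Hom_R(\Omega, \pi \boxtimes \pi_2) \ne 0$, so Lemma \ref{L:similitude1}(ii) produces a bijection between the $\tau_i$ and the isometry summands of $\pi_2$; this forces $\pi_2|_{\Sp(W)} = \bigoplus_i \theta_{\psi}(\tau_i)$ and hence $\pi_2 = \Theta(\pi)$.

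For part (ii), suppose $\Theta(\pi) = \Theta(\pi') = \sigma$ for two supercuspidals on $\GO(V)$. Decomposing $\sigma|_{\Sp(W)}$ in two ways and applying Kudla's Proposition \ref{P:Kudla}(ii) identifies the underlying sets of isometry summands $\{\tau_i\} = \{\tau'_j\}$, so $\pi|_{\OO(V)} \cong \pi'|_{\OO(V)}$ and Clifford theory gives $\pi' \cong \pi \otimes (\chi \circ \lambda_V)$ for some character $\chi$ of $F^{\times}$. To translate this twist to the $\GSp(W)^+$-side, observe that the $R$-character $\eta(h,g) = \chi(\lambda_V(h)\,\lambda_W(g))$ is trivial on $R_0$, so by the projection formula for compact induction $\Omega \otimes \eta \cong \Omega$; tracing this through yields $\Theta(\pi \otimes (\chi \circ \lambda_V)) \cong \Theta(\pi) \otimes (\chi \circ \lambda_W)$. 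The hypothesis $\Theta(\pi') = \Theta(\pi) = \sigma$ then produces the self-twist $\sigma \otimes (\chi \circ \lambda_W) \cong \sigma$.

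The main obstacle is to upgrade this self-twist to $\chi = 1$. Comparing central characters via $\omega_{\Theta(\pi)} = \chi_V^{\dim W / 2}\,\omega_\pi$ forces $\chi^2 = 1$, so $\chi$ is at worst quadratic. To finish, the plan is to restrict the isomorphism $\sigma \to \sigma \otimes (\chi \circ \lambda_W)$ to $\Sp(W)$, where it becomes an automorphism of the multiplicity-free decomposition $\bigoplus_i \theta_{\psi}(\tau_i)$ which must be diagonal and must implement the permutation action of $\GSp(W)^+/\Sp(W)$; via the bijection of Lemma \ref{L:similitude1}(ii), this action is identified through the Weil representation with the $\GO(V)/\OO(V)$-permutation of the $\tau_i$ inside $\pi$ itself. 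This rigidity collapses $\chi \circ \lambda_W$ to the trivial character, and then $\chi = 1$ by surjectivity of $\lambda_W$, giving $\pi = \pi'$.
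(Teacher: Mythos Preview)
The paper does not actually prove this proposition; it simply cites \cite[Prop.~2.3]{GT1}. So there is no in-paper argument to compare against, and your proposal must stand on its own.

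Your argument for (i) is sound and is the expected one: restrict to isometry groups, invoke Kudla's Proposition~\ref{P:Kudla}, and use Lemma~\ref{L:similitude1}(ii)--(iii) to reassemble. The key observations --- that the $\tau_i$ are pairwise distinct (multiplicity-free restriction), hence the $\theta_\psi(\tau_i)$ are pairwise distinct by Proposition~\ref{P:Kudla}(ii), and that any irreducible $\GSp(W)^+$-constituent of $\Theta(\pi)$ must already exhaust all of $\bigoplus_i \theta_\psi(\tau_i)$ on restriction --- are all correct.

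Your argument for (ii) has a genuine gap in the last paragraph. You correctly reduce to $\pi' \cong \pi \otimes (\chi\circ\lambda_V)$ and establish the twisting compatibility $\Theta(\pi\otimes(\chi\circ\lambda_V)) \cong \Theta(\pi)\otimes(\chi\circ\lambda_W)$, yielding the self-twist $\sigma \cong \sigma\otimes(\chi\circ\lambda_W)$. But your conclusion that this ``collapses $\chi\circ\lambda_W$ to the trivial character'' is false in general: an irreducible $\sigma$ whose restriction to $\Sp(W)$ has $r>1$ summands admits exactly $r$ nontrivial self-twists by characters of $\GSp(W)^+/\Sp(W)$. The vague appeal to ``rigidity'' does not pin down $\chi$.

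What does work is a counting argument you have essentially set up but not executed. Let $S_\pi$ be the group of characters $\mu$ of $\GO(V)/\OO(V)$ with $\pi\otimes\mu\cong\pi$, and $S_\sigma$ the analogous group for $\sigma$. By Clifford theory $|S_\pi|$ equals the number of constituents of $\pi|_{\OO(V)}$, and $|S_\sigma|$ the number for $\sigma|_{\Sp(W)}$; you have already shown both equal $r$. Your twisting formula gives an injection $S_\pi \hookrightarrow S_\sigma$ via $\chi\circ\lambda_V \mapsto \chi\circ\lambda_W$ (this uses that $\GO(V)/\OO(V)$ and $\GSp(W)^+/\Sp(W)$ are both identified with $\mathrm{Im}\,\lambda_V$). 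Since $|S_\pi|=|S_\sigma|=r$, this is a bijection, so $\chi\circ\lambda_W \in S_\sigma$ forces $\chi\circ\lambda_V \in S_\pi$, i.e.\ $\pi' = \pi\otimes(\chi\circ\lambda_V) \cong \pi$. Note that $\chi$ itself need not be trivial.
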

\vskip 5pt

 We now specialize to the cases of interest in this paper.
 Henceforth, we shall only consider the case when $\dim W  =4$, so that
 \[  \GSp(W) \cong \GSp_4(F). \]
Moreover, we shall only consider quadratic spaces $V$ with $\dim V = 4$ or $6$. We describe these quadratic spaces in greater detail.
\vskip 5pt

Let $D$ be a (possibly split) quaternion algebra over $F$ and let $\mathbb{N}_D$ be its reduced norm.
Then $(D, \mathbb{N}_D)$ is a rank 4 quadratic space.
We have an isomorphism
\[  \GSO(D, \mathbb{N}_D) \cong (D^{\times} \times D^{\times})/ \{(z,z^{-1}): z \in \GL_1\} \]
via the action of the latter on $D$ given by
\[  (\alpha, \beta) \mapsto  \alpha x \overline{\beta}. \]
Moreover, an element of $\GO(D,\mathbb{N}_D)$ of determinant $-1$ is given by the conjugation action $c: x \mapsto \overline{x}$ on $D$.  We have:
\[  \GSO(D) \cong \begin{cases}
\GSO_{2,2}(F) \text{ if $D$ is split;} \\
\GSO_{4,0}(F) \text{  if $D$ is non-split.} \end{cases} \]
The similitude character of $\GSO(D)$ is given by
\[  \lambda_D: (\alpha, \beta) \mapsto N_D(\alpha \cdot \beta), \]
which is surjective onto $F^{\times}$.
Since $\lambda_D$ is surjective, we have $\GSp(W)^+ = \GSp_4(W)$ and the induced Weil representation is a representation of $\GO(D) \times \GSp(W)$. The investigation of the theta correspondence for these dual pairs has been initiated by B. Roberts [Ro2].
In  Thm. \ref{T:local-theta-GSO(2,2)} and Thm. \ref{T:local-theta-SO(D)} below, we shall  complete the study initiated in  [Ro2] by giving an explicit determination of the theta correspondence.

 \vskip 10pt

Now consider the rank 6 quadratic space:
\[  (V_D, q_D)  = (D , \mathbb{N}_D) \oplus \mathbb{H} \]
where $\mathbb{H}$ is the hyperbolic plane. Then one has an isomorphism
\[  \GSO(V_D) \cong (\GL_2(D)  \times \GL_1)/ \{ (z \cdot \operatorname{Id},\; z^{-2}): z \in \GL_1\}. \]
To see this, note that the quadratic space $V_D$ can also be described as the space of
$2 \times 2$-Hermitian matrices with entries in $D$, so that a typical element has the form
\[  (a,d; x) = \left( \begin{array}{cc}
a & x \\
\overline{x} & d \end{array} \right),  \qquad \text{$a, d \in F$ and $x \in D$},  \]
equipped with the quadratic form $- \det (a,d;x) =  -ad + \mathbb{N}_D(x)$.
The action of $\GL_2(D)  \times \GL_1$ on this space is given by
\[  (g,z)(X) = z \cdot g \cdot X \cdot \overline{g}^t. \]
\noindent  The similitude factor of $\GSO(V_D)$ is given by
\[  \lambda_{D}(g,z) = N(g) \cdot z^2, \]
where $N$ is the reduced norm on the central simple algebra $\text{M}_2(D)$. Thus,
 \[  \SO(V_D) = \{ (g,z) \in \GSO(V_D): N(g) \cdot z^2 = 1\}. \]
\vskip 5pt

In this paper,  we only need to consider $V_D$ when $D$ is split. Thus, we shall suppress $D$ from the notations,  so that from now on throughout this paper, $V$ denotes the 6 dimensional split quadratic space, i.e.
\[
    V=\H\oplus\H\oplus\H\quad\text{and}\quad\GSO(V)=\GSO_{3,3}.
\]
Moreover,  since $\lambda_V$ is surjective, we have $\GSp(W)^+ = \GSp(W)$, so that
 the induced Weil representation $\Omega$ is a representation of $R = \GSp(W) \times \GO(V)$.
We shall only consider the
theta correspondence for $\GSp(W) \times \GSO(V)$. There is no significant loss in restricting to
$\GSO(V)$ because of the following lemma:
\vskip 5pt

\begin{Lem}
Let $\pi$ (resp. $\tau$) be an irreducible representation of $\GSp(W)$ (resp. $\GO(V)$) and
suppose that
\[  \Hom_{\GSp(W) \times \GO(V)}(\Omega, \pi \otimes \tau) \ne 0. \]
Then $\tau$ is irreducible as a representation of $\GSO(V)$. If $\nu_0 = \lambda_V^{-3}  \cdot \det$ is the unique non-trivial quadratic character of $\GO(V)/\GSO(V)$, then $\tau \otimes \nu_0$ does not participate in the theta correspondence with $\GSp(W)$.
\end{Lem}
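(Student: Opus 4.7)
The plan is to reduce both assertions to a dichotomy in the \emph{isometry} theta correspondence for the pair $(\OO(V),\Sp(W))$, via Clifford theory and Lemma~\ref{L:similitude1}.

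First I would observe that the two assertions are linked by Clifford theory for the index-$2$ inclusion $\GSO(V)\subset\GO(V)$. The restriction $\tau|_{\GSO(V)}$ is irreducible if and only if $\tau\not\cong\tau\otimes\nu_0$: in the reducible case one has $\tau|_{\GSO(V)}=\tau_0\oplus\tau_0^c$ with $\tau_0$ irreducible and $\tau\cong\ind_{\GSO(V)}^{\GO(V)}(\tau_0)$, so that $\tau\otimes\nu_0\cong\tau$ since $\nu_0|_{\GSO(V)}=1$. Consequently, if $\tau\otimes\nu_0$ fails to participate in the theta correspondence with $\GSp(W)$ (while $\tau$ does), then $\tau\not\cong\tau\otimes\nu_0$ and the first assertion follows. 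It therefore suffices to prove the second assertion.

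Next I would apply Lemma~\ref{L:similitude1} to pass to the isometry setting. Writing $\tau|_{\OO(V)}=\bigoplus_i\sigma_i$, we have $(\tau\otimes\nu_0)|_{\OO(V)}=\bigoplus_i(\sigma_i\otimes\det)$, and Lemma~\ref{L:similitude1}(iii) yields
\[
    \Theta(\tau)|_{\Sp(W)}=\bigoplus_i\Theta_\psi(\sigma_i),\qquad
    \Theta(\tau\otimes\nu_0)|_{\Sp(W)}=\bigoplus_i\Theta_\psi(\sigma_i\otimes\det).
\]
The constituents $\sigma_i$ form a single orbit under conjugation by $\GO(V)/\OO(V)\cong F^\times$, and since the Weil representation $\omega_\psi$ is invariant under this outer action (as reflected in the very construction of $\Omega$ from $\omega_\psi$), the $\Theta_\psi(\sigma_i)$ are simultaneously all zero or all nonzero, and likewise for the $\Theta_\psi(\sigma_i\otimes\det)$. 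The non-participation of $\tau\otimes\nu_0$ is therefore equivalent to $\Theta_\psi(\sigma\otimes\det)=0$ on $\Sp(W)$ for any single constituent $\sigma$ of $\tau|_{\OO(V)}$.

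It thus remains to prove the following isometry dichotomy: for any irreducible $\sigma$ of $\OO(V)$ with $\Theta_\psi(\sigma)\ne 0$ on $\Sp(W)=\Sp_4$, one has $\Theta_\psi(\sigma\otimes\det)=0$ on $\Sp_4$. This is an instance of the Kudla-Rallis conservation relation for the symplectic tower $\{\Sp_{2n}\}_{n\ge 0}$: combined with the Rallis tower (persistence) property, the first occurrence indices of $\sigma$ and $\sigma\otimes\det$ cannot both be $\le 2$ when $\dim V=6$. The main obstacle is precisely this isometric dichotomy; in full generality it is a consequence of the Sun-Zhu conservation theorem, but in the low-rank situation $(\dim V,\dim W)=(6,4)$ it can also be checked directly using the Schr\"odinger model $S(X\otimes V)$ of the Weil representation, for $X\subset W$ a maximal isotropic subspace, together with standard Rallis tower arguments.
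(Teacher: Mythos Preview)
Your proposal is correct and follows essentially the same route as the paper: reduce via Clifford theory for $\GSO(V)\subset\GO(V)$ and Lemma~\ref{L:similitude1} to the isometry statement that if an irreducible $\sigma$ of $\OO(V)$ satisfies $\Theta_\psi(\sigma)\ne 0$ on $\Sp(W)$, then $\Theta_\psi(\sigma\otimes\det)=0$.

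The one substantive difference is in the isometry input. The paper invokes a classical result of Rallis [R, Appendix] (see also [Pr1, \S 5]), valid in the range $\dim W<\dim V$: any $\sigma$ of $\OO(V)$ occurring in $\omega_\psi$ for $\Sp(W)$ restricts irreducibly to $\SO(V)$, and $\sigma\otimes\det$ does not occur. This is elementary (it amounts to analyzing when the sign character $\det$ of $\OO(V)$ first appears in the symplectic tower) and predates the conservation relation. You instead appeal to the general Kudla--Rallis/Sun--Zhu conservation relation $n(\sigma)+n(\sigma\otimes\det)=\dim V$; this is correct but heavier machinery than needed, and one should note that the Kudla--Rallis paper [KR] cited in this article proves the dichotomy in the \emph{other} direction (fixing $\Sp$, varying the orthogonal tower), so for the direction you need, the clean reference really is Rallis [R] or the full Sun--Zhu theorem. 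Your fallback remark about checking low rank directly via the Schr\"odinger model is too vague to stand on its own.

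A minor point: your simultaneity argument for the $\Theta_\psi(\sigma_i)$ is fine in conclusion, but the phrase ``$\omega_\psi$ is invariant under this outer action'' is not literally true (conjugation by a similitude of factor $t$ replaces $\psi$ by $\psi_t$); what makes the argument work is exactly that $\Omega=\ind_{R_0}^R\omega_\psi$ is independent of $\psi$ here since $\lambda_V$ is surjective, which you do allude to.
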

 \vskip 5pt

 \begin{proof}
  First note that $\tau$ is irreducible when restricted to $\GSO(V)$ if and only if
 $\tau \otimes \nu_0 \ne \tau$. By a well-known result of Rallis [R, Appendix] (see also [Pr1, \S 5, Pg. 282]),  the lemma holds in the setting of isometry groups.
 Suppose that $\tau|_{O(V)} = \oplus_i \tau_i$. Then this result of Rallis implies that $\tau_i$ is irreducible when restricted to $\SO(V)$, so that $\tau_i \otimes \nu_0 \ne \tau_i$, and
 $\tau_i \otimes \nu_0$ does not participate in the theta correspondence with $Sp(W)$.
 This implies that $\tau \otimes \nu_0 \ne \tau$ and $\tau \otimes \nu_0$ does not participate in the theta correspondence with $\GSp(W)$.
 \end{proof}
\vskip 10pt

 Proposition \ref{P:super} and the above lemma imply:
  \vskip 5pt

 \begin{Cor}
 If $\pi$ is a supercuspidal representation of $\GSp_4(F)$ and $\theta(\pi)$ is nonzero,
 then $\theta(\pi)$ is irreducible as a representation of $\GSO(V)$. Moreover, if $\theta(\pi) = \theta(\pi')$ as a representation of $\GSO(V)$ for some other supercuspidal $\pi'$, then $\pi = \pi'$.
 \end{Cor}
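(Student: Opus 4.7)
The corollary is essentially a formal consequence of Proposition~\ref{P:super} (applied in the $\GO(V)$ setting) and the lemma immediately preceding it; the plan is to string them together.

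First, since $\pi$ is supercuspidal and $\theta(\pi) \ne 0$, Proposition~\ref{P:super}(i), applied to the pair $\GSp(W) \times \GO(V)$, tells us that $\Theta(\pi) = \theta(\pi)$ is an \emph{irreducible} representation of $\GO(V)$. Call this representation $\tau$. Since $\tau$ appears in the theta correspondence with $\GSp(W)$, the preceding lemma applies: $\tau$ restricts irreducibly to $\GSO(V)$, and moreover $\tau \otimes \nu_0 \ne \tau$ while $\tau \otimes \nu_0$ does not participate in the theta correspondence with $\GSp(W)$. The first claim of the corollary is precisely the irreducibility of $\tau|_{\GSO(V)}$.

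For the second claim, suppose $\pi'$ is another supercuspidal representation of $\GSp_4(F)$ with $\theta(\pi') = \theta(\pi)$ as representations of $\GSO(V)$. By exactly the same argument, $\tau' := \Theta(\pi') = \theta(\pi')$ is an irreducible $\GO(V)$-representation whose restriction to $\GSO(V)$ is irreducible and agrees with $\tau|_{\GSO(V)}$. Since $\tau|_{\GSO(V)} \ne \tau|_{\GSO(V)} \otimes \nu_0$, Clifford theory forces $\tau|_{\GSO(V)}$ to have exactly two extensions to $\GO(V)$, namely $\tau$ and $\tau \otimes \nu_0$. Hence $\tau' \in \{\tau,\, \tau \otimes \nu_0\}$. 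But $\tau \otimes \nu_0$ does not participate in theta correspondence with $\GSp(W)$ by the lemma, while $\tau'$ does (coming from $\pi'$). Therefore $\tau' = \tau$ as $\GO(V)$-representations, and Proposition~\ref{P:super}(ii) then yields $\pi' = \pi$.

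There is no real obstacle here; the only subtlety is bookkeeping between the $\GO(V)$ and $\GSO(V)$ points of view, and the preceding lemma is tailored exactly to carry out that translation. The argument is short enough that it could reasonably be presented as a single paragraph invoking the two earlier results.
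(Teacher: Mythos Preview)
Your proof is correct and is precisely the argument the paper has in mind: the paper simply states that the corollary follows from Proposition~\ref{P:super} together with the preceding lemma, and you have spelled out exactly how those two ingredients combine, including the Clifford-theory step identifying the two extensions to $\GO(V)$ and ruling out the twist by $\nu_0$.
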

 \vskip 10pt

 If $p \ne 2$, then the above corollary would hold for any irreducible representation $\pi$ because one knows that Howe conjecture for isometry groups.
 \vskip 10pt

A study of  the local theta correspondence for $\GSp_4 \times \GSO_{3,3}$ was undertaken in [W] and [GT2].  In  Thm \ref{T:local-theta} below, we give a complete determination of this theta correspondence.
\vskip 15pt

 \section{\bf A Result of Kudla-Rallis}

 In this section, we recall a fundamental general result of Kudla-Rallis [KR] before specializing it to the cases of interest in this paper.

 \vskip 5pt

Let $W_n$ be the 2n-dimensional symplectic vector space with associated
symplectic group $\Sp(W_n)$ and consider the two towers of orthogonal groups attached to the quadratic spaces with trivial discriminant. More precisely, let
\[  V_m = \mathbb{H}^m \quad \text{and} \quad V^{\#}_m = D \oplus \mathbb{H}^{m-2} \]
and denote the orthogonal groups by $\OO(V_m)$ and $\OO(V^{\#}_{m})$ respectively.
For an irreducible representation $\pi$ of $\Sp(W_n)$, one may consider the theta lifts
$\theta_m(\pi)$ and $\theta^{\#}_m(\pi)$ to $\OO(V_m)$ and $\OO(V^{\#}_{m})$ respectively (with respect to
a fixed non-trivial additive character $\psi$). Set
\[  \begin{cases}
m(\pi) = \inf \{ m: \theta_m(\pi) \ne 0 \}; \\
m^{\#}(\pi) = \inf \{m: \theta^{\#}_m(\pi) \ne 0 \}.
\end{cases} \]
\vskip 5pt

\noindent Then Kudla and Rallis  \cite[Thms. 3.8 \& 3.9]{KR} showed:
\vskip 5pt

\begin{Thm}  \label{T:KR}
(i)  For any irreducible representation $\pi$ of $\Sp(W_n)$,
\[  m(\pi) + m^{\#}(\pi) \geq 2n  +2. \]
\vskip 5pt

\noindent (ii) If $\pi$ is a supercuspidal representation of $\Sp(W_n)$, then
\[  m(\pi) + m^{\#}(\pi) = 2n  +2. \]
\end{Thm}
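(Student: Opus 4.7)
The plan is to deploy the doubling method of Piatetski-Shapiro and Rallis in tandem with Rallis' tower property. Rallis' tower property ensures that $\theta_m(\pi) \ne 0$ implies $\theta_{m+1}(\pi) \ne 0$, with the analogous statement for the $\#$-tower, and that for sufficiently large $m$ both lifts are automatically non-zero; consequently $m(\pi)$ and $m^{\#}(\pi)$ are both finite and well defined. Since both towers consist of quadratic spaces with trivial discriminant, the analysis proceeds inside the degenerate principal series $I(s) = \Ind_P^{\Sp_{4n}} |\det|^{s}$ of $\Sp_{4n} = \Sp(W_n \oplus W_n^{-})$ induced from the Siegel parabolic $P$, with no quadratic character twist required.

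The main technical input is a description of the socle/cosocle filtration of $I(s)$ at its half-integer reducibility points. Near the critical value $s_m$ corresponding to $\dim V_m = 2m$, one has an $\Sp(W_n) \times \Sp(W_n)$-equivariant surjection from a suitable subquotient of $I(s_m)$ onto the restriction (via the doubling embedding) of the Weil representation $\omega_{\psi, V_m}$, and similarly for $V_m^{\#}$; the two towers parametrize the graded pieces of $I(s)$ in a symmetric way about the midpoint $s = 0$. Based on this, my proof proceeds in three steps. First, I would reformulate the non-vanishing of $\theta_m(\pi)$, via Frobenius reciprocity, as the existence of a non-zero $\Sp(W_n) \times \Sp(W_n)$-equivariant map $I(s_m) \to \pi \otimes \pi^{\vee}$. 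Second, introduce the doubling zeta integral $Z(s,f,v,v^{\vee}) = \int_{\Sp(W_n)} f(\delta \cdot (g,1), s) \, \langle \pi(g) v, v^{\vee}\rangle \, dg$ and read off the non-vanishing of $\theta_m(\pi)$ from its poles at $s = s_m$. Third, use the standard intertwining operator between $I(s)$ and $I(-s)$ together with the Kudla-Rallis filtration to compare the first occurrences in the two towers.

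The inequality in (i) then follows: if $m(\pi) + m^{\#}(\pi) < 2n+2$, the resulting non-vanishing statements would force two distinct $\Sp(W_n) \times \Sp(W_n)$-equivariant quotients at a point of $I(s)$ whose subquotient structure is too simple to accommodate both, a contradiction. For part (ii), the supercuspidality of $\pi$ makes the matrix coefficients compactly supported modulo centre, so the doubling integral $Z$ is an honest finite integral rather than a meromorphic distribution; one can then construct an explicit section $f$ in the image of $I(s_m) \to \omega_{\psi, V_m}$ at the conservation value $m + m^{\#} = 2n+2$ on which $Z$ is non-zero, forcing equality. The main obstacle I anticipate is the third step, namely the precise determination of the subquotient structure of $I(s)$ at its reducibility points; this ultimately rests on a detailed analysis of the orbit decomposition of $P \backslash \Sp_{4n} / \bigl(\Sp(W_n) \times \Sp(W_n)\bigr)$ and the identification of each orbit's contribution with a Weil representation from one of the two towers.
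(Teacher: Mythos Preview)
The paper does not give its own proof of this theorem: it is quoted verbatim as a result of Kudla and Rallis, with a citation to \cite[Thms.~3.8 \& 3.9]{KR}, and is then specialized to the case $n=2$ without further argument. So there is no ``paper's proof'' to compare against.

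That said, your outline is essentially the strategy Kudla and Rallis themselves employ in \cite{KR}: analyze the degenerate principal series $I(s)$ of $\Sp_{4n}$ at the points $s_m$ where the doubled Weil representations $\omega_{V_m}$ and $\omega_{V_m^{\#}}$ appear as constituents, and read off first-occurrence information from the submodule structure of $I(s_m)$ restricted along the diagonal embedding $\Sp(W_n) \times \Sp(W_n) \hookrightarrow \Sp_{4n}$. Two small points of caution. First, the direction of the map is that the big Weil representation maps \emph{into} $I(s_m)$ (via the Siegel--Weil section $\phi \mapsto \omega(g)\phi(0)$), so the Weil representation is a submodule (or a specific subquotient) of $I(s_m)$, not a quotient; your Step~1 as written would need to be inverted accordingly. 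Second, the non-vanishing of $\theta_m(\pi)$ is not literally equivalent to the existence of a map $I(s_m) \to \pi \boxtimes \pi^{\vee}$: it is equivalent to having such a map from the Weil-representation constituent, and the whole point of the filtration analysis is to control which constituent of $I(s_m)$ any given $\pi \boxtimes \pi^{\vee}$ can come from. With those adjustments your three-step plan matches the Kudla--Rallis argument, and the obstacle you flag (the precise subquotient structure of $I(s)$, governed by the orbit decomposition of $P \backslash \Sp_{4n} / (\Sp(W_n) \times \Sp(W_n))$) is exactly where the technical work in \cite{KR} lies.
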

\vskip 5pt

If we specialize this result to the case $\dim W_n = 4$ , we obtain:
 \vskip 5pt

\begin{Thm} \label{T:dichotomy}
(i) Let $\pi$ be an irreducible supercuspidal representation of $\GSp_4(F)$. Then one has the following two mutually exclusive possibilities:
\vskip 5pt

\noindent (A) $\pi$ participates in the theta correspondence with $\GSO(D) =\GSO_{4,0}(F)$, where $D$ is non-split;
\vskip 5pt

\noindent (B) $\pi$ participates in the theta correspondence with $\GSO(V) = \GSO_{3,3}(F)$.

 \end{Thm}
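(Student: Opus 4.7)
The plan is to reduce to the isometry version, where Kudla--Rallis (Theorem \ref{T:KR}) applies directly, and then match each of $\GSO_{4,0}$ and $\GSO_{3,3}$ to its appropriate orthogonal tower.

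Step one is to pick an irreducible constituent $\tau$ of the restriction $\pi|_{\Sp(W)}$. Since $Z(\GSp_4) \cdot \Sp_4$ has finite index in $\GSp_4$ and $\pi$ is supercuspidal, $\tau$ exists and is itself an irreducible supercuspidal representation of $\Sp_4(F)$. By Lemma \ref{L:similitude1}(i) and (iii), $\pi$ participates in the similitude theta correspondence with $\GSO(V')$ if and only if $\tau$ participates in the isometry theta correspondence with $\OO(V')$: part (iii) gives $\Theta(\pi)|_{\Sp(W)} = \bigoplus_i \Theta_{\psi}(\tau_i)$, so the nonvanishing of the similitude lift forces the nonvanishing of some isometry lift, and part (i) gives the reverse.

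Step two is to apply Theorem \ref{T:KR}(ii) to $\tau$ with $n = 2$, yielding $m(\tau) + m^\#(\tau) = 6$. Since $V_m = \H^m$ and $V^\#_m = D \oplus \H^{m-2}$ (the latter starting at $m=2$), condition (B) (lifting to $\GSO_{3,3} = \GSO(V_3)$) translates to $m(\tau) \leq 3$, while condition (A) (lifting to $\GSO_{4,0} = \GSO(V^\#_2)$) translates to $m^\#(\tau) \leq 2$. Mutual exclusivity is immediate from the sum relation: (A) forces $m^\#(\tau) = 2$ and hence $m(\tau) = 4 > 3$, whereas (B) forces $m^\#(\tau) \geq 3$. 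Exhaustion is equally direct: if neither (A) nor (B) held, then $m(\tau) \geq 4$ and $m^\#(\tau) \geq 3$, giving $m(\tau) + m^\#(\tau) \geq 7$, contradicting Kudla--Rallis.

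The only subtle point I anticipate is verifying that the reduction in step one is independent of the choice of constituent $\tau$, which is what makes the dichotomy a statement about $\pi$ rather than about a particular $\tau_i$. Different constituents of $\pi|_{\Sp_4}$ are $\GSp_4$-conjugate, and such conjugation can be absorbed by a similitude of the orthogonal space, so the pair $(m(\tau), m^\#(\tau))$ is an invariant of $\pi$ itself. Once this bookkeeping is in place, the theorem follows by the combination described above, so the ``hard part'' is really only to marshal the inputs correctly rather than to prove anything substantially new.
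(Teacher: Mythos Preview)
Your proposal is correct and is exactly the specialization the paper has in mind: the paper does not spell out a proof but simply asserts that Theorem \ref{T:dichotomy} is obtained by specializing Theorem \ref{T:KR} to $\dim W_n = 4$, and you have supplied precisely those details---restricting to an isometry constituent via Lemma \ref{L:similitude1}, applying Kudla--Rallis with $n=2$, and reading off the dichotomy from $m(\tau)+m^{\#}(\tau)=6$. Your attention to the conjugacy of the constituents $\tau_i$ (ensuring $(m(\tau),m^{\#}(\tau))$ is an invariant of $\pi$) is the one point that genuinely needs checking and you handle it correctly.
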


\vskip 5pt

One of the purposes of this paper is to extend the dichotomy of this theorem to all irreducible representations of $\GSp_4(F)$. Moreover, our main results make this dichotomy completely explicit. For example, we will see that the supercuspidal representations of Type (A) are precisely those which are non-generic.
 \vskip 10pt

 On the other hand, one may consider the mirror situation, where one fixes an irreducible representation of $\OO(V_m)$ or $\OO(V_m^{\#})$ and consider its theta lifts $\theta_n(\pi)$ to the tower of symplectic groups $\Sp(W_n)$. Then, with $n(\pi)$ defined in the analogous fashion, one expects that
 \[  n(\pi) + n(\pi \otimes \det) = 2m. \]
   For similitude groups, this implies that
   \[  n(\pi) + n(\pi \otimes \nu_0) = 2m, \]
   where $\nu_0$ is the non-trivial character of $\GO(V_m)/\GSO(V_m)$.
   When $m = 2$,  this expectation has been proved by B. Roberts [Ro2, Thm. 7.8 and Cor. 7.9] as follows:
 \vskip 5pt

 \begin{Thm} \label{T:roberts}
 Let $\pi$ be an irreducible representation of $\GO(D)$, where $D$ is possibly split.
 Then
 \[   n(\pi) + n(\pi \otimes \nu_0) = 4. \]
  In particular, if $\pi  = \pi \otimes \nu_0$, then  $n(\pi) = 2$.
 \end{Thm}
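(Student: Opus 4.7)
The plan is to reduce the statement to the analogous conservation relation for the isometry group $\OO(D)$ and then apply a ``mirror'' Kudla--Rallis principle in the rank-$4$ case. By Lemma~\ref{L:similitude1}(iii), if $\pi|_{\OO(D)} = \bigoplus_i \tau_i$, then $\Theta(\pi)|_{\Sp(W_n)} \cong \bigoplus_i \Theta_\psi(\tau_i)$, so $n(\pi) = \min_i n(\tau_i)$, where the right-hand side uses the first-occurrence index in the isometry $\Sp(W_n)$-tower. Since $\nu_0$ restricts to $\det$ on $\OO(D)$, the same argument yields $n(\pi \otimes \nu_0) = \min_i n(\tau_i \otimes \det)$. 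It therefore suffices to prove the isometry identity
\[
n(\tau) + n(\tau \otimes \det) = 4 \qquad \text{for every irreducible $\tau$ of $\OO(D)$,}
\]
together with a compatibility check that the minimizing indices for $\pi$ and for $\pi \otimes \nu_0$ are realized by the same $\tau_i$. This compatibility is automatic when $\pi|_{\OO(D)}$ is irreducible; when it is reducible, the conjugation action of $\GO(D)/\GSO(D)$ permutes the constituents and furnishes the required symmetry.

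For the isometry identity, the upper bound $n(\tau) + n(\tau \otimes \det) \le 4$ is a non-vanishing statement. Using the isomorphism $\GSO(D) \cong (D^\times \times D^\times)/\Delta\GL_1$ together with the Shimizu / Jacquet--Langlands realization of the theta lift from $\OO(D)$ to $\SL_2 = \Sp(W_1)$, one verifies that for each $\tau$ at least one of $\tau,\ \tau \otimes \det$ has a nonzero lift to $\Sp(W_1)$, while both certainly lift to $\Sp(W_2) = \Sp_4$ in the stable range (where the nonvanishing can be read off from the explicit description in terms of Jacquet--Langlands correspondents). The lower bound $n(\tau) + n(\tau \otimes \det) \ge 4$ is the conservation vanishing, which is established by examining the Kudla filtration of the Weil representation $\omega_{\psi,n}$ restricted to the Siegel parabolic $P \subset \Sp(W_n)$; the successive quotients of this filtration are induced representations of the form $\Ind_{P'}^P (\chi_V|\cdot|^{s_j}) \boxtimes \sigma_j(\tau)$, and the existence of $\OO(D)$-equivariant quotient maps onto both $\tau$ and $\tau \otimes \det$ at small $n$ produces contradictory constraints via a short MVW-type argument. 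Combined with the compatibility of minimizers from the first step, this yields $n(\pi) + n(\pi \otimes \nu_0) = 4$; the self-dual case $\pi \cong \pi \otimes \nu_0$ then forces $n(\pi) = 2$ at once.

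The main obstacle is the vanishing direction of the isometry conservation. For $p \ne 2$ one can invoke Howe duality (as in Proposition~\ref{P:Kudla}) to shorten the argument considerably, but since the paper explicitly permits $p = 2$ and refrains from assuming Howe's conjecture, a hands-on computation along the lines of Roberts \cite[\S 7]{Ro2} is needed. Concretely, one writes down the Jacquet modules $\omega_{\psi,n}|_P$ layer by layer for $n \le 4$ and tracks which $\tau \in \Pi(\OO(D))$ can appear as an $\OO(D)$-equivariant quotient of each layer, thereby ruling out simultaneous small first occurrence of $\tau$ and $\tau \otimes \det$. This is a finite but delicate calculation, and it is the step on which the entire conservation relation rests.
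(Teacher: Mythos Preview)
The paper does not prove Theorem~\ref{T:roberts}; it is quoted from Roberts \cite[Thm.~7.8 and Cor.~7.9]{Ro2}, so there is no in-paper argument to compare your proposal against.

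That said, your sketch has two concrete problems worth flagging. First, the compatibility step is misattributed: the constituents $\tau_i$ of $\pi|_{\OO(D)}$ are permuted transitively by $\GO(D)/\OO(D)$ (via the similitude character), not by $\GO(D)/\GSO(D)$. What one actually needs is that all $\tau_i$ share the same first-occurrence index $n(\tau_i)$; this follows because conjugation by $g\in\GO(D)$ with $\lambda_V(g)=\lambda$ carries $\omega_\psi$ to $\omega_{\psi_\lambda}$, and first occurrence in the symplectic tower is independent of $\psi$ (the two Weil representations differ by an automorphism of $\Sp(W_n)$ coming from $\GSp(W_n)$). Without this $\psi$-independence the reduction to isometry groups does not close, since $\min_i n(\tau_i)+\min_i n(\tau_i\otimes\det)$ could a priori be strictly less than $4$. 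Second, your nonvanishing claim that ``both certainly lift to $\Sp(W_2)=\Sp_4$ in the stable range'' is incorrect: with $\dim D=4$ and $\dim W_2=4$ one is well outside the stable range, so the nonvanishing at $\Sp_4$ requires a separate argument (which is indeed part of what Roberts proves).
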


 \vskip 15pt

\section{\bf Whittaker Modules of Weil Representations} \label{S:Whit}

In this section, we describe the Whittaker modules of the Weil representations $\Omega_{W,D}$
and $\Omega_{W,V}$, with $\dim W = 4$. We omit the proofs since they are by-now-standard; see for example [GT2 Prop. 7.4] and [MS, Prop. 4.1].
\vskip 5pt

\begin{Prop} \label{P:Whit1}
Let $D$ be a (possibly split) quaternion algebra over $F$ and consider the Weil representation
$\Omega_{D, W}$ of $\GO(D) \times \GSp(W)$. Let $U$ be the unipotent radical of a Borel subgroup of $\GSp(W)$ and let $\psi$ be a generic character of $U(F)$. Similarly, let $U_0$ be the unipotent radical  of a Borel subgroup of $\GO(D)$ when $D$ is split, and let $\psi_0$ be a generic character of $U_0(F)$.
Then we have:
\[  (\Omega_{D,W})_{U, \psi}  = 0 \]
if $D$ is non-split, and
\[   (\Omega_{D,W})_{U, \psi} \cong ind_{U_0}^{\GSO(D)} \psi_0 \]
if $D$ is split.
\end{Prop}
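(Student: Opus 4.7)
The plan is to compute the twisted Jacquet module $(\Omega_{D,W})_{U, \psi}$ in the Schr\"odinger model $\Omega_{D,W} = S(X \otimes D) \cong S(D^2)$ associated to a polarization $W = X \oplus Y$. The unipotent radical $U$ of a Borel of $\GSp(W) = \GSp_4$ decomposes as $U = N(Y) \cdot N_1$, where $N(Y) \cong \Sym_2(F)$ is the unipotent radical of the Siegel parabolic and $N_1$ is the one-dimensional unipotent radical of the Borel of the Levi $\GL(Y) \cong \GL_2$. I will compute coinvariants in two stages: first with respect to $(N(Y), \psi_N)$, then with respect to $(N_1, \psi_1)$, where $\psi_N := \psi|_{N(Y)}$ and $\psi_1 := \psi|_{N_1}$.

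The first key point is that $\psi_N$ is of rank one. Since $\psi$ is trivial on $[U,U]$, which in type $C_2$ is the product of the root subgroups $U_{\alpha_1+\alpha_2}$ and $U_{2\alpha_1+\alpha_2}$ (both lying in $N(Y)$), and $\psi$ is non-trivial only on the simple root subgroups $U_{\alpha_1} = N_1$ and $U_{\alpha_2} \subset N(Y)$, the restriction $\psi_N$ corresponds under $N(Y) \cong \Sym_2(F)$ to a rank-one symmetric matrix $T_\psi = \operatorname{diag}(0, a)$ with $a \ne 0$. Substituting $b \in N(Y)$ into the explicit formula $\omega_\psi(b)\phi(v) = \psi(\langle bv, v\rangle)\phi(v) = \psi(\operatorname{tr}(b Q(v)))\phi(v)$ for $\phi \in S(D^2)$, where $Q(v_1, v_2) \in \Sym_2(F)$ is the Gram matrix of $(v_1, v_2)$ with respect to the bilinear form on $D$ attached to $\mathbb{N}_D$, one identifies by Fourier inversion on the abelian group $N(Y)$
\[
(\Omega_{D,W})_{N(Y), \psi_N} \cong S\bigl(Q^{-1}(T_\psi)\bigr), \qquad Q^{-1}(T_\psi) = \{(v_1, v_2) \in D^2 : \mathbb{N}_D(v_1) = 0,\ \operatorname{tr}(v_1 \bar{v}_2) = 0,\ \mathbb{N}_D(v_2) = a\}.
\]

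For the remaining $(N_1, \psi_1)$-coinvariants, the explicit $\GL(Y)$-action shows that $N_1 = \{n_t\}_{t \in F}$ operates on $D^2$ by the shear $n_t \cdot (v_1, v_2) = (v_1, v_2 + t v_1)$, which preserves $Q^{-1}(T_\psi)$ (as it must by the $N_1$-invariance of $\psi_N$). If $D$ is the division quaternion algebra, then $\mathbb{N}_D$ is anisotropic and $\mathbb{N}_D(v_1) = 0$ forces $v_1 = 0$; the shear then acts trivially on the fiber $\{0\} \times \{v_2 : \mathbb{N}_D(v_2) = a\}$, and the $(N_1, \psi_1)$-coinvariants vanish since $\psi_1$ is non-trivial, yielding $(\Omega_{D,W})_{U,\psi} = 0$ in the non-split case. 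If $D$ is split, the fiber contains an open stratum where $v_1$ is a rank-one $2\times 2$ matrix, and $N_1$ acts freely on this stratum. The main remaining obstacle is the identification of the resulting $\GSO(D)$-module with $\ind_{U_0}^{\GSO(D)} \psi_0$: for this I would use the isomorphism $\GSO(D) \cong (\GL_2 \times \GL_2)/\Delta\GL_1$ with action $(\alpha, \beta) \cdot v = \alpha v \bar{\beta}$, verify that $\GSO(D)$ acts transitively on $N_1 \backslash (\text{open stratum of } Q^{-1}(T_\psi))$, determine the stabilizer of a convenient base point and identify it with $U_0$, and check that the resulting character of $U_0$ is generic (hence conjugate to $\psi_0$ under the split maximal torus of $\GSO(D)$). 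With this identification, the quotient becomes the homogeneous space $U_0 \backslash \GSO(D)$, and the space of sections is by construction $\ind_{U_0}^{\GSO(D)} \psi_0$.
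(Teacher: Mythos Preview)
Your approach is the standard one; the paper itself omits the proof entirely, saying only that it is ``by-now-standard'' and citing \cite{GT2} and \cite{MS}. The two-step computation --- first $(N(Y),\psi_N)$-coinvariants via the explicit Schr\"odinger formula, then $(N_1,\psi_1)$-coinvariants by analyzing the shear action on the resulting variety --- is exactly what those references do, so there is nothing to compare at the level of strategy.

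Two points deserve tightening. First, you write $\Omega_{D,W} = S(X\otimes D)$, but $\Omega_{D,W}$ is the \emph{induced} Weil representation $\ind_{R_0}^{R}\omega_\psi$, not $\omega_\psi$ itself. Since $U \subset \Sp(W) \subset R_0$ and coset representatives for $R_0\backslash R$ can be taken in $\GO(D)\times\{1\}$ (commuting with $U$), the $(U,\psi)$-coinvariants functor passes through the induction, and one recovers $\ind_{U_0}^{\GSO(D)}\psi_0$ from the isometry computation $(\omega_\psi)_{U,\psi}\cong \ind_{U_0}^{\SO(D)}\psi_0$ by inducing up. This step is routine but should be stated. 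Second, in the split case you pass directly to the open stratum $\{v_1\neq 0\}$ of $Q^{-1}(T_\psi)$ without dispatching the closed stratum $\{v_1=0\}$; the short exact sequence $0\to S(\text{open})\to S(Q^{-1}(T_\psi))\to S(\{v_1=0\})\to 0$ and exactness of twisted Jacquet functors reduce you to showing the closed piece has vanishing $(N_1,\psi_1)$-coinvariants, which is exactly your non-split argument (the shear is trivial there). With these two remarks inserted, the outline is complete and correct.
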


\vskip 10pt

\begin{Cor} \label{C:Whit1}
(i) If $D$ is non-split, then no generic representation of $\GSp(W)$ participates in theta correspondence with $\GO(D)$.

\vskip 5pt
(ii) Let $\pi$ be an irreducible generic representation of $\GO(V_2) = \GSO_{2,2}(F)$. Then $\pi$ is generic if and only if $\Theta_{V_2,W}(\pi)$ contains a generic constituent, in which case this generic constituent is unique.
\end{Cor}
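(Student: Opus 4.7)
The plan is to deduce both parts from Proposition \ref{P:Whit1} via a uniform Frobenius reciprocity argument. Let $\pi$ be an irreducible generic representation of $\GSp(W)$ with Whittaker functional $\ell \colon \pi \to \C$, and suppose $\Theta(\pi) \ne 0$. The defining surjection $\Omega \twoheadrightarrow \pi \otimes \Theta(\pi)$, composed with $\ell \otimes \id$, yields a nonzero $\GO$-equivariant map $\Omega \twoheadrightarrow \Theta(\pi)$ on which $U$ acts through $\psi$ in the source and trivially in the target. This map therefore descends to a surjection $(\Omega)_{U,\psi} \twoheadrightarrow \Theta(\pi)$.

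For (i), Proposition \ref{P:Whit1} gives $(\Omega_{D,W})_{U,\psi} = 0$ when $D$ is non-split, immediately contradicting the existence of such a nonzero surjection; hence $\Theta(\pi) = 0$ for every generic $\pi$. For the forward direction of (ii), in the split case Proposition \ref{P:Whit1} identifies $(\Omega_{V_2,W})_{U,\psi}$ with $\ind_{U_0}^{\GSO(V_2)} \psi_0$, so we obtain a nonzero $\GSO(V_2)$-equivariant surjection $\ind_{U_0}^{\GSO(V_2)} \psi_0 \twoheadrightarrow \Theta_{V_2,W}(\pi)$. By Frobenius reciprocity this corresponds to a nonzero Whittaker functional on $\Theta_{V_2,W}(\pi)$, so some irreducible constituent is generic. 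Uniqueness of this constituent follows because the Gelfand--Graev representation $\ind_{U_0}^{\GSO(V_2)} \psi_0$ has each generic irreducible of $\GSO(V_2)$ appearing with multiplicity at most one in its composition series --- a consequence of the uniqueness of Whittaker models for $\GSO_{2,2}$, inherited from $\GL_2 \times \GL_2$.

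For the converse direction of (ii), I would establish the mirror identity $(\Omega_{V_2,W})_{U_0,\psi_0} \cong \ind_U^{\GSp(W)} \psi$ by the same partial-Fourier transform computation that underlies Proposition \ref{P:Whit1}, with the roles of the two groups interchanged. Running the argument above in reverse, one starts from a Whittaker functional on a generic constituent of $\Theta_{V_2,W}(\pi)$ and produces a nonzero Whittaker functional on $\pi$, showing that $\pi$ must itself be generic. The main technical point I anticipate is precisely this symmetric Whittaker-module computation: it is not stated explicitly in the excerpt, but is an exact mirror of Proposition \ref{P:Whit1} and is handled by parallel methods. A minor secondary point is justifying the exactness of the Whittaker coinvariants functor invoked in the uniqueness step, which is standard for finite-length smooth representations of $p$-adic groups.
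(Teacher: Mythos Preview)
Your argument for part (i) is fine. For part (ii), however, you have placed $\pi$ on the wrong group. In the corollary, $\pi$ is an irreducible representation of $\GSO(V_2)=\GSO_{2,2}(F)$, and $\Theta_{V_2,W}(\pi)$ lives on $\GSp(W)$; you have taken $\pi$ on $\GSp(W)$ throughout. Because of this swap, what you call the ``forward direction'' actually proves the (true but different) statement that a generic $\pi$ on $\GSp(W)$ has $\Theta_{W,V_2}(\pi)$ with a generic constituent, and you are then forced to invoke a mirror identity $(\Omega_{V_2,W})_{U_0,\psi_0}\cong \ind_U^{\GSp(W)}\psi$ that the paper does not state or prove. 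Neither direction of (ii) as written in the corollary is established by your argument without that extra input.

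With the correct setup no mirror identity is needed, and both directions together with uniqueness come straight from Proposition \ref{P:Whit1}. Take $\pi$ on $\GSO(V_2)$ and the defining surjection $\Omega_{V_2,W}\twoheadrightarrow \pi\boxtimes\Theta_{V_2,W}(\pi)$. Applying the exact functor of $(U,\psi)$-coinvariants on the $\GSp(W)$ factor, and noting that this commutes with taking the maximal $\pi$-isotypic quotient on the $\GSO(V_2)$ factor, one finds that the maximal $\pi$-isotypic quotient of $(\Omega_{V_2,W})_{U,\psi}\cong \ind_{U_0}^{\GSO(V_2)}\psi_0$ is exactly $\pi\otimes\bigl(\Theta_{V_2,W}(\pi)\bigr)_{U,\psi}$. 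Since an irreducible $\pi$ occurs as a quotient of the Gelfand--Graev representation $\ind_{U_0}^{\GSO(V_2)}\psi_0$ if and only if $\pi$ is generic, and with multiplicity at most one by uniqueness of Whittaker models for $\GSO_{2,2}$, we get $\dim\bigl(\Theta_{V_2,W}(\pi)\bigr)_{U,\psi}\le 1$, with equality precisely when $\pi$ is generic. Exactness of the Whittaker functor then translates this into the desired statement about generic constituents of $\Theta_{V_2,W}(\pi)$.
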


\vskip 10pt

\begin{Prop}  \label{P:Whit2}
Consider the Weil representation $\Omega_{W,V}$ of $\GSp(W) \times \GSO(V)$.
Let $U$ be the unipotent radical of a Borel subgroup of $\GSp(W)$ and let $\psi$ be a generic character of $U(F)$. Similarly, let $U_0$ be
the unipotent radical  of a Borel subgroup of $\GO(V)$, and let $\psi_0$ be a generic character of $U_0(F)$. Then we have
\[  (\Omega_{W,V})_{U_0, \psi_0}  \cong ind_U^{\GSp(W)} \psi. \]
\end{Prop}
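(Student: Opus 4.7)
The plan is to compute $(\Omega_{W,V})_{U_0, \psi_0}$ in a mixed model for the Weil representation based on a complete polarization of $V$, via a two-stage twisted Jacquet: first along the unipotent radical of the Siegel parabolic of $\GO(V)$, then along the unipotent of a Borel of its Levi.

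I would fix a polarization $V = V^+ \oplus V^-$ into maximal isotropic subspaces of dimension $3$, and realize $\omega_\psi$ on $S(W \otimes V^+) \cong S(W^{\oplus 3})$. In this model the Siegel parabolic $P_0 = M_0 N_0 \subset \SO(V)$ with Levi $M_0 = \GL(V^+) \cong \GL_3$ acts explicitly: $M_0$ by linear change of variables (up to a modulus character and a $\chi_V$-twist), and the abelian unipotent $N_0 \simeq \wedge^2 V^+$ by multiplication by a Fourier character encoding a bilinear form on $W \otimes V^+$. Simultaneously, $\GSp(W)$ acts in the standard Weil fashion on each of the three $W$-slots of $W \otimes V^+$. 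I then choose the Borel $B_0 \subset P_0$ of $\SO(V)$ whose unipotent radical factors as $U_0 = N_0 \cdot U_0^{\GL}$, with $U_0^{\GL}$ the upper-triangular unipotent of $\GL(V^+)$, and decompose $\psi_0 = \psi_0^{N} \cdot \psi_0^{\GL}$ accordingly.

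The computation then proceeds in two stages. First, since $N_0$ acts by multiplication by a quadratic Fourier character and $\psi_0^{N}$ is non-degenerate, the $(N_0, \psi_0^{N})$-coinvariants single out Schwartz sections supported on an open, ``non-degenerate'' locus $\Omega_0 \subset W^{\oplus 3}$ cut out by the genericity of $\psi_0^{N}$. Second, I pass to the $(U_0^{\GL}, \psi_0^{\GL})$-coinvariants of that module: the linear $\GL_3$-action on $\Omega_0$ combined with the Whittaker condition collapses the result to sections on a single $\GL(V^+)$-orbit in $\Omega_0$. What remains is a $\GSp(W)$-equivariant space of sections on a homogeneous space for $\GSp(W)$; a direct computation shows that $\GSp(W)$ acts transitively with stabilizer of a well-chosen base point equal to the unipotent radical $U$ of a Borel of $\GSp(W)$, while the transported Whittaker character is a generic character of $U$ which one arranges to equal $\psi$. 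This yields $(\Omega_{W,V})_{U_0, \psi_0} \cong \ind_U^{\GSp(W)} \psi$, as claimed.

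The chief technical difficulty is the bookkeeping of normalizations: the $\chi_V$- and modulus-factors in the Weil action of the Siegel parabolic, together with the similitude twist used in the definition of $\Omega$, must recombine so that the final character on $U$ is identified with $\psi$ rather than some twist of it. The underlying orbit-and-stabilizer geometry on $W^{\oplus 3}$ is routine once the model is set up, and the admissibility/finiteness assertions come for free once the answer is recognized as a compact induction from $U$.
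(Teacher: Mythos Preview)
Your approach is correct and is precisely the standard technique for computing Whittaker modules of Weil representations: realize $\omega_\psi$ in the mixed Schr\"odinger model $S(W \otimes V^+)$ attached to a complete polarization $V = V^+ \oplus V^-$, factor $U_0 = N_0 \cdot U_0^{\GL}$ through the Siegel parabolic, and take twisted coinvariants in two stages, finishing with an orbit/stabilizer identification inside $W^{\oplus 3}$. Note, however, that the paper does not give its own proof of this proposition at all: the authors explicitly omit it as ``by-now-standard'' and refer to [GT2, Prop.~7.4] and [MS, Prop.~4.1], whose arguments are exactly of the type you outline. So there is nothing to compare against beyond confirming that your sketch matches the cited literature.

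One small point worth tightening in your write-up: the character $\psi_0|_{N_0}$ is nontrivial only on the single simple-root subgroup contained in $N_0$, not on all of $N_0 \cong \wedge^2 V^+$, so after the first stage the support condition constrains only one of the pairings $\langle w_i, w_j\rangle$ to be nonzero (rather than cutting out a fully ``non-degenerate'' locus). The genuine transitivity of $\GSp(W)$ and the identification of the stabilizer with $U$ emerge only after the second stage, once the $U_0^{\GL}$-Whittaker condition has been imposed. This does not affect the correctness of your outline, but the phrasing ``non-degenerate locus cut out by the genericity of $\psi_0^N$'' overstates what happens at the first step.
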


\vskip 10pt

\begin{Cor} \label{C:Whit2}
Let $\pi$ be an irreducible representation of $\GSp(W)$. Then $\pi$ is generic if and only if
$\Theta_{W,V}(\pi)$ contains a generic constituent, in which case this generic constituent is unique.
\end{Cor}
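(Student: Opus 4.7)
The plan is to deduce the corollary from Proposition \ref{P:Whit2} by expressing both sides of the equivalence as the dimension of a single Hom space. For any irreducible representation $\pi$ of $\GSp(W)$, the universal property of the big theta lift gives
\[
\Hom_{\GSp(W) \times U_0}\bigl(\Omega_{W,V},\ \pi \otimes \mathbb{C}_{\psi_0}\bigr) \;\cong\; \Hom_{U_0}\bigl(\Theta_{W,V}(\pi),\ \mathbb{C}_{\psi_0}\bigr) \;=\; \bigl(\Theta_{W,V}(\pi)_{U_0,\psi_0}\bigr)^{*},
\]
because any $\GSp(W)$-equivariant map $\Omega_{W,V} \to \pi$ factors through the canonical surjection $\Omega_{W,V} \twoheadrightarrow \pi \otimes \Theta_{W,V}(\pi)$ as $\id_\pi \otimes f$ for a unique linear functional $f$ on $\Theta_{W,V}(\pi)$, and $(U_0,\psi_0)$-equivariance of the composition is equivalent to $(U_0,\psi_0)$-equivariance of $f$. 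Combining the universal property of the twisted Jacquet functor with Proposition \ref{P:Whit2} identifies the left-hand side with $\Hom_{\GSp(W)}(\ind_U^{\GSp(W)} \psi,\ \pi)$. Hence
\[
\dim \Hom_{\GSp(W)}\bigl(\ind_U^{\GSp(W)} \psi,\ \pi\bigr) \;=\; \dim \Theta_{W,V}(\pi)_{U_0,\psi_0}.
\]

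I then invoke two standard inputs. The first is the Gelfand-Graev identity for $\GSp_4$: the left-hand dimension above equals $1$ when $\pi$ is irreducible generic and $0$ otherwise; this rests on uniqueness of the Whittaker model for $\GSp_4$ together with contragredient-invariance of the generic property (via the MVW involution). The second is uniqueness of the Whittaker model for $\GSO_{3,3}(F) \cong (\GL_4(F) \times \GL_1(F))/\{(z, z^{-2})\}$, inherited from $\GL_4$; consequently each irreducible constituent $\tau$ of the admissible, finite-length module $\Theta_{W,V}(\pi)$ satisfies $\dim \tau_{U_0,\psi_0} \in \{0,1\}$ according as $\tau$ is generic or not. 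Exactness of the Jacquet functor along a composition series therefore gives
\[
\dim \Theta_{W,V}(\pi)_{U_0,\psi_0} \;=\; \sum_{\tau\text{ generic}} \bigl[\Theta_{W,V}(\pi) : \tau\bigr].
\]
Putting the three displays together, this common integer equals $1$ when $\pi$ is generic and $0$ otherwise, which is exactly the content of the corollary: existence of a generic constituent of $\Theta_{W,V}(\pi)$ is equivalent to genericity of $\pi$, and in that case the generic constituent is unique and occurs with multiplicity one.

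The main ingredient I expect to be nontrivial is the Gelfand-Graev identity for $\GSp_4$ invoked above. Modulo that classical input, the rest of the argument is purely formal, using only the universal properties of the big theta lift and the twisted Jacquet functor, together with Proposition \ref{P:Whit2} and uniqueness of the Whittaker model for $\GSO_{3,3}(F)$.
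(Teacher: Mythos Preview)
Your argument is correct and is precisely the standard deduction that the paper has in mind: the paper omits the proof of this corollary, calling it ``by-now-standard'' and citing [GT2, Prop.~7.4] and [MS, Prop.~4.1], and your chain of identifications
\[
\Hom_{\GSp(W)}\bigl((\Omega_{W,V})_{U_0,\psi_0},\,\pi\bigr)\;\cong\;\Hom_{\GSp(W)\times U_0}\bigl(\Omega_{W,V},\,\pi\boxtimes\psi_0\bigr)\;\cong\;\bigl(\Theta_{W,V}(\pi)_{U_0,\psi_0}\bigr)^{*}
\]
together with Proposition~\ref{P:Whit2} is exactly that argument. The one external input you flag---that $\dim\Hom_{\GSp(W)}(\ind_U^{\GSp(W)}\psi,\pi)$ equals $1$ or $0$ according as $\pi$ is generic or not---is indeed the standard multiplicity-one property of the Gelfand--Graev representation for quasi-split groups, and your justification via uniqueness of Whittaker models plus contragredient-invariance of genericity is the right one.
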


\vskip 10pt
Corollaries \ref{C:Whit1}(i) and \ref{C:Whit2} imply:
\vskip 5pt

\begin{Cor}
The dichotomy result of Theorem \ref{T:dichotomy} holds for generic representations of $\GSp(W)$.
\end{Cor}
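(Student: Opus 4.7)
The plan is to deduce the dichotomy for generic $\pi$ directly from the two corollaries that precede it, \ref{C:Whit1}(i) and \ref{C:Whit2}, without invoking any supercuspidality hypothesis. The point is that the Whittaker module computations of the previous section already tell us, in each of the two towers, exactly when generic representations can or cannot lift.

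First, I would fix an irreducible generic representation $\pi$ of $\GSp(W) = \GSp_4(F)$ and show that case (B) always occurs. By Corollary \ref{C:Whit2}, the generic character module of $\Theta_{W,V}(\pi)$ is nonzero; in particular $\Theta_{W,V}(\pi)$ itself is nonzero, so $\pi$ participates in the theta correspondence with $\GSO(V) = \GSO_{3,3}(F)$. This is the existence half of the dichotomy and requires no further work.

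Next, I would show that case (A) is excluded. Here $D$ denotes the quaternion division algebra, so $\GSO(D) = \GSO_{4,0}(F)$. By Corollary \ref{C:Whit1}(i), no generic representation of $\GSp(W)$ participates in the theta correspondence with $\GO(D)$. By Lemma \ref{L:similitude1}(i), nonvanishing of the big theta lift from $\GSO(D)$ to $\GSp(W)$ would force nonvanishing of some isometry-group theta lift between $\OO(D)$ and $\Sp(W)$, which would in turn (by reciprocity and the same Lemma \ref{L:similitude1}(i) applied in the opposite direction, together with the fact that the restriction of $\pi$ to $\Sp(W)$ has a generic constituent) contradict Corollary \ref{C:Whit1}(i). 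Hence $\pi$ does not lift to $\GSO(D)$, and exactly one of (A), (B) holds.

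The conclusion is that the two Whittaker-module corollaries are already a drop-in replacement for the Kudla–Rallis inequality of Theorem \ref{T:KR}(ii) in the generic case: Corollary \ref{C:Whit2} supplies the lower bound on $m^{\#}(\pi)$ while Corollary \ref{C:Whit1}(i) supplies the upper bound on $m(\pi)$. There is no genuine obstacle; the only mildly delicate point is the reduction from $\GSp(W)^+ = \GSp(W)$ and $\GO(D)/\GSO(D)$ down to the isometry pair, which is handled cleanly by Lemma \ref{L:similitude1}. The result is then immediate.
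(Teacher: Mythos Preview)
Your approach is exactly the paper's: the corollary is deduced immediately from Corollary~\ref{C:Whit1}(i) and Corollary~\ref{C:Whit2}. Your first paragraph handles case~(B) correctly, and the first sentence of your second paragraph already finishes case~(A), since Corollary~\ref{C:Whit1}(i) is stated for the similitude correspondence $\GO(D)\times\GSp(W)$ and nonparticipation there trivially implies nonparticipation with the subgroup $\GSO(D)$.

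The remainder of your second paragraph is both unnecessary and mistaken. Lemma~\ref{L:similitude1}(i) runs in the opposite direction to what you claim: it says that nonvanishing of an \emph{isometry} theta lift forces nonvanishing of the \emph{similitude} lift, not the converse. Fortunately you do not need this detour at all, since Proposition~\ref{P:Whit1} and Corollary~\ref{C:Whit1}(i) are computed directly with the induced Weil representation $\Omega_{D,W}$ on the similitude group; no reduction to isometry groups is required. Finally, in your last paragraph the roles of $m(\pi)$ and $m^{\#}(\pi)$ are swapped: Corollary~\ref{C:Whit2} bounds $m(\pi)$ from above (participation in the split tower at $\GSO_{3,3}$), while Corollary~\ref{C:Whit1}(i) bounds $m^{\#}(\pi)$ from below (nonparticipation with $\GSO_{4,0}$).
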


\vskip 15pt

\section{\bf Representations of $\GSp_4(F)$}

In order to state our main results, we need to introduce
some notations and recall some results about various
principal series representations of $\GSp_4(F)$.  In the following, by the term ``discrete series" or ``tempered" representations of $\GSp(W) = \GSp_4(F)$ or $\GSO(V)= \GSO_{3,3}(F)$,  we mean a representation which is
unitarizable discrete series or unitarizable tempered after twisting by a 1-dimensional character.
\vskip 10pt

\subsection{\bf Principal series representations.}
Recall from
Section \ref{S:theta} that we have a Witt decomposition
$W = Y^* \oplus Y$. Suppose that
\[  Y^* = F \cdot e_1 \oplus F \cdot e_2 \quad \text{and} \quad  Y =
F\cdot f_1 \oplus F \cdot f_2 \]
and consider the decomposition $W = Fe_1
\oplus W' \oplus Ff_1$, where $W' = \langle e_2, f_2 \rangle$. Let
$Q(Z) = L(Z) \cdot U(Z)$ be the maximal parabolic stabilizing the
line $Z = F \cdot f_1$, so that
\[  L(Z) = \GL(Z) \times \GSp(W') \]
and $U(Z)$ is a Heisenberg group:
\[  \begin{CD}
1 @>>> \operatorname{Sym}^2 Z @>>> U(Z) @>>> W' \otimes Z @>>> 1. \end{CD} \]
This is typically called the Klingen parabolic subgroup in the literature. A
representation of $L(Z)$ is thus of the form $\chi \boxtimes \tau$
where $\tau$ is a representation of $\GSp(W') \cong \GL_2(F)$. We let
$I_{Q(Z)}(\chi,\tau)$ be the corresponding parabolically induced
representation.  If $I_{Q(Z)}(\chi,\tau)$ is a standard module, then it has a unique irreducible quotient (the Langlands quotient), which we shall denote by $J_{Q(Z)}(\chi,\tau)$. The same notation applies to
other principal series representations to be introduced later.
\vskip 10pt

The module structure of $I_{Q(Z)}(\chi,\tau)$
is known by Sally-Tadic [ST] and a convenient reference is [RS].
In particular,  we note the following:

 \vskip 5pt

\begin{Lem}  \label{L:Q(Z)}
(a) Let $\tau$ be a supercuspidal representation of $\GL_2(F)$. The
induced representation $I_{Q(Z)}(\chi, \tau)$ is reducible iff one
of the following holds: \vskip 5pt

(i) $\chi = 1$; \vskip 5pt

(ii) $\chi = \chi_0 |-|^{\pm 1}$ and $\chi_0$ is a non-trivial
quadratic character  such that $\tau \otimes \chi_0 \cong \tau$.
\vskip 5pt

In case (i), the representation $I_{Q(Z)}(1, \tau)$ is the direct
sum of two irreducible tempered representations, exactly one of which is
generic.  In case (ii), assuming without loss of generality that
$\chi = \chi_0  \cdot |-|$, one has a (non-split) short exact
sequence:
\[  \begin{CD}
0 @>>>   St(\chi_0, \tau_0) @>>> I_{Q(Z)}(\chi_0|-|, \tau_0|-|^{-1/2}) @>>> Sp(\chi_0,\tau_0)
@>>> 0 \end{CD} \]
 where $St(\chi_0,\tau_0)$ is a generic discrete series
 representation and the Langlands quotient $Sp(\chi_0,\tau_0)$ is non-generic.
\vskip 10pt

(b) If $\tau$ is the twisted Steinberg representation  of
$\GL_2$, then $I_{Q(Z)}(\chi,\tau)$ is reducible iff one of the following holds:
\vskip 5pt

(i) $\chi = 1$; \vskip 5pt

(ii) $\chi = |-|^{\pm 2}$ \vskip 5pt

In case (i), $I_{Q(Z)}(1, st_{\chi})$ is the sum of two irreducible tempered representations,
exactly one of which is generic. In case (ii),   $I_{Q(Z)}(|-|^2, st_{\chi} \cdot |-|^{-1})$ has the
twisted Steinberg representation $St_{\PGSp_4} \otimes \chi$ as its
unique irreducible submodule. \vskip 10pt

(c) For general $\tau$, there is a standard intertwining operator
\[ I_{Q(Z)}(\chi^{-1},  \tau \otimes \chi) \longrightarrow I_{Q(Z)}(\chi, \tau), \]
which is an isomorphism if $I_{Q(Z)}(\chi,\tau)$ is irreducible.
If $ I_{Q(Z)}(\chi^{-1},  \tau \otimes \chi)$ is a standard module, then the image of this operator is the
unique irreducible submodule of $I_{Q(Z)}(\chi, \tau)$.
\end{Lem}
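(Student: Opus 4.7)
The plan is to deduce all three assertions from the classification of induced representations of $\GSp_4(F)$ carried out by Sally--Tadi\'c [ST] and tabulated in Roberts--Schmidt [RS]; what I need to supply here is the organizing principle, since the explicit module structure is borrowed. The starting point is the observation that $Q(Z)$ is maximal with Levi $\GL(Z) \times \GSp(W')$, so the reducibility of $I_{Q(Z)}(\chi,\tau)$ is governed by the unique long intertwining operator
\[
M(\chi,\tau): I_{Q(Z)}(\chi,\tau) \longrightarrow I_{Q(Z)}(\chi^{-1}, \tau \otimes \chi).
\]
By Harish-Chandra's theory, the reducibility set along the unramified twist line $s \mapsto I_{Q(Z)}(\chi|-|^s,\tau)$ is determined by the zeros and poles of the Plancherel measure $\mu(s)$, or equivalently by the local coefficient attached to the Whittaker functional. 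Since $\tau$ is fixed and the relevant Levi is a rank-one product, this reduces the question to locating the reducibility points of a single family of principal series.

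For part (a) with $\tau$ supercuspidal, the only way a pole/zero of $\mu(s)$ can occur is through a self-twist condition $\tau \otimes \chi \cong \tau$, which since $\tau$ is supercuspidal forces $\chi$ to be quadratic. This gives the two families: $\chi = 1$ (unramified self-duality, giving a unitary induction) and $\chi = \chi_0|-|^{\pm 1}$ with $\chi_0$ nontrivial quadratic and $\tau \otimes \chi_0 \cong \tau$. At $\chi = 1$ the induced representation is unitary, so Harish-Chandra's theorem splits it as a direct sum of two tempered constituents; a Jacquet/Whittaker-module computation using Proposition \ref{P:Whit2}-style geometric-lemma arguments (or directly Rodier's heredity) shows exactly one of them is generic. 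At $\chi = \chi_0|-|$ the representation $I_{Q(Z)}(\chi_0|-|, \tau_0|-|^{-1/2})$ is a standard module, hence has a unique Langlands quotient $Sp(\chi_0,\tau_0)$ and a unique submodule $St(\chi_0,\tau_0)$; the Casselman square-integrability criterion applied to the Jacquet module along the Borel shows $St(\chi_0,\tau_0)$ is discrete series, while the Langlands quotient is non-tempered and, again by a Jacquet-module computation, non-generic.

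For part (b), I realize the twisted Steinberg $\tau = st_\chi$ as the Langlands quotient of an induced representation of $\GL_2$, so that $I_{Q(Z)}(\chi', st_{\chi})$ is the Langlands quotient of a Borel-induced representation of $\GSp_4$. The reducibility points $\chi' = 1$ and $\chi' = |-|^{\pm 2}$ then correspond exactly to those values of $\chi'$ for which this Borel-induced representation has more composition factors than generically, and the relevant normalization is fixed by Bernstein--Zelevinsky's intertwining combinatorics; at $\chi' = |-|^2$ the unique submodule is identified as the twisted Steinberg $St_{\PGSp_4} \otimes \chi$ by checking that its Jacquet module along the Borel is the expected character, while the direct-sum decomposition at $\chi' = 1$ is again forced by unitarity plus an $R$-group calculation that separates the generic from the non-generic summand.

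Part (c) is formal once (a) and (b) are in place. The standard intertwining operator exists by the usual meromorphic continuation. Irreducibility of $I_{Q(Z)}(\chi,\tau)$ means $M(\chi^{-1}, \tau\otimes\chi)$ has neither pole nor zero, hence is an isomorphism by Schur. When $I_{Q(Z)}(\chi^{-1},\tau\otimes\chi)$ is standard, its image under $M$ is irreducible by the Langlands classification and agrees with the unique irreducible submodule of the target by construction of the Langlands quotient. The main obstacle in the whole argument is bookkeeping: matching the explicit reducibility tables of [ST] with the normalization used here, and carrying out the Whittaker-module computations to pin down which constituent is generic in every reducible case.
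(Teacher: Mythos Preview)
Your proposal is essentially correct and matches the paper's approach: the paper does not prove this lemma at all but simply cites Sally--Tadi\'c [ST] and Roberts--Schmidt [RS] as the source of the module structure, and you do the same while supplying a helpful organizing sketch of the underlying representation-theoretic mechanisms (Plancherel measure, $R$-groups, Langlands classification, Casselman's criterion). Your added explanations are reasonable heuristics for why the tables in [ST] and [RS] come out as they do, though some details (e.g., the precise link between self-twist conditions and Plancherel zeros for the Klingen parabolic) would need more care in a fully self-contained proof; since both you and the paper ultimately defer to the literature, this is not a gap.
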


\vskip 10pt

Now let $P(Y) = M(Y) \cdot N(Y)$ be the Siegel parabolic subgroup stabilizing $Y$ so that
\[  M(Y) =  \GL(Y) \times \GL_1 \]
and $N(Y) \cong \Sym^2 Y$. A representation of $M(Y)$ is thus of the form $\tau \boxtimes \chi$ with $\tau$ a representation of $\GL(Y) \cong \GL_2(F)$ and $\chi$ a character of $\GL_1(F)$. We denote the normalized induced representation by $I_{P(Y)}(\tau, \chi)$. As  before, the module structure of
$I_{P(Y)}(\tau,\chi)$ is completely known [ST] and a convenient reference is [RS]. In particular, we note the following:
\vskip 10pt

\begin{Lem} \label{L:P(Y)}
(a) Suppose that $\tau$ is a supercuspidal representation of $\GL(Y) \cong \GL_2(F)$. Then
$I_{P(Y)}(\tau, \mu)$ is reducible iff $\tau = |-|^{\pm 1/2} \cdot \tau_0$ with $\tau_0$ having trivial central character.  In this case, one has a non-split short exact sequence:
\[  \begin{CD}
0 @>>> St(\tau_0, \mu_0) @>>> I_{P(Y)}(\tau_0|-|^{1/2},\mu_0 |-|^{-1/2}) @>>> Sp(\tau_0,\mu_0) @>>> 0  \end{CD} \]
where $St(\tau_0,\mu_0)$ is a generic discrete series representation and the Langlands quotient $Sp(\tau_0,\mu_0)$ is non-generic.
\vskip 10pt

(b) Suppose that $\tau$ is a twisted Steinberg representation of $\GL(Y)$. Then $I_{P(Y)}(\tau,\mu)$ is reducible iff one of the following holds:
\vskip 5pt

(i) $\tau = st \cdot |-|^{\pm 1/2}$;  in this case,
$I_{P(Y)}(st |-|^{1/2}, \mu)$ has a unique irreducible Langlands quotient and a unique irreducible tempered submodule, which is the generic summand of $I_{Q(Z)}(1, st_{\mu} \cdot |-|^{1/2})$.

\vskip 5pt

(ii) $\tau = st_{\chi} \cdot |-|^{\pm 1/2}$, with $\chi$ a non-trivial quadratic character. In this case,
the representation $I_{P(Y)}(st_{\chi} |-|^{1/2}, \mu_0|-|^{-1/2})$ has a unique irreducible Langlands quotient and a unique irreducible submodule which is a generic discrete series representation
$St( st_{\chi}, \mu_0)$. Moreover, $St(st_{\chi},\mu_0) \cong St(st_{\chi}, \chi \mu_0)$.
\vskip 5pt

(iii) $\tau = st \cdot |-|^{\pm 3/2}$; in this case, $I_{P(Y)}(st |-|^{3/2}, \mu|-|^{-3/2})$ has the twisted Steinberg representation $St_{\PGSp_4} \otimes \mu$ as its unique irreducible submodule.
\vskip 10pt

(c) There is a standard intertwining operator
\[  I_{P(Y)}(\tau, \mu) \longrightarrow I_{P(Y)}(\tau^{\vee}, \omega_{\tau} \mu), \]
which is an isomorphism if $I_{P(Y)}(\tau, \mu)$ is irreducible. If $I_{P(Y)}(\tau,\mu)$ is a standard module, then the image of this operator is the unique irreducible submodule of $I_{P(Y)}(\tau^{\vee},\omega_{\tau}\mu)$.
\end{Lem}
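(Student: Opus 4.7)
The plan is to derive the lemma from the general reducibility theory of parabolically induced representations, combined with the Sally--Tadi\'c classification for $\GSp_4$. The key input is the standard intertwining operator
\[ M(\tau,\mu): I_{P(Y)}(\tau,\mu) \longrightarrow I_{P(Y)}(\tau^{\vee}, \omega_{\tau}\mu), \]
whose analytic behavior is controlled by the Siegel-Levi $L$-function, essentially $L(s,\tau,\mathrm{Sym}^2\otimes\mu^{-1})\cdot L(2s,\omega_\tau\mu)$ up to normalization. Part (c) is then a direct consequence of the standard fact that, for a standard module, the image of such an intertwining operator is the unique irreducible submodule of the target (equivalently, the unique irreducible quotient of the source).

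For part (a), with $\tau$ supercuspidal, I would first show that $I_{P(Y)}(\tau,\mu)$ is irreducible outside the locus where $M(\tau,\mu)$ has a pole or zero. Since $\tau$ is supercuspidal, the symmetric-square $L$-factor is trivial, so reducibility is forced by the $\GL_2$ L-function on the diagonal together with the condition $\tau^{\vee}\cong \tau\otimes\omega_\tau\mu$; unwinding gives $\tau=|-|^{\pm1/2}\tau_0$ with $\tau_0$ of trivial central character. To read off the short exact sequence, I would compute the Jacquet module with respect to $P(Y)$ and to $Q(Z)$ using Bernstein--Zelevinsky geometric lemma, which distinguishes the two constituents: Casselman's square-integrability criterion identifies the submodule as a discrete series, and genericity is detected by Proposition \ref{P:Whit1}-style Whittaker functional computation (one Weyl-group orbit supports a nonzero Whittaker functional on the submodule).

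For part (b), with $\tau$ a twisted Steinberg $st_\chi\cdot|-|^a$, I would substitute the short exact sequence
\[ 0 \to st_\chi\cdot|-|^a \to \Ind_{B_2}^{\GL_2}(\chi|-|^{a+1/2}\boxtimes\chi|-|^{a-1/2}) \to \chi\cdot|-|^a \to 0 \]
into the Siegel induction, yielding a principal series on $\GSp_4$ whose constituents are classified by [ST]. The reducibility points $a=\pm1/2,\pm3/2$ come out as those where an exponent matches a root-hyperplane singularity of the long intertwining operator. Case-by-case inspection of the Jacquet modules then identifies (i) with a Klingen-type constituent already described in Lemma \ref{L:Q(Z)}(b)(i), (ii) with a genuinely new generic discrete series and its Langlands quotient, and (iii) with the twisted Steinberg $St_{\PGSp_4}\otimes\mu$ (recognized by its cuspidal support $(st\cdot|-|^{3/2},\mu|-|^{-3/2})$, the longest chain on the positive Weyl chamber). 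The isomorphism $St(st_\chi,\mu_0)\cong St(st_\chi,\chi\mu_0)$ in (ii) follows from the stabilizer property $\tau\otimes\chi\cong\tau$ when $\tau=st_\chi$, applied to the Siegel Levi.

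The main obstacle is the case analysis in (b): distinguishing the generic discrete series submodule from the non-generic Langlands quotient at each reducibility point, and correctly matching these constituents across the two different induced pictures (Siegel vs.\ Klingen) to verify compatibility with Lemma \ref{L:Q(Z)}. Once the Jacquet-module bookkeeping is organized systematically, as in [RS], the identifications follow by exhaustion.
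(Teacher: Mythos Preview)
The paper does not give a proof of this lemma at all; it simply records the result as known and refers the reader to [ST] and [RS] for the module structure of $I_{P(Y)}(\tau,\chi)$. Your sketch via intertwining operators, Jacquet modules, and Casselman's criterion is precisely the method those references use, so at the level of strategy there is nothing to compare.

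One point in your treatment of (a) is muddled, though. The reducibility condition does not involve $\mu$ at all, and the condition you wrote, ``$\tau^{\vee}\cong\tau\otimes\omega_{\tau}\mu$'', does not unwind to $\tau=|-|^{\pm 1/2}\tau_0$ with $\omega_{\tau_0}=1$. The clean argument is: the nontrivial Weyl element sends $(\tau,\mu)\mapsto(\tau^{\vee},\omega_{\tau}\mu)$, and for supercuspidal $\tau$ the induced representation can be reducible only when this lies on the same unramified line as $(\tau,\mu)$; since $\tau^{\vee}\cong\tau\otimes\omega_{\tau}^{-1}$, this forces $\omega_{\tau}$ to be an integral power of $|-|$, i.e.\ $\tau=\tau_0\,|-|^{t}$ with $\omega_{\tau_0}=1$. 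The actual reducibility point $t=\pm 1/2$ then comes from the Plancherel measure computation (equivalently, from [ST]). Also, the claim that ``the symmetric-square $L$-factor is trivial'' for supercuspidal $\tau$ is false for dihedral $\tau$; and the normalization factor you wrote for the Siegel intertwining operator is not quite the right product. None of this affects the final statement, but if you intend to supply an actual proof rather than a pointer to [ST], these details need to be straightened out.
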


\vskip 10pt

Finally, let $B = P(Y) \cap Q(Z) = T \cdot U$ be a Borel subgroup of $\GSp(W)$, so that
\[  T \cong (\GL(F \cdot f_1) \times \GL(F \cdot f_2)) \times \GL_1. \]
 In particular, for characters $\chi_1$,
$\chi_2$ and $\chi$ of $\GL_1(F)$, we let $I_B(\chi_1, \chi_2; \chi)$ denote the normalized
induced representation. Again, we refer the reader to [RS] for the reducibility points and module structure of $I_B(\chi_1,\chi_2;\chi)$. We simply note here that if $\chi_1$ and $\chi_2$ are unitary,
then $I_B(\chi_1,\chi_2;\chi)$ is irreducible.

\vskip 10pt

\subsection{\bf Non-Supercuspidal Representations.}   \label{SS:non-super}
We can now give a concise enumeration of the non-supercuspidal representations of $\GSp_4(F)$.
\vskip 10pt

\subsubsection{\bf \underline{Discrete Series Representations}}  \label{SSS:DS}
\vskip 5pt

The non-supercuspidal discrete series representations of $\GSp_4(F)$ are precisely the following representations:
\vskip 5pt

\begin{itemize}
\item[(a)]  the generalized Steinberg representation $St(\chi,\tau)$ of
Lemma \ref{L:Q(Z)}(a)(ii), with $\tau$ supercuspidal and
$\chi$ a non-trivial quadratic character such that $\tau \otimes \chi = \tau$;
\vskip 5pt

\item[(b)]  the generalized Steinberg representation $St(\tau,\mu)$ of
Lemma \ref{L:P(Y)}(a) and (b)(ii), so that $\tau$ is a discrete series
representation of $\PGL_2$ and $\tau \ne st$.
\vskip 5pt

\item[(c)] the twisted Steinberg representation $St_{\PGSp_4} \otimes \chi$ of Lemma \ref{L:Q(Z)}(b)(ii) and Lemma \ref{L:P(Y)}(b)(iii).
\end{itemize}
\vskip 5pt

\noindent All these representations are generic.

\vskip 10pt
\subsubsection{\bf  \underline{Non-Discrete Series Representations}} \label{SSS:NDS}
\vskip 5pt

Now we consider non-discrete series representations.
Every tempered representation is a constituent of a twist of an induced representation
$I_P(\tau)$ with $\tau$ a {\em unitary} discrete series representation. Such an induced representation is irreducible (and thus generic) except in the setting of Lemma \ref{L:Q(Z)}(a)(i) and (b)(i). In this exceptional case,
for a discrete series representation $\tau$ of $\GSp(W') \cong \GL_2(F)$, we have:
\[  I_{Q(Z)}(1,\tau) = \pi_{gen}(\tau) \oplus \pi_{ng}(\tau) \]
where $\pi_{gen}(\tau)$ is generic and $\pi_{ng}(\tau)$ is non-generic.

\vskip 10pt

Suppose now that $\pi$ is an irreducible non-tempered representation of $\GSp_4(F)$.
By the Langlands classification, there is a unique standard representation $I_P(\sigma)$ (with $\sigma$ essentially tempered) which has $\pi$ as its unique irreducible quotient.  In fact, it will be more convenient for us to make use of the dual version of Langlands classification, so that $\pi$ is the unique submodule of $I_P(\sigma)$, for an essentially tempered representation $\sigma$
whose  central character is in the relevant negative Weyl chamber. Note that since the Levi subgroups of $\GSp_4$ are all of $\GL$-type,  any essentially tempered representation of a Levi subgroup of $\GSp_4(F)$ is fully induced from a twist of a discrete series representation.
\vskip 5pt

Summarizing the above discussions,  we have:
\vskip 5pt

\begin{Prop} \label{P:NDS}
The non-discrete series representations of $\GSp_4(F)$
fall into the following three disjoint families:
\vskip 5pt

\begin{enumerate}

\item[(a)]  $\pi \hookrightarrow I_{Q(Z)}(\chi |-|^{-s}, \tau)$ with $\chi$ a unitary character, $s \geq 0$
and $\tau$ a discrete series representation of $\GSp(W')$ up to twist. In fact, $\pi$ is the unique irreducible submodule, except in the exceptional tempered case mentioned above (with  $\chi$ trivial and $s = 0$).
\vskip 5pt

\item[(b)]  $\pi \hookrightarrow I_{P(Y)}(\tau |-|^{-s}, \chi)$
with $\chi$ an arbitrary character, $s \geq 0$
and $\tau$ a unitary discrete series representation of $\GL(Y)$. In this case, $\pi$ is the unique irreducible submodule.
\vskip 5pt

\item[(c)]  $\pi \hookrightarrow I_B(\chi_1 |-|^{-s_1}, \chi_2|-|^{-s_2}; \chi)$ where $\chi_1$ and $\chi_2$ are unitary and $s_1 \geq s_2 \geq  0$. By induction in stages, we see that
\[  I_B(\chi_1 |-|^{-s_1}, \chi_2|-|^{-s_2}; \chi) = I_{Q(Z)}(\chi_1|-|^{-s_1}, \pi(\chi \chi_2|-|^{-s_2}, \chi)). \]
We may now consider two subcases:
\vskip 5pt

\item[(i)] if $\chi_2 |-|^{-s_2} \ne |-|^{-1}$, then $\pi(\chi \chi_2|-|^{-s_2}, \chi)$  is irreducible and we have:
\[  \pi \hookrightarrow   I_{Q(Z)}(\chi_1|-|^{-s_1}, \pi(\chi \chi_2|-|^{-s_2}, \chi)) \]
as the unique irreducible submodule.
\vskip 5pt

\item[(ii)] if  $\chi_2 |-|^{-s_2} =  |-|^{-1}$, then $\pi(\chi \chi_2|-|^{-s_2}, \chi)$ contains $\chi|-|^{-1/2}$ as a unique irreducible submodule and so
\[   \pi \hookrightarrow   I_{Q(Z)}(\chi_1|-|^{-s_1}, (\chi \circ \det)\cdot  |\det|^{-1/2})  \]
as the unique irreducible submodule.
\end{enumerate}
\end{Prop}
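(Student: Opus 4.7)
The proposition is essentially a packaging of the Langlands classification for $\GSp_4(F)$ in the form suited to subsequent applications, so the plan is to invoke Langlands and then sort out the bookkeeping according to the three proper standard parabolics.

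First, I would apply the (dual) Langlands classification already recalled in the paragraph preceding the proposition: every non-tempered irreducible $\pi$ is the unique submodule of a standard module $I_P(\sigma)$, where $P = MN$ is a proper parabolic and $\sigma$ is essentially tempered with central character in the negative Weyl chamber of the split component of $M$. For non-discrete series \emph{tempered} $\pi$, the only way such a $\pi$ arises is as a constituent of $I_P(\sigma)$ with $\sigma$ a unitary discrete series of a proper Levi, and by the reducibility list in Lemma~\ref{L:Q(Z)} the only tempered reducibility occurs for $I_{Q(Z)}(1,\tau)$ (case (a)(i) and (b)(i)), splitting as $\pi_{gen}(\tau)\oplus\pi_{ng}(\tau)$. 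This is precisely the exceptional case flagged in family (a). The three proper standard parabolics are the Klingen $Q(Z)$, the Siegel $P(Y)$, and the Borel $B$, and I would treat each in turn.

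For the Klingen case, a Levi character is of the form $\chi \boxtimes \tau$ with $\tau$ essentially tempered on $\GSp(W')\cong\GL_2(F)$; since every essentially tempered representation of $\GL_2(F)$ is a twist of a discrete series, I may absorb twists to write $\tau$ as a discrete series up to twist, and write $\chi = \chi_0|-|^{-s}$ with $\chi_0$ unitary. The negative chamber condition translates to $s\geq 0$; this is family (a). For the Siegel case, the Levi character is $\tau\boxtimes\chi$, and after the same normalization $\tau = \tau_0|-|^{-s}$ with $\tau_0$ a unitary discrete series and $\chi$ arbitrary, giving family (b); note that one has uniqueness of the submodule in this case because $P(Y)$ admits no non-trivial tempered reducibility of the relevant shape beyond discrete-series submodules (Lemma~\ref{L:P(Y)}).

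For the Borel case, $\sigma$ is a triple of characters $\chi_1|-|^{-s_1}\boxtimes\chi_2|-|^{-s_2}\boxtimes\chi$ with $\chi_1,\chi_2$ unitary and the chamber condition $s_1\geq s_2\geq 0$. Using induction in stages through $Q(Z)$, I rewrite
\[
I_B(\chi_1|-|^{-s_1},\chi_2|-|^{-s_2};\chi) = I_{Q(Z)}\!\bigl(\chi_1|-|^{-s_1},\, \pi(\chi\chi_2|-|^{-s_2},\chi)\bigr).
\]
The inner $\GL_2$ principal series $\pi(\chi\chi_2|-|^{-s_2},\chi)$ is reducible exactly when the ratio of its inducing characters is $|-|^{\pm 1}$, i.e. $\chi_2|-|^{-s_2} = |-|^{\pm 1}$; the chamber condition forces the minus sign, yielding the dichotomy between (c)(i) (irreducible, so $\pi$ is the unique submodule of the resulting Klingen induction) and (c)(ii) (reducible with unique submodule the one-dimensional constituent $\chi\circ\det \cdot|\det|^{-1/2}$, which one then inflates through $Q(Z)$). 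Finally, disjointness of the three families follows from uniqueness of Langlands data (the parabolic and the essentially tempered inducing representation are determined by $\pi$).

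The only genuine subtlety is the tempered exceptional case $I_{Q(Z)}(1,\tau) = \pi_{gen}(\tau)\oplus\pi_{ng}(\tau)$, where $\pi$ is not the \emph{unique} submodule of a standard module; this is precisely the clause ``except in the exceptional tempered case'' in (a). Everything else is a direct translation from Langlands classification once one remembers that each Levi of $\GSp_4$ is of $\GL$-type, so essentially tempered representations of Levis are automatically twists of discrete series, with no further essentially tempered parabolic induction needed.
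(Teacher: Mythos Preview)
Your proposal is correct and follows essentially the same approach as the paper. The paper presents this proposition as a summary (``Summarizing the above discussions, we have:'') of the paragraphs immediately preceding it, which invoke the dual Langlands classification, note that Levis of $\GSp_4$ are of $\GL$-type so essentially tempered representations of Levis are twists of discrete series, and single out the exceptional tempered reducibility $I_{Q(Z)}(1,\tau)=\pi_{gen}(\tau)\oplus\pi_{ng}(\tau)$; you have simply made this outline more explicit by treating the three parabolics separately and spelling out the induction-in-stages argument for case (c).
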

\vskip 10pt

\section{\bf Representations of $\GSO(D)$}

In this section, we set up some notations for the irreducible representations of $\GSO(D)$.
In this case, we have
\[
 V = (D, -\N_D)
\]
where $D$ is a quaternion $F$-algebra (possibly split) with reduced norm $\N_D$.
We have the identification
\[
 \GSO(V) \cong (D^{\times} \times D^{\times}) /
 \{ (z, z^{-1}) \, | \, z \in F^{\times} \}
\]
via
\[
 (g_1, g_2) : x \longmapsto g_1 \cdot x \cdot \bar{g}_2.
\]
Moreover,
the main involution $x \mapsto \bar{x}$ on $D$ gives an order two element
$c$ of $\OO(V)$ with determinant $-1$,
so that $\GO(V) = \GSO(V) \rtimes \langle c \rangle$.
The conjugation of $c$ on $\GSO(V)$ is given by
\[
 (g_1, g_2) \longmapsto (g_2, g_1).
\]
\vskip 5pt

Thus, an irreducible representation of $\GSO(D)$ is of the form
$\tau_1 \boxtimes\tau_2$ for  irreducible representations $\tau_i$
of $D^{\times}$ with the same central character $\omega_{\tau_1} = \omega_{\tau_2}$.
Moreover, the action of $c$ sends $\tau_1 \boxtimes \tau_2$ to $\tau_2 \boxtimes \tau_1$.
If $\tau_1 \ =\tau_2$, then the representation $\tau_1 \boxtimes \tau_2$ extends to $\GO(D)$ in two different ways. If $\tau_1 \ne \tau_2$, then
\[  \ind_{\GSO(D)}^{\GO(D)} \tau_1 \boxtimes \tau_2 = \ind_{\GSO(D)}^{\GO(D)} \tau_2 \boxtimes \tau_1 \]
is an irreducible representation of $\GO(D)$. This describes all the irreducible representations of $\GO(D)$.

\vskip 10pt

When $D$ is split, the quadratic space $D$ is split
and we have a Witt decomposition
\[  D = X \oplus X^* \]
for a two dimensional isotropic space $X$.
Let $P(X) = M(X) \cdot N(X)$ be the parabolic subgroup stabilizing $X$,
so that
\[
 M(X) \cong \GL(X) \times \GL_1
 \quad \text{and} \quad
 N(X) \cong \wedge^2 X.
\]
For an irreducible representation
$\tau \boxtimes \chi$ of $\GL(X) \times \GL_1(F) \cong \GL_2(F) \times F^{\times}$,
we let $I_{P(X)}(\tau, \chi)$ denote the normalized induced representation.
The following lemma is easy to check.
\vskip 5pt

\begin{Lem}
Under the identification $\GSO(D) \cong (\GL_2 \times \GL_2) / F^{\times}$,
we have
\[
 \pi(\chi_1,\chi_2) \boxtimes \tau
 = I_{P(X)}(\tau^{\vee} \cdot \chi_1, \chi_2)
 = I_{P(X)}(\tau \cdot \chi_2^{-1}, \chi_2).
\]
\end{Lem}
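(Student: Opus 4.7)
The strategy is to realize both sides as normalized parabolic inductions from a common parabolic of $\GSO(D)\cong (\GL_2\times\GL_2)/F^{\times}$ and then match the inducing data on the Levi. By induction in stages, the left-hand side $\pi(\chi_1,\chi_2)\boxtimes\tau$ equals the normalized induction from the subgroup $(B_2\times\GL_2)/F^{\times}$ of the data $(\chi_1\boxtimes\chi_2)\boxtimes\tau$, where $B_2\subset\GL_2$ is the upper-triangular Borel of the first factor.

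I would identify this subgroup with the Siegel parabolic $P(X)$ of $\GSO(D)$ for a suitable choice of maximal totally isotropic subspace $X\subset D$. Taking $X=\left\{\left(\begin{smallmatrix} a & b \\ 0 & 0\end{smallmatrix}\right):a,b\in F\right\}$, a direct check of the action $(g_1,g_2)\cdot x=g_1x\bar{g}_2$ shows that the stabilizer of $X$ is exactly $(B_2\times\GL_2)/F^{\times}$. A short coordinate computation in a natural basis of $X$ then yields the map on Levis: writing $g_1=\left(\begin{smallmatrix}\alpha & \beta \\ 0 & \delta\end{smallmatrix}\right)$, the element $(g_1,g_2)$ maps to $(h,\lambda)=\bigl(\alpha\cdot\det(g_2)\cdot g_2^{-T},\; \det(g_1)\det(g_2)\bigr)$ in $\GL(X)\times\GL_1$. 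In particular $\det(h)/\lambda=\alpha/\delta$, so the modular character $\delta_{P(X)}^{1/2}$ pulls back to $|\alpha/\delta|^{1/2}=\delta_{B_2}^{1/2}(g_1)$, matching the normalization on the LHS.

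Finally, I would pull back the inducing datum $(\tau^{\vee}\chi_1)\boxtimes\chi_2$ through this map. Using the standard isomorphism $\tau^{\vee}\cong\tau\circ\iota$ with $\iota(g)=g^{-T}$ (valid for any irreducible representation of $\GL_2$) together with the central-character constraint $\omega_{\tau}=\chi_1\chi_2$, a direct expansion gives
\[ \chi_1(\det h)\,\tau^{\vee}(h)\,\chi_2(\lambda)=\chi_1(\alpha)\,\chi_2(\delta)\,\tau(g_2), \]
which is exactly the inducing datum on the LHS. Hence the two normalized induced representations agree. The second equality follows at once from $\tau^{\vee}\cong\tau\otimes\omega_{\tau}^{-1}$ and $\omega_{\tau}=\chi_1\chi_2$. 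The main technical subtlety is this final expansion: one must track the outer automorphism $g\mapsto g^{-T}$ that naturally appears when the right-action of the second $\GL_2$-factor on $X$ is computed in coordinates, and verify that it is precisely absorbed by the contragredient in $\tau^{\vee}\chi_1$ to reproduce $\tau$ on the second $\GL_2$-factor — this is what forces the use of the contragredient, rather than $\tau$ itself, on the $\GSO(D)$-side of the identification.
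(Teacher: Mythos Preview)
Your argument is correct: the identification of the stabilizer of $X=\left\{\left(\begin{smallmatrix}a&b\\0&0\end{smallmatrix}\right)\right\}$ with $(B_2\times\GL_2)/F^{\times}$, the Levi map $(g_1,g_2)\mapsto(\alpha\det(g_2)g_2^{-T},\det g_1\det g_2)$, the matching of modular characters via $\det(h)/\lambda=\alpha/\delta$, and the final unwinding using $\tau^{\vee}\cong\tau\circ(g\mapsto g^{-T})$ together with $\omega_\tau=\chi_1\chi_2$ all check out. The paper itself gives no proof beyond ``easy to check,'' so your write-up simply supplies the verification the authors omit; there is nothing further to compare.
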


\vskip 15pt

\section{\bf Representations of $\GSO(V)$.}
 Now we need to establish some notations for the representations of $\GSO(V) = \GSO_{3,3}(F)$.
Though we may identify $\GSO(V)$ as a quotient of $\GL_4(F) \times \GL_1(F)$ as in
Section \ref{S:theta}, it is in fact better not to do so
for the purpose of the computation of local theta lifting.
\vskip 5pt

Recall that we have a decomposition
\[ V = F \cdot (1,0) \oplus V_2 \oplus F \cdot (0,1) \]
where $V_2$ is the split rank 4 quadratic space.
Let $J = F \cdot (1,0)$ and let $P(J)= M(J) \cdot N(J)$
be the stabilizer of $J$, so that $M(J) = \GL(J) \times \GSO(V_2)$. We
represent an element of $M(J)$ by $(a, \alpha, \beta)$ with
\[ (\alpha,\beta) \in \GSO(V_2) \cong (\GL_2 \times \GL_2)/\{(z,z^{-1}): z \in F^{\times} \}. \]
For a character $\chi$ and a representation $\tau_1 \boxtimes
\tau_2$ of $\GSO(V_2)$, one may consider the normalized induced
representation $I_{P(J)}(\chi, \tau_1 \boxtimes \tau_2)$.
\vskip 5pt

Under the natural map $\GL_4 \times \GL_1 \longrightarrow \GSO(V)$, the inverse image of
$P(J)$ is the parabolic $P \times \GL_1$, where $P$ is the $(2,2)$-parabolic in $\GL_4$.
 Indeed,  under the natural map $P \times \GL_1 \longrightarrow P(J)$,
\[  \left( \left( \begin{array}{cc}
\alpha & \\
 & \beta  \end{array} \right), z \right)  \mapsto (a, \alpha' , \beta'), \]
 then we have:
 \[  \begin{cases}
 \alpha = a \cdot \overline{\alpha'}^{-1} \\
 \beta  = \beta' \\
 z = a^{-1}\cdot  \mathbb{N}(\alpha'). \end{cases} \]
 From this, one deduces that
 \[  I_P(\tau_1,\tau_2)\boxtimes \mu  \cong I_{P(J)}( \omega_1\mu^{-1} , \tau_1^{\vee} \mu \boxtimes \tau_2). \]
 \vskip 10pt
\noindent The following lemma is well-known:

\begin{Lem}
(i) Let $\tau$ be a supercuspidal representation of $\GL_2(F)$.
Then one has  a short exact sequence of representations of $\GL_4(F)$:
\[  \begin{CD}
0 @>>> St(\tau) @>>> I_P(\tau |-|^{1/2}, \tau |-|^{-1/2}) @>>>
Sp(\tau) @>>> 0
\end{CD} \]
where $St(\tau)$ is a discrete series representation (a generalized
Steinberg representation) and $Sp(\tau)$ is the unique Langlands
quotient (a generalized Speh representation). \vskip 10pt

(ii) If $\tau= st_{\chi}$, then the
principal series $I_P(\tau|-|, \tau|-|^{-1})$ has a unique
irreducible submodule which is the twisted Steinberg representation
$St_{\chi}:= St_{P\GL_4} \otimes \chi$. Moreover, $I_P(\tau|-|^{1/2}, \tau|-|^{-1/2})$ is also reducible, but its constituents are not discrete series representations.
\vskip 10pt

(iii) The situations described in (i) and (ii) are the only cases when $I_P(\tau_1,\tau_2)$ is reducible with $\tau_i$ discrete series representations.
\end{Lem}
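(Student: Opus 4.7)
The plan is to reduce everything to the Bernstein--Zelevinsky classification of irreducibles for $\GL_n(F)$ and Zelevinsky's linkage criterion for reducibility of products $\tau_1 \times \tau_2$. Recall that every discrete series of $\GL_2(F)$ is either a supercuspidal $\tau$, corresponding to the singleton Zelevinsky segment $\{\tau\}$ based on a $\GL_2$-supercuspidal, or a twisted Steinberg $st_\chi = \langle [\chi|-|^{-1/2}, \chi|-|^{1/2}]\rangle$, corresponding to a length-two segment of characters. Zelevinsky's theorem says that $\tau_1 \times \tau_2$ is reducible iff some pair of the underlying segments is linked, and in that case it has length two with constituents described by the union--intersection rule; the submodule theorem pins down the embedding direction. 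With this machinery, statements (i)--(iii) become finite case checks.

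For (i), the singleton segments $\{\tau|-|^{1/2}\}$ and $\{\tau|-|^{-1/2}\}$ are linked: neither contains the other, and their union $[\tau|-|^{-1/2}, \tau|-|^{1/2}]$ is a segment. Hence $I_P(\tau|-|^{1/2}, \tau|-|^{-1/2})$ has length two; the combined-segment constituent $\langle [\tau|-|^{-1/2}, \tau|-|^{1/2}]\rangle = St(\tau)$ is essentially square integrable and, by the submodule theorem, is the unique irreducible subrepresentation, while $Sp(\tau)$ is the Langlands quotient.

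For (ii) with $\tau = st_\chi$, the segments of $\tau|-|$ and $\tau|-|^{-1}$ are $[\chi|-|^{1/2}, \chi|-|^{3/2}]$ and $[\chi|-|^{-3/2}, \chi|-|^{-1/2}]$, disjoint and linked with union $[\chi|-|^{-3/2}, \chi|-|^{3/2}]$; the $\langle \cdot \rangle$ of this union is $St_{\PGL_4} \otimes \chi$, yielding the asserted unique irreducible submodule. For the $|-|^{\pm 1/2}$ twist, the segments $[\chi, \chi|-|]$ and $[\chi|-|^{-1}, \chi]$ are linked with nonempty intersection $\{\chi\}$, so $I_P(\tau|-|^{1/2}, \tau|-|^{-1/2})$ is reducible of length two, but the Zelevinsky parameters of the two constituents are genuine multisegments, not single segments, so neither constituent is essentially discrete series.

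For (iii), I would case-split on the types of $\tau_1, \tau_2$. If both are supercuspidal, linkage of singleton segments forces $\tau_2 \cong \tau_1 |-|^{\pm 1}$ on the same cuspidal line, which after relabeling and an overall twist is exactly (i). If both are Steinberg twists $st_{\chi_i}|-|^{a_i}$, linkage of their length-two character segments forces $\chi_1 = \chi_2$ and $|a_1 - a_2| \in \{1, 2\}$, giving after twist the two subcases of (ii). A mixed pair cannot be linked since the underlying segments live over supercuspidal bases of distinct $\GL_k$'s. The main (and really only) obstacle is this bookkeeping for (iii); (i) and (ii) are immediate from standard Bernstein--Zelevinsky theory.
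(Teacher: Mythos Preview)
Your argument is correct and is exactly the standard Bernstein--Zelevinsky route: linkage of segments for reducibility, the union/intersection description of the two constituents, and the Langlands submodule theorem for the direction of the exact sequence. The paper itself gives no proof at all; it simply introduces the lemma with ``The following lemma is well-known'' and states it without justification. So there is nothing to compare against---you have supplied the omitted argument.

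One cosmetic remark on part (iii): writing the twisted Steinbergs as $st_{\chi_i}|-|^{a_i}$ with $\chi_i$ unitary is fine, but then ``linkage forces $\chi_1=\chi_2$'' should be read as ``the unitary parts agree,'' i.e.\ $\mu_1/\mu_2$ is an integral power of $|-|$; you clearly intend this, and the conclusion $|a_1-a_2|\in\{1,2\}$ is the correct dichotomy recovering the two subcases of (ii). The mixed case is handled correctly: a singleton segment on a $\GL_2$-supercuspidal line can never be linked to a length-two segment of characters. No gaps.
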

\vskip 15pt

We shall need another parabolic of $\GSO(V)$.
Let $Q$ be  the (1,2,1)-parabolic of $\GL_4$ so that its Levi factor is
$\GL_1 \times \GL_2 \times \GL_1$. We let $I_Q(\chi_1, \tau,\chi_2)$ denote the normalized induced representation, with $\chi_i$ characters of $\GL_1(F)$ and $\tau$ a representation of $\GL_2(F)$.
Consider the image of $Q \times \GL_1$ under the natural map $\GL_4 \times \GL_1 \longrightarrow
\GSO(V)$. This image is the stabilizer $Q(E)$  of a 2-dimensional isotropic subspace $E$ containing
$J$. Writing
\[  V = E \oplus V_1 \oplus E^* \]
where $V_1$ is a split rank 2 quadratic space,  we see that the Levi factor of $Q(E)$ is
\[  L(E) = \GL(E) \times \GSO(V_1)\cong \GL_2 \times (\GL_1 \times \GL_1). \]
Thus, for a representation $\tau$ of $\GL(E)$ and a character $\chi$ of $F^{\times}$,
we may consider the normalized induced representation $I_{Q(E)}(\tau, \chi \circ \lambda_{V_1})$.
Now under the natural map $Q \times \GL_1 \longrightarrow Q(E)$, we have
\[ \left( \left( \begin{array}{ccc}
t_1 & & \\
& B &  \\
&  & t_4  \end{array} \right) , z \right) \mapsto  \left( t_1B , ( t_1t_4,  \det B) \right) \in GL(E) \times \GSO(V_1). \]
From this, one easily calculates that
\[  I_{Q(E)}(\tau , \chi \circ \lambda_{V_1}) =
I_Q(\omega_{\tau}\chi , \tau \otimes \chi, \chi)  \boxtimes \chi^2 \omega_{\tau}. \]
\vskip 5pt

Finally, we have the Borel subgroup $B_0 = P \cap Q$ of $\GL_4$ and the principal series $I_B(\chi_1, \chi_2,\chi_3,\chi_4)$.

\vskip 15pt

\section{\bf The Main Results.}

We are now ready to state our main results concerning the computation of local theta correspondences. But before stating them, let us note that for any quadratic space $V$, there is an action of $\GO(V)$ on  $\Pi(\GSO(V))$, and we denote the class of the equivalence classes defined by this action by
\[
    \Pi(\GSO(V))/\sim.
\]
 \vskip 15pt

\begin{Thm} \label{T:local-theta-SO(D)}
Suppose that $D$ is the quaternion division algebra over $F$.
Let $\tau^D_1 \boxtimes \tau^D_2$ be an irreducible representation of $\GSO(D)$. Then
we have the following:
\vskip 5pt

\noindent (i) $\Theta(\tau^D_1 \boxtimes \tau_2^D)$ is an irreducible representation of $\GSp_4(F)$.

\vskip 5pt

 \noindent (ii) If $\tau^D_1 = \tau^D_2 =\tau_D$, then
\[  \Theta(\tau_D \boxtimes \tau_D) = \pi_{ng}(JL(\tau_D)), \]
which is the unique non-generic summand of $I_{Q(Z)}(1,JL(\tau_D))$.
\vskip 5pt

\noindent (iii) If $\tau^D_1 \ne  \tau^D_2$, then
$\Theta(\tau^D_1 \boxtimes \tau_2^D) = \Theta(\tau^D_2 \boxtimes \tau^D_1)$ is a non-generic supercuspidal representation of $\GSp_4(F)$.
\vskip 5pt

\noindent (iv) The map
\[  \tau^D_1 \boxtimes \tau^D_2  \mapsto \Theta(\tau^D_1 \boxtimes \tau_2^D) \]
 defines a bijection between
 \[  \Pi(\GSO(D))/\sim\]
and
\[
  \Pi(\GSp_4)^{temp}_{ng}: = \{\text{non-generic tempered representations of $\GSp_4$} \}. \]
\end{Thm}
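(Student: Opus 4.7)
The plan is to reduce everything to the isometry case via Lemma~\ref{L:similitude1} and Proposition~\ref{P:super}, then apply Roberts' Theorem~\ref{T:roberts} for non-vanishing and Corollary~\ref{C:Whit1}(i) for non-genericity. The decisive structural input is that $D$ is division, so $\GSO(D)$ is anisotropic modulo its center and every irreducible representation $\tau_1^D \boxtimes \tau_2^D$ is automatically supercuspidal. Proposition~\ref{P:super}(i) then gives (i) once non-vanishing is known, and Proposition~\ref{P:super}(ii) supplies the injectivity needed for (iv).

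I would prove non-vanishing and the supercuspidality in (iii) together by passing to $\GO(D)$. When $\tau_1^D \ne \tau_2^D$, the irreducible $\GO(D)$-induction $\rho := \ind_{\GSO(D)}^{\GO(D)}(\tau_1^D \boxtimes \tau_2^D)$ satisfies $\rho \cong \rho \otimes \nu_0$, so Theorem~\ref{T:roberts} forces $n(\rho) = 2$. Via Lemma~\ref{L:similitude1}(i) this yields $\Theta(\tau_1^D \boxtimes \tau_2^D) \ne 0$, and since $n(\rho) = 2$ is a first-occurrence statement, the standard tower property (Kudla's filtration of the Jacquet modules of the Weil representation along the proper parabolics of $\GSp_4$) forces $\Theta(\rho)$, and hence $\Theta(\tau_1^D \boxtimes \tau_2^D)$, to be supercuspidal; this is (iii). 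When $\tau_1^D = \tau_2^D$, the two extensions $\rho^\pm$ to $\GO(D)$ are interchanged by $\otimes\nu_0$ and satisfy $n(\rho^+) + n(\rho^-) = 4$, so at least one has $n = 2$, giving $\Theta(\tau_D \boxtimes \tau_D) \ne 0$ again via Lemma~\ref{L:similitude1}. Corollary~\ref{C:Whit1}(i) then forces every nonzero lift to be non-generic.

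For (ii), the lift $\Theta(\tau_D \boxtimes \tau_D)$ is therefore an irreducible non-generic representation of $\GSp_4(F)$ with central character $\omega_{\tau_D}$, and temperedness follows from standard matrix-coefficient estimates in equal-rank theta correspondence. Such a representation is either a non-generic supercuspidal or $\pi_{ng}(\sigma)$ for some discrete series $\sigma$ of $\GL_2(F)$. To pin down $\sigma = JL(\tau_D)$, I would compute the Klingen-parabolic Jacquet module of $\Omega_{D,W}$ using Kudla's filtration, which expresses it in terms of the smaller theta correspondence $\GSp(W') \times \GO(D)$; the relevant $\GSp(W')$-lift of $\tau_D \boxtimes \tau_D$ is $JL(\tau_D)$ by the classical $\OO(D) \times \Sp_2$ correspondence. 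This produces an embedding of $\Theta(\tau_D \boxtimes \tau_D)$ into $I_{Q(Z)}(1, JL(\tau_D))$, and Corollary~\ref{C:Whit1}(i) singles out the non-generic summand $\pi_{ng}(JL(\tau_D))$.

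For (iv), injectivity follows from Proposition~\ref{P:super}(ii) and $c$-invariance of $\Theta$. For surjectivity, the $\pi_{ng}(\sigma)$'s with $\sigma$ a discrete series of $\GL_2(F)$ are all realized via (ii), since every such $\sigma$ lies in the image of Jacquet--Langlands. Any non-generic supercuspidal $\pi$ of $\GSp_4(F)$ lifts to $\GSO(D)$ or $\GSO_{3,3}$ by Theorem~\ref{T:dichotomy}; if it lifted to $\GSO_{3,3}$, its lift would be supercuspidal by Proposition~\ref{P:super}(i), hence correspond to a supercuspidal $\Pi$ of $\GL_4(F)$ which is generic, forcing $\pi$ generic via Corollary~\ref{C:Whit2}---a contradiction. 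Hence $\pi$ is in the image of (iii). The main obstacle is the Jacquet-module/seesaw computation in (ii): the structural conclusions (tempered, irreducible, non-generic) are cheap, but pinning down the specific $\sigma = JL(\tau_D)$ requires that explicit computation.
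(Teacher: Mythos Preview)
Your approach for (i)--(iii) is essentially identical to the paper's, including the use of Roberts' theorem for non-vanishing, the Klingen Jacquet-module computation for (ii), and Corollary~\ref{C:Whit1}(i) to exclude the generic summand.

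There is, however, a genuine gap in your surjectivity argument for (iv). You write that if a non-generic supercuspidal $\pi$ lifted to $\GSO_{3,3}$, ``its lift would be supercuspidal by Proposition~\ref{P:super}(i)''. But Proposition~\ref{P:super}(i) only asserts that $\Theta(\pi)$ is \emph{irreducible}, not supercuspidal. Supercuspidality of the lift holds only at first occurrence in the tower, and nothing you have said rules out the possibility that $\pi$ already participates with $\GSO_{2,2}$, in which case its lift to $\GSO_{3,3}$ is a non-supercuspidal (and possibly non-generic) irreducible representation, and no contradiction arises from Corollary~\ref{C:Whit2}.

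The paper closes this gap by splitting into the two cases for first occurrence of $\pi$ in the split orthogonal tower. If first occurrence is at $\GSO_{3,3}$, your argument goes through. If first occurrence is at $\GSO_{2,2}$ (it cannot be at $\GO_{1,1}$ for a supercuspidal $\pi$), then $\sigma = \Theta_{W,V_2}(\pi)$ is a supercuspidal, hence generic, representation of $\GSO_{2,2}$, and Corollary~\ref{C:Whit1}(ii) forces $\pi = \Theta_{V_2,W}(\sigma)$ to be generic as well --- the same contradiction. You need this second branch, and it uses the Whittaker computation for the \emph{split} rank-$4$ case (Corollary~\ref{C:Whit1}(ii)) rather than the anisotropic one.
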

\vskip 20pt

\begin{Thm}  \label{T:local-theta-GSO(2,2)}
Let $\tau_1 \boxtimes\tau_2$ be an irreducible representation of $\GSO_{2,2}(F) $ and let $\theta(\tau_1 \boxtimes\tau_2)$ be its small theta lift to $\GSp_4(F)$. Then $\theta(\tau_1 \boxtimes\tau_2) = \theta(\tau_2 \boxtimes \tau_1)$ can be determined as follows.
\vskip 5pt

\noindent (i) If $\tau_1 = \tau_2 = \tau$ is a discrete series representation, then
\[  \theta(\tau_1 \boxtimes \tau_2) = \pi_{gen}(\tau), \]
which is the unique generic constituent of $I_{Q(Z)}(1,\tau)$.
\vskip 5pt

\noindent (ii) If $\tau_1 \ne \tau_2$ are both supercuspidal, then $\theta(\tau_1 \boxtimes\tau_2)$ is supercuspidal.
\vskip 5pt

\noindent (iii) If $\tau_1$ is supercuspidal and $\tau_2 = st_{\chi}$, then
\[  \theta(\tau_1 \boxtimes \tau_2) = St(\tau_1 \otimes \chi^{-1}, \chi). \]
\vskip 5pt

\noindent (iv) Suppose that $\tau_1 = st_{\chi_1}$ and $\tau_2 = st_{\chi_2}$ with $\chi_1 \ne \chi_2$,
so that $\chi_1^2 = \chi_2^2$. Then
\[  \theta(\tau_1 \boxtimes\tau_2) = St(st_{\chi_1/\chi_2}, \chi_2) = St(st_{\chi_2/\chi_1}, \chi_1). \]

\vskip 5pt

\noindent (v) Suppose that $\tau_1$ is a discrete series representation and $\tau_2 \hookrightarrow \pi(\chi,\chi')$ with $|\chi/\chi'| = |-|^{-s}$ and $s\geq 0$, so that $\tau_2$ is non-discrete series. Then
\[  \theta(\tau_1\otimes\tau_2) =  J_{P(Y)}( \tau_1 \otimes \chi^{-1}, \chi).  \]
\vskip 5pt

\noindent (vi)  Suppose that
\[
\tau_1 \hookrightarrow \pi(\chi_1, \chi_1') \quad \text{and} \quad
\tau_2 \hookrightarrow \pi(\chi_2, \chi_2') \]
with
\[  |\chi_i/\chi'_i| = |-|^{-s_i}, \quad   s_1 \geq s_2 \geq 0. \]
Then
\[  \theta(\tau_1 \boxtimes\tau_2) = J_B(\chi_2'/\chi_1,  \chi_2/\chi_1; \chi_1). \]
\vskip 5pt
In particular, the map $\tau_1 \boxtimes \tau_2 \mapsto \theta(\tau_1 \boxtimes \tau_2)$ defines an injection
\[  \Pi(\GSO_{2,2} )/\sim\;  \hookrightarrow \Pi(\GSp_4). \]
\end{Thm}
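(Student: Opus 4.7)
The plan is to handle the six cases in turn, using the general toolkit of Sections~\ref{S:theta} and~\ref{S:Whit}: Kudla's result for supercuspidals (Propositions~\ref{P:Kudla} and~\ref{P:super}), the Whittaker module computation (Corollary~\ref{C:Whit1}(ii)), the dichotomy (Theorem~\ref{T:dichotomy}), and Roberts' conservation relation (Theorem~\ref{T:roberts}). The main auxiliary tool is the induction principle, obtained by combining Frobenius reciprocity with Kudla's filtration on the Jacquet module of the Weil representation along a maximal parabolic; this reduces the theta lift of a parabolically induced representation of $\GSO_{2,2}$ to the theta lifts of its Levi constituents on smaller symplectic groups.

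Case (ii), with both $\tau_i$ supercuspidal and $\tau_1 \ne \tau_2$, is essentially immediate: the representation $\tau_1 \boxtimes \tau_2$ extends irreducibly to a supercuspidal representation of $\GO(V_2)$, so by Proposition~\ref{P:super}, $\Theta(\tau_1 \boxtimes \tau_2)$ is zero or irreducible. Non-vanishing and supercuspidality follow from Proposition~\ref{P:Kudla} on the isometry side, combined with Lemma~\ref{L:similitude1}(iii) to transfer to the similitude setting.

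For Case (i), with $\tau_1 = \tau_2 = \tau$ discrete series, the representation $\tau \boxtimes \tau$ is generic, so Corollary~\ref{C:Whit1}(ii) guarantees that $\Theta(\tau \boxtimes \tau)$ has a unique generic irreducible constituent. The induction principle (using the Klingen parabolic $Q(Z)$ on the $\GSp_4$ side, paired with the natural parabolic of $\GSO_{2,2}$) produces an embedding of $\theta(\tau \boxtimes \tau)$ into $I_{Q(Z)}(1,\tau)$. Since $I_{Q(Z)}(1,\tau) = \pi_{gen}(\tau) \oplus \pi_{ng}(\tau)$ by Lemma~\ref{L:Q(Z)}(a)(i) or~(b)(i), and only $\pi_{gen}(\tau)$ is generic, one concludes $\theta(\tau \boxtimes \tau) = \pi_{gen}(\tau)$.

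Cases (iii)--(vi) will be handled by induction in stages. Each non-supercuspidal $\tau_i$ is embedded as the unique irreducible submodule of a principal series $\pi(\chi_i, \chi_i')$ of $\GL_2(F)$, and the theta lift of the fully induced representation on the $\GSO_{2,2}$ side is computed by the induction principle as an induced representation of $\GSp_4$. The desired $\theta(\tau_1 \boxtimes \tau_2)$ is then extracted as the unique irreducible submodule of this induced representation, using the standard intertwining operators of Lemmas~\ref{L:Q(Z)}(c) and~\ref{L:P(Y)}(c) and matching central characters. The injectivity of $\tau_1 \boxtimes \tau_2 \mapsto \theta(\tau_1 \boxtimes \tau_2)$ then follows by inspection: the Langlands data of the image determine the unordered pair $\{\tau_1, \tau_2\}$ uniquely in each case. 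The main obstacle is Case (i), where one must control the \emph{full} big theta lift $\Theta(\tau \boxtimes \tau)$ in order to rule out the non-generic constituent $\pi_{ng}(\tau)$; this requires a careful Jacquet module computation rather than the formal induction used in (iii)--(vi). Cases (iii) and (iv) present a secondary difficulty, namely distinguishing the generic discrete series $St(\cdot,\cdot)$ from its non-generic Langlands quotient $Sp(\cdot,\cdot)$ inside the relevant induced representation of $\GSp_4$, which is resolved by a second application of the Whittaker uniqueness from Corollary~\ref{C:Whit1}(ii) applied to the generic $\tau_1 \boxtimes \tau_2$.
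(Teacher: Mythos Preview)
Your proposal is essentially correct and follows the same broad strategy as the paper: Jacquet modules of the induced Weil representation together with Frobenius reciprocity, plus the structure of the resulting standard modules of $\GSp_4$. A few differences in execution are worth noting.

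First, the paper does not treat Case~(i) with $\tau = st_\chi$ via the Klingen parabolic $Q(Z)$. Instead, it handles all cases where at least one $\tau_i$ is a twisted Steinberg (including $st_\chi \boxtimes st_\chi$) uniformly by computing the Jacquet module of $\Omega_{V_2,W}$ along the parabolic $P(X)$ of $\GSO(V_2)$ (Proposition~\ref{P:jacquet-split} and Corollary~\ref{C:jacquet-split}). Embedding $\sigma$ into $I_{P(X)}(\tau^\vee \chi_1, \chi_2)$ and verifying that the top two pieces of the filtration contribute nothing yields a surjection $I_{P(Y)}(\tau \cdot \chi_1^{-1}, \chi_1) \twoheadrightarrow \Theta(\sigma)$. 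This bypasses entirely the ``main obstacle'' you flag: one never needs to rule out $\pi_{ng}(\tau)$ by a separate argument, because the Siegel-parabolic induced representation one lands in already has a unique irreducible quotient.

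Second, and relatedly, the paper works with surjections onto $\Theta(\sigma)$ rather than embeddings of $\theta(\sigma)$; the induced representation $I_{P(Y)}(\tau \cdot \chi^{-1}|-|^{-1/2}, \chi|-|^{1/2})$ arising in Cases~(iii) and~(iv) is the \emph{dual} of the standard module, so its unique irreducible quotient is already $St(\tau \cdot \chi^{-1}, \chi)$ (respectively $\pi_{gen}(st_\chi)$) by Lemma~\ref{L:P(Y)}. No second Whittaker argument is required to separate $St$ from $Sp$. Your proposed Whittaker route would also work, but the paper's approach is slightly more direct.

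Finally, your description of the Case~(i) argument as pairing $Q(Z)$ ``with the natural parabolic of $\GSO_{2,2}$'' is not quite right even in the supercuspidal subcase: there the paper uses no parabolic on the $\GSO(V_2)$ side at all, but rather the Kudla filtration of $R_{Q(Z)}(\Omega_{V_2,W})$ whose top quotient is the Weil representation for the smaller pair $\GSO(V_2) \times \GSp(W')$, combined with the known lift $\Theta_{V_2,W'}(\tau \boxtimes \tau) = \tau$.
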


\vskip 15pt

\begin{Thm}  \label{T:local-theta}
Let $\pi$ be an irreducible representation of $\GSp_4(F)$ and consider its theta lift to $\GSO(V)$.

\vskip 5pt

\noindent (i) The representation $\theta(\pi)$ is nonzero iff $\pi \notin \Pi(\GSp_4)_{ng}^{temp}$, in which case it is irreducible.
\vskip 5pt

\noindent (ii) The map $\pi \mapsto \theta(\pi)$ defines an injective
map from $\Pi(\GSp_4) \smallsetminus \Pi(\GSp_4)_{ng}^{temp}$ into the set of irreducible representations of $\GSO(V)$.
\vskip 10pt

Moreover, for $\pi \notin \Pi(\GSp_4)_{ng}^{temp}$, $\theta(\pi)$ can be described in terms of $\pi$ as follows.
\vskip 10pt

\noindent (iii) ({\bf Supercuspidal}) If $\pi$ is supercuspidal, then $\theta(\pi)$ is supercuspidal
unless $\pi$ has a nonzero theta lift to $\GSO_{2,2}(F) $. If $\pi$ is the theta lift of $\tau_1 \boxtimes \tau_2$ from $\GSO_{2,2}(F) $, then
\[  \theta(\pi) = I_P(\tau_1,\tau_2) \boxtimes \omega_{\tau_1}. \]
\vskip 10pt

\noindent (iv) ({\bf Generalized Steinberg}) Suppose that $\pi = St(\chi,\tau)$ as in \ref{SSS:DS}(a). Then
\[  \theta(St(\chi,\tau)) =   St(\tau) \boxtimes \omega_{\tau}\chi. \]
\vskip 5pt
\noindent On the other hand, if $\pi = St(\tau,\mu)$ as in \ref{SSS:DS}(b), then
\[  \theta(St(\tau,\mu))  = I_P(\tau \otimes \mu , st \otimes \mu)  \boxtimes \mu^2. \]
\vskip 10pt

\noindent (v) ({\bf Twisted Steinberg}) If $\pi = St_{\PGSp_4} \otimes \chi$ is a twisted Steinberg representation, then
\[  \theta(St_{\PGSp_4} \otimes \chi) = St_{\chi} \boxtimes \chi^2. \]
\vskip 10pt

\noindent (vi) ({\bf Non-discrete series}) We consider the different cases according to \ref{SSS:NDS}:
\vskip 10pt

(a) Suppose that  $\pi \hookrightarrow I_{Q(Z)}(\chi, \tau)$ as in \ref{SSS:NDS}(a), so that  $|\chi| = |-|^{-s}$ with $s \geq 0$. Then
\[  \theta(\pi) = J_P(\tau, \tau \cdot \chi) \boxtimes \omega_{\tau}\chi. \]
\vskip 10pt

(b) Suppose that $\pi \hookrightarrow I_{P(Y)}(\tau, \chi)$ as in \ref{SSS:NDS}(b),  so that
$|\omega_{\tau}| = |-|^{-2s}$ with $s\geq 0$. Then
\[  \theta(\pi) = J_Q(1, \tau, \omega_{\tau}) \cdot\chi \boxtimes \chi^2 \omega_{\tau}. \]
\vskip 10pt

(c) Suppose that
\[ \pi \hookrightarrow I_B(\chi_1, \chi_2; \chi) \]
where $|\chi_1| = |-|^{-s_1}$ and $|\chi_2| = |-|^{-s_2}$ with $s_1 \geq s_2 \geq  0$. Then
\[  \theta(\pi) = \chi \cdot  J_{B_0}(1,\chi_2, \chi_1, \chi_1\chi_2) \boxtimes \chi^2 \chi_1
\chi_2. \]
 \end{Thm}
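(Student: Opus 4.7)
The plan is to proceed case by case along the classification of representations of $\GSp_4(F)$ given in Section~\ref{SSS:DS} and \ref{SSS:NDS}, relying on three main tools: (a) the Whittaker module computation of Cor.~\ref{C:Whit2}, which controls when $\theta(\pi)\ne 0$ for generic $\pi$ and pins down a generic constituent; (b) Prop.~\ref{P:super} together with the companion Theorems \ref{T:local-theta-SO(D)} and \ref{T:local-theta-GSO(2,2)}, which handle the supercuspidal inputs and outputs from the other two similitude pairs; and (c) a Kudla-style filtration on the Jacquet modules of the induced Weil representation $\Omega$, which lets one pass from a parabolic induction on the $\GSp_4$-side to a parabolic induction on the $\GSO(V)$-side.

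\textbf{Supercuspidal case (iii).} For $\pi$ non-generic supercuspidal, Theorem~\ref{T:local-theta-SO(D)}(iv) realizes $\pi$ as a theta lift from $\GSO_{4,0}$, and the dichotomy Theorem~\ref{T:dichotomy} then forces $\theta_V(\pi)=0$. For $\pi$ generic supercuspidal, Cor.~\ref{C:Whit2} gives $\theta(\pi)\ne 0$, and Prop.~\ref{P:super} gives irreducibility. To decide whether $\theta(\pi)$ is itself supercuspidal, I apply the standard tower argument to $V_1\subset V_2\subset V$: if $\pi$ has trivial lift to $\GSO_{2,2}$, cuspidality of $\theta(\pi)$ is automatic; otherwise, Theorem~\ref{T:local-theta-GSO(2,2)} gives $\pi=\theta(\tau_1\boxtimes\tau_2)$, and a see-saw for the inclusion $\GO(V_2)\times\GO(\H)\subset\GO(V)$ produces an embedding of $\theta_V(\pi)$ into $I_{P(J)}(\omega_{\tau_1},\tau_1\boxtimes\tau_2)$. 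The dictionary between $P(J)$-induction and $(2,2)$-parabolic induction in $\GL_4$ recorded in the preceding section then identifies the lift with $I_P(\tau_1,\tau_2)\boxtimes\omega_{\tau_1}$.

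\textbf{Non-supercuspidal cases (iv), (v), (vi).} For every non-supercuspidal $\pi$, Prop.~\ref{P:NDS} and Lemmas \ref{L:Q(Z)}, \ref{L:P(Y)} realize $\pi$ as the (essentially unique) irreducible submodule of a standard module $I_P(\sigma)$. I will compute the theta lift of the entire $I_P(\sigma)$ using the Jacquet module filtration of $\Omega$ with respect to $P$: this produces an inclusion of $\Theta_V(I_P(\sigma))$ into $I_{P'}(\Theta(\sigma)\otimes\chi')$ for a suitable parabolic $P'$ of $\GSO(V)$, namely $Q(E)$ when $P=Q(Z)$ and $P(J)$ when $P=P(Y)$, with the Borel case handled by induction in stages. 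Plugging in the parameters from families (vi)(a), (b), (c) and matching Langlands quotients then yields the stated formulas. The generalized and twisted Steinberg cases (iv) and (v) follow by restricting attention to the unique discrete series subquotient of $I_P(\sigma)$ on both sides, using the submodule structure from Lemmas~\ref{L:Q(Z)} and \ref{L:P(Y)}.

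\textbf{Vanishing, injectivity, and the main obstacle.} The vanishing in (i) for non-supercuspidal non-generic tempered $\pi=\pi_{ng}(\tau)$ and the injectivity in (ii) are the most delicate points, because Howe's conjecture for similitudes is not available when $p=2$ and the Kudla--Rallis conservation (Theorem~\ref{T:KR}) only applies to supercuspidals. I would prove vanishing by a see-saw identity for the pair $(\GSp(W),\GO(V_2)\times\GO(\H))\subset(\GSp(W),\GO(V))$: a nonzero lift of $\pi_{ng}(\tau)$ to $\GSO(V)$ would produce a nonzero $\Hom_{\GO(V_2)}$-space contradicting Cor.~\ref{C:Whit1}(ii) and Theorem~\ref{T:local-theta-GSO(2,2)}(i), since $\GSO_{2,2}$-inputs always pick out the generic summand $\pi_{gen}(\tau)$ of $I_{Q(Z)}(1,\tau)$ and never the non-generic one. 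Injectivity is then read off from the explicit Langlands parameters produced in the previous steps. The main obstacle is precisely this Step~4: careful Jacquet-module bookkeeping is required so that the potential reducibility of $\Theta_V(I_P(\sigma))$ (in the absence of Howe's conjecture) does not spoil the attribution of a specific subquotient of $\Theta_V(I_P(\sigma))$ to the submodule $\pi\subset I_P(\sigma)$ rather than to a companion constituent.
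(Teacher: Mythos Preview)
Your overall strategy is close to the paper's, but there are two concrete errors and one misunderstanding worth flagging.

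First, you have the parabolic correspondence reversed. The Kudla filtration pairs the Klingen parabolic $Q(Z)$ of $\GSp_4$ with the parabolic $P(J)$ of $\GSO(V)$ (Levi $\GL_1\times\GSO(V_2)$), and pairs the Siegel parabolic $P(Y)$ with $Q(E)$ (Levi $\GL_2\times\GSO(V_1)$); see Props.~\ref{P:jacquet2} and \ref{P:jacquet3}. Your sentence ``namely $Q(E)$ when $P=Q(Z)$ and $P(J)$ when $P=P(Y)$'' has these swapped, and if you actually ran the computation that way the resulting induced representations on the $\GL_4$ side would not match the stated answers.

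Second, and more importantly, your ``main obstacle'' is not an obstacle. Theorem~\ref{T:KR} has two parts: the inequality $m(\pi)+m^{\#}(\pi)\geq 2n+2$ in part (i) holds for \emph{all} irreducible $\pi$, while only the equality in part (ii) is restricted to supercuspidals. Since every $\pi\in\Pi(\GSp_4)_{ng}^{temp}$ has $m^{\#}(\pi)\leq 2$ by Theorem~\ref{T:local-theta-SO(D)}, part (i) immediately forces $m(\pi)\geq 4$, i.e.\ $\Theta_V(\pi)=0$. This is exactly how the paper proves vanishing in one line, and your proposed see-saw workaround is unnecessary.

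Third, the paper's treatment of the non-supercuspidal cases is genuinely two-sided, not one-sided. For each case it computes $\Hom_{\GSp(W)}(\Omega,I_{Q(Z)}(\chi,\tau))$ via Prop.~\ref{P:Hom1} to obtain a surjection $I_{P(J)}(\ldots)\twoheadrightarrow\Theta(\pi)$, giving $\theta(\pi)\subseteq\Pi$; and then \emph{separately} embeds the candidate $\Pi$ into an $I_{P(J)}$ and applies Prop.~\ref{P:Hom3} in the reverse direction to get $\theta(\Pi)\subseteq\pi$ and $\Theta(\pi)\ne 0$. Your plan of computing ``$\Theta_V(I_P(\sigma))$'' and then ``restricting attention to the submodule $\pi$'' does not work directly, since $\Theta$ is a quotient functor and not exact: what you actually get from $\pi\hookrightarrow I_P(\sigma)$ is that $\Theta(\pi)$ is a \emph{quotient} of the relevant induced representation on the $\GSO(V)$ side, which pins down $\theta(\pi)$ only up to knowing nonvanishing. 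The paper supplies nonvanishing either by the reverse Hom computation (cases (iv), (v), (vi)(a), (vi)(c)) or by invoking Theorem~\ref{T:local-theta-GSO(2,2)} to show $\pi$ already lifts to $\GSO(V_2)$ and hence to $\GSO(V)$ via the tower (cases (iv) second part, (vi)(b)). Injectivity in (ii) is then read off, as you say, from the explicit list of answers.
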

  \vskip 10pt

 \noindent{\bf Remarks:}   In Section \ref{S:table}, we display the results of the above three theorems in the form of three tables. The results there are given in terms of the usual Langlands classification which describes $\pi$ as a unique irreducible quotient of a standard module, as opposed to describing $\pi$ as a unique submodule as we have done in the theorems.

\vskip 15pt

\section{\bf Proof of Theorem \ref{T:local-theta-SO(D)}} \label{S:proof1}

We begin with the proof of Thm. \ref{T:local-theta-SO(D)}.
\vskip 5pt

\noindent (i) Let $\pi = \tau_1^D \boxtimes  \tau_2^D$ be an irreducible representation of $\GSO(D)$ which is contained in a (unique)  irreducible representation $\tilde{\pi}$ of $\GO(D)$.
Then by Prop. \ref{P:super}, $\Theta(\tilde{\pi})$ is either zero or an irreducible representation of $\GSp_4(F)$. By Thm. \ref{T:roberts}, at least one of $\Theta(\tilde{\pi})$ or $\Theta(\tilde{\pi} \otimes \nu_0)$ is nonzero. Thus we conclude that
\[ \Theta(\tau_1^D \boxtimes \tau_2^D) = \Theta(\tau_2^D \boxtimes \tau_1^D)  \]
is nonzero and irreducible.  This proves (i). Moreover, Prop. \ref{P:super} implies that one has an injection
\[  \Pi(\GSO(D))/\sim\; \hookrightarrow \Pi(\GSp(W)). \]
 \vskip 5pt

\noindent (ii)  Now suppose that $\tau_1^D = \tau_2^D = \tau_D$. Then it is well-known that $\pi  =\tau_D \boxtimes \tau_D$ participates in the theta correspondence with $\GSp(W') = \GSp_2 = \GL_2$. Indeed, one has
  \[  \Theta_{D, W_1}(\pi) = JL(\tau_D). \]
  On the other hand, by Theorem \ref{T:Jacquet_module2} in the appendix, one has an $\GSO(D) \times Q(Z)$-equivariant surjection
  \[  R_{Q(Z)}(\Omega_{D, W}) \twoheadrightarrow \Omega_{D, W'}.  \]
By Frobenius reciprocity, one has a nonzero $\GSO(D) \times \GSp(W)$-equivariant map
\[   \Omega_{D, W'} \longrightarrow  \pi \boxtimes I_{Q(Z)}( 1, JL(\tau_D)). \]
In view of (i), we see that $\Theta(\pi)$ is either equal to $\pi_{gen}(JL(\tau_D))$ or
$\pi_{ng}(JL(\tau_D))$.  By Cor. \ref{C:Whit1}(i), we see that the former possibility cannot occur, so that
\[  \Theta(\tau_D \boxtimes \tau_D) = \pi_{ng}(JL(\tau_D)). \]
We have thus shown (ii). Moreover, from our enumeration of the non-supercuspidal representations of $\GSp(W)$ given in \S \ref{SS:non-super}, we see that the representations of $\GSp(W)$ obtained in this way are precisely the essentially tempered non-generic non-supercuspidal representations.
\vskip 5pt

\noindent (iii) On the other hand, suppose that $\tau_1^D \ne \tau_2^D$. Then by Thm. \ref{T:roberts}, $\Theta(\tau_1^D \boxtimes \tau_2^D)$ is supercuspidal. Cor. \ref{C:Whit1}(i) implies that it is non-generic. This proves (iii).
\vskip 5pt

\noindent (iv) By the above, we see that the map $\Theta$ gives an injection
\[  \Pi(\GSO(D))/\sim\; \hookrightarrow \Pi(\GSp(W))^{temp}_{ng}. \]
Moreover, as we remarked in the proof of (ii), any non-supercuspidal representation in
$\Pi(\GSp(W))^{temp}_{ng}$ lies in the image. Thus, to prove (iv),
 we need to show that every non-generic supercuspidal representation lies in the image.
\vskip 5pt

Suppose then that $\pi$ is a non-generic supercuspidal representation of $\GSp(W)$ which does not participate in the theta correspondence with $\GSO(D)$. By Thm. \ref{T:dichotomy}, $\pi$ must participate in the theta correspondence with $\GSO(V) = \GSO_{3,3}(F)$.  If this is the first occurrence of $\pi$ in the tower of split orthogonal similitude groups, then $\Theta_{W,D}(\pi)$ is a supercuspidal representation of $\GSO_{3,3}(F)$ which is necessarily generic. By  Cor. \ref{C:Whit2}, this implies that $\pi$ is itself generic, which is a contradiction. Thus, $\pi$ must participate in the theta correspondence with $\GSO(V_2) = \GSO_{2,2}(F)$, which is the lower step of the tower.  Moreover,
\[  \sigma = \Theta_{W, V_2}(\pi) \]
 is then a supercuspidal representation of $\GO(V_2)$,  since no supercuspidal representations of $\GSp(W)$ participate in the theta correspondence with $\GO(V_1) =\GO_{1,1}(F)$.
 Since $\sigma$ is necessarily generic,  we see by Cor. \ref{C:Whit1}(ii) that $\pi= \Theta_{V_2,W}(\sigma) $ must also be generic. This contradiction completes the proof of (iv).
  \vskip 5pt

\noindent Thm. \ref{T:local-theta-SO(D)} is proved.

\vskip 15pt

\section{\bf Proof of Theorem \ref{T:local-theta-GSO(2,2)}} \label{S:proof2}

 We now give the proof of Theorem \ref{T:local-theta-GSO(2,2)}.
The  key step is the computation of the normalized Jacquet modules
of the induced Weil representation $\Omega_{V_2,W}$,
where $V_2$ is the split four dimensional quadratic space.
Before coming to this computation,
we first introduce some more notations.
\vskip 5pt

Recall that $V_2 = X \oplus X^*$, where $X$ is a two dimensional isotropic space.
We can write
\[
 X = F \cdot u_1 \oplus F \cdot u_2
 \quad \text{and} \quad
 X^* = F \cdot v_1\oplus F \cdot v_2
\]
with $(u_i, v_j) = \delta_{ij}$.
Let $P(X)$ be the parabolic subgroup of $\GSO(V_2)$ stabilizing $X$
with Levi factor
\[
 M(X) \cong \GL(X) \times \GL_1.
\]
Let $J = F \cdot u_1$ be the isotropic line spanned by $u_1$ in $X$
and let $B(J)$ be the stabilizer of $J$ in $M(X)$;
it is also the stabilizer of the isotropic line spanned by $v_2$ in $X^*$.
With respect to the basis $\{ u_1, u_2 \}$ of $X$,
$B(J)$ is the group of upper triangular matrices
in $M(X) \cong \GL(X) \times \GL_1$.
We write
\[
 (t(a,b), \lambda) = \left(
 \begin{pmatrix}
  a & 0 \\
  0 & b
 \end{pmatrix},
 \lambda \right) \in B(J) \subset M(X).
\]
Similarly, recall that $W = Y^* \oplus Y$,
\[
 Y^* = F \cdot e_1 \oplus F \cdot e_2
 \quad \text{and} \quad
 Y = F \cdot f_1 \oplus F \cdot f_2
\]
with $\langle e_i, f_j \rangle = \delta_{ij}$.
The stabilizer of $Y$ in $\GSp(W)$ is
the Siegel parabolic subgroup $P(Y)$ with Levi factor
\[
 M(Y) \cong \GL(Y) \times \GL_1
\]
and the stabilizer of $Z = F \cdot f_1$ in $\GSp(W)$
is the Klingen parabolic subgroup $Q(Z)$ with Levi factor
\[
 L(Z) \cong \GL(Z) \times \GSp(W')
\]
where $W' = F \cdot e_2 \oplus F \cdot f_2$.
\vskip 5pt

\subsection{\bf Jacquet modules. }
With the above notations, we can now compute the Jacquet module of $\Omega_{V_2,W}$ relative to $P(X)$.  This follows the lines of \cite{K}  and we give the detailed computation in the appendix.

\vskip 5pt

\begin{Prop}
\label{P:jacquet-split}
The normalized Jacquet module $R_{P(X)}(\Omega_{V_2,W})$
of $\Omega_{V_2,W}$ along $P(X)$ has a natural three step filtration
as an $M(X) \times \GSp(W)$-module
whose successive quotients are described as follows:
\begin{enumerate}
\item
The top quotient is
\[
 C \cong S(F^{\times}).
\]
Here the action of $(m, \lambda) \in M(X) \cong \GL(X) \times \GL_1$
on $S(F^{\times})$ is given by
\[
 ((m, \lambda) \cdot f)(t)
 = |{\det}_X(m)|^{3/2} \cdot |\lambda|^{-3/2} \cdot f(\lambda \cdot t).
\]
\item
The middle subquotient is
\[
 B \cong
 I_{B(J) \times Q(Z)}(S(F^{\times}) \otimes S(F^{\times} \cdot v_2 \otimes f_1)).
\]
Here the action of $(t(a,b), \lambda) \in B(J)$
on $S(F^{\times}) \otimes S(F^{\times} \cdot v_2 \otimes f_1))$ is given by
\[
 ((t(a,b), \lambda) \cdot f)(t, x) =
 |a|  \cdot |\lambda|^{-3/2} \cdot f(\lambda \cdot t, b \cdot x),
\]
whereas the action of $(\alpha, g) \in L(Z) \cong \GL(Z) \times \GSp(W')$
is given by
\[
 ((\alpha, g) \cdot f)(t, x)
 =  |\nu_{W'}(g)|^{-1}
 \cdot f(\nu_{W'}(g) \cdot t, \alpha^{-1} \cdot \nu_{W'}(g) \cdot x).
\]
\item
Finally, the submodule is
\[
 A \cong I_{P(Y)} (S(F^{\times}) \otimes S(\Isom(X,Y)))
\]
where $\Isom(X,Y)$ is the set of isomorphisms from $X$ to $Y$
as vector spaces
(which is a torsor for $\GL(X)$ as well as for $\GL(Y)$).
Here the action of $(m, \lambda) \in M(X) \cong \GL(X) \times \GL_1$
on $S(F^{\times}) \otimes S(\Isom(X,Y))$ is given by
\[
 ((m, \lambda) \cdot f)(t, h)
 =   f(\lambda \cdot t, h \circ m),
\]
whereas the action of $(m', \lambda') \in M(Y) \cong \GL(Y) \times \GL_1$
is given by
\[
 ((m', \lambda') \cdot f)(t, h)
 =  f(\lambda' \cdot t, \lambda' \cdot m'^{-1} \circ h).
\]
\end{enumerate}
\end{Prop}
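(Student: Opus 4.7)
My plan is to follow the by-now-standard Kudla orbit-filtration strategy, working in the mixed model associated to the polarization $V_2 = X \oplus X^*$. The first step is to realize the isometry Weil representation $\omega_{\psi}$ on $S(W \otimes X)$, where $P(X)$ acts in a manifestly explicit way: $\GL(X)$ acts by $(a \cdot \phi)(\xi) = |\det a|^{\dim W/2} \phi(\xi \circ a)$ (up to a character that is trivial here since $V_2$ is split), while the unipotent radical $N(X) \cong \wedge^2 X$ acts by multiplication by the character $\xi \mapsto \psi(\langle q_W(\xi), n\rangle)$, where $q_W : W\otimes X \to \Sym^2 X$ records the symplectic form on $W$ pulled back along $\xi \in \Hom(X^*, W)$. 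In parallel, $\GSp(W)$ acts by its natural action on $W$. The similitude Weil representation $\Omega_{V_2,W} = \ind_{R_0}^R \omega_{\psi}$ is then realized on $S(W\otimes X) \otimes S(F^{\times})$, with the extra $F^\times$-variable recording the similitude ratio; this is where the factor $S(F^{\times})$ common to all three subquotients comes from.

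Next, I take $N(X)$-coinvariants. Since $N(X)$ acts purely by a character $\psi(q_W(\xi), \cdot)$, the coinvariants are naturally identified with functions supported on the closed subvariety
\[  \Xi = \{\xi \in W \otimes X : q_W(\xi) = 0\} = \{\xi \in \Hom(X^*,W) : \Image(\xi) \text{ is isotropic in } W\}. \]
The key geometric input is then the orbit structure of $\GL(X) \times \GSp(W)$ on $\Xi$: the invariant is the rank of $\xi$, and when $\xi$ has rank $2$ the image is automatically Lagrangian in $W$. This gives exactly three orbits---rank $0$, rank $1$, and rank $2$ (necessarily Lagrangian)---and yields a three-step filtration by closure: the closed stratum $\{0\}$ contributes the top quotient, the rank-$1$ stratum contributes the middle subquotient, and the open rank-$2$ stratum contributes the submodule.

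The third step is to identify each subquotient via the standard formula for sections supported on an orbit. For the closed orbit $\{0\}$, only the similitude factor survives, yielding $C \cong S(F^{\times})$. For the rank-$1$ orbit, a point is a nonzero isotropic vector together with an element of $X$ modulo its kernel; fixing representatives $J = F\cdot u_1 \subset X$ and $Z = F\cdot f_1 \subset W$ and computing the stabilizer as $B(J) \times Q(Z)$ inside $M(X) \times \GSp(W)$, the orbit is $(M(X)/B(J)) \times (\GSp(W)/Q(Z))$ twisted by a fiber $S(F^{\times} \cdot v_2 \otimes f_1)$ parametrizing the ``length'' of the nonzero vector, giving the stated $B$. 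For the open rank-$2$ orbit, $\xi$ is an isomorphism of $X$ onto the Lagrangian $\Image(\xi) \subset W$, so after fixing a reference Lagrangian $Y$ the orbit fibers over $\GSp(W)/P(Y)$ with fiber $\Isom(X,Y)$, yielding $A \cong I_{P(Y)}(S(F^{\times}) \otimes S(\Isom(X,Y)))$.

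The main obstacle---and the reason the detailed computation is deferred to the appendix---is bookkeeping the various normalization characters to match the formulas in the proposition exactly. One must carefully track (i) the modulus characters $\delta_{P(X)}^{1/2}$, $\delta_{P(Y)}^{1/2}$, $\delta_{Q(Z)}^{1/2}$ used in normalizing the induced and Jacquet modules; (ii) the twist $|{\det}_X|^{\dim W/2} = |{\det}_X|^2$ coming from the Weil action of $\GL(X)$; (iii) the factor $|\lambda_V|^{-\dim V \cdot \dim W /8} = |\lambda_V|^{-3}$ from the similitude normalization of $\Omega$; and (iv) the identification $\nu_{W'}$ versus $\lambda_W$ restricted to the Klingen Levi. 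Reconciling these to produce the explicit action formulas on $S(F^\times)$, $S(F^\times) \otimes S(F^\times v_2 \otimes f_1)$, and $S(F^\times) \otimes S(\Isom(X,Y))$ is the only nontrivial part of the argument; the underlying orbit-filtration structure is essentially dictated by the geometry of $\Xi$.
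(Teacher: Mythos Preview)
Your proposal is correct and follows essentially the same approach as the paper: the paper derives this proposition by specializing its general appendix theorem (the case $m=4$, $n=2$, $t=2$), which is itself proved precisely via Kudla's orbit-filtration method in the mixed Schr\"odinger model that you outline. One minor slip in your item (iii): the similitude normalization is $|\lambda_V|^{-\dim V \cdot \dim W/8} = |\lambda_V|^{-2}$ here, not $|\lambda_V|^{-3}$---but as you correctly flag, this is exactly the bookkeeping where care is required.
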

\vskip 5pt

\begin{proof}
This is Theorem \ref{T:Jacquet_module1}, with $m = 4$, $n=2$ and $t=2$, and with Remark \ref{R:kt} taken into account.
\end{proof}

\vskip 5pt

\begin{Cor}
\label{C:jacquet-split}
Let $\sigma = \pi(\chi_1, \chi_2) \boxtimes \tau$ be a representation of
$\GSO(V_2) \cong (\GL_2(F) \times \GL_2(F)) / F^{\times}$
such that $\tau$ is irreducible but $\pi(\chi_1, \chi_2)$ may be reducible,
so that $\omega_{\tau} = \chi_1 \chi_2$ and
\[
 \sigma = I_{P(X)}(\tau^{\vee} \cdot \chi_1, \chi_2).
\]
Then
\[
 \Hom_{\GSO(V_2)}(\Omega, \sigma)
 = \Hom_{M(X)}(R_{P(X)}(\Omega), \tau^{\vee} \cdot \chi_1 \boxtimes \chi_2).
\]
\begin{enumerate}
\item If $\chi_1 / \chi_2 \ne |-|^3$, then
\[
 \Hom_{M(X)}(C, \tau^{\vee} \cdot \chi_1 \boxtimes \chi_2) = 0.
\]
\item
If $R_B(\tau)$ does not have $\chi_1 |-|^{-1} \boxtimes \eta$
as a subquotient for any character $\eta$,
then
\[
 \Hom_{M(X)}(B, \tau^{\vee} \cdot \chi_1 \boxtimes \chi_2) = 0.
\]
\item
If the conditions in $\mathrm{(i)}$ and $\mathrm{(ii)}$ hold,
then
\[
 \Hom_{M(X)}(R_{P(X)}(\Omega), \tau^{\vee} \cdot \chi_1 \boxtimes \chi_2)
 \subset  \Hom_{M(X)}(A, \tau^{\vee} \cdot \chi_1 \boxtimes \chi_2)
 = I_{P(Y)}(\tau \cdot \chi_1^{-1}, \chi_1)^*,
\]
where $^\ast$ indicates the full linear dual.
\end{enumerate}
\end{Cor}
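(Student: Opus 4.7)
The first identity is Bernstein's second adjunction applied to $\sigma = I_{P(X)}(\tau^\vee \chi_1, \chi_2)$. My plan for the rest is to apply $\Hom_{M(X)}(-, \tau^\vee \chi_1 \boxtimes \chi_2)$ to the three-step filtration of $R_{P(X)}(\Omega)$ in Proposition \ref{P:jacquet-split}; this yields a long exact sequence, and statements (i) and (ii) assert the vanishing of the Hom from the top quotient $C$ and the middle subquotient $B$, whence (iii) follows at once.

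For (i), I would exploit that $\GL(X)$ acts on $C = S(F^\times)$ through the one-dimensional character $m \mapsto |\det(m)|^{3/2}$. Any nonzero $\GL(X)$-equivariant map from $C$ into $\tau^\vee \chi_1$ must therefore have its image inside the $|\det|^{3/2}$-isotypic part of $\tau^\vee \chi_1$; irreducibility of $\tau$ then forces $\tau = \mu \circ \det$ with $\chi_1 = \mu \cdot |-|^{3/2}$. Combined with $\chi_1 \chi_2 = \omega_\tau = \mu^2$, this is equivalent to $\chi_1 / \chi_2 = |-|^3$, contradicting the hypothesis. (The separate $\GL_1$-constraint from $\chi_2$ is always satisfiable since every character of $F^\times$ occurs as a quotient of $S(F^\times)$ via Mellin transform.)

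For (ii), the idea is to apply Frobenius reciprocity to view $B$ as an $M(X)$-module parabolically induced from the Borel subgroup $B(J)$:
\[
\Hom_{M(X)}(B, \tau^\vee \chi_1 \boxtimes \chi_2) \;\cong\; \Hom_{B(J)}\bigl( S(F^\times) \otimes S(F^\times v_2 \otimes f_1),\; R_B(\tau^\vee \chi_1) \boxtimes \chi_2 \bigr).
\]
The torus $\{t(a,b)\}$ acts on the inducing datum with $a$-character $|a|$ (scalar factor), while $b$ acts through translation on the $x$-variable (yielding every character of $F^\times$ as a quotient). Non-vanishing therefore forces the first coordinate of some constituent of $R_B(\tau^\vee \chi_1)$ to equal $|-|$. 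Since these first coordinates take the form $\chi_1 \mu^{-1}$ where $\mu$ runs over the first coordinates of constituents of $R_B(\tau)$, this becomes exactly the condition that $R_B(\tau)$ contain $\chi_1 |-|^{-1} \boxtimes \eta$ as a subquotient for some $\eta$.

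Finally, for (iii) I would compute $\Hom_{M(X)}(A, \tau^\vee \chi_1 \boxtimes \chi_2)$, where $A$ is compactly induced from $P(Y)$ with inducing datum $A_0 = S(F^\times) \otimes S(\Isom(X,Y))$. Since the $M(X)$-action is confined to $A_0$ and commutes with $\GSp(W)$, passing $\Hom_{M(X)}$ through the compact induction yields an algebraic (non-smooth) $P(Y)$-induction of the $M(Y)$-module $\Hom_{M(X)}(A_0, \tau^\vee \chi_1 \boxtimes \chi_2)$. Using $\Isom(X,Y) \cong \GL(X)$ as a right-$\GL(X)$-torsor (with $\GL(Y)$ acting by left translation), one identifies $\Hom_{\GL(X)}(S(\Isom(X,Y)), \tau^\vee \chi_1) \cong \tau^\vee \chi_1$ as a $\GL(Y)$-module; the joint Mellin analysis in $t$ combined with the central scalar action of $\lambda' \cdot I_Y$ on $h$ (of weight $\omega_{\tau^\vee \chi_1} = \chi_1/\chi_2$) pins down the $\GL_1$-character on $M(Y)$, giving $\tau^\vee \chi_1 \boxtimes \chi_1^{-1}$ as the inducing datum. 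Identifying algebraic induction with the full linear dual of the smooth induction of the $M(Y)$-contragredient then converts this to $\tau \chi_1^{-1} \boxtimes \chi_1$, yielding the stated formula. The main technical obstacle will be step (iii): the careful bookkeeping of modular characters, duals, and similitude factors needed to pin down both the $\GL_1$-character on $M(Y)$ and the appearance of the linear dual.
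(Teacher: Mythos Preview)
Your proposal is correct and follows essentially the same route as the paper: Frobenius reciprocity for the first identity, a central-character (resp.\ torus-character) comparison for the vanishing in (i) and (ii), and for (iii) the identification of the $M(X)$-coinvariants of $A_0 = S(F^\times)\otimes S(\Isom(X,Y))$ followed by induction to $\GSp(W)$ and dualization. Two small terminological corrections: the first identity is ordinary Frobenius reciprocity (Jacquet functor left adjoint to normalized induction), not Bernstein's second adjunction; and in (ii), when you pass $\Hom_{M(X)}(I_{B(J)}(\cdot),\,\tau^\vee\chi_1\boxtimes\chi_2)$ to the Levi, it is the second adjunction that applies, so the Jacquet module on the right should be $R_{\overline{B}}(\tau^\vee\chi_1)$ along the \emph{opposite} Borel, exactly as the paper writes---this does not change your conclusion since the relevant characters appear among the subquotients either way.
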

\begin{proof}
The first equality is just the Frobenius reciprocity.  
\vskip 5pt

\noindent (1) This follows from the fact that the center of $\GL(X)$ acts by $|\det_X|^3$ on $C$ and by the character $\chi_1/\chi_2$ on $\tau^{\vee} \cdot \chi_1 \boxtimes \chi_2$. 
\vskip 5pt

\noindent (2) We have:
\begin{align}
 &\Hom_{M(X)}(B, \tau^{\vee} \cdot \chi_1 \boxtimes \chi_2)  \notag \\
 =&\Hom_{M(X)}(  I_{B(J) \times Q(Z)}(S(F^{\times}) \otimes S(F^{\times} \cdot v_2 \otimes f_1)),
  \tau^{\vee} \cdot \chi_1 \boxtimes \chi_2) \notag \\
= &\Hom_{B(J)}(S(F^{\times}) \otimes S(F^{\times} \cdot v_2 \otimes f_1)), R_{\overline{B}}(\tau^{\vee} \cdot \chi_1)  \boxtimes \chi_2), \notag 
\end{align}
where $R_{\overline{B}}$ denotes the normalized Jacquet module relative to the opposite Borel subgroup $\overline{B}$.
From the action of $B(J)$ described in Proposition \ref{P:jacquet-split}(2), one sees that the element $t(a,1) \in B(J)$ acts on 
$S(F^{\times}) \otimes S(F^{\times} \cdot v_2 \otimes f_1))$ by the character $|a|$. It follows that 
if the last Hom space is nonzero, $ R_{\overline{B}}(\tau^{\vee} \cdot \chi_1)$ must contain $|-| \boxtimes \eta^{-1}$ as a subquotient, for some character $\eta$. This is equivalent to $R_B(\tau)$ having $\chi_1  |-|^{-1}  \boxtimes \eta$ as a subquotient. This proves (2).

 \vskip 5pt

\noindent (3) By (1) and (2), one obtains the inclusion in (3).  Now let $(\tau^{\vee} \cdot \chi_1 \boxtimes \chi_2)\otimes\Pi$ be the maximal $\tau^{\vee} \cdot \chi_1 \boxtimes \chi_2$-isotypic quotient of $A$. Then
\[
    \Hom_{M(X)}(A, \tau^{\vee} \cdot \chi_1 \boxtimes \chi_2)
    =\Hom_{M(X)}((\tau^{\vee} \cdot \chi_1 \boxtimes \chi_2)\otimes\Pi, \tau^{\vee} \cdot \chi_1 \boxtimes \chi_2)
    =\Hom_{\C}(\Pi,\C)=\Pi^\ast.
\]
But by \cite[Lemma 9.4]{GG}, $\Pi$ is of the form $I_{P(Y)}(\pi_0)$, where $\pi_0$ is such that $(\tau^{\vee} \cdot \chi_1 \boxtimes \chi_2)\otimes\pi_0$ is the maximal $\tau^{\vee} \cdot \chi_1 \boxtimes \chi_2$-isotypic quotient of $S(F^{\times}) \otimes S(\Isom(X,Y))$ which we view as a representation of $\GL(X)\times\GL(Y)$. Now by \cite[Lemma, Pg. 59]{MVW}, with the actions of $\GL_1$ taken into account, one can see that $\pi_0=\tau\cdot\chi_1^{-1}\boxtimes\chi_1$. This competes the proof.
\end{proof}

\vskip 5pt

\subsection{\bf Proof of Theorem \ref{T:local-theta-GSO(2,2)}.}
We are now ready to give the proof of Theorem \ref{T:local-theta-GSO(2,2)}.
In the following, we shall repeatedly use the following simple fact:
\vskip 5pt

\begin{itemize}
\item if $\sigma$ is an irreducible representation of $\GSO(V_2)$, then
\[  \Theta(\sigma)^* \cong \Hom_{\GSO(V_2)}(\Omega_{V_2,W},  \sigma). \]
\end{itemize}
\vskip 5pt

Let $\tau_1 \boxtimes \tau_2$ be an irreducible representation
of $\GSO(V_2) \cong (\GL_2(F) \times \GL_2(F)) / F^{\times}$.
As in the proof of Thm. \ref{T:local-theta-SO(D)}(i), it follows from Thm. \ref{T:roberts} that
\[
 \Theta(\tau_1 \boxtimes \tau_2) = \Theta(\tau_2 \boxtimes \tau_1) \ne 0.
\]
We now consider the various cases in Theorem \ref{T:local-theta-GSO(2,2)}
in turn.

\subsection*{\underline{Supercuspidal representations}}
\hfill

Suppose that $\tau_1 \boxtimes \tau_2$ is supercuspidal.
Then one knows that
$\Theta(\tau_1 \boxtimes \tau_2) = \theta(\tau_1 \boxtimes \tau_2)$
is non-zero and irreducible.
Moreover, if $\tau_1 \ne \tau_2$,
then the theta lift of $\tau_1 \boxtimes \tau_2$ to $\GSp(W') \cong \GL_2$
is zero and hence $\theta(\tau_1 \boxtimes\tau_2)$ is supercuspidal.

\vskip 5pt

On the other hand, if $\tau_1 = \tau_2 = \tau$,
then $\tau \boxtimes \tau$ participates
in the theta correspondence with $\GSp(W') \cong \GL_2$
and its big theta lift to $\GL_2$ is $\tau$.
As in the proof of Thm. \ref{T:local-theta-SO(D)}(ii),
there is a $\GSO(V_2) \times L(Z)$-equivariant surjective map
\[
 R_{Q(Z)}(\Omega_{V_2,W}) \longrightarrow \Omega_{V_2,W'}.
\]
By Frobenius reciprocity,
one has a non-zero $\GSO(V_2) \times \GSp(W)$-equivariant map
\[
 \Omega_{V_2,W} \longrightarrow
 (\tau \boxtimes \tau) \boxtimes I_{Q(Z)}(1, \tau).
\]
Thus, we see that
\[
 \Theta(\tau \boxtimes \tau) \hookrightarrow I_{Q(Z)}(1, \tau).
\]
We know that $I_{Q(Z)}(1, \tau)$ is the direct sum of two irreducible
constituents with a unique generic constituent $\pi_{\mathit{gen}}(\tau)$.
It follows from Cor. \ref{C:Whit1}(ii) that
\[
 \Theta(\tau \boxtimes \tau) = \pi_{\mathit{gen}}(\tau).
\]
\vskip 5pt

\subsection*{\underline{Discrete series representations}}
\hfill

Suppose that $\sigma = \st_{\chi} \boxtimes \tau$
where $\st_{\chi}$ is a twisted Steinberg representation
and $\tau$ is a discrete series representation
so that $\omega_{\tau} = \chi^2$.
Note that $\tau$ is either supercuspidal or equal to $\st_{\mu}$.
Then
\[
 \sigma \hookrightarrow
 \pi(\chi |-|^{1/2}, \chi |-|^{-1/2}) \boxtimes \tau
 = I_{P(X)}(\tau^{\vee} \cdot \chi |-|^{1/2}, \chi|-|^{-1/2}).
\]
We would like to apply Corollary \ref{C:jacquet-split} (3)
and so we need to verify that the conditions in
Corollary \ref{C:jacquet-split} (1) and (2) hold.
The condition in Corollary \ref{C:jacquet-split} (1) obviously holds,
and that in Corollary \ref{C:jacquet-split} (2) holds
when $\tau$ is supercuspidal.
If $\tau = \st_{\mu}$ is a twisted Steinberg representation
(so that $\chi^2 = \mu^2$),
then
\[
 R_B(\tau) = \mu |-|^{1/2} \boxtimes \mu |-|^{-1/2}
 \ne \chi |-|^{-1/2} \boxtimes \eta
\]
for any character $\eta$.
Hence the condition in Corollary \ref{C:jacquet-split} (2)
also holds when $\tau$ is a twisted Steinberg representation.
In particular, we conclude by Corollary \ref{C:jacquet-split} (3) that
\[
 I_{P(Y)}(\tau \cdot \chi^{-1} |-|^{-1/2}, \chi |-|^{1/2})
 \twoheadrightarrow \Theta(\sigma).
\]
By Lemma \ref{L:P(Y)},
the above induced representation is multiplicity free
and of length two with a unique irreducible quotient,
so that $\Theta(\sigma)$ is multiplicity free
and $\theta(\sigma)$ is irreducible.
Moreover,
\[
 \theta(\sigma) =
 \begin{cases}
  \St(\tau \cdot \chi^{-1}, \chi) & \text{if $\tau \ne \st_{\chi}$;} \\
  \pi_{\mathit{gen}}(\tau) & \text{if $\tau = \st_{\chi}$.}
\end{cases}
\]

%There remains the issue whether $\Theta(\sigma) = \theta(\sigma)$.
%This follows from a result of Mui\'c \cite{Mu},
%but we can also give a brief sketch of the proof.
%Suppose on the contrary that
%$\Theta(\sigma) = I_{P(Y)}(\tau \cdot \chi^{-1} |-|^{-1/2}, \chi |-|^{1/2})$.
%Then we would have
%\[
 %\sigma^* \hookrightarrow \Hom_{\GSp(W)}
 %(\Omega_{V_2,W}, I_{P(Y)}(\tau \cdot \chi^{-1} |-|^{-1/2}, \chi|-|^{1/2})).
%\]
%Now one compute the latter Hom space,
%which amounts to the computation of the normalized Jacquet module
%$R_{P(Y)}(\Omega_{V_2,W})$.
%A short computation shows that
%\[
% \Hom_{\GSp(W)}(\Omega_{V_2,W},
 %I_{P(Y)}(\tau \cdot \chi^{-1} |-|^{-1/2}, \chi |-|^{1/2}))
 %= I_{P(X)}(\tau^{\vee} \cdot \chi |-|^{1/2}, \chi |-|^{-1/2})^*,
%\]
%so that
%\[
 %I_{P(X)}(\tau^{\vee} \cdot \chi |-|^{1/2}, \chi |-|^{-1/2})
 %\twoheadrightarrow \sigma.
%\]
%This is a contradiction,
%since $I_{P(X)}(\tau^{\vee} \cdot \chi |-|^{1/2}, \chi |-|^{-1/2})$
%has $\sigma$ as a submodule but not a quotient.
%Thus, we conclude that $\Theta(\sigma) = \theta(\sigma)$ is irreducible.

\subsection*{\underline{Non-discrete series representations I}}
\hfill

Suppose that
\[
 \sigma \hookrightarrow
 \pi(\chi_1, \chi_2) \boxtimes \tau
 = I_{P(X)}(\tau^{\vee} \cdot \chi_1, \chi_2)
\]
where $\tau$ is a discrete series representation with
$\omega_{\tau} = \chi_1 \chi_2$ and
\[
 |\chi_1 / \chi_2| = |-|^{-s_0}
 \quad \text{and} \quad
 s_0 \ge 0.
\]
Again, we would like to apply Corollary \ref{C:jacquet-split} (3)
and so we need to verify the conditions there.
As before, the only issue is
the condition in Corollary \ref{C:jacquet-split} (2)
when $\tau = \st_{\chi}$ is a twisted Steinberg representation,
in which case
\[
 R_B(\tau) = \chi |-|^{1/2} \boxtimes \chi |-|^{-1/2}
\]
and we need to show that this is different from
$\chi_1|-|^{-1} \boxtimes \eta$ for any character $\eta$.
In other words, we need to show that
\[
 \chi / \chi_1 \ne |-|^{-3/2}.
\]
But observe that
\[
 |\chi|^2 = |\chi_1 \chi_2| = |\chi_1|^2 \cdot |\chi_2 / \chi_1|
 = |\chi_1|^2 \cdot |-|^{s_0},
\]
so that
\[
 |\chi / \chi_1| = |-|^{s_0/2} \ne |-|^{-3/2}.
\]
This verifies that the conditions
in Corollary \ref{C:jacquet-split} (1) and (2) hold,
so that we conclude that
\[
 I_{P(Y)}(\tau \cdot \chi_1^{-1}, \chi_1)
 \twoheadrightarrow \Theta(\sigma).
\]
Since the above induced representation is multiplicity free
with a unique irreducible quotient,
we conclude that $\Theta(\sigma)$ is multiplicity free and
$\theta(\sigma) = J_{P(Y)}(\tau \cdot \chi_1^{-1}, \chi_1)$ is irreducible.

\subsection*{\underline{Non-discrete series representations II}}
\hfill

Finally, we consider the case where
\[
 \sigma \hookrightarrow
 \pi(\chi_1, \chi_1') \boxtimes \pi(\chi_2, \chi_2')
\]
with $\chi_1 \chi_1' = \chi_2 \chi_2'$ and
\[
 |\chi_i / \chi_i'| = |-|^{-s_i}
 \quad \text{and} \quad
 s_1 \ge s_2 \ge 0.
\]
We consider two subcases:
\begin{itemize}
\item[$\mathrm{(a)}$]
$\chi_2 / \chi_2' \ne |-|^{-1}$;
in this case $\pi(\chi_2, \chi_2') = \pi(\chi_2', \chi_2)$ is irreducible and
\[
 \sigma \hookrightarrow
 I_{P(X)}(\pi(\chi_2, \chi_2')^{\vee} \cdot \chi_1, \chi_1').
\]
Again,
to apply Corollary \ref{C:jacquet-split} (3),
we need to verify the conditions there,
and in particular the condition in Corollary \ref{C:jacquet-split} (2).
We have
\[
 R_B(\pi(\chi_2, \chi_2'))
 = (\chi_2 \boxtimes \chi_2') \oplus (\chi_2' \boxtimes \chi_2)
\]
up to semisimplification and so we need to verify that
\[
 \chi_2 \ne \chi_1 |-|^{-1}
 \quad \text{and} \quad
 \chi_2' \ne \chi_1|-|^{-1}.
\]
To see these, we argue by contradiction.
If $\chi_2 = \chi_1|-|^{-1}$,
then $\chi_2' = \chi_1' |-|$, so that
\[
 |-|^{-s_2} = |\chi_2 / \chi_2'|
 = |\chi_1 / \chi_1'| \cdot |-|^{-2} = |-|^{-s_1-2}.
\]
This would give $s_2 = s_1+2$ which contradicts $s_1 \ge s_2$.
On the other hand,
if $\chi_2' = \chi_1 |-|^{-1}$,
then $\chi_2 = \chi_1' |-|$, so that
\[
 |-|^{s_2} = |\chi_2' / \chi_2|
 = |\chi_1 / \chi_1'| \cdot |-|^{-2} = |-|^{-s_1-2}.
\]
This would give $s_2 = -s_1-2 < 0$, which is a contradiction.
Thus, we may apply Corollary \ref{C:jacquet-split} (3) to conclude that
\[
 I_{P(Y)}(\pi(\chi_2', \chi_2) \cdot \chi_1^{-1}, \chi_1)
 = I_B(\chi'_2 / \chi_1, \chi_2 / \chi_1; \chi_1)
 \twoheadrightarrow \Theta(\sigma).
\]
This shows that $\Theta(\sigma)$ is multiplicity free
with a unique irreducible quotient
\[
 \theta(\sigma) = J_B(\chi_2'/\chi_1, \chi_2/\chi_1; \chi_1).
\]
\item[$\mathrm{(b)}$]
$\chi_2 / \chi_2' = |-|^{-1}$;
in this case, $\pi(\chi_2,\chi_2')$ is reducible
and has the one dimensional representation $\chi_2 |-|^{1/2}$
as its unique irreducible submodule.
Then
\[
 \sigma \hookrightarrow \pi(\chi_1, \chi_1') \boxtimes \chi_2 |-|^{1/2}
 = I_{P(X)}(\chi_1 \chi_2^{-1} |-|^{-1/2}, \chi_1').
\]
Applying Corollary \ref{C:jacquet-split} (3)
(we leave the verification of the conditions there to the reader),
we conclude that
\[
 I_{P(Y)}(\chi_1^{-1} \chi_2 |-|^{1/2}, \chi_1)
 \twoheadrightarrow \Theta(\sigma).
\]
Observe that
\[
 I_B(\chi_2' / \chi_1, \chi_2 / \chi_1; \chi_1)
 \twoheadrightarrow I_{P(Y)}(\chi_1^{-1} \chi_2 |-|^{1/2}, \chi_1)
\]
and the former induced representation is a standard module.
This shows that $\Theta(\sigma)$ is multiplicity free
with a unique irreducible quotient
\[
 \theta(\sigma) = J_B(\chi_2' / \chi_1, \chi_2 / \chi_1; \chi_1).
\]
\end{itemize}

This completes the proof of Theorem \ref{T:local-theta-GSO(2,2)}.

%%%%%%%%%%%%%%%%%%%%%%%%%%%%%%%%%%%%%%%%%%%%%%%

\vskip 15pt

\section{\bf Proof of Theorem \ref{T:local-theta}} \label{S:proof3}

In this section, we give the proof of Theorem \ref{T:local-theta}.

\vskip 5pt

\subsection{\bf Jacquet Modules.}
 The key step is the computation of the normalized Jacquet modules of the induced Weil representation
$\Omega$ with respect to $Q(Z)$, $P(Y)$ and $P(J)$. This is carried out in the appendix, following the lines of \cite{K}.

\vskip 10pt

\begin{Prop} \label{P:jacquet1}
Let $R_{P(J)}(\Omega)$ denote the normalized Jacquet module of
$\Omega$ along $P(J)$. Then we have
a short exact sequence of $M(J) \times \GSp(W)$-modules: \vskip 5pt
\[  \begin{CD}
0 @>>> A @>>> R_{P(A)}(\Omega) @>>> B @>>> 0 \end{CD}. \] Here,
\[  B \cong \Omega_{W, V_2}, \]
\vskip 5pt \noindent where $\Omega_{W,V_2}$ is the induced Weil
representation for $\GSp(W) \times \GSO(V_2)$, and \vskip 3pt
\[  A \cong   I_{Q(Z)} \left(  S(F^{\times})   \otimes \Omega_{W',V_2}  \right), \]
\vskip 5pt \noindent  where the action of $(GL(J) \times \GSO(V_2))\times
(GL(Z) \times GSp(W'))$ on $S(F^{\times})$ is given by: \vskip 5pt
\[  ((a, h), (b, g)) \cdot f (x) = f(b^{-1}\cdot x \cdot a  \cdot \lambda_{W'}(g)), \]
\vskip 5pt \noindent and $\Omega_{W',V_2}$ denotes the induced Weil
representation of $\GSp(W') \times \GSO(V_2)$.
\end{Prop}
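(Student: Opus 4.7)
The plan is to follow Kudla's method in \cite{K} for computing Jacquet modules of the Weil representation, adapted to the similitude setting. First, realize the underlying isometry Weil representation $\omega_\psi$ on the Schr\"odinger model $S(X \otimes V)$, where $X \subset W$ is a maximal isotropic subspace. Using the decomposition $V = J \oplus V_2 \oplus J^*$, each element of $X \otimes V$ has components in $X \otimes J$, $X \otimes V_2$, and $X \otimes J^*$. The unipotent radical $N(J)$ of $P(J)$ fixes $J$ pointwise and collapses $V_2, J^*$ into $J$, so it acts on $S(X \otimes V)$ by multiplication by characters whose arguments are bilinear in the $X \otimes V_2$ and $X \otimes J^*$ components.

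The second step is to take $N(J)$-coinvariants by stratifying the Schwartz space according to the $X \otimes J^* \cong X$ component. Since $J$ is a line, $M(J) \supset \GL(J)$ acts on $X \otimes J^*$ through scalar multiplication, and one finds exactly two orbits relevant to the Jacquet functor: the zero orbit and the open nonzero orbit. This yields a two-step filtration
\[
 0 \longrightarrow A_0 \longrightarrow R_{P(J)}(\omega_\psi) \longrightarrow B_0 \longrightarrow 0
\]
of the isometry Jacquet module. The quotient $B_0$ comes from restricting functions to the zero section $X \otimes J^* = 0$: after taking $N(J)$-coinvariants the remaining $M(J) \times \Sp(W)$-action on $S(X \otimes V_2)$ is precisely the Weil representation for $\GSO(V_2) \times \Sp(W)$, with $\GL(J)$ acting via the appropriate modular twist.

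Third, the submodule $A_0$ comes from the open stratum. A generic nonzero vector in $X \otimes J^*$ singles out a line in $X$, which we identify with the line $F\cdot e_1$ dual to $Z = F\cdot f_1$. Its stabilizer in $\GL(X)$ is the mirabolic, and induction from the Klingen-type stabilizer in $\Sp(W)$ explains the $I_{Q(Z)}$ on the symplectic side. What remains after fixing such a vector is a Weil representation for the smaller pair $\Sp(W') \times \OO(V_2)$, realized on $S(X' \otimes V_2)$ with $X' \subset W'$ maximal isotropic; this yields $\Omega_{W',V_2}$ as the fiber of the induction.

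Finally, pass to the similitude version via $\Omega = \ind_{R_0}^R \omega_\psi$. The effect is to produce an additional factor $S(F^\times)$ parametrizing the similitude ratios between $\GSO(V)$, $\GL(J) \times \GSO(V_2)$, $\GL(Z) \times \GSp(W')$, and $\GSp(W)$, and one must check that the action formula $f(x) \mapsto f(b^{-1} x a \lambda_{W'}(g))$ is exactly what results from tracking similitude factors through the steps above. The normalization factors on the isometry side contribute modular characters of $M(J)$ and $L(Z)$ that, combined with the similitude twists, produce the stated formulas. The main obstacle is the bookkeeping: keeping track of the modular characters of $P(J)$ and $Q(Z)$, the normalization $|\lambda_V|^{-\dim V \cdot \dim W/8}$ used in defining $\Omega$, and the way these combine with the quadratic character $\chi_V$ (which is trivial here since $V_2$ and $V$ are split) to yield the precise formulas for $A$ and $B$. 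The orbit analysis itself is straightforward once the model is set up, but verifying that the $S(F^\times)$-twist in $A$ has the stated bi-equivariance for $\GL(J) \times \GSO(V_2)$ on one side and $\GL(Z) \times \GSp(W')$ on the other requires a careful unwinding of the compact induction $\ind_{R_0}^R$.
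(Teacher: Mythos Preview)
Your overall strategy matches the paper's: both follow Kudla's orbit-stratification method, and the paper's proof is literally a one-line citation of its general appendix result (Theorem~\ref{T:Jacquet_module1}) with $m=6$, $n=2$, $t=1$, together with Remark~\ref{R:kt} to normalize away the $e_0=f_0$ factors.

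There is, however, a genuine technical slip in your setup. In the standard Schr\"odinger model $S(X\otimes V)$ (with $X\subset W$ a Lagrangian), the orthogonal group $\OO(V)$ acts \emph{linearly} on the argument via $\phi(x)\mapsto\phi(h^{-1}x)$; hence $N(J)$ does \emph{not} act by multiplication by characters in this model. The character action you describe only appears after a partial Fourier transform to the mixed model
\[
S\big((W\otimes J^{\ast})\oplus (X\otimes V_2)\big),
\]
as carried out in the paper's appendix. It is in this model that $N(J)$ (which for $t=1$ is abelian, isomorphic to $V_2$) acts by characters depending on the pairing of the $W\otimes J^{\ast}$ coordinate against the $X\otimes V_2$ coordinate, and it is the $W\otimes J^{\ast}\cong\Hom(J,W)$ coordinate---not $X\otimes J^{\ast}$---that one stratifies. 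The two strata are then $\{0\}$ and $W\smallsetminus\{0\}$ (a single $\GL(J)\times\Sp(W)$-orbit since $\dim J=1$), yielding $B$ and $A$ respectively. Once you correct the model, the rest of your outline---identifying the stabilizer of a generic point with $Q(Z)$, recognizing $\Omega_{W',V_2}$ on the fiber, and tracking the $S(F^{\times})$ factor through the compact induction---goes through and agrees with the paper.
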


\vskip 5pt

\begin{proof}
This is Theorem \ref{T:Jacquet_module1} with $m = 6$, $n=2$ and $t=1$, and with Remark \ref{R:kt} taken into account.
\end{proof}

 \vskip 10pt

 \begin{Prop} \label{P:jacquet2}
 Let $R_{Q(Z)}(\Omega)$ denote the normalized Jacquet module of $\Omega$ along $Q(Z)$.
 Then we have a short exact sequence of $\GSO(V) \times L(Z)$-modules
 \vskip 5pt
 \[ \begin{CD}
 0 @>>> A'@>>> R_{Q(Z)}(\Omega_D) @>>> B' @>>> 0.\end{CD}  \]
 Here,
 \[  B' \cong |det_Z| \cdot |\lambda_W|^{-1/2}  \boxtimes  \Omega_{W', V} \]
 \vskip 5pt
 \noindent where $\Omega_{W',V}$ is the induced Weil representation of $\GSp(W') \times \GSO(V)$ and
 \vskip 3pt
 \[  A' \cong  I_{P(J)} \left( S(F^{\times}) \otimes \Omega_{W', V_2}  \right), \]
\vskip 5pt

\noindent where the action of $(\GL(J) \times \GSO(V_2)) \times
(\GL(Z) \times \GSp(W'))$ on $S(F^{\times})$ is given by \vskip 5pt
\[  ((a,h), (b,g)) \cdot f(x) = f( a^{-1} \cdot \lambda_{W'}(g)^{-1} \cdot x \cdot b), \]
\vskip 5pt
\noindent and $\Omega_{W',V_2}$ is the induced Weil representation of
$\GSp(W') \times \GSO(V_2)$.
 \end{Prop}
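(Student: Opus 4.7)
My plan for Proposition \ref{P:jacquet2} is to derive it as a specialization of the general Jacquet module computation that will be carried out in the appendix (call it Theorem \ref{T:Jacquet_module2}), in complete parallel with the way Proposition \ref{P:jacquet1} is quoted from Theorem \ref{T:Jacquet_module1}. Concretely, one applies the general theorem with $\dim V = 6$, $\dim W = 4$, and the parabolic being the Klingen parabolic $Q(Z)$ of $\GSp(W)$; Remark \ref{R:kt} handles the passage between the isometry Weil representation and its similitude extension $\Omega = \ind_{R_0}^R \omega_{\psi}$.

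The core calculation of the underlying theorem follows Kudla \cite{K}. I would begin with the Schr\"odinger model on $S(Y \otimes V)$, and use the polarization $Y = Z \oplus Z'$ with $Z' = F \cdot f_2$ to write
\[
 S(Y \otimes V) \cong S(Z \otimes V) \otimes S(Z' \otimes V) \cong S(V) \otimes S(V).
\]
The key analytical point is the action of the unipotent radical $U(Z)$, which is a Heisenberg group with center $\Sym^2 Z$ acting on the first $S(V)$-factor by the character $\phi(v, \cdot) \mapsto \psi(q_V(v)) \phi(v, \cdot)$. Hence the $U(Z)$-coinvariants are supported on the isotropic cone $\{v \in V : q_V(v) = 0\}$, which forces the anisotropic orbits to drop out and leaves just two $\GL(Z) \times \GSO(V)$-orbits, giving a two-step filtration on $R_{Q(Z)}(\Omega)$.

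The closed orbit $\{0\}$ produces the quotient $B'$: the first $S(V)$-factor collapses to the fibre at $0$ and only $S(Z' \otimes V) \cong S(V)$ survives, carrying the Weil representation $\Omega_{W', V}$ for $\GSp(W') \times \GSO(V)$; the twist $|\det_Z| \cdot |\lambda_W|^{-1/2}$ arises from combining the modulus character used to normalize $R_{Q(Z)}$ with the similitude-induction normalization built into $\Omega$. The open orbit of nonzero isotropic vectors produces the submodule $A'$: fixing a base point $v_0$ with $J = F \cdot v_0$, its stabilizer in $\GL(Z) \times \GSO(V)$ is a diagonal subgroup inside $\GL(Z) \times P(J)$, and Frobenius reciprocity identifies $A'$ with $I_{P(J) \times L(Z)}(\text{fibre})$. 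Since $v_0^{\perp}/F \cdot v_0 \cong V_2$, the residual Heisenberg formulas on the fibre collapse precisely to $\Omega_{W', V_2}$ tensored with an $S(F^{\times})$-factor that records the $\GL(Z)$-orbit of $v_0$ together with the similitude scaling, giving the action described in the proposition.

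The main obstacle is the bookkeeping of normalizations: verifying that the modulus, similitude, and Schr\"odinger twists combine to exactly $|\det_Z| \cdot |\lambda_W|^{-1/2}$ on the quotient, and that the $(\GL(J) \times \GSO(V_2)) \times (\GL(Z) \times \GSp(W'))$-action on the $S(F^{\times})$-factor of $A'$ comes out as stated, namely $f(x) \mapsto f(a^{-1} \cdot \lambda_{W'}(g)^{-1} \cdot x \cdot b)$. This is a routine but careful calculation once the orbit decomposition and the vanishing on the anisotropic locus have been established, and it is cleanest to package everything into the general appendix theorem and simply invoke it here, exactly as is done for Proposition \ref{P:jacquet1}.
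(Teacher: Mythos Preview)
Your proposal is correct and takes essentially the same approach as the paper: the paper's proof of Proposition \ref{P:jacquet2} is simply ``This is Theorem \ref{T:Jacquet_module2}, with $m=6$, $n=2$ and $k=1$ and with Remark \ref{R:kt} taken into account,'' which is exactly what you propose in your first paragraph. Your additional sketch of the orbit decomposition and normalization bookkeeping is an accurate outline of what goes into the proof of the general appendix theorem (following \cite{K}), though the paper defers all of that to the appendix rather than repeating it here.
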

 \vskip 10pt

\begin{proof}
This is Theorem \ref{T:Jacquet_module2}, with $m=6$, $n=2$ and $k=1$ and with Remark \ref{R:kt} taken into account.
\end{proof}
\vskip 10pt

\begin{Prop} \label{P:jacquet3}
Let $R_{P(Y)}(\Omega)$ denote the normalized Jacquet module of $\Omega$ along $P(Y)$.
Then as a representation of $M(Y) \times \GSO(V)$,
$R_{P(Y)}(\Omega)$ has a 3-step filtration whose successive quotients are given as follows:
\vskip 5pt

\begin{itemize}
\item[(i)] the top piece of the filtration is:
\[ A'' = S(F^{\times})\otimes |{\det}_Y|^{3/2} \cdot |\lambda_W|^{-3/2},\]
where $(a ,\lambda, h) \in GL(Y) \times \GL_1 \times \GSO(V)$ acts on $S(F^{\times})$ by
\[  (a,\lambda, h)\phi(t) = \phi(t \cdot \lambda \cdot \lambda(h)). \]
\vskip 5pt

\item[(ii)] the second piece in the filtration is:
\vskip 5pt
\[ B'' = I_{B \times P(J)}( S(F^{\times} \times F^{\times})) \]
\vskip 5pt
\noindent where the action of the diagonal torus in $B$ on $S(F^{\times}\times F^{\times})$ is given by
\vskip 5pt
\[  \left( \left( \begin{array}{cc}
a & \\
& d \end{array} \right) \cdot \phi \right)(\lambda, t) = |a| \cdot \phi(\lambda, td). \]
\vskip 5pt

\item[(iii)]  the bottom piece of the filtration is:
\vskip 5pt
\[  C'' = I_{Q(E)}(S(F^{\times}) \otimes S(\GL_2)), \]
\vskip 5pt
\noindent where the action of  $(\GL(Y) \times \GL_1) \times (\GL(E) \times \GSO(V_1))$ on
$S(F^{\times}) \otimes S(\GL_2)$ is given by:
\vskip 5pt
\[  (a, \lambda; b, h)\phi(t, g) = \phi( t \cdot \lambda \cdot \lambda_{V_1}(h), b^{-1} g a \cdot
\lambda_{V_1}(h)). \]
  \end{itemize}
\end{Prop}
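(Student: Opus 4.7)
My plan is to adapt the orbit-filtration method of Kudla \cite{K} that underlies Propositions \ref{P:jacquet1} and \ref{P:jacquet2}; here one obtains a three-step rather than two-step filtration because the Jacquet module along the Siegel parabolic is naturally indexed by the $\GL(Y) \times \GSO(V)$-orbits of pairs of isotropic vectors in $V$, and for $\dim Y = 2$ and $\dim V = 6$ split there are three such orbits according to the rank of the pair.

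First, I would realize $\Omega$ in the Schr\"odinger model attached to the polarization $W = Y^* \oplus Y$, so that the underlying space is $S(Y^* \otimes V) \cong S(V^2)$. In this model $N(Y) \cong \Sym^2 Y$ acts by the character
\[
 \omega(b)\phi(v_1, v_2) = \psi\bigl(b_{11} (v_1,v_1)_V + 2 b_{12}(v_1,v_2)_V + b_{22}(v_2,v_2)_V\bigr)\phi(v_1,v_2),
\]
so that the $N(Y)$-coinvariants are supported on the closed subvariety
\[
 Z = \{(v_1, v_2) \in V^2 : (v_i, v_j)_V = 0 \text{ for all } i,j\}
\]
of pairs spanning an isotropic subspace of $V$. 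Normalizing by the modulus of $P(Y)$ produces the overall twist $|{\det}_Y|^{3/2} \cdot |\lambda_W|^{-3/2}$ appearing in part~(i).

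Next, I would stratify $Z$ by the rank of $(v_1, v_2)$: the open stratum $Z_2$ consists of pairs spanning a two-dimensional isotropic subspace $E \subset V$, $Z_1$ consists of nonzero pairs spanning an isotropic line $J \subset V$, and $Z_0 = \{(0,0)\}$. Since $V$ is split of rank $6$, $\GSO(V)$ acts transitively on two-dimensional isotropic subspaces (with stabilizer $Q(E)$) and on isotropic lines (with stabilizer $P(J)$), and $M(Y) = \GL(Y) \times \GL_1$ acts through $\GL(Y) \cong \GL_2$ and the similitude $\lambda$. The short exact sequences
\[
 0 \to S(Z_2) \to S(Z_{\leq 2}) \to S(Z_{\leq 1}) \to 0, \qquad 0 \to S(Z_1) \to S(Z_{\leq 1}) \to S(Z_0) \to 0
\]
produce the three successive quotients in the proposition: the generic orbit $Z_2$ gives the submodule $C''$, the middle orbit $Z_1$ gives the subquotient $B''$, and the point $Z_0$ gives the top quotient $A''$.

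The technical heart of the argument is identifying each stratum with the induced representation claimed, with the right normalizations. Picking base points, one checks that the stabilizer of $(v_1, v_2) \in Z_2$ projects onto the Levi of $Q(E)$ on the orthogonal side and onto the full $\GL(Y)$ on the symplectic side; Frobenius reciprocity then yields $C'' \cong I_{Q(E)}(S(F^\times) \otimes S(\GL_2))$ with the $\GL_2$-factor recording the isomorphism $Y \xrightarrow{\sim} E$ supplied by the base point and the $S(F^\times)$ tracking the similitude factor. For $Z_1$, the stabilizer projects to the Levi of $P(J)$ on the $\GSO(V)$-side and to the standard Borel on the $\GL(Y)$-side, producing the doubly-induced module $B'' = I_{B \times P(J)}(S(F^\times \times F^\times))$, where the character $|a|$ on the diagonal torus arises from the half-sum of positive roots in the parabolic $B \subset \GL(Y)$. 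Finally $Z_0 = \{0\}$ gives the one-dimensional contribution $A''$, and the formula $\phi(t) \mapsto \phi(t\lambda\lambda_V(h))$ reflects the transport of similitudes once the Weil representation has been extended to $\GSp(W) \times \GO(V)$ as in Section \ref{S:theta}. The passage from isometry to similitude groups, and the bookkeeping of $\chi_V$-twists, is the same as in Propositions \ref{P:jacquet1} and \ref{P:jacquet2} and is handled uniformly by Remark \ref{R:kt}; this, together with tracking the normalization factors through each induction, is the only real obstacle.
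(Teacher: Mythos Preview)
Your approach is correct and is essentially the same as the paper's: the paper simply invokes Theorem~\ref{T:Jacquet_module2} with $m=6$, $n=2$, $k=2$ (together with Remark~\ref{R:kt}), and that theorem is proved in the appendix by exactly the Kudla orbit-filtration method you outline---realize $\Omega$ in the Schr\"odinger model, take $N(Y)$-coinvariants to land on pairs spanning an isotropic subspace, stratify by rank, and identify each stratum as an induced representation. Your sketch is just the specialization of that argument to the case at hand.
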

\vskip 5pt

\begin{proof}
This is Theorem \ref{T:Jacquet_module2}, with $m=6$, $n=2$ and $k=2$, and with Remark \ref{R:kt} taken into account.
\end{proof}
\vskip 15pt

\subsection{\bf Consequences.}
Applying Frobenius reciprocity as well as Props. \ref{P:jacquet1},
\ref{P:jacquet2} and \ref{P:jacquet3}, we obtain the following 3 propositions as consequences. Since the proofs of these 3 propositions are similar, we shall only give the proof of Prop. \ref{P:Hom2}.

\vskip 5pt

\begin{Prop}  \label{P:Hom1}
Assume that $\chi \ne |-|$. Then as a representation of
$\GSO(V)$,
\[  \Hom_{\GSp(W)}(\Omega, I_{Q(Z)}(\chi, \tau)) = I_{P(J)}(\chi^{-1}, (\tau \cdot \chi) \boxtimes (\tau \cdot \chi))^* \quad \text{(full linear dual)}. \]
\end{Prop}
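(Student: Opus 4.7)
The plan is to combine Frobenius reciprocity with the two-step filtration on $R_{Q(Z)}(\Omega)$ supplied by Proposition \ref{P:jacquet2}, thereby reducing to a computation on the fiber that invokes the theta correspondence for the ``diagonal'' dual pair $\GSp(W') \times \GSO(V_2)$.

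First, by Frobenius reciprocity applied to $\GSp(W) \supset Q(Z)$, there is a $\GSO(V)$-equivariant identification
\[
  \Hom_{\GSp(W)}(\Omega, I_{Q(Z)}(\chi,\tau))
  = \Hom_{L(Z)}(R_{Q(Z)}(\Omega),\ \chi\boxtimes\tau).
\]
Proposition \ref{P:jacquet2} supplies a short exact sequence of $\GSO(V)\times L(Z)$-modules
\[
  0 \to A' \to R_{Q(Z)}(\Omega) \to B' \to 0,
\]
whose associated long exact sequence links $\Hom_{L(Z)}(B',\chi\boxtimes\tau)$, $\Hom_{L(Z)}(R_{Q(Z)}(\Omega),\chi\boxtimes\tau)$, $\Hom_{L(Z)}(A',\chi\boxtimes\tau)$, and $\operatorname{Ext}^1_{L(Z)}(B',\chi\boxtimes\tau)$. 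Since $B' \cong |{\det}_Z|\cdot|\lambda_W|^{-1/2}\boxtimes \Omega_{W',V}$ and $\lambda_W$ is trivial on the subgroup $\GL(Z)\subset\GSp(W)$, the group $\GL(Z)$ acts on $B'$ by $|-|$. As $\chi\ne|-|$ and smooth representations of $\GL(Z)\cong F^{\times}$ split canonically by central characters, both the Hom and the $\operatorname{Ext}^1$ involving $B'$ vanish, giving
\[
  \Hom_{L(Z)}(R_{Q(Z)}(\Omega),\chi\boxtimes\tau)
  \;=\; \Hom_{L(Z)}(A',\chi\boxtimes\tau).
\]

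Next, $A'$ is the parabolic induction $I_{P(J)}$ of the $M(J)\times L(Z)$-module $\tilde A := S(F^\times)\otimes \Omega_{W',V_2}$, and $L(Z)$ acts only on the fiber; consequently, the formation of the maximal $(\chi\boxtimes\tau)$-isotypic quotient under $L(Z)$ commutes with induction along $P(J)$. On the fiber, I take $\chi$-coinvariants under $\GL(Z)$ (which acts on $S(F^\times)$ by right translation) via the Mellin transform $f\mapsto \int f(y)\chi^{-1}(y)\,d^\times y$; a direct inspection of the action formulas in Proposition \ref{P:jacquet2} shows that the residual action of $M(J)\times \GSp(W')$ on the resulting one-dimensional space has $\GL(J)$ acting by $\chi^{-1}$, $\GSO(V_2)$ trivially, and $\GSp(W')$ by the character $\chi^{-1}\circ\lambda_{W'}$. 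Taking the further maximal $\tau$-quotient under $\GSp(W')$ is therefore tantamount to taking the maximal $\tau\cdot\chi$-quotient of $\Omega_{W',V_2}$. By the well-known theta correspondence for the dual pair $\GSp(W')\times\GSO(V_2)$---which realizes $\GL_2$ as the diagonal subgroup of $(\GL_2\times\GL_2)/\GL_1$---the big theta lift of an irreducible representation $\tau\cdot\chi$ of $\GL_2$ is the irreducible representation $(\tau\cdot\chi)\boxtimes(\tau\cdot\chi)$ of $\GSO(V_2)$. Combining these, the maximal $(\chi\boxtimes\tau)$-quotient of $\tilde A$ is
\[
  \tilde A_{\chi\boxtimes\tau}
  \cong (\chi\boxtimes\tau)\otimes\bigl(\chi^{-1}\boxtimes (\tau\cdot\chi)\boxtimes(\tau\cdot\chi)\bigr)
\]
as $L(Z)\times M(J)$-module.

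Inducing along $P(J)$ and appealing to Schur's lemma in the form $\Hom_{L(Z)}((\chi\boxtimes\tau)\otimes V,\chi\boxtimes\tau) = V^*$, we conclude
\[
  \Hom_{L(Z)}(A',\chi\boxtimes\tau) \;=\; I_{P(J)}\bigl(\chi^{-1},\ (\tau\cdot\chi)\boxtimes(\tau\cdot\chi)\bigr)^*,
\]
which is the claimed equality. I expect the principal obstacle to be the careful bookkeeping of the twist by $\chi^{-1}\circ\lambda_{W'}$ arising from the $\GSp(W')$-action on $S(F^\times)$: this is precisely what shifts the theta-correspondence parameter from $\tau$ to $\tau\cdot\chi$ and, dually, places $\chi^{-1}$ in the $\GL(J)$-slot of the induced representation on the $\GSO(V)$-side.
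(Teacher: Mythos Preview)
Your argument is correct and follows essentially the same approach the paper indicates: Frobenius reciprocity, the two-step filtration of $R_{Q(Z)}(\Omega)$ from Proposition \ref{P:jacquet2}, elimination of the quotient piece $B'$ via the $\GL(Z)$-character, and identification of $\Hom_{L(Z)}(A',\chi\boxtimes\tau)$ by passing the isotypic quotient through the induction (as in the proof of Corollary \ref{C:jacquet-split}(3)) together with the theta correspondence $\Theta_{W',V_2}(\sigma)=\sigma\boxtimes\sigma$ for $\GSp(W')\times\GSO(V_2)$. Your use of the central-character decomposition for $\GL(Z)\subset L(Z)$ to kill both $\Hom_{L(Z)}(B',-)$ and $\operatorname{Ext}^1_{L(Z)}(B',-)$ is the clean way to get the \emph{equality} (rather than a mere inclusion), and is exactly the analogue of the Bernstein-component argument the paper uses at the end of the proof of Proposition \ref{P:Hom2}.
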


\vskip 10pt

\begin{Prop}  \label{P:Hom2}
Suppose that $\tau$ is a discrete series representation of
$\GL(Y)$ and $\omega_{\tau} \ne |-|^3$. Then as a representation of $\GSO(V)$,
\[  \Hom_{\GSp(W)}(\Omega, I_{P(Y)}(\tau, \chi)) \hookrightarrow I_{Q(E)}(\tau^{\vee}, (\chi \omega_{\tau})\circ \lambda_{V_1})^*. \]
Further, if $\tau$ is supercuspidal,  then
\[  \Hom_{\GSp(W)}(\Omega, I_{P(Y)}(\tau, \chi)) =  I_{Q(E)}(\tau^{\vee}, (\chi \omega_{\tau})\circ \lambda_{V_1})^*. \]
\end{Prop}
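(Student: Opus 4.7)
The strategy is to apply Frobenius reciprocity to obtain, as a representation of $\GSO(V)$,
\[ \Hom_{\GSp(W)}(\Omega, I_{P(Y)}(\tau, \chi)) \cong \Hom_{M(Y)}(R_{P(Y)}(\Omega), \tau \boxtimes \chi), \]
and then exploit the three-step filtration of $R_{P(Y)}(\Omega)$ furnished by Proposition \ref{P:jacquet3}, with successive quotients $A''$ (top), $B''$ (middle) and $C''$ (bottom). The long exact sequences associated to this filtration will yield the claimed injection as soon as $A''$ and $B''$ are shown to contribute zero to the Hom space.

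The contribution of $A''$ vanishes immediately: $\GL(Y)$ acts on $A''$ through the one-dimensional character $|{\det}|^{3/2}$, whereas $\tau$ is infinite-dimensional and irreducible. For the middle piece $B''$, which is parabolically induced on the $M(Y)$-side from the Borel $B \subset \GL(Y)$ with fibre $\sigma = S(F^{\times} \times F^{\times})$ whose diagonal torus action is $\operatorname{diag}(a,d) \cdot \phi(\lambda, t) = |a| \cdot \phi(\lambda, td)$, I will use second adjointness to rewrite $\Hom_{M(Y)}(B'', \tau \boxtimes \chi)$ in terms of a Hom from $\sigma$ into $J_{\overline{B}}(\tau) \boxtimes \chi$. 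If $\tau$ is supercuspidal then $J_{\overline{B}}(\tau) = 0$ and the Hom vanishes; if $\tau = \st_\mu$ is a twisted Steinberg, then $J_{\overline{B}}(\st_\mu)$ is a single character and matching the $T$-character $|a|$ on $\sigma$ against it forces $\mu = |-|^{3/2}$, so $\omega_\tau = |-|^3$, which is excluded by hypothesis.

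With $A''$ and $B''$ killed, the injection reduces to computing $\Hom_{M(Y)}(C'', \tau \boxtimes \chi)$. Since $C''$ is parabolically induced on the $\GSO(V)$-side from $Q(E)$ with fibre $S(F^{\times}) \otimes S(\GL_2)$ viewed as an $M(Y) \times L(E)$-module, and the Hom functor turns this induction into the full linear dual of the corresponding induced representation, I reduce to a fibrewise Hom computation. The $(\GL(Y), \GL(E))$-bimodule structure on $S(\GL_2)$ (right translation by $\GL(Y)$, inverse-left translation by $\GL(E)$) together with the MVW lemma extracts the $\tau$-isotypic quotient and produces $\tau^{\vee}$ on the $\GL(E)$-side; careful tracking of the $\GL_1 \subset M(Y)$ and $\GSO(V_1) \subset L(E)$ actions on $S(F^{\times})$ (both scaling the $t$-variable by $\lambda_{V_1}$) then pins down the similitude character as $(\chi\omega_\tau) \circ \lambda_{V_1}$. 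Inducing back from $Q(E)$ to $\GSO(V)$ delivers exactly $I_{Q(E)}(\tau^{\vee}, (\chi\omega_\tau)\circ\lambda_{V_1})^*$. For the equality in the supercuspidal case, the inclusion upgrades to an equality once the $\mathrm{Ext}^1$-obstructions from $A''$ and $B''$ into $\tau \boxtimes \chi$ are shown to vanish, which follows from the projectivity of a supercuspidal $\tau$ in its Bernstein component. The main obstacle I anticipate is the precise character-and-normalization bookkeeping for $B''$ — verifying that $\omega_\tau \ne |-|^3$ is exactly the condition required — together with the rigorous identification of the $\GSO(V_1)$-action emerging from the fibrewise Hom out of $C''$ as the similitude character $(\chi\omega_\tau) \circ \lambda_{V_1}$.
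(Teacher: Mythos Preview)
Your proposal is correct and follows essentially the same route as the paper: Frobenius reciprocity, analysis of the three-step filtration of $R_{P(Y)}(\Omega)$ from Proposition \ref{P:jacquet3} (killing $A''$ and $B''$ exactly as you describe), identification of $\Hom_{M(Y)}(C'', \tau \boxtimes \chi)$ via the argument of Corollary \ref{C:jacquet-split}(3), and a Bernstein-component argument for the supercuspidal equality. The only cosmetic difference is that for $A''$ the paper uses the central-character mismatch $\omega_\tau \ne |-|^3$ rather than infinite-dimensionality of $\tau$, and for the supercuspidal case the paper phrases the splitting via Bernstein components rather than explicit $\mathrm{Ext}^1$-vanishing, but these are equivalent.
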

\vskip 10pt

\begin{Prop} \label{P:Hom3}
(i)  Consider the space
\[  \Hom_{\GSO(V)}(\Omega, I_{P(J)}(\chi, \tau_1 \boxtimes \tau_2) ) \]
as a representation of $\GSp(W)$. Then we have: \vskip 5pt

\begin{itemize}
\item[(a)] If $\chi \ne 1$, then
\[   \Hom_{\GSO(V)}(\Omega, I_{P(J)}(\chi, \tau_1 \boxtimes \tau_2))
 = 0 \]
unless
\[  \tau_1 = \tau_2 = \tau, \]
in which case
\[ \Hom_{\GSO(V)}(\Omega, I_{P(J)}(\chi, \tau \boxtimes \tau)) =  I_{Q(Z)}\left( \chi^{-1}, \tau \otimes  \chi \right)^*  . \]
\vskip 5pt

\item[(b)] If $\chi=1$ but $\tau_1 \ne \tau_2$, then
\[  \Hom_{\GSO(V)}(\Omega, I_{P(J)}(\chi, \tau_1 \boxtimes \tau_2) )
= \Theta_{W,V_2}(\tau_1 \boxtimes  \tau_2)^*, \] where
$\Theta_{W,V_2}(\tau_1 \boxtimes \tau_2)$ denotes the big theta lift
of $\tau_1 \boxtimes \tau_2$ from $\GSO(V_2)$ to $\GSp(W)$. \vskip 5pt

\item[(c)] If $\chi = 1$ and $\tau_1 = \tau_2 = \tau$, then we have an exact sequence:
\[  \begin{CD} 0 @>>> \Theta_{W,V_2}(\tau \boxtimes \tau)^*@>>>
\Hom_{\GSO(V)}(\Omega, I_{P(J)}(\chi, \tau \boxtimes \tau)) @>>>
 \left( I_{Q(Z)}(1, \tau) \right)^* \end{CD} \]
\end{itemize}
\vskip 10pt
 \end{Prop}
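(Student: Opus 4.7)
The strategy mirrors that of Propositions \ref{P:Hom1} and \ref{P:Hom2}. First, I would apply Frobenius reciprocity for the Jacquet functor along $P(J) \subset \GSO(V)$ to obtain a $\GSp(W)$-equivariant isomorphism
\[
 \Hom_{\GSO(V)}(\Omega, I_{P(J)}(\chi, \tau_1 \boxtimes \tau_2)) \cong \Hom_{M(J)}(R_{P(J)}(\Omega), \chi \boxtimes \tau_1 \boxtimes \tau_2).
\]
Then I would feed in the short exact sequence $0 \to A \to R_{P(J)}(\Omega) \to B \to 0$ from Proposition \ref{P:jacquet1}, which yields a left-exact sequence of Hom spaces whose outer terms are computed separately and then reassembled case by case.

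For the $B$-piece: since $B \cong \Omega_{W,V_2}$ with $\GL(J)$ acting trivially (by the normalization in Proposition \ref{P:jacquet1}), the space $\Hom_{M(J)}(B, \chi \boxtimes \tau_1 \boxtimes \tau_2)$ vanishes unless $\chi = 1$, in which case it equals $\Hom_{\GSO(V_2)}(\Omega_{W,V_2}, \tau_1 \boxtimes \tau_2) = \Theta_{W,V_2}(\tau_1 \boxtimes \tau_2)^*$ as a $\GSp(W)$-module. For the $A$-piece: since $A = I_{Q(Z)}(S(F^{\times}) \otimes \Omega_{W',V_2})$ is induced only in the $\GSp(W)$-direction and $M(J)$ acts on the inducing data alone, the Hom commutes with $I_{Q(Z)}$ up to passing to the full linear dual, giving $I_{Q(Z)}(\sigma_\chi)^*$ where
\[
 \sigma_\chi := \Hom_{M(J)}\bigl(S(F^{\times}) \otimes \Omega_{W',V_2},\ \chi \boxtimes \tau_1 \boxtimes \tau_2\bigr)
\]
is a representation of $L(Z) = \GL(Z) \times \GSp(W')$. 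The tensor structure splits $\sigma_\chi$ into two factors. The $\GSO(V_2)$-Hom produces $\Theta_{W',V_2}(\tau_1 \boxtimes \tau_2)^*$, which vanishes unless $\tau_1 = \tau_2 = \tau$ and equals $\tau^*$ (as a $\GSp(W') \cong \GL_2$-module) in that case. The $\GL(J)$-Hom $\Hom_{\GL(J)}(S(F^{\times}), \chi)$ is one-dimensional, and from the explicit formula $((a,h),(b,g)) \cdot f(x) = f(b^{-1} x a \lambda_{W'}(g))$ in Proposition \ref{P:jacquet1}, one reads off that $(b,g) \in L(Z)$ acts on this line by $\chi(b)^{-1} \cdot \chi(\lambda_{W'}(g))$. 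Combining, $\sigma_\chi \cong \chi^{-1} \boxtimes (\tau \otimes \chi)$ when $\tau_1 = \tau_2 = \tau$, and is zero otherwise.

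Assembling in the three cases: when $\chi \ne 1$, the $B$-piece vanishes and the whole Hom embeds into the $A$-piece, giving $0$ if $\tau_1 \ne \tau_2$ and $I_{Q(Z)}(\chi^{-1}, \tau \otimes \chi)^*$ if $\tau_1 = \tau_2 = \tau$, which is (a); when $\chi = 1$ and $\tau_1 \ne \tau_2$, only the $B$-piece contributes, yielding (b); when $\chi = 1$ and $\tau_1 = \tau_2 = \tau$, both pieces are present and one records the left-exact sequence in (c). The main obstacle is upgrading the bound in (a) from an inclusion $\Hom_{M(J)}(R_{P(J)}(\Omega), \chi \boxtimes \tau \boxtimes \tau) \hookrightarrow \Hom_{M(J)}(A, \chi \boxtimes \tau \boxtimes \tau)$ to an equality; this requires checking that the relevant $\mathrm{Ext}^1$-term vanishes, or equivalently, that every $M(J)$-morphism from $A$ extends across the quotient map — which is reasonable because the target lives in the full linear dual, a class of modules for which $\Hom$ is more exact than in the smooth category. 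A secondary bookkeeping task is to track all modular-character contributions from the normalized Jacquet functor through the final twist $\tau \otimes \chi$, and to make rigorous the commutation of $\Hom_{M(J)}$ with parabolic induction along the orthogonal $\GSp(W)$-direction.
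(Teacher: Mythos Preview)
Your approach is correct and is essentially the one the paper intends (the paper only says the proof is ``similar to that of Proposition~\ref{P:Hom2}'' and gives no further details). Frobenius reciprocity along $P(J)$, then feeding in the two-step filtration of Proposition~\ref{P:jacquet1}, is exactly right, and your identification of the $A$- and $B$-contributions is accurate, including the use of the theta correspondence for $\GSO(V_2)\times\GSp(W')$ to see that the $A$-piece vanishes unless $\tau_1=\tau_2$.

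The one point where your justification goes astray is the upgrade from inclusion to equality in (a). The target $\chi\boxtimes\tau\boxtimes\tau$ is an ordinary irreducible smooth representation, not a full linear dual, so the heuristic you offer does not apply. The actual reason is much more concrete: $\GL(J)\cong F^\times$ acts trivially on $B$ (as you noted) but by $\chi\ne 1$ on the target. Choose $a\in F^\times$ with $\chi(a)\ne 1$. For any $v\in R_{P(J)}(\Omega)$ one has $a\cdot v - v\in A$ since $a$ acts trivially on the quotient $B$; hence any $M(J)$-equivariant $f:A\to\chi\boxtimes\tau\boxtimes\tau$ extends to all of $R_{P(J)}(\Omega)$ by
\[
\tilde f(v)\;:=\;(\chi(a)-1)^{-1}\,f(a\cdot v - v),
\]
and one checks directly that $\tilde f$ is $M(J)\times\GSp(W)$-equivariant (using that $a$ is central in $M(J)$ and commutes with $\GSp(W)$). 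This gives surjectivity onto $\Hom_{M(J)}(A,\,\chi\boxtimes\tau\boxtimes\tau)$ without any $\mathrm{Ext}$ machinery. With this correction, your proof goes through.
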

 \vskip 10pt

  \begin{proof}[Proof of Prop. \ref{P:Hom2}]
  By Frobenius reciprocity, we have:
\[  \Hom_{\GSp(W)}(\Omega, I_{P(Y)}(\tau, \chi)) = \Hom_{M(Y)}(R_{P(Y)}(\Omega), \tau \boxtimes \chi)). \]
The 3-step filtration of $R_{P(Y)}(\Omega)$ thus induces one on this Hom space.
\vskip 5pt

 For $\tau$ as in the proposition, we see that
 \[  \Hom_{M(Y)}(A'',\tau \boxtimes \chi) = 0. \]
 This is because the center of $GL(Y)$ acts by the character $|-|^3$ on $A''$ and by
 $\omega_{\tau}$ on $\tau \boxtimes \chi$, and by our assumption, these two characters
 are different.
 \vskip 5pt

 We claim now that
 \[  \Hom_{M(Y)}(B'', \tau \boxtimes \chi) = 0 \]
as well. This is clear if $\tau$ is supercuspidal. On the other hand, suppose that $\tau= st_{\mu}$ is a twisted Steinberg representation. If $\Hom_{M(Y)}(B'', \tau\boxtimes \chi) \ne 0$, then one deduces that
\[  \Hom_{GL(Y)}(I_B(  |-| \boxtimes V), st_{\mu})) \ne 0, \]
where $|-| \boxtimes V$ is a representation of the diagonal torus $\GL_1 \times \GL_1$.
By Frobenius reciprocity, this implies that
\[  \Hom_{\GL_1 \times \GL_1}(R_B(st_{\mu^{-1}}) ,  |-|^{-1}  \boxtimes V^{\vee}) \ne 0. \]
 But $R_B(st_{\mu^{-1}}) = |-|^{1/2} \mu^{-1} \boxtimes |-|^{-1/2} \mu^{-1}$. This would imply that
 $\mu = |-|^{3/2}$. However, this is ruled out by the assumption of the proposition.
 \vskip 5pt

 Hence, we have shown that
 \[   \Hom_{\GSp(W)}(\Omega, I_{P(Y)}(\tau, \chi)) \hookrightarrow
 \Hom_{M(Y)}(C'', \tau \boxtimes \chi). \]
Now by arguing in the same way as the proof of Cor. \ref{C:jacquet-split}(3)
\[   \Hom_{M(Y)}(C'', \tau \boxtimes \chi) = I_{Q(E)}(\tau^{\vee}, (\chi \omega_{\tau})\circ \lambda_{V_1})^*. \]
Suppose further that $\tau$ is supercuspidal.  Since the representations $A''$ and  $B''$ do not contain any supercuspidal constituents and hence belong to different Bernstein components of $\GSp_4$,
one has
 \[   \Hom_{\GSp(W)}(\Omega, I_{P(Y)}(\tau, \chi)) =
 \Hom_{M(Y)}(C'', \tau \boxtimes \chi). \]
 The proposition is proved.
 \end{proof}
\vskip 15pt

\subsection{\bf Proof of Thm. \ref{T:local-theta}.}
Now we can prove Thm. \ref{T:local-theta}. In the following, we shall repeatedly use the following two simple facts:
\vskip 5pt

\begin{itemize}
\item[(a)] if $\pi$ is an irreducible representation of $\GSp(W)$, then
\[  \Theta(\pi)^* \cong \Hom_{\GSp(W)}(\Omega,  \pi). \]
\vskip 5pt

\item[(b)] if $\Pi$ is an irreducible representation of $\GSO(V)$ such that
\[  \Pi^{\vee}  \hookrightarrow \Hom_{\GSp(W)}(\Omega, \Sigma), \]
where $\Sigma$ is not necessarily irreducible (typically, $\Sigma$ is a principal series representation), then there is a nonzero equivariant map
\[  \Omega \longrightarrow \Pi \boxtimes \Sigma. \]
In particular, $\Theta(\Pi) \ne 0$.
The analogous result with the roles of $\GSp(W)$ and $\GSO(V)$ exchanged also holds.
\end{itemize}
\vskip 5pt

By [KR, Thm. 3.8] (i.e. Thm. \ref{T:KR}(i)), we know that if $\pi \in \Pi(\GSp_4)_{ng}^{temp}$, then $\Theta(\pi) =0$. Parts (i) and (ii) of the theorem will follow if we can determine $\theta(\pi)$ for
$\pi \notin \Pi(\GSp_4)_{ng}^{temp}$, i.e. if we can demonstrate parts (iii), (iv), (v) and (vi).
We consider the different cases separately.
\vskip 15pt

\noindent{\bf \underline{Supercuspidal Representations}} \vskip 5pt

If $\pi$ is a supercuspidal representation of $\GSp_4(F)$ and $\pi \notin \Pi(\GSp_4)_{ng}^{temp}$,
then $\pi$ is generic and we know that $\Theta(\pi)$ is a nonzero irreducible representation of
$\GSO(V)$.  It is supercuspidal unless the theta lift of $\pi$ to $\GSO_{2,2}(F) $ is nonzero, in which case its theta lift to $\GSO_{2,2}(F) $ is also supercuspidal.
Suppose that
\[ \pi = \Theta(\tau_1 \boxtimes \tau_2) = \Theta(\tau_2 \boxtimes \tau_1), \]
so that $\tau_1$ and $\tau_2$ are supercuspidal representations of $\GL_2(F)$ with the same central character. Then it follows by Prop. \ref{P:jacquet1} that
\[  \Hom_{\GSp(W) \times \GSO(V)}(\Omega, \pi \boxtimes I_{P(J)}(1, \tau_1 \boxtimes \tau_2)) \ne 0. \]
This shows that
\[  \Theta(\pi) = I_{P(J)}(1,\tau_1 \boxtimes \tau_2) = I_P(\tau_1, \tau_2) \boxtimes \omega_{\tau_1}. \]
This proves Thm. \ref{T:local-theta}(iii).
\vskip 15pt

\noindent{\bf \underline{The Generalized Steinberg Representation $St(\chi, \tau)$}}
\vskip 5pt

Now we consider the first family of generalized Steinberg representations, so that
$\pi = St(\chi, \tau)$ is as in \ref{SSS:DS}(a) with $\chi$ a non-trivial quadratic character and $\tau$ supercuspidal so that $\tau \otimes \chi = \tau$.  Recall that we need to show:
\[   \theta(St(\chi,\tau)) =   St(\tau) \boxtimes \omega_{\tau}\chi. \]
 Since
\[  St(\tau) \boxtimes \omega_{\tau}\chi \hookrightarrow I_{P(J)}(\chi |-|, (\tau \cdot \chi |-|^{-1/2})
\boxtimes \tau |-|^{-1/2}), \]
we deduce by the fact (a) above and Prop. \ref{P:Hom3}(i)(a) that
\[  \Theta(St(\tau) \boxtimes \omega_{\tau}\chi)^* \hookrightarrow \Hom_{\GSO(V)}(\Omega,
I_{P(J)}(\chi |-|, (\tau \cdot \chi|-|^{-1/2}) \boxtimes \tau|-|^{-1/2})), \]
which vanishes unless $\tau \otimes \chi \cong \tau$,  in which case
one has:
\[  I_{Q(Z)}(\chi^{-1} |-|^{-1},  \tau|-|^{1/2}) \twoheadrightarrow  \Theta(St(\tau) \boxtimes \omega_{\tau}\chi).
\]
Recall that the above induced representation has $St(\chi, \tau)$ as its
unique irreducible quotient (since $\chi \ne 1$). From this, we conclude
that: \vskip 5pt
\begin{itemize}
\item  $\theta(St(\tau) \boxtimes \omega_{\tau}\chi)  \subseteq St(\chi, \tau)$;
\vskip 5pt

\item $\theta(St(\chi, \tau)) \ne 0$.
\end{itemize}
 \vskip 10pt

On the other hand, since $\chi \ne 1$, one may apply  Prop.
\ref{P:Hom1} to $I_{Q(Z)}(\chi |-|, \tau|-|^{-1/2})$
and arguing as above, one obtains:
\vskip 5pt
\begin{itemize}
 \item $\theta(St(\chi, \tau)) \subseteq St(\tau) \boxtimes \omega_{\tau}\chi$;

\item  $\theta(St(\tau) \boxtimes \omega_{\tau}\chi) \ne 0$.
\end{itemize}
\vskip 5pt

\noindent  Hence, we have shown that \vskip 5pt
\[   \begin{cases}
\theta(St(\tau) \boxtimes \omega_{\tau}\chi) =  St(\chi, \tau); \\
\theta(St(\chi, \tau)) =  St(\tau) \boxtimes \omega_{\tau}\chi. \end{cases} \] \vskip 10pt

\vskip 15pt

\noindent{\bf \underline{The Generalized Steinberg  Representation $St(\tau,\mu)$}} \vskip 5pt

Now we consider the second family of generalized Steinberg representations, so that
 $\pi = St(\tau,\mu)$ as in \ref{SSS:DS}(b), with $\tau \ne st$ a discrete series representation
 of $\PGL(Y)$.  Recall that we need to show:
\[  \theta(St(\tau,\mu))  = I_P(\tau \otimes \mu , st \otimes \mu)  \boxtimes \mu^2. \]
\vskip 5pt

Since
\[  St(\tau,\mu) \hookrightarrow I_{P(Y)}(\tau|-|^{1/2},\mu|-|^{-1/2}), \]
Prop. \ref{P:Hom2} implies that
\[  I_{Q(E)}(\tau |-|^{-1/2}, (\mu|-|^{1/2}) \circ \lambda_{V_1}) \twoheadrightarrow
\Theta(St(\tau, \mu))^*. \]
Now note that as a representation of $\GL_4(F) \times \GL_1(F)$,
\[  I_{Q(E)}(\tau |-|^{-1/2}, (\mu|-|^{1/2}) \circ \lambda_{V_1}) = \mu \cdot
I_Q(|-|^{-1/2},\tau, |-|^{1/2}) \boxtimes \mu^2, \]
and the latter representation has a unique irreducible quotient isomorphic to
$\mu \cdot I_P(\tau , st) \boxtimes \mu^2$. Hence, we have shown that
\[  \theta(St(\tau, \mu)) \subseteq  I_P(\tau\otimes \mu , st\otimes  \mu) \boxtimes \mu^2. \]
On the other hand, by Thm. \ref{T:local-theta-GSO(2,2)}, one knows that the representation
$St(\tau, \mu)$ participates in the theta correspondence with $\GSO(V_2)$: it is the theta lift of
$(\tau \otimes \mu) \boxtimes (st \otimes \mu)$. Hence it follows by Prop. \ref{P:jacquet1} that $ \Theta(St(\tau, \mu))  \ne 0$ and so
\[    \theta(St(\tau, \mu)) =  I_P(\tau\otimes \mu , st\otimes \mu) \boxtimes \mu^2,\]
as desired.

\vskip 15pt
\noindent{\bf \underline{Twisted Steinberg Representations}}
\vskip 5pt

Now consider the twisted Steinberg representation $St_{\PGSp_4} \otimes \chi$. Since
\[  St_{\PGSp_4} \otimes \chi \hookrightarrow I_{Q(Z)}(|-|^2, st_{\chi} |-|^{-1}), \]
and
\[  St_{\chi} \boxtimes \chi^2  \hookrightarrow I_{P(J)}(|-|^2, st_{\chi} |-|^{-1} \boxtimes st_{\chi} |-|^{-1}), \]
we may apply Props. \ref{P:Hom1} and \ref{P:Hom3}
 to conclude that
\[  \theta(St_{\chi} \boxtimes \chi^2) = St_{\PGSp_4} \otimes \chi \]
and
\[   \theta( St_{\PGSp_4} \otimes \chi ) = St_{\chi} \boxtimes \chi^2. \]
Since the arguments are similar to the above, we omit the details.

\vskip 15pt

\noindent{\bf \underline{Non-Discrete Series Representations}} \vskip 5pt

Finally, we come to the non-discrete series representations in part (vi) of Thm. \ref{T:local-theta}.
We will consider the three cases (a), (b) and (c) separately.
\vskip 10pt

\noindent{\bf (a)} Suppose that
\[   \pi \hookrightarrow I_{Q(Z)}(\chi, \tau) \]
as in \ref{SSS:NDS}(a), so that $|\chi| = |-|^{-s}$ with $s \geq 0$. Recall that we need to show:
\[  \theta(\pi) = J_P(\tau, \tau \cdot \chi) \boxtimes \omega_{\tau}\chi. \]
By Prop. \ref{P:Hom1}, we deduce that $\Theta(\pi)$ is a quotient of
\[  I_{P(J)}(\chi^{-1}, (\tau \cdot \chi) \boxtimes  (\tau \cdot \chi))
 = I_P(\tau, \tau \cdot\chi) \boxtimes (\omega_{\tau} \cdot \chi).  \]
But this induced representation has a unique irreducible quotient
$J_P(\tau, \tau \cdot \chi) \boxtimes \omega_{\tau}\chi$,
since it is a standard module. This shows that
\begin{itemize}
\item $\theta(\pi) \subseteq  J_P(\tau, \tau \cdot \chi) \boxtimes \omega_{\tau}\chi)$;
\vskip 5pt
\item $\theta(J_P(\tau, \tau \cdot \chi) \boxtimes \omega_{\tau}\chi)) \ne 0$.
\end{itemize}
\vskip 10pt

On the other hand, since
\[  J_P(\tau, \tau \cdot \chi) \boxtimes \omega_{\tau}\chi \hookrightarrow
I_P(\tau \cdot \chi, \tau) \boxtimes \omega_{\tau}\chi \cong
I_{P(J)}(\chi, \tau \boxtimes \tau),  \]
\vskip 5pt
\noindent we may apply Prop. \ref{P:Hom3}(a) and (c) to conclude that
\vskip 5pt

\begin{itemize}
\item  $\theta(J_P(\tau, \tau \cdot \chi) \boxtimes \omega_{\tau}\chi) \subseteq \pi$;
\vskip 5pt

\item $\Theta(\pi) \ne 0$.
\end{itemize}
\vskip 5pt

\noindent From this, we conclude that
\[ \theta(\pi) =  J_P(\tau, \tau \cdot \chi) \boxtimes \omega_{\tau}\chi, \]
as desired.
\vskip 15pt

\noindent{\bf (b)} Suppose now that
\[  \pi \hookrightarrow I_{P(Y)}(\tau, \chi) \]
as in \ref{SSS:NDS}(b), so that $\tau$ is a twist of a discrete series representation with $|\omega_{\tau}| = |-|^{-2s}$ with $s \geq 0$. Applying Prop. \ref{P:Hom2}, one deduces that $\Theta(\pi)$ is a quotient of
\[  I_{Q(E)}(\tau^{\vee},  (\omega_{\tau} \chi) \circ \lambda_{V_1})) \cong
I_Q(1, \tau, \omega_{\tau}) \cdot \chi \boxtimes \chi^2 \omega_{\tau}. \]
This induced representation has a unique irreducible quotient
$J_Q(1, \tau, \omega_{\tau}) \cdot\chi \boxtimes \chi^2 \omega_{\tau}$. Thus,
one sees that
\vskip 5pt
\[  \theta(\pi) \subseteq J_Q(1, \tau, \omega_{\tau}) \cdot\chi \boxtimes \chi^2
\omega_{\tau}. \]
\vskip 5pt
On the other hand, since $\pi$ participates in the theta correspondence with $\GSO(V_2)$ by
Thm. \ref{T:local-theta-GSO(2,2)}, one knows that $\theta(\pi) \ne 0$. Hence, one has
\[  \theta(\pi) = J_Q(1, \tau, \omega_{\tau}) \cdot\chi \boxtimes \chi^2
\omega_{\tau}, \]
 as desired.
 \vskip 15pt

 \noindent{\bf (c)} Suppose first that
\[  \pi \hookrightarrow I_B(\chi_1, \chi_2; \chi) \]
as in \ref{SSS:NDS}(c)(i), so that $\chi_2 \ne |-|^{-1}$.
Thus, $|\chi_1| = |-|^{-s_1}$ and $|\chi_2| = |-|^{-s_2}$ with
$s_1 \geq s_2 \geq  0$, but $\chi_2  \ne |-|^{-1}$. In this case, we have
\[  \pi \hookrightarrow   I_{Q(Z)}(\chi_1, \pi(\chi \chi_2, \chi)) \]
as the unique irreducible submodule. Applying Prop. \ref{P:Hom1}, one deduces that as a representation of $\GL_4(F) \times \GL_1(F)$, $\Theta(\pi)$ is a quotient of
\begin{align}
&I_{P(J)}(\chi_1^{-1},  \pi(\chi\chi_1\chi_2,\chi\chi_1) \boxtimes
  \pi(\chi\chi_1\chi_2,\chi\chi_1)) \notag \\
=&\chi \cdot I_P(\pi(\chi_2, 1), \pi(\chi_1\chi_2, \chi_1)) \boxtimes \chi^2\chi_1\chi_2 \notag \\
=&\chi \cdot I_{B_0}(1,\chi_2, \chi_1, \chi_1\chi_2)  \boxtimes \chi^2\chi_1\chi_2. \notag
\end{align}
 \vskip 5pt
\noindent  This induced representation has a unique irreducible quotient
\[ \Pi = \chi \cdot
J_{B_0}(1, \chi_2, \chi_1, \chi_1\chi_2)  \boxtimes \chi^2\chi_1\chi_2.\]
Hence, we have:
\[  \theta(\pi) \subseteq \Pi \quad \text{and} \quad \Theta(\Pi) \ne 0. \]
\vskip 10pt

In fact, if $I_B(\chi_1,\chi_2;\chi)$ is irreducible, so that it is equal to $\pi$, then
we would have
\[  \Theta(\pi)  = I_{B_0}(1,\chi_2, \chi_1, \chi_1\chi_2)  \boxtimes \chi^2\chi_1\chi_2.\]
This is the case, for example, when $\chi_1$ (and hence $\chi_2$) is unitary. In that case, we have the desired identity $\theta(\pi) = \Pi$.

\vskip 10pt

To prove the desired identity in general, we may thus assume that $\chi_1 \ne 1$.
Then one may apply Prop. \ref{P:Hom3}(a) to the representation
\[  I_{P(J)}(\chi_1, \pi(\chi\chi_2, \chi) \boxtimes  \pi(\chi\chi_2, \chi)) \]
 which contains $\Pi$ as its unique irreducible submodule. By Prop. \ref{P:Hom3}(a), one has
\[ \Hom_{\GSO(V)}(\Omega, I_{P(J)}(\chi_1, \pi(\chi\chi_2, \chi) \boxtimes
 \pi(\chi\chi_2, \chi)))  = I_{Q(Z)}(\chi_1^{-1},  \pi(\chi\chi_1\chi_2,\chi\chi_1) \boxtimes
  \pi(\chi\chi_1\chi_2,\chi\chi_1))^*. \]
 It follows from this that
 \[  \theta(\Pi) \subseteq \pi \quad \text{and} \quad \Theta(\pi) \ne 0. \]
 Hence we have the desired equality $\theta(\pi) = \Pi$ in general.
 \vskip 10pt

Finally, we need to treat the case when
\[   \pi \hookrightarrow   I_{Q(Z)}(\chi_1, \chi \cdot  |-|^{-1/2})  \]
as the unique irreducible submodule, as given in \ref{SSS:NDS}(c)(ii), so that
$|\chi_1| = |-|^{-s_1}$ with $s_1 \geq 1$.
Application of Prop. \ref{P:Hom1} shows that $\Theta(\pi)$ is a quotient of
\[  I_{P(J)}(\chi_1^{-1}, \chi_1\chi|-|^{-1/2} \boxtimes \chi_1\chi|-|^{-1/2}) =
\chi|-|^{-1/2}\cdot I_P(1, \chi_1) \boxtimes \chi_1\chi^2|-|^{-1}.
 \]
But $ I_P(1, \chi_1)$ is a quotient of $I_{B_0}(|-|^{1/2}, |-|^{-1/2}, \chi_1|-|^{1/2},\chi_1|-|^{-1/2})$ which is a standard module. This shows that
\[  \theta(\pi) \subset \Pi :=  \chi|-|^{-1/2} J_{B_0}(|-|^{1/2}, |-|^{-1/2}, \chi_1|-|^{1/2},\chi_1|-|^{-1/2}) \boxtimes \chi_1\chi^2 |-|^{-1}. \]
On the other hand, since
\[  \Pi \hookrightarrow
I_{P(J)}(\chi_1,    \chi|-|^{-1/2} \boxtimes   \chi |-|^{-1/2}) \]
and $\chi_1 \ne 1$, one may apply Prop. \ref{P:Hom3}(a) to conclude that $\theta(\pi) = \Pi$.
 This completes the proof of Thm. \ref{T:local-theta}.
\qed

\vskip 15pt

\section{\bf Some Corollaries}  \label{S:corollaries}

We note some corollaries of our explicit determination of theta correspondences.
\vskip 5pt

The following is an immediate consequence of Thms. \ref{T:local-theta-SO(D)}, \ref{T:local-theta-GSO(2,2)} and \ref{T:local-theta}.
\vskip 5pt

\begin{Cor}  \label{C:dichotomy}
 The dichotomy statement of Theorem \ref{T:dichotomy} holds for all irreducible representations of $\GSp_4(F)$.
\end{Cor}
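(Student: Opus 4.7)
The plan is to combine the full characterizations obtained in Theorem \ref{T:local-theta-SO(D)} and Theorem \ref{T:local-theta}, which together pin down exactly which irreducible representations of $\GSp_4(F)$ lift nontrivially to $\GSO_{4,0}$ and which lift to $\GSO_{3,3}$. Concretely, I would argue that the two subsets
\[
 \mathcal{A} = \{\pi \in \Pi(\GSp_4) : \Theta_{D,W}(\pi) \ne 0 \text{ with $D$ the division quaternion}\}
 \quad \text{and} \quad
 \mathcal{B} = \{\pi \in \Pi(\GSp_4) : \Theta_{W,V}(\pi) \ne 0\}
\]
form a partition of $\Pi(\GSp_4)$, thereby extending Theorem \ref{T:dichotomy} from the supercuspidal case to all irreducibles.

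First I would identify $\mathcal{A}$. By Theorem \ref{T:local-theta-SO(D)}(iv), the map $\tau_1^D \boxtimes \tau_2^D \mapsto \Theta(\tau_1^D \boxtimes \tau_2^D)$ gives a bijection between $\Pi(\GSO(D))/\sim$ and $\Pi(\GSp_4)^{temp}_{ng}$. Combined with Lemma \ref{L:similitude1}(i), this shows that an irreducible $\pi$ of $\GSp_4$ lies in $\mathcal{A}$ if and only if it appears as the theta lift of some representation of $\GSO(D)$, i.e.\ if and only if $\pi \in \Pi(\GSp_4)^{temp}_{ng}$. Hence $\mathcal{A} = \Pi(\GSp_4)^{temp}_{ng}$.

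Next I would identify $\mathcal{B}$ using Theorem \ref{T:local-theta}(i), which says precisely that $\theta(\pi) \ne 0$ (equivalently $\Theta(\pi) \ne 0$) if and only if $\pi \notin \Pi(\GSp_4)^{temp}_{ng}$. Therefore $\mathcal{B} = \Pi(\GSp_4) \smallsetminus \Pi(\GSp_4)^{temp}_{ng}$.

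Putting these two equalities together, one immediately reads off
\[
 \mathcal{A} \cap \mathcal{B} = \emptyset
 \quad \text{and} \quad
 \mathcal{A} \cup \mathcal{B} = \Pi(\GSp_4),
\]
which is exactly the dichotomy of Theorem \ref{T:dichotomy} extended to all irreducible representations of $\GSp_4(F)$. There is no real obstacle here: the genuine work has already been done in establishing Theorems \ref{T:local-theta-SO(D)} and \ref{T:local-theta}, and the only thing to verify is that the characterizations of the images and domains given there are compatible set-theoretic complements inside $\Pi(\GSp_4)$, which is transparent from their statements.
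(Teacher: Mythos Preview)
Your argument is correct and matches the paper's approach, which simply records the corollary as an immediate consequence of Theorems \ref{T:local-theta-SO(D)}, \ref{T:local-theta-GSO(2,2)}, and \ref{T:local-theta}. One small correction: the appeal to Lemma \ref{L:similitude1}(i) is misplaced, as that lemma compares isometry and similitude theta lifts rather than establishing the equivalence you need. To conclude $\mathcal{A} \subseteq \Pi(\GSp_4)^{temp}_{ng}$, i.e.\ that a $\pi$ with nonzero theta lift to $\GSO(D)$ actually lies in the image of the lift from $\GSO(D)$, you should instead use the irreducibility in Theorem \ref{T:local-theta-SO(D)}(i): if $\Theta(\pi) \ne 0$ on $\GSO(D)$, pick any irreducible quotient $\sigma$; then $\pi$ is a quotient of $\Theta(\sigma)$, which is irreducible by (i), so $\pi = \Theta(\sigma)$ and hence $\pi \in \Pi(\GSp_4)^{temp}_{ng}$ by (iv).
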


\vskip 10pt

The following result was stated in [GT1, Thm. 5.6(iv)]:
\vskip 5pt

 \begin{Cor} \label{C:comp}
Let $\pi$ be an irreducible representation of $\GSp_4(F)$ with central character $\mu$ and suppose that
$\pi$ participates in the theta correspondence with $\GSO(V_2)$, so that
\[  \pi = \theta(\tau_1 \boxtimes \tau_2) = \theta(\tau_2 \boxtimes \tau_1). \]
Let $\Pi \boxtimes \mu$ be the small theta lift of $\pi$ to $\GSO(V)$, with $\Pi$ a representation of
$\GL_4(F)$. Then we have the following equality of $L$-parameters for $\GL_4 \times \GL_1$:
\[  \phi_{\Pi} \times \mu  = (\phi_{\tau_1} \oplus \phi_{\tau_2}) \times \mu. \]
\end{Cor}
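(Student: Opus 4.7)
The plan is to prove Corollary \ref{C:comp} by a case-by-case verification, using the explicit determination of $\pi = \theta(\tau_1 \boxtimes \tau_2)$ afforded by Theorem \ref{T:local-theta-GSO(2,2)}, and then applying the appropriate part of Theorem \ref{T:local-theta} to compute $\theta(\pi) = \Pi \boxtimes \mu$. In each case, the identity $\phi_\Pi = \phi_{\tau_1} \oplus \phi_{\tau_2}$ will follow by direct inspection of the Langlands parameters of parabolically induced and Langlands quotient representations of $\GL_4(F)$, while the central character identity $\mu = \omega_{\tau_1} = \omega_{\tau_2}$ is already built into the formulas of Theorem \ref{T:local-theta}.

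For cases (i)--(iv) of Theorem \ref{T:local-theta-GSO(2,2)}, the representation $\pi$ is either supercuspidal, the generic tempered non-discrete series representation $\pi_{gen}(\tau)$, or a generalized Steinberg representation $St(\tau,\mu)$. In all four subcases, Theorem \ref{T:local-theta}---parts (iii), (iv), and (vi)(a) in the exceptional tempered subcase---produces $\Pi = I_P(\sigma_1, \sigma_2)$ as a genuine parabolic induction in $\GL_4(F)$, whose $L$-parameter is manifestly $\phi_{\sigma_1} \oplus \phi_{\sigma_2}$. A one-line substitution identifies $\{\sigma_1, \sigma_2\}$ with $\{\tau_1, \tau_2\}$: for instance, in case (iv) with $\tau_i = st_{\chi_i}$ and $\pi = St(st_{\chi_1/\chi_2}, \chi_2)$, Theorem \ref{T:local-theta}(iv) gives $\theta(\pi) = I_P(st_{\chi_1/\chi_2} \otimes \chi_2,\, st \otimes \chi_2) \boxtimes \chi_2^2 = I_P(st_{\chi_1}, st_{\chi_2}) \boxtimes \mu$; the remaining three subcases are entirely analogous.

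The remaining cases (v) and (vi) of Theorem \ref{T:local-theta-GSO(2,2)}, where $\pi$ is a non-discrete series Langlands quotient $J_{P(Y)}(\ldots)$ or $J_B(\ldots)$, require a preliminary conversion. Using the intertwining operators of Lemmas \ref{L:Q(Z)}(c) and \ref{L:P(Y)}(c), one rewrites $\pi$ as the unique irreducible submodule of a suitable standard module fitting the hypothesis of Theorem \ref{T:local-theta}(vi)(b) or (c). For example, in case (v), $\pi = J_{P(Y)}(\tau_1 \otimes \chi^{-1}, \chi)$ becomes the unique submodule of $I_{P(Y)}(\tau_1^\vee \otimes \chi, \chi')$, which fits the submodule form of Theorem \ref{T:local-theta}(vi)(b) since $|\omega_{\tau_1^\vee \otimes \chi}| = |\chi/\chi'| = |-|^{-s}$. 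Then Theorem \ref{T:local-theta}(vi)(b) gives $\Pi = J_Q(1, \tau_1^\vee \otimes \chi, \chi^2/\omega_{\tau_1}) \cdot \chi'$, whose $L$-parameter is $\chi' \oplus \omega_{\tau_1} \phi_{\tau_1^\vee} \oplus \chi = \chi' \oplus \phi_{\tau_1} \oplus \chi = \phi_{\tau_1} \oplus \phi_{\tau_2}$, where we use $\chi\chi' = \omega_{\tau_1}$ together with $\omega_{\tau_1} \phi_{\tau_1^\vee} = \phi_{\tau_1}$ and $\phi_{\tau_2} = \chi \oplus \chi'$. Case (vi) is handled analogously via Theorem \ref{T:local-theta}(vi)(c) and the principal series Langlands quotient $J_{B_0}$. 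The only real obstacle is the bookkeeping of various twists: tracking the applications of the intertwining operators and confirming that central character relations such as $\chi_1\chi_1' = \chi_2\chi_2' = \omega_{\tau_1}$ force the four characters appearing in $\phi_\Pi$ to match those of $\phi_{\tau_1} \oplus \phi_{\tau_2}$, but no new ideas are needed beyond these straightforward manipulations.
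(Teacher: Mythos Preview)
Your proposal is correct and follows essentially the same case-by-case strategy as the paper, pairing Theorem \ref{T:local-theta-GSO(2,2)} with Theorem \ref{T:local-theta}. The only notable difference is in the handling of cases (i)--(iv) (i.e., when both $\tau_i$ are discrete series): rather than invoking three separate clauses of Theorem \ref{T:local-theta} as you do, the paper dispatches all of these at once by appealing directly to Proposition \ref{P:jacquet1} and Frobenius reciprocity to produce a nonzero map $\Omega \to \pi \boxtimes I_{P(J)}(1,\tau_1 \boxtimes \tau_2)$, and then uses the irreducibility of $I_P(\tau_1,\tau_2)$ (for $\tau_i$ discrete series) together with the irreducibility of $\theta(\pi)$ to conclude $\Pi = I_P(\tau_1,\tau_2)$. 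This is slightly more uniform, but since Theorem \ref{T:local-theta} is itself proved via these Jacquet-module computations, the two arguments are equivalent in substance. For the non-discrete series cases (v) and (vi), your approach and the paper's are identical.
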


\begin{proof}
Suppose first that $\pi$ is a discrete series representation so that $\tau_i$ is also discrete series.
By Thm. \ref{T:local-theta}, we know that $\theta(\pi)$ is irreducible. By Prop. \ref{P:jacquet1} and Frobenius reciprocity, we see that there is a nonzero map
\[  \Omega \longrightarrow \pi \boxtimes I_P(\tau_1,\tau_2). \]
Since $I_P(\tau_1,\tau_2)$ is irreducible, we see that $\Pi = I_P(\tau_1,\tau_2)$ and we have the desired equality of $L$-parameters.
\vskip 5pt

If $\pi$ is not a discrete series representation, then $\pi$ is of the type occurring in Thm. \ref{T:local-theta-GSO(2,2)}(v) or (vi). On the other hand, we can determine $\Pi$ from Thm. \ref{T:local-theta}(vi)(b) or (c). Let us illustrate this for the case when $\pi$ is as in Thm. \ref{T:local-theta-GSO(2,2)}(vi), so that
\[  \pi = \theta(\tau_1 \boxtimes \tau_2) = J_B(\chi'_2/ \chi_1, \chi_2/\chi_1;\chi_1), \]
and $\tau_i \hookrightarrow \pi(\chi_i, \chi_i')$ is non-discrete series. Then
\[  \phi_{\tau_1} \oplus \phi_{\tau_2} = \chi_1 \oplus \chi_1' \oplus \chi_2 \oplus \chi_2'. \]
On the other hand, since
\[  J_B(\chi'_2/ \chi_1, \chi_2/\chi_1;\chi_1) \hookrightarrow I_B(\chi_1/\chi_2', \chi_1/\chi_2; \chi_1'),\]
it follows by Thm. \ref{T:local-theta}(vi)(c) that
\[  \Pi = J_B(\chi_1', \chi_2', \chi_2, \chi_1) \]
and so
\[  \phi_{\Pi} = \chi_1 \oplus \chi_1' \oplus \chi_2 \oplus \chi_2' \]
as well. This proves the corollary.
\end{proof}
\vskip 10pt

Now we consider the theta lifts of unramified representations. The dual group of $\GSp_4$ is $\GSp_4(\C)$ while that of $\GSO(V) = \GSO_{3,3}$ is a subgroup of $\GL_4(\C) \times \GL_1(\C)$.
There is a natural embedding of dual groups
\[  \iota: \GSp_4(\C) \hookrightarrow \GSO(V)^{\vee} \subset \GL_4(\C) \times \GL_1(\C), \]
where the first projection is given by the tautological embedding and the second projection is given by the similitude character.
The following corollary gives the lifting of unramified representations in terms of their Satake parameters. It was stated in [GT1, Prop. 3.4].

\vskip 5pt

\begin{Cor}  \label{C:unram}
 Let $\pi = \pi(s)$ be an unramified representation of $\GSp(W)$ corresponding to the semisimple class $s \in \GSp_4(\C)$. Then $\theta(\pi(s))$ is the unramified representation of $\GSO(V)$ corresponding to the semisimple class $\iota(s) \in \GL_4(\C) \times \GL_1(\C)$.
\end{Cor}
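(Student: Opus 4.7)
The plan is to deduce Corollary \ref{C:unram} directly from the explicit formula of Theorem \ref{T:local-theta}(vi)(c). Every unramified representation of $\GSp_4(F)$ is an unramified constituent of a principal series $I_B(\chi_1,\chi_2;\chi)$ with all three inducing characters unramified, and such a representation necessarily lies in the non-discrete-series family \ref{SSS:NDS}(c) to which that formula applies. So the theta lift can simply be read off, with no further theta-correspondence input needed.

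Concretely, I would first use the Weyl group to arrange $\chi_1,\chi_2$ so that $|\chi_1|=|-|^{-s_1}$ and $|\chi_2|=|-|^{-s_2}$ with $s_1\geq s_2\geq 0$; then $\pi(s)$ sits inside $I_B(\chi_1,\chi_2;\chi)$ as its unique irreducible submodule (Proposition \ref{P:NDS}(c)), and Theorem \ref{T:local-theta}(vi)(c) gives
\[
\theta(\pi(s)) \;=\; \chi\cdot J_{B_0}(1,\chi_2,\chi_1,\chi_1\chi_2)\boxtimes\chi^2\chi_1\chi_2
\;=\; J_{B_0}(\chi,\chi\chi_2,\chi\chi_1,\chi\chi_1\chi_2)\boxtimes\chi^2\chi_1\chi_2.
\]
This is manifestly the unique unramified constituent of a principal series of $\GL_4(F)\times\GL_1(F)$ induced from unramified characters, and its Satake parameter in $\GL_4(\C)\times\GL_1(\C)$ is the pair consisting of the diagonal matrix with entries $\chi(\varpi),\,\chi(\varpi)\chi_2(\varpi),\,\chi(\varpi)\chi_1(\varpi),\,\chi(\varpi)\chi_1(\varpi)\chi_2(\varpi)$ together with the scalar $\chi(\varpi)^2\chi_1(\varpi)\chi_2(\varpi)$.

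To finish, I would check that this equals $\iota(s)$. Under the identification $T\cong\GL(F\cdot f_1)\times\GL(F\cdot f_2)\times\GL_1$ used in the paper, the Satake parameter $s\in\GSp_4(\C)$ of $\pi(s)$ is by definition the element whose standard four-dimensional representation has exactly the four eigenvalues above and whose similitude factor is $\chi(\varpi)^2\chi_1(\varpi)\chi_2(\varpi)$; since $\iota$ pairs the tautological four-dimensional embedding with the similitude character, $\iota(s)$ is exactly the pair just computed. The main (and essentially only) difficulty is the bookkeeping needed to fix compatible conventions for Satake parameters on the $\GSp_4$ and $\GSO_{3,3}$ sides so that they line up under $\iota$; once these normalizations are pinned down, the corollary is immediate.
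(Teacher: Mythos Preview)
Your proposal is correct and follows essentially the same approach as the paper's own proof: both write $\pi(s)\hookrightarrow I_B(\chi_1,\chi_2;\chi)$ with unramified characters, invoke Theorem \ref{T:local-theta}(vi)(c) to identify $\theta(\pi(s))$ as $\chi\cdot J_{B_0}(1,\chi_2,\chi_1,\chi_1\chi_2)\boxtimes\chi^2\chi_1\chi_2$, and then match Satake parameters under $\iota$. Your write-up is simply a more expanded version of the paper's terse argument, and the only implicit point in both---that the unramified constituent coincides with the Langlands submodule in the dual realization---is standard.
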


\begin{proof}
If $\pi(s) \hookrightarrow I_B(\chi_1,\chi_2;\chi)$ and we set $\chi_i(\varpi) = t_i$ and $\chi(\varpi) = \nu$,  then
\[  s = \left( \begin{array}{cccc}
\nu t_1 t_2 & & & \\
& \nu  t_1 & & \\
& & \nu t_2 & \\
&  &  &  \nu  \end{array} \right). \]
The unramified representation of $\GL_4(F) \times \GL_1(F)$ with Satake parameter $\iota(s)$ is the unramified constituent of
\[   \chi \cdot I_B(1,\chi_2 ,\chi_1, \chi_1\chi_2) \boxtimes \chi^2 \chi_1 \chi_2. \]
The corollary follows by Thm. \ref{T:local-theta}(vi)(c).
\end{proof}
\vskip 10pt

\section{\bf $L$-parameters and Genericity.}

 In [GT1], using Theorem \ref{T:summary} in the introduction, the local Langlands correspondence for $\GSp_4$ was obtained from that for $\GL_2$ and $\GL_4$.
Given an irreducible representation $\pi$ of $\GSp_4(F)$, we briefly recall how one obtains its $L$-parameter:
\[  \phi_{\pi} : WD_F = W_F \times \SL_2(\C) \longrightarrow \GSp_4(\C) \]
where $WD_F$ (resp.  $W_F$) denotes the Weil-Deligne (resp. Weil) group of $F$.

\vskip 5pt
Firstly, we note that
in [GT1, Lemma 6.1], it was shown that the embedding
\[  \iota: \GSp_4(\C) \hookrightarrow  \GL_4(\C) \times \GL_1(\C) \]
induces an injection
\[  \Phi(\GSp_4) \hookrightarrow \Phi(\GL_4) \times \Phi(\GL_1) \]
where $\Pi(G)$ denotes the set of $L$-parameters of $G$.
 In particular,  $\phi_{\pi}$ can be specified by describing it as a 4-dimensional representation of $WD_F$ and giving its similitude character $\simi \phi_{\pi}$.

 \vskip 10pt
The following describes the construction of $\phi_{\pi}$:
\vskip 5pt

\begin{itemize}
\item[(a)] Suppose that $\pi$ participates in the theta correspondence with $\GSO(D)$, where $D$ is possibly split. Then we have:
\[  \pi = \theta_{D,W}(\tau_1^D \boxtimes \tau_2^D), \]
where $\tau_1^D$ and $\tau_2^D$ have the same central characters.
Let $\phi_i$ denote the $L$-parameter of the Jacquet-Langlands lift of $\tau_i^D$ to $\GL_2(F)$. Then
one sets:
\[  \phi_{\pi} =  \phi_1 \oplus \phi_2 \quad \text{with $\simi \phi_{\pi} = \det \phi_1 = \det \phi_2$.} \]
\vskip 5pt

\item[(b)] Suppose that $\pi$ participates in the theta correspondence with $\GSO(V) = \GSO_{3,3}(F)$. Then we have
\[  \theta_{W,V}(\pi) = \Pi \boxtimes \mu \]
for an irreducible representation $\Pi$ of $\GL_4(F)$ and $\mu = \omega_{\pi}$ is such that $\omega_{\Pi} = \mu^2$. One then sets:
\[  \phi_{\pi} = \phi_{\Pi}  \quad \text{with $\simi \phi_{\pi}  = \mu$.} \]
\end{itemize}
\vskip 10pt

Now by our explicit determination of local theta correspondence, we can explicitly write down the $L$-parameter of any non-supercuspidal representation. This is given in the following proposition.
\vskip 5pt

\begin{Prop} \label{P:L-par}
Let $S_n$ denote the $n$-dimensional irreducible representation of $\SL_2(\C)$. The $L$-parameter of
a non-supercuspidal representation $\pi$ of $\GSp_4$ can be given as follows.
\vskip 5pt

(i) If $\pi = St(\chi,\tau)$, then
\[  \phi_{\pi} = \phi_{\tau} \boxtimes S_2: W_F \times \SL_2(\C) \longrightarrow \GO_2(\C) \times \SL_2(\C) \longrightarrow \GSp_4(\C),\]
with similitude factor $\omega_{\tau} \cdot \chi$, so that the composite
\[  W_F \longrightarrow \GO_2(\C) \longrightarrow \GO_2(\C)/\GSO_2(\C) \cong \{ \pm 1\} \]
is the quadratic character $\chi$.
\vskip 5pt

(ii) If $\pi = St(\tau,\mu)$, then
\[  \phi_{\pi} = \mu \cdot \phi_{\tau} \oplus (\mu \boxtimes S_2): WD_F \longrightarrow
(\GL_2(\C) \times \GL_2(\C))^0 \longrightarrow \GSp_4(\C), \]
with $\simi \phi_{\pi} = \mu^2$.
\vskip 5pt

(iii) If $\pi = St_{\PGSp_4} \otimes \chi$ is a twisted  Steinberg representation, then
\[ \phi_{\pi} = \chi \boxtimes S_4: W_F \times \SL_2(\C) \longrightarrow \GSp_4(\C), \]
with $\simi \phi_{\pi}  = \chi^2$.
\vskip 5pt

(iv) If $\pi \hookrightarrow I_{Q(Z)}(\chi, \tau)$ as in Thm. \ref{T:local-theta}(vi)(a), then
\[  \phi_{\pi} = \phi_{\tau} \oplus \phi_{\tau} \cdot \chi : WD_F \longrightarrow M(\C) \longrightarrow \GSp_4(\C) \]
where $M(\C)$ is the Levi subgroup of the Siegel parabolic subgroup of $\GSp_4(\C)$, and
$\simi \phi_{\pi} = \chi \cdot \omega_{\tau}$.
\vskip 5pt

(v) If $\pi \hookrightarrow I_{P(Y)}(\tau,\chi)$ as in Thm. \ref{T:local-theta}(vi)(b), then
\[  \phi_{\pi} = \chi \oplus \chi \cdot \phi_{\tau} \oplus \chi \omega_{\tau}: WD_F \longrightarrow
L(\C) \longrightarrow \GSp_4(\C) \]
where $L(\C)$ is the Levi subgroup of the Klingen (or Heisenberg) parabolic subgroup of $\GSp_4(\C)$, and $\simi \phi_{\pi} = \chi^2 \cdot \omega_{\tau}$.
\vskip 5pt

(vi) If $\pi \hookrightarrow I_B(\chi_1, \chi_2; \chi)$ as in Thm. \ref{T:local-theta}(vi)(c), then
\[  \phi_{\pi} = \chi\chi_1\chi_2 \oplus \chi  \oplus \chi\chi_1 \oplus \chi\chi_2: WD_F \longrightarrow T(\C) \longrightarrow \GSp_4(\C), \]
where $T(\C)$ is the diagonal maximal torus of $\GSp_4(C)$ and $\simi \phi_{\pi} = \chi^2 \chi_1 \chi_2$.
\vskip 10pt

In particular, we see that $\phi_{\pi}$ is a discrete series parameter if and only if $\pi$ is a discrete series representation. Moreover, the map $\pi \mapsto \phi_{\pi}$ defines a bijective map
\[  \Pi(\GSp_4)_{NDS} \smallsetminus \Pi(\GSp_4)_{ng}^{temp} \longrightarrow \Phi(\GSp_4)_{NDS} \]
where the subscript {\em NDS} on both sides stand for ``non-discrete series".
\end{Prop}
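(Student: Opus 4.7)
My plan is to compute $\phi_\pi$ case by case by reading off the theta lift of $\pi$ from the theorems already proved. In every one of the six cases, $\pi$ is either generic, or, if tempered, lies outside $\Pi(\GSp_4)^{temp}_{ng}$; hence by the dichotomy (Corollary \ref{C:dichotomy}) together with the characterization of $\Pi(\GSp_4)^{temp}_{ng}$ given in Theorem \ref{T:local-theta-SO(D)}(iv), such a $\pi$ has a nonzero theta lift to $\GSO(V) = \GSO_{3,3}$. The parameter $\phi_\pi$ is then computed via recipe (b) above: writing $\theta(\pi) = \Pi \boxtimes \mu$ with $\Pi$ an irreducible representation of $\GL_4(F)$, we have $\phi_\pi = \phi_\Pi$ with similitude character $\mu$.

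For the discrete series cases (i)--(iii) I would invoke Theorem \ref{T:local-theta}(iv)--(v) to identify the lift. In (i) the lift is $\St(\tau) \boxtimes \omega_\tau \chi$, whose $\GL_4$-parameter is the generalized Steinberg $\phi_\tau \boxtimes S_2$; the hypothesis $\tau \otimes \chi \cong \tau$ forces $\phi_\tau$ to be induced from the quadratic extension of $F$ cut out by $\chi$, so its image sits in the normalizer $\operatorname{GO}_2(\C)$ of a maximal torus of $\GL_2(\C)$, and the tensor product with $S_2$ (symplectic) lands in $\GSp_4(\C)$ with similitude $\omega_\tau \chi$; the identity $\det \phi_\pi = \omega_\tau^2 = (\omega_\tau \chi)^2$ uses $\chi^2 = 1$. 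In (ii) the lift is the irreducible induced representation $I_P(\tau \otimes \mu, \st \otimes \mu) \boxtimes \mu^2$, and since the parameter of $\st \otimes \mu$ is $\mu \boxtimes S_2$, one obtains $\phi_\pi = \mu \phi_\tau \oplus (\mu \boxtimes S_2)$. In (iii) the lift is $\St_\chi \boxtimes \chi^2$ where $\St_\chi$ is the twisted Steinberg of $\GL_4$, whose parameter is $\chi \boxtimes S_4$.

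For the non-discrete series cases (iv)--(vi), I would apply Theorem \ref{T:local-theta}(vi), which presents the lift $\Pi$ as the Langlands quotient of a standard module on $\GL_4$ with explicit inducing data. The $L$-parameter of $\Pi$ is the direct sum of the inducing characters arranged in the Langlands ordering, which gives exactly the formulas in (iv), (v), and (vi); the similitude characters are read directly off the $\GL_1$-factors of the theta lifts.

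The main obstacle is the ``In particular'' clause. One checks directly that in (i)--(iii) the parameter is either irreducible as a 4-dimensional representation of $WD_F$ (cases (i) and (iii)) or a multiplicity-free sum of two inequivalent 2-dimensional irreducibles with matching determinants (case (ii), which uses the hypothesis $\tau \ne \st$ to guarantee $\mu \phi_\tau \not\cong \mu \boxtimes S_2$), so does not factor through any proper Levi of $\GSp_4(\C)$; conversely in (iv)--(vi) the parameter visibly factors through the Siegel Levi, Klingen Levi, or diagonal torus respectively, hence is non-discrete. The bijection onto $\Phi(\GSp_4)_{NDS}$ then follows by comparing the inducing data in (iv)--(vi) with the enumeration of non-discrete series parameters of $\GSp_4$; the only subtlety is the endoscopic situation of case (iv) with $\chi = 1$ and $\tau$ discrete series, where $\pi_{gen}(\tau)$ and $\pi_{ng}(\tau)$ share the parameter $\phi_\tau \oplus \phi_\tau$, but the exclusion of $\Pi(\GSp_4)^{temp}_{ng}$ from the domain removes $\pi_{ng}(\tau)$ and restores injectivity.
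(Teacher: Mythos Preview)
Your approach is exactly the one the paper intends: the paper gives no explicit argument for this proposition, simply presenting it as the translation of Theorems \ref{T:local-theta-SO(D)}, \ref{T:local-theta-GSO(2,2)}, and \ref{T:local-theta} through the recipes (a) and (b) that precede it. Your case-by-case reading of $\theta(\pi)$ from Theorem \ref{T:local-theta}(iv)--(vi) and the identification of the resulting $\GL_4 \times \GL_1$ parameters is correct.

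There is one genuine oversight. Your opening claim that in every case $\pi$ lies outside $\Pi(\GSp_4)^{temp}_{ng}$ is false: case (iv) with $\chi = 1$ and $\tau$ discrete series includes $\pi_{ng}(\tau)$, which \emph{is} in $\Pi(\GSp_4)^{temp}_{ng}$ and therefore has $\theta_{W,V}(\pi_{ng}(\tau)) = 0$ by Theorem \ref{T:local-theta}(i). Recipe (b) is unavailable for this representation. You must instead invoke recipe (a): by Theorem \ref{T:local-theta-SO(D)}(ii), $\pi_{ng}(\tau) = \Theta(\tau_D \boxtimes \tau_D)$ with $JL(\tau_D) = \tau$, so recipe (a) assigns it the parameter $\phi_\tau \oplus \phi_\tau$ with similitude $\omega_\tau$, which agrees with the formula in (iv) at $\chi = 1$. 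You clearly \emph{know} about $\pi_{ng}(\tau)$ from your closing remark on the bijection, but the body of the proposition asserts a parameter for every non-supercuspidal $\pi$, so this case must be handled, not just excluded.
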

\vskip 10pt

The reader can easily verify that the above $L$-parameters agree with the prescription given in
[RS, \S A.5].

\vskip 5pt

Finally, the following proposition was used in the proof of [GT1, Main Theorem (vii)], which relates the genericity of $\pi$ with its adjoint $L$-factor.  A proof of this was also given in [AS], but our verification is more concise.
\vskip 10pt

\begin{Prop} \label{P:generic}
For the $L$-parameters $\phi$ described in Prop. \ref{P:L-par}, the adjoint $L$-factor $L(s, Ad \circ \phi)$ is holomorphic at $s =1$ if and only if the $L$-packet $L_{\phi}$ contains a generic representation.
\end{Prop}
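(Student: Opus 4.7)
The plan is a direct case-by-case verification across the six types of Langlands parameter listed in Proposition \ref{P:L-par}. For each $\phi$, I will (a) read off the $L$-packet $L_\phi$ and identify its generic members, using the LLC for $\GSp_4$ from [GT1], the classification of non-supercuspidal representations in Section \ref{SS:non-super}, and the Whittaker module analysis of Section \ref{S:Whit}; and (b) compute the trivial subrepresentations of $Ad\circ\phi$ (whose multiplicity equals the pole order of $L(s,Ad\circ\phi)$ at $s=1$) using the identification of the adjoint representation of $\GSp_4(\C)$ on its $10$-dimensional Lie algebra $\mathfrak{pgsp}_4(\C)$.

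In the discrete-series cases (i)--(iii), both sides of the biconditional hold. The generic generalized Steinberg representations sit in the packet by Lemmas \ref{L:Q(Z)} and \ref{L:P(Y)}, and decomposing $Ad\circ\phi$ shows the absence of the trivial subrepresentation of $WD_F$: in (i), the $S_3$ tensor factor together with the non-trivial quadratic character $\chi$ rule out any trivial constituent; in (ii), the assumption $\tau\ne st$ ensures that $\phi_\tau\otimes S_2$ does not contain the trivial representation (this is precisely where the condition $\tau\ne st$ becomes essential); in (iii), the decomposition $\operatorname{Sym}^2 S_4=S_7\oplus S_3$ is already trivial-free.

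For the non-discrete series cases (iv)--(vi), I proceed sub-case by sub-case, following Proposition \ref{P:NDS}. Generic members of $L_\phi$ are detected via Corollaries \ref{C:Whit1} and \ref{C:Whit2}, combined with the reducibility structure of the relevant principal series (Lemmas \ref{L:Q(Z)} and \ref{L:P(Y)}), while the pole order of $L(s,Ad\circ\phi)$ is computed by counting the coincidences among the diagonal characters of $\phi$ (beyond those forced by the symplectic constraint $\phi\colon WD_F\to\GSp_4(\C)$). The two sides of the biconditional then match, sub-case by sub-case: non-generic Langlands quotients such as $Sp(\chi_0,\tau_0)$ and $\pi_{ng}(\tau)$ correspond exactly to the parameters in which $Ad\circ\phi$ acquires an extra trivial constituent.

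The main obstacle will be the careful enumeration of sub-cases in Case~(vi), where degeneracies such as $\chi_2|-|^{-s_2}=|-|^{-1}$ must be singled out and the interaction between the trivial subrepresentations of $Ad\circ\phi$ and the presence of a non-generic Langlands quotient has to be tracked. As a sanity check, the resulting table of $L$-parameters agrees with the one given in [RS, \S A.5], which gives independent confirmation of the bookkeeping.
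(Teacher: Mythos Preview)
Your step (b) rests on an incorrect criterion: the multiplicity of the trivial representation of $WD_F$ in $Ad\circ\phi$ governs the pole of $L(s,Ad\circ\phi)$ at $s=0$, not at $s=1$. For an irreducible constituent $\rho\boxtimes S_r$ one has $L(s,\rho\boxtimes S_r)=L(s+(r-1)/2,\rho)$, so a pole at $s=1$ occurs exactly when $\rho$ contains the unramified character $|-|^{-(r+1)/2}$. This is the criterion the paper isolates as observation (b) and uses throughout.

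Your criterion happens to give the right answer in the discrete-series cases (i)--(iii), since there the parameters are essentially bounded and neither the trivial character nor any $|-|^{-(r+1)/2}$ occurs. But it fails systematically in cases (iv)--(vi). In case (iv), $Ad\circ\phi_\pi$ always contains $\phi_\tau\otimes\phi_\tau^\vee\supset\mathbf{1}$, so your criterion predicts a pole at $s=1$ in every instance and hence that no $\pi$ of type (iv) is generic --- plainly false when $I_{Q(Z)}(\chi,\tau)$ is irreducible. Likewise in case (vi) with $\chi_1=\chi_2=1$, $Ad\circ\phi$ is a sum of ten copies of the trivial character, yet $L(s,Ad\circ\phi)=\zeta_F(s)^{10}$ is holomorphic at $s=1$ and $\pi$ is generic. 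Your remark about counting coincidences ``beyond those forced by the symplectic constraint'' does not repair this: the extra coincidences still detect trivial constituents, not occurrences of $|-|^{-1}$.

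The paper's argument also differs on the genericity side. Rather than invoking the Whittaker-module corollaries directly, it uses the standard module conjecture for $\GSp_4$ (its observation (c)) to identify genericity of $\pi$ with irreducibility of the ambient principal series, and then matches the reducibility loci in Lemmas \ref{L:Q(Z)} and \ref{L:P(Y)} against the correct pole condition case by case.
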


\begin{proof}
This is a simple calculation using Prop. \ref{P:L-par} and the knowledge of reducibility points of various principal series. But let us make a few simple observations:
\vskip 5pt

\begin{itemize}
\item[(a)]  If $\phi: WD_F \longrightarrow \GSp_4(\C)$ is an $L$-parameter, then $Ad \circ \phi = \Sym^2 \phi \otimes \simi(\phi)^{-1}$;
\vskip 5pt

\item[(b)] If $\rho \boxtimes S_r$ is a representation of $W_F \times \SL_2(\C)$, then $L(s, \rho \boxtimes S_r)
= L(s+ (r-1)/2, \rho)$. Thus $L(s, \rho \boxtimes S_r)$ has a pole at $s = 1$ if and only if
$\rho$ contains the unramified character $|-|^{-(r+1)/2}$ (regarded as a character of $W_F$)
as a constituent.
\vskip 5pt

\item[(c)] In the context of Prop. \ref{P:L-par}(iv), (v) and (vi) (but with $\chi \ne 1$ in case (iv)), the $L$-packet
for $\phi_{\pi}$ is a singleton set containing only $\pi$. Moreover, $\pi$ is generic precisely when the relevant principal series containing $\pi$ as the unique irreducible submodule is irreducible. This is because the standard module conjecture holds for $\GSp_4$.
\end{itemize}
\vskip 15pt

Now we consider each case of Prop. \ref{P:L-par} in turn.
\vskip 5pt

\noindent (i) If $\pi = St(\chi,\tau)$,  which is generic, then by (a),
\[  Ad \circ \phi_{\pi} = (\chi \cdot Ad(\phi_{\tau}) \boxtimes S_3) \oplus (\chi \boxtimes S_1) \]
so that
\[  L(s, Ad \circ \phi_{\pi}) = L(s+1, Ad(\phi_{\tau}) \times \chi) \cdot L(s, \chi). \]
Since the only 1-dimensional characters in $Ad(\phi_{\tau})$ are precisely those quadratic $\chi_K$ such that $\tau \otimes \chi_K = \tau$, we see that $L(s, Ad \circ \phi_{\pi})$ is holomorphic at $s=1$ by (b).
\vskip 10pt

\noindent (ii) If $\pi = St(\tau, \mu)$, which is generic, then by (a),
\[  Ad \circ \phi_{\pi} = Ad(\phi_{\tau}) \oplus (1 \boxtimes S_3) \oplus (\phi_{\tau} \boxtimes S_2). \]
If $\tau$ is supercuspidal, then as in (i), the only characters contained in $Ad(\phi_{\tau})$ are quadratic. It follows by (b) that the adjoint $L$-factor is holomorphic at $s=1$. On the other hand, if
$\tau = st_{\chi}$ is a twisted Steinberg representation with $\chi$ a non-trivial quadratic character, then
\[  Ad \circ \phi_{\pi} = 2 \cdot (1 \boxtimes S_3) \oplus (\chi \boxtimes S_3) \oplus (\chi \boxtimes S_1). \]
It follows from (b) that the adjoint $L$-factor is holomorphic at $s=1$.
\vskip 10pt

\noindent (iii) If $\pi = St_{\PGSp_4} \otimes \chi$, which is generic, then by (a),
\[  Ad \circ \phi_{\pi} = (1 \boxtimes S_3) \oplus (1 \boxtimes S_7), \]
so that $L(s, Ad \circ \phi_{\pi}) = \zeta(s+1) \cdot \zeta(s+3)$, which is clearly holomorphic at $s = 1$.
\vskip 10pt

\noindent (iv) If $\pi \hookrightarrow I_{Q(Z)}(\chi ,\tau)$, where $|\chi| = |-|^{-s_0}$ with $s_0 \geq 0$, then
\[  Ad \circ \phi_{\pi} = \chi \cdot Ad(\phi_{\tau}) \oplus \chi^{-1} \cdot Ad(\phi_{\tau}) \oplus
(\phi_{\tau}  \otimes \phi_{\tau}^{\vee}). \]
If $\tau$ is supercuspidal, then it follows from (b) that the adjoint $L$-factor is non-holomorphic
at $s=1$ if and only if there is a quadratic character $\chi_0$ such that
\[  \tau \otimes \chi_0 = \tau \quad \text{and} \quad \chi \cdot \chi_0 = |-|^{-1}. \]
Similarly, when $\tau = st_{\mu}$ is a twisted Steinberg representation, then
\[  Ad \circ\phi_{\pi} = ((\chi \oplus 1 \oplus \chi^{-1}) \boxtimes S_3) \oplus (1 \boxtimes S_1). \]
By (b), it follows that the adjoint $L$-factor is non-holomorphic at $s=1$ precisely when
\[  \chi = |-|^{-2}. \]
Comparing this with Lemma \ref{L:Q(Z)} and taking note of (c), we see that when $\chi \ne 1$, $\pi$ is non-generic iff $I_{Q(Z)}(\chi, \tau)$ is reducible iff the adjoint $L$-factor is holomorphic at $s=1$. If $\chi =1$, then
$I_{Q(Z)}(1,\tau)$ is the sum of two representations which form an $L$-packet. This $L$-packet thus contains a generic element and the adjoint $L$-factor is indeed holomorphic at $s=1$.
\vskip 10pt

\noindent (v) If $\pi \hookrightarrow I_{P(Y)}(\tau, \chi)$, where $|\omega_{\tau}| = |-|^{-s_0}$ with
$s_0 \geq 0$, then  by (a),
 \[ Ad \circ \phi_{\pi} = \omega_{\tau} \oplus \omega_{\tau}^{-1}  \oplus \phi_{\tau} \oplus
 \phi_{\tau}^{-1} \oplus Ad(\phi_{\tau}) \oplus 1. \]
If $\tau$ is supercuspidal, it follows by (b) that the adjoint $L$-factor is non-holomorphic at $s =1$ precisely when
\[  \omega_{\tau} = |-|^{-1}. \]
Similarly, when $\tau = st_{\mu}$ is a twisted Steinberg representation, then
\[  Ad \circ \phi = ((\mu^2 \oplus \mu^{-2} \oplus 1) \boxtimes S_1)  \oplus (\mu \oplus \mu^{-1}) \boxtimes S_2) \oplus (1 \boxtimes S_3). \]
By (b), it follows that the adjoint $L$-factor is non-holomorphic at $s=1$ iff
\[  \mu^2 = |-|^{-1} \quad \text{or} \quad  \mu = |-|^{-3/2}. \]
In view of (c), a comparison with Lemma \ref{L:P(Y)} gives the desired result.
\vskip 10pt

\noindent (vi) If $\pi \hookrightarrow I_B(\chi_1, \chi_2 ; \chi)$, then it follows by (a) that
$Ad \circ \phi_{\pi}$ is the direct sum of the following 1-dimensional characters:
\[  \chi_1^{\pm 1},  \, \chi_2^{\pm 1}, \, (\chi_1\chi_2)^{\pm 1}, \, (\chi_1/\chi_2)^{\pm 1}, \, 1 \,
\text{(with multiplicity 2)}. \]
By (b), the adjoint $L$-factor is non-holomorphic at $s=1$ precisely when one of the above character is equal to $|-|^{-1}$. By [ST] (see also [RS, Pg. 37]), this is precisely when the induced representation $I_B(\chi_1,\chi_2;\chi)$ is reducible so that $\pi$ is non-generic. This proves the desired result.

\end{proof}
\vskip 15pt

\section{\bf Tables of Explicit Local Theta Correspondence.}  \label{S:table}
In this section, we  display the results of local theta correspondences in the form of tables.  Note however that, unlike Thms.  \ref{T:local-theta-SO(D)}, \ref{T:local-theta-GSO(2,2)} and \ref{T:local-theta},  we have described the representation $\pi$ in terms of the usual Langlands classification, so that $\pi$ is the unique irreducible quotient of a standard module.  So, for example, $J(\pi(\chi_1,\chi_2))$ stands for the unique irreducible quotient of the principal series representation $\pi(\chi_1,\chi_2)$ of $\GL_2(F)$.

\vskip 15pt
\newpage

\begin{table}
\centering \caption{Explicit theta lifts from $\GSp_4$}
\label{maintable} \vspace{-3ex}
$$
\renewcommand{\arraystretch}{1.5}
 \begin{array}{|c|c|c|c|c|c|c|}
  \hline&&\multicolumn{2}{|c|}{\mbox{$\pi$}}
   &\mbox{$\theta_{(3,3)}(\pi)$}
   &\mbox{$\theta_{(2,2)}(\pi)$}
   &\mbox{$\theta_{(4,0)}(\pi)$}\\\hline\hline
% Super cuspidal
   &\mbox{a}&\multicolumn{2}{|c|}{\mbox{not a lift from $\GO_{2,2}$ or $\GO_{4,0}$}}
   &\mbox{S.C.}
   &0
   &0\\ \cline{3-7}
   {\mbox{S.C.}}
   &\mbox{b}&\multicolumn{2}{|c|}{\mbox{$\theta(\tau_1\boxtimes\tau_2)$, $\tau_1\neq\tau_2$, both S.C.}}
   &I_P(\tau_1,\tau_2)\boxtimes\omega_{\tau_1}
   &\tau_1\boxtimes\tau_2
   &0\\  \cline{3-7}

   &\mbox{c}&\multicolumn{2}{|c|}{\mbox{$\theta(\tau^D_1\boxtimes\tau^D_2)$, $\tau^D_1
   \neq\tau^D_2$}}
    &0
   & 0
   &\tau^D_1\boxtimes\tau^D_2\\
  \hline
%  Discrete Series
   &\mbox{a}&\multicolumn{2}{|c|}{\mbox{$St(\chi,\tau)$}}
   &\mbox{$St(\tau)\boxtimes\omega_{\tau}\chi$}
   &0
   &0\\ \cline{3-7}
 \mbox{D.S.} &\mbox{b}&\multicolumn{2}{|c|}{\mbox{$St(\tau,\mu)$}}
   &\mbox{$I_P(\tau\cdot \mu, st\cdot  \mu)\boxtimes\mu^2$}
   & (\tau \cdot \mu)\boxtimes st_{\mu}
   &0\\ \cline{3-7}
  &\mbox{c}& \multicolumn{2}{|c|}{\mbox{$St_{\PGSp_4}\otimes\chi$}}
   &St_{\PGL_4}\boxtimes\chi^2
   &0
   &0\\\hline
% Non-Discrete series

  &\mbox{a}& \multicolumn{2}{|c|}{\mbox{$ J_{Q(Z)}(\chi,\tau)$, $\chi \ne 1$}}
   &  J_P(\tau \cdot\chi , \tau)\boxtimes\omega_{\tau}\chi
        &0
   &0\\ \cline{3-7}

      &\mbox{b}  & \raisebox{-0.5ex}[0.5ex]{$\pi \hookrightarrow$}
   & \pi = \pi_{gen}(\tau)
   &  J_P(\tau , \tau)\boxtimes\omega_{\tau}
  &\tau \boxtimes \tau
   &0 \\ \cline{4-5}\cline{6-7}

     \mbox{N.D.S.}  &\mbox{c}&  \raisebox{1.0ex}[-1.0ex]{$I_{Q(Z)}(1,\tau)$}
   &\pi=\pi_{ng}(\tau)
   & 0
   &0
   & \tau_D\boxtimes \tau_D \\ \cline{3-7}

   &\mbox{d}&\multicolumn{2}{|c|}{J_{P(Y)}(\tau,\chi)}
   &J_Q(\omega_{\tau},\tau,1)\cdot\chi\boxtimes\chi^2\omega_\tau
   & (\tau \cdot \chi) \boxtimes J(\pi(\omega_{\tau}\chi, \chi))
   &0\\ \cline{3-7}
   &&\multicolumn{2}{|c|}{}
   &\chi\cdot J_{B_0}(\chi_1\chi_2,\chi_1,\chi_2,1)
   & J(\pi(\chi\chi_1,\chi\chi_2))
   &\\
   &\raisebox{2.5ex}[-2.5ex]{\mbox{e}}
   &\multicolumn{2}{|c|}{\raisebox{2.5ex}[-2.5ex]{$J_B(\chi_1,\chi_2;\chi)$}}
   &\qquad\qquad\quad\boxtimes\chi^2\chi_1\chi_2
   &\quad \boxtimes J(\pi(\chi\chi_1\chi_2,\chi))
  &\raisebox{2.5ex}[-2.5ex]{0}
   \\\hline
\end{array}
$$
\end{table}

\quad\\
\quad\\

\begin{table}
\centering \caption{Explicit theta lifts from $\GSO_{2,2}$ to $\GSp_4$}
\label{maintable2} \vspace{-3ex}
$$
\renewcommand{\arraystretch}{1.5}
 \begin{array}{|c|c|c|}
  \hline&\mbox{$\tau_1\boxtimes\tau_2$}
   &\mbox{$\theta(\tau_1\boxtimes\tau_2) =\theta(\tau_2\boxtimes\tau_1) $}\\\hline\hline
   \mbox{a}& \tau_1=\tau_2=\tau=\text{D.S.}
   &\pi_{gen}(\tau)\\ \hline
   \mbox{b}&\mbox{$\tau_1 \ne \tau_2$ both S.C.}
   &\mbox{S.C.}\\ \hline
   \mbox{c}&\tau_1=\text{S.C.}, \quad  \tau_2=st_{\chi}
   &St(\tau_1\otimes\chi^{-1},\chi)\\ \hline
   \mbox{d}& \tau_1=st_{\chi_1},\quad   \tau_2=st_{\chi_2},\;\chi_1\neq\chi_2
   &St(st_{\chi_1/\chi_2},\chi_2)=St(st_{\chi_2/\chi_1},\chi_1)\\ \hline
   \mbox{e}&\tau_1=\text{D.S.},\quad \tau_2 = J(\pi(\chi',\chi))
   &J_{P(Y)}(\tau_1\otimes\chi^{-1},\chi) \\ \hline
   \mbox{f} &\tau_1 = J(\pi(\chi'_1,\chi_1)), \quad \tau_2 =  J(\pi(\chi'_2,\chi_2))
   &J_B(\chi_2'/\chi_1,\chi_2/ \chi_1;\chi_1)\\ \hline
\end{array}
$$
\end{table}

\begin{table}
\centering \caption{Explicit theta lifts from $\GSO_{4,0}$ to $\GSp_4$}
\label{maintable3} \vspace{-3ex}
$$
\renewcommand{\arraystretch}{1.5}
 \begin{array}{|c|c|c|}
  \hline&\mbox{$\tau_1^D\boxtimes\tau_2^D$}
   &\mbox{$\Theta(\tau_1^D\boxtimes\tau_2^D)=\theta(\tau_2^D\boxtimes\tau_1^D)$}\\\hline\hline
   \mbox{a}&\tau_1^D=\tau_2^D=\tau_D
   &\pi_{ng}(JL(\tau_D))\\ \hline
   \mbox{b}&\tau_1^D\neq\tau_2^D
   &\mbox{non-generic, \;S.C.}\\ \hline
\end{array}
$$
\end{table}

\newpage

\appendix\section{\bf Jacquet modules of the Weil representation}  \label{S:jacquet}
In this appendix, we give a detailed computation of the Jacquet module of the (induced) Weil representation. We deal not only with the dual pairs for the small ranks we worked with but for all  ranks. Hence throughout this appendix, $(V_m,(-,-))$ will denote a symmetric bilinear space with $\dim V_m=m$ even and $(W_n, \langle-,-\rangle)$ a symplectic space with $\dim W_n=2n$. And $\chi_{V}$ denotes the character of $V_m$ as usual. We fix a polarization
\[
    V_m=X_r+ V_{an}+ X_r^\ast
\]
of $V_m$ where $X_{an}$ is anisotropic and $X_r + X_r^\ast=\H^r$. We let $\{v_1,\dots,v_r\}$ (resp. $\{v_1^\ast,\dots,v_r^\ast\}$) be a basis of $X_r$ (resp. $X_r^\ast$) with $(v_i,v_j^\ast)=\delta_{ij}$. Also we fix a polarization
\[
    W_n=Y_n\oplus Y_n^\ast
\]
of $W_n$ and we let $\{e_1,\dots,e_n\}$ (resp. $\{e_1^\ast,\dots,e_n^\ast\}$) be a basis of $Y_n$ (resp. $Y_n^\ast$) with $\langle e_i,e_j^\ast\rangle=\delta_{ij}$.

Let
\[
    R_{m,n}=R=\GO(V_m)\times \GSp(W_n)^+
\]
and $\omega_{m,n}=\omega_{V_m W_n}$ be the Weil representation of
\[
    R_0=\{(h,g)\in R_{m,n}:\lambda_{V}(h)\cdot\lambda_{W}(g)=1\}.
\]
Note that in this appendix, we consider various subspaces of $V_m$ and $W_n$ and their similitude groups. For the similitude characters of those similitude groups, we always use the same symbols $\lambda_V$ and $\lambda_W$ because this will not produce any confusion. Also let
\[
    \Omega_{m,n}=\Omega_{V_m,W_n}=ind_{R_0}^R\omega_{m,n}
\]
be the induced Weil representation of $\GO(V_m)\times\GSp(W_n)$.

Now let
\[
    X_t=\Span\{v_1,\dots,v_t\}\text{ and }X_t^\ast=\Span\{v_1^\ast,\dots,v_t^\ast\}
\]
and write
\[
    V_m=X_t+V_{m_0}+X_t^\ast
\]
so that $V_{m_0}=V_{an}+\H^{r-t}$ and
\[
    \dim V_{m_0}=m_0=m-2t.
\]
Let $P(X_t)$ be the parabolic subgroup that stabilizes $X_t$. Then we write
\[
    P(X_t)=M(X_t)N(X_t)
\]
where $M(X_t)\cong\GL(X_t)\times\GO(V_{m_0})$ is the Levi part and $N(X_t)$ is the unipotent part. We use $[a,h]$ to denote an element in $M(X_t)$ where $(a,h)\in\GL(X_t)\times\GO(V_{m_0})$.

Next let
\[
    Y_k=\Span\{e_1,\dots,e_k\}\text{ and }Y_k=\Span\{e_1^\ast,\dots,e_k\}
\]
and we write
\[
    W_n=Y_k+W_{n_0}+Y_k^\ast
\]
so that
\[
    \dim W_{n_0}=2n_0=2(n-k).
\]
Let $Q(Y_k)=M(Y_k)N(Y_k)$ be the parabolic that stabilizes $Y_k$, so that $M(Y_k)\cong\GL(Y_k)\times\GSp(W_{n_0})$.
We use $[b,g]$ to denote an element in $M(Y_k)$ where $(b,g)\in\GL(Y_k)\times\GO(W_{n-k})$.

Then we compute the Jacquet modules $R_{P(X_t)}(\Omega_{m,n})$ and $R_{Q(Y_k)}(\Omega_{m,n})$ of $\Omega_{m,n}$ along the parabolic subgroups $P(X_t)$ and $Q(Y_k)$, respectively. For the former, assuming $t\geq k$, let
\[
    X_{t-k}=\Span\{v_{1},\dots,v_{t-k}\}\text{ and }X_{t-k}^\ast=\Span\{v_{1}^\ast,\dots,v_{t-k}^\ast\}
\]
and $P(X_{t-k},X_t)$ be the parabolic subgroup of $M(X_t)$ that preserves the flag
\[
    0\subseteq X_{t-k}\subseteq X_t\subseteq V_m,
\]
so that
\[
    P(X_{t-k}, X_t)\cong R(X_{t-k}, X_t)\times\GO(V_{m_0}),
\]
where $R(X_{t-k}, X_t)$ is the parabolic subgroup of $\GL(X_t)$ that preserves $0\subseteq X_{t-k}\subseteq X_t$, i.e.
\[
    R(X_{t-k}, X_t)=\{a=\begin{pmatrix}a_1&\ast\\&a_2\end{pmatrix}:a_1\in\GL(X_{t-k}), a_2\in\GL(X'_k)\},
\]
and
\[
    X'_{k}=\Span\{v_{t-k+1},\dots,v_t\}\text{ and }{X'_k}^\ast=\Span\{v_{t-k+1}^\ast,\dots,v_t^\ast\}.
\]
For the latter, assuming $k\geq t$, let
\[
     Y_{k-t}=\Span\{e_{1},\dots,e_{k-t}\}\text{ and }Y_{k-t}^\ast=\Span\{e_{1}^\ast,\dots,e_{k-t}^\ast\}
\]
and $Q(Y_{k-t}, Y_k)$ be the parabolic subgroup of $M(Y_k)$ that preserves the flag
\[
    0\subseteq Y_{k-t}\subseteq Y_k\subseteq W_n,
\]
so that
\[
    Q(Y_{k-t}, Y_k)\cong R(Y_{k-t}, Y_k)\times \GSp(W_{n_0}),
\]
where $R(Y_{k-t}, Y_k)$ is the parabolic subgroup of $\GL(Y_k)$ that preserves $0\subseteq Y_{k-t}\subseteq Y_k$, i.e.
\[
    R(Y_{k-t}, Y_k)=\{b=\begin{pmatrix}b_1&\ast\\&b_2\end{pmatrix}:b_1\in\GL(Y_{k-t}), b_2\in\GL(Y'_t)\},
\]
and
\[
    Y'_{t}=\Span\{e_{k-t+1},\dots,e_k\}\text{ and }{Y'_k}^\ast=\Span\{e_{k-t+1}^\ast,\dots,e_k^\ast\}.
\]
Then we have

\begin{Thm}\label{T:Jacquet_module1}
The normalized Jacquet module $R_{P(X_t)}(\Omega_{m,n})$ of the Weil representation $\Omega_{m,n}$ along the parabolic $P(X_t)$ has an $M(X_t)\times\GSp(W_n)$ invariant filtration
\[
    \{0\}\subseteq J^{(\min\{t,n\})}\subseteq\cdots\subseteq J^{(1)}\subseteq J^{(0)}=R_{P(X_t)}(\Omega_{m,n})
\]
with the successive quotient
\[
    J^k:=J^k/J^{k+1}\cong\Ind_{P(X_{t-k},X_t)\times Q(Y_k)^+}^{M(X_t)\times\GSp(W_n)^+}
(S(\Isom(X'_k,Y_k))\otimes\Omega_{m_0,n-k}),
\]
where $\Omega_{m_0,n-k}$ is the Weil representation of the group $\GO(V_{m_0})\times\GSp(W_{n-k})$, and the group $P(X_{t-k},X_t)\times Q(Y_k)^+$ acts on $S(\Isom(X'_k,Y_k))$ as follows: Let $\varphi(A)\in S(\Isom(X'_k,Y_k))$. Then each element $[\begin{pmatrix}a_1&\ast\\&a_2\end{pmatrix},h]\in P(X_{t-k},X_t)$ acts as
\[
    [\begin{pmatrix}a_1&\ast\\&a_2\end{pmatrix},h]\cdot\varphi(A)
    =\chi_V(\det a_2)|\lambda_{V}(h)|^{e_0}|\det a_1|^{e_1}\varphi(A a_2)
\]
where
\begin{align*}
    e_0&=-\frac{1}{4}(m-2t)k-\frac{1}{2}tn+\frac{1}{4}mt-\frac{1}{4}t(t+1)\\
    e_1&=n-\frac{1}{2}m+\frac{1}{2}t-\frac{1}{2}(k-1), \quad (\text{for $k < t$})
\end{align*}
and each element $[b,g]n\in Q(Y_k)$ acts as
\[
    [b,g]n\cdot\varphi(A)
    =|\lambda_{W}(g)|^{f_0}\varphi(\lambda_{W}(g)b^{-1}A),
\]
where
\begin{align*}
    f_0&=-\frac{1}{2}kn+\frac{1}{4}k(k-1).
\end{align*}
Note that the induction is normalized.
\end{Thm}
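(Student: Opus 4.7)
The plan is to adapt Kudla's geometric computation of the Jacquet modules of the Weil representation \cite{K} to the similitude setting. First I would work in the mixed Schr\"odinger model of $\omega_{m,n}$ attached to the polarization $V_m = X_t + V_{m_0} + X_t^*$, realizing it on $S(\Hom(X_t, W_n)) \otimes \omega_{m_0,n}$, where $\omega_{m_0,n}$ denotes the Weil representation of $\OO(V_{m_0}) \times \Sp(W_n)$. In this model, the action of the unipotent radical $N(X_t)$ is by characters depending only on the $\Hom(X_t, W_n)$-variable through the symplectic form on $W_n$. Consequently, the $N(X_t)$-coinvariants of $\omega_{m,n}$ are naturally supported on the closed subvariety $Z \subset \Hom(X_t, W_n)$ of linear maps whose image is totally isotropic in $W_n$.

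Next I would exploit the stratification of $Z$ by rank. The Levi subgroup $M(X_t) \cong \GL(X_t) \times \OO(V_{m_0})$ together with $\GSp(W_n)^+$ acts on $Z$ preserving the rank, and the stratum $Z_k$ of rank exactly $k$ forms a single orbit for $0 \le k \le \min(t,n)$. The stabilizer of a representative map with source $X_k'$ and image $Y_k$ is precisely $P(X_{t-k}, X_t) \times Q(Y_k)^+$. The filtration of $Z$ by unions of high-rank strata produces the filtration $\{J^{(k)}\}$ on $R_{N(X_t)}(\omega_{m,n})$; on the open piece of the $k$-th stratum, the Schwartz functions factor as $S(\Isom(X_k', Y_k))$ tensored with functions on a transverse slice, and this transverse slice, combined with $\omega_{m_0,n}$, reassembles into the Weil representation $\omega_{m_0, n-k}$ of the smaller dual pair $\OO(V_{m_0}) \times \Sp(W_{n-k})$. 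A standard compact-induction-from-stabilizer argument then identifies the $k$-th successive quotient with the claimed compactly induced representation. The statement for $\Omega_{m,n} = \ind_{R_0}^R \omega_{m,n}$ then follows, since the Jacquet functor commutes with compact induction.

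The main technical obstacle is the careful bookkeeping of the exponents $e_0$, $e_1$, and $f_0$ in the characters by which the various Levi factors act on the successive quotients. These assemble from four independent contributions: (i) the intrinsic Weil representation twist, which contributes $\chi_V(\det) \cdot |\det|^{n - (m-t)/2}$ to the $\GL(X_t)$-action in the mixed model; (ii) the normalization modulus $\delta_{P(X_t)}^{1/2}$ of the Jacquet functor, as well as the normalization modulus of the further induction $\Ind_{P(X_{t-k},X_t)}^{M(X_t)}$ relative to the Levi decomposition $\GL(X_t) \cong \GL(X_{t-k}) \times \GL(X_k')$; (iii) the similitude normalization $|\lambda_V(h)|^{-\dim V \cdot \dim W/8}$ built into the definition of $\omega$ on $R_0$ in Section \ref{S:theta}, together with the dual scaling in $\lambda_W(g)$; and (iv) change-of-variable Jacobians on $\Isom(X_k', Y_k)$ and on the transverse slice when localizing on the rank-$k$ stratum. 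Assembling these four sources yields precisely the exponents $e_0$, $e_1$, and $f_0$ stated in the theorem; the corresponding isometry computation is essentially \cite[Thm.~2.8]{K}, and the similitude extension simply tacks on the uniform scaling in $\lambda_V$ and $\lambda_W$ forced by the normalizations of Section \ref{S:theta}.
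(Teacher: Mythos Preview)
Your outline follows the same route as the paper (and Kudla), but there is one genuine imprecision that would cause trouble if you carried it out as written. You claim that $N(X_t)$ acts on the mixed model by characters in the $\Hom(X_t,W_n)$-variable. In fact only the \emph{center} $N(X_t)_0 \cong \{d \in \Hom(X_t^*,X_t) : {}^t d = -d\}$ acts by characters; the quotient $N(X_t)/N(X_t)_0 \cong \Hom(V_{m_0},X_t)$ acts through the Heisenberg representation $\rho_0(-c^* w)$ on the $S(V_{m_0}\otimes Y_n^*)$ factor. The correct two-step procedure is: first take $N(X_t)_0$-coinvariants, which cuts the support down to the isotropic locus $W_0 \subset \Hom(X_t,W_n)$ and yields your rank stratification; then, on the rank-$k$ stratum with base point $w_0$, take the remaining $N(X_t)/N(X_t)_0$-coinvariants. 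It is this second step, not a ``transverse slice'', that kills the $S(V_{m_0}\otimes Y_k^*)$ factor and reduces $\omega_{m_0,n}$ to $\omega_{m_0,n-k}$ (via evaluation at $0$ in the $V_{m_0}\otimes Y_k^*$-coordinate). Without separating these two steps you cannot correctly account for where the drop from $n$ to $n-k$ comes from, nor for the contribution $|\lambda_V(h)|^{-\frac{1}{4}m_0 k}$ to $e_0$.

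Apart from this, your plan matches the paper's: stratify by rank, identify the stabilizer of $w_0$ inside $R_0(M(X_t)\times \GSp(W_n)^+)$ as (the $R_0$-part of) $P(X_{t-k},X_t)\times Q(Y_k)^+$, induce up, pass from $\omega$ to $\Omega$ by exactness of compact induction, and then collect the four normalization contributions you list. The paper also performs one final cosmetic step you do not mention: the raw formula carries matching characters $|\det a_2|^{e_2}$ and $|\det b|^{-e_2}$, which are absorbed into the regular representation on $S(\Isom(X_k',Y_k))$ via $\varphi \mapsto |\det(\cdot)|^{e_2}\varphi$, at the cost of shifting $f_0'$ to $f_0 = f_0' - k e_2$.
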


\begin{Thm}\label{T:Jacquet_module2}
The normalized Jacquet module $R_{Q(Y_k)}(\Omega_{m,n})$ of the Weil representation $\Omega_{m,n}$ along the parabolic $Q(Y_k)$ has an $\GO(V_m)\times M(Y_k)^+$ invariant filtration
\[
    \{0\}\subseteq J^{(\min\{k,r\})}\subseteq\cdots\subseteq J^{(1)}\subseteq J^{(0)}=R_{Q(Y_k)}(\Omega_{m,n})
\]
with the successive quotient
\[
    J^t:=J^t/J^{t+1}\cong\Ind_{P(X_t)\times Q(Y_{k-t}, Y_k)^+}^{\GO(V_m)\times M(Y_k)^+}
(S(\Isom(Y'_t, X_t))\otimes\Omega_{m-2t,n_0}),
\]
where $\Omega_{m-2t,n_0}$ is the Weil representation of the group $\GO(V_{m-2t})\times\GSp(W_{n_0})$, and the group $P(X_t)\times Q(Y_{k-t}, Y_k)^+$ acts on $S(\Isom(Y'_t, X_t))$ as follows: Let $\varphi(A)\in S(\Isom(Y'_t,X_t))$. Then each element $[\begin{pmatrix}b_1&\ast\\&b_2\end{pmatrix},g]\in Q(Y_{k-t}, Y_k)^+$ acts as
\[
    [\begin{pmatrix}b_1&\ast\\&b_2\end{pmatrix},g]\cdot\varphi(A)
    =\chi_V(\det b_1)\chi_V(\det b_2)|\lambda_{W}(g)|^{e_0}|\det b_1|^{e_1}\varphi(Ab_2)
\]
where
\begin{align*}
    e_0&=  - \frac{1}{2}t(n-k) - \frac{1}{4}mk +\frac{1}{2}kn-\frac{1}{4}k(k-1)\\
    e_1&=\frac{1}{2}m-n+\frac{1}{2}k-\frac{1}{2}(t+1), \quad (\text{for $t < k$})
\end{align*}
and each element $[a,h]n\in P(X_t)$ acts as
\[
    [a,h]n\cdot\varphi(A)
    =|\lambda_{V}(h)|^{f_0}\varphi(\lambda_V(h)a^{-1}A),
\]
where
\begin{align*}
    f_0&=-\frac{1}{4}mt+\frac{1}{4}t(t+1).
\end{align*}
Note that the induction is normalized.
\end{Thm}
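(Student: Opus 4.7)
The plan is to follow the approach of Kudla in \cite{K} for the isometry case and then extend to similitudes. Choose a compatible polarisation $W_n = Y_k \oplus W_{n_0} \oplus Y_k^\ast$ with $W_{n_0} = Y_{n_0} \oplus Y_{n_0}^\ast$, and realise the underlying Weil representation $\omega_{m,n}$ in the mixed Schr\"odinger model
\[
 \omega_{m,n} \cong S(V_m \otimes Y_k^\ast) \otimes \omega_{m,n_0},
\]
where $\omega_{m,n_0}$ is the Weil representation of the smaller dual pair $\OO(V_m) \times \Sp(W_{n_0})$. In this model the Levi $\GL(Y_k) \times \Sp(W_{n_0}) \subset M(Y_k)$ acts diagonally on the two tensor factors, while the unipotent radical $N(Y_k)$ acts via characters arising from a Heisenberg-type coupling of $V_m \otimes Y_k^\ast$ with the $\omega_{m,n_0}$ factor. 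The induced representation $\Omega_{m,n}$ is obtained from $\omega_{m,n}$ by compact induction, and its similitude action is the normalised one from Section \ref{S:theta}.

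The first substantive step is to compute the $N(Y_k)$-coinvariants. The unipotent $N(Y_k)$ fits in an exact sequence
\[
 0 \longrightarrow \Sym^2 Y_k \longrightarrow N(Y_k) \longrightarrow \Hom(W_{n_0},Y_k) \longrightarrow 0,
\]
and a direct application of the Weil-representation formulas shows that coinvariants under the centre $\Sym^2 Y_k$ restrict the support of the first Schwartz factor to the set of maps $Y_k^\ast \to V_m$ whose image is totally isotropic, while coinvariants under the abelian quotient $\Hom(W_{n_0}, Y_k)$ contract the $\omega_{m,n_0}$ factor in a controlled way. The outcome is that $(\omega_{m,n})_{N(Y_k)}$ is naturally identified, as a $\GL(Y_k) \times \OO(V_m) \times \Sp(W_{n_0})$-module, with Schwartz functions on the subset of $\Hom(Y_k^\ast, V_m)$ consisting of maps with isotropic image, tensored with the Weil representation of the residual pair.

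The second step is to extract the filtration from the orbits. The group $\GL(Y_k) \times \OO(V_m)$ acts on the set of maps with isotropic image, and its orbits are indexed by the integer $t = 0, 1, \ldots, \min(k,r)$ recording the dimension of the image. A standard excision argument on Schwartz functions supported on increasingly restrictive rank conditions produces the descending filtration $R_{Q(Y_k)}(\Omega_{m,n}) = J^{(0)} \supseteq J^{(1)} \supseteq \cdots \supseteq J^{(\min(k,r))}$ of the theorem, with $J^{(t)}/J^{(t+1)}$ supported on the locally closed stratum where the image has dimension exactly $t$. To identify the $t$-th stratum, take the representative orbit with image $X_t$; its stabiliser in $\GL(Y_k) \times \OO(V_m)$ is $R(Y_{k-t},Y_k) \times P(X_t)$ acting naturally on the reference isomorphism, and a standard Mackey / geometric-induction argument combined with Frobenius reciprocity identifies the subquotient with the normalised induced representation whose inducing data involves $S(\Isom(Y'_t, X_t))$ and the residual Weil representation $\Omega_{m-2t, n_0}$ for $\OO(V_{m-2t}) \times \Sp(W_{n_0})$, as stated.

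The main obstacle will be the careful bookkeeping of character twists, which must combine to produce the precise exponents $e_0$, $e_1$, and $f_0$ in the theorem. Three distinct sources contribute: (a) the intrinsic powers of $|\det|$ and the factors of $\chi_V$ in the Weil-representation action of the Levi on the Schr\"odinger model; (b) the modulus characters $\delta_{Q(Y_k)}^{1/2}$, $\delta_{P(X_t)}^{1/2}$ and $\delta_{R(Y_{k-t},Y_k)}^{-1/2}$ arising from the normalisations of the Jacquet and induction functors, together with the half-dimension shift built into Mackey induction across each orbit; and (c) the renormalisation factor $|\lambda_V(h)|^{-\frac{1}{8} \dim V \dim W}$ in the definition of $\Omega_{m,n}$ from $\omega_{m,n}$, which, once the similitude part of the Levi is tracked through the argument, produces the $|\lambda_V(h)|^{f_0}$ and $|\lambda_W(g)|^{e_0}$ corrections. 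Assembling these contributions orbit by orbit and simplifying yields the formulas in the statement.
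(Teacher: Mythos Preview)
Your approach is essentially the same as the paper's: the paper in fact proves only the companion result Theorem~\ref{T:Jacquet_module1} (the Jacquet module along the orthogonal-side parabolic $P(X_t)$) in detail, following Kudla's method step by step, and explicitly leaves Theorem~\ref{T:Jacquet_module2} ``to the reader'' by symmetry. So your direct attack on the symplectic-side Jacquet module, via the mixed Schr\"odinger model and the rank stratification, is exactly what is intended.

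There is one place where your ordering needs to be tightened, and it is not cosmetic. You assert that the full $N(Y_k)$-coinvariants of $\omega_{m,n}$ are identified with ``Schwartz functions on isotropic maps $Y_k^\ast \to V_m$, tensored with the Weil representation of the residual pair'', and only afterwards extract the filtration by rank. But that description is only correct for coinvariants under the \emph{centre} $\Sym^2 Y_k$; the abelian quotient $\Hom(W_{n_0},Y_k)$ acts on $S(\text{isotropic maps})\otimes\omega_{m,n_0}$ by Heisenberg translations that depend on the base point in the first factor, so there is no uniform global description of the full coinvariants. In the paper's proof of the companion theorem this is handled as follows: take only the centre coinvariants first, obtaining $S(W_0)\otimes S(V_{m_0}\otimes Y_n^\ast)$; \emph{then} introduce the rank filtration on $W_0$; and only on each individual stratum $T^k$, where one may fix a base point $w_0$ and recognise the piece as an induced representation from a stabiliser, does one compute the remaining $N$-coinvariants (Lemma on $\tau^k_{N(X_t)}$). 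It is precisely this last step that replaces $\omega_{m,n_0}$ by $\omega_{m-2t,n_0}$ --- the reduction from $V_m$ to $V_{m-2t}$ that your outline invokes at the end without having accounted for it. Once you reorder these two steps, the rest of your plan (stabiliser identification, Mackey induction, and the three-source bookkeeping of $\chi_V$, modulus characters, and the similitude normalisation) matches the paper's computation exactly.
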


\begin{Rmk}
If $m_0$ or $n-k$ in $\Omega_{m_0, n-k}$ (resp. $m-2k$ or $n_0$ in $\Omega_{m-2k, n_0}$) is zero, then $\Omega_{m_0, n-k}$ (resp. $\Omega_{m-2k, n_0}$) is the induced representation of the trivial representation. For example if $m_0=0$, then $\Omega_{m_0, n-k}$ is realized in the space $S(F^\times)$ where $([1,\lambda],[1,g])$ acts as $([1,\lambda],[1,g])\cdot f(x)=(x\cdot\lambda\cdot\lambda_W(g))$ for $\lambda\in\GL_1$, $g\in\GSp_{n-k}$ and $f\in S(F^\times)$.
\end{Rmk}
\begin{Rmk}
Note that the roles of $k$ and $t$ are switched in Theorems  \ref{T:Jacquet_module1} and \ref{T:Jacquet_module2}.
\end{Rmk}
\vskip 5pt

\begin{Rmk}  \label{R:kt}
For an induced Weil representation $\Omega_{W,V}$, and any character $\chi$, one has
\[  \Omega_{V,W} \otimes ((\chi \circ \lambda_W) \boxtimes (\chi \circ \lambda_V)) \cong  \Omega_{W,V}. \]
Thus, in Theorems \ref{T:Jacquet_module1}
and \ref{T:Jacquet_module2}, one could replace the pair $(e_0, f_0)$ by $(e_0 - f_0, 0)$.
Now observe that when $k=t$ in Theorems \ref{T:Jacquet_module1}
and \ref{T:Jacquet_module2}, one has $e_0 = f_0$. Thus, for $k = t$, one could simply take
$e_0 = f_0 = 0$.
\end{Rmk}

\begin{Rmk}
One can obtain the analogous theorems for the isometry case simply by replacing the induced Weil representation $\Omega_{m_0, n-k}$ (or $\Omega_{m-2t, n_0}$) by the Weil representation $\omega_{m_0, n-k}$ (or $\omega_{m-2t, n_0}$) and disregarding the similitude factors. For example, the Jacquet module $R_{P(X_t)}(\omega_{m,n})$ along the parabolic $P(X_t)$ of $\OO(V_m)$ has the analogous filtration where each successive quotient has the inducing data of the form $S(\Isom(X'_k,Y_k))\otimes\omega_{m_0,n-k}$ where the action of the relevant subgroup of $\OO(V_m)\times\Sp(W_n)$ on $S(\Isom(X'_k,Y_k))$ is simply the restriction of the similitude case. And the similar statement holds for $R_{Q(Y_k)}(\omega_{m,n})$.
\end{Rmk}

The rest of the appendix is devoted to the proof of those theorems. The proof in the context of the isometry groups appears in Kudla (\cite{K}). Our proof closely follows Kudla's computation, though we give more details. Also in \cite{K}, Kudla considers the Jacquet module along the parabolic of the symplectic group, but we do the other way around. Namely we consider the Jacquet module $R_{P(X_t)}(\Omega_{m,n})$ along the parabolic $P(X_t)$ of $\GO(V_m)$ i.e. Theorem \ref{T:Jacquet_module1}, and leave the other case to the reader.

First recall that
\[
    \W:=V_m\otimes W_n
\]
is equipped with the obvious symplectic structure $\langle\langle-,-\rangle\rangle$ and for each polarization $\W=\Y+\Y^\ast$, the Weil representation $\omega_{m,n}$ can be realized in the space $S(\Y)$ of Schwartz functions on $\Y$ called the Schrodinger model of $\omega_{m,n}$ with respect to the polarization. To compute the Jacquet module of the Weil representation $\omega_{m,n}$, one needs to consider the Schrodinger models for various polarizations. First, consider the Schrodinger model with respect to the polarization
\[
    \W=V_m\otimes Y_n^\ast+V_m\otimes Y_n
\]
so that $\omega_{m,n}$ is realized in the space $S(V_m\otimes Y_n^\ast)$. Then each $(h,g)\in R_0$ acts as
\[
    (h,g)\varphi(x)=|\lambda_W(g)|^{\frac{1}{4}mn}\omega_{m,n}(1,g_1)\cdot\varphi(h^{-1}x)
\]
where
\[
    g_1=g\begin{pmatrix}\lambda_W(g)^{-1}&0\\0&1\end{pmatrix},
\]
and $\omega_{m,n}(1,g_1)$ is the action of the Weil representation for the usual isometry group.

Next let $t\leq r$ and $m_0$ be such that $2t+m_0=m$. Consider
\[
    V_m=X_t+ V_{m_0}+X_t^\ast
\]
where $X_t$, $V_{m_0}$ and $X_t^\ast$ are as above. Then we compute the Jacquet module of $\Omega_{m,n}$ along the parabolic subgroup $P(X_t)=M(X_t)N(X_t)$ of $\GO(V_m)$. Let $N(X_t)_0$ be the center of $N(X_t)$, so that it fits in the exact sequence
\[
    1\longrightarrow N(X_t)_0\longrightarrow N(X_t)\longrightarrow Hom(V_{m_0}, X_t)\longrightarrow 1.
\]
Indeed in terms of the obvious matrix realization of $\GO(V_m)$, $N(X_t)$ is written as
\[
    N(X_t)=\{n(c,d)=\begin{pmatrix}1&c&d-\frac{1}{2}c\circ c^\ast\\&1&-c^\ast\\&&1\end{pmatrix}:
    c\in Hom(V_{m_0},X_t), d\in Hom(X_t^\ast,X_t)\},
\]
where $c^\ast$ is the adjoint of $c$, i.e. $c^\ast\in Hom(X_t^\ast,V_{m_0})$ such that
\[
    (cv, u)=(v,c^\ast u)\quad\text{ for } v\in V_{m_0}, u\in V_{m_0},
\]
and $d$ is such that $(dx_1,x_2)=(x_1,-dx_2)$ for all $x_1,x_2\in X_t^\ast$. Note that if we make the identification $Hom(X_t^\ast,X_t)=\GL_t$ with respect to the above chosen bases of $X_t$ and $X_t^\ast$, we have $^td=-d$. Note that
\[
    N(X_t)_0=\{n(c,d):c=0\}=\{\begin{pmatrix}1&&d\\&1&\\&&1\end{pmatrix}:\; ^td=-d\}.
\]

Now we consider the polarization
\[
    \W=\Y^\ast+\Y=(W_n\otimes X_t^\ast+V_{m_0}\otimes Y_n^\ast)+(W_n\otimes X_t + V_{m_0}\otimes Y_n)
\]
to realize the Weil representation $\omega_{m,n}$ of $R_0\subset\GO(V_m)\times \GSp(W_n)^+$. So the space of $\omega_{m,n}$ is
\[
    S(\Y^\ast)=S(W_n\otimes X_t^\ast+V_{m_0}\otimes Y_n^\ast)\cong S(W_n\otimes X_t^\ast)\otimes S(V_{m_0}\otimes Y_n^\ast).
\]
Now for any subgroup $H$ of $R$, let us define
\[
    R_0(H):=R_0\cap H.
\]
Write
\[
    W_n\otimes X_t^\ast=Y_n\otimes X_t^\ast+Y_n^\ast\otimes X_t^\ast
\]
and denote each element $w\in W_n\otimes X_t^\ast$ as
\[
    w=y+y^\ast\in Y_n\otimes X_t^\ast+Y_n^\ast\otimes X_t^\ast
\]
where $y\in Y_n\otimes X_t^\ast$ and $y^\ast\in Y_n^\ast\otimes X_t^\ast$. Then the action of $R_0$ is described as follows: Let $([a,h],g)\in R_0(M(X_t)\times\GSp(W_n)^+)$ where $(a,h)\in\GL(X_t)\times\GO(V_{m_0})$. Then for $\phi_1(y+y^\ast)\otimes\phi_2(x)\in S(W_n\otimes X_t^\ast)\otimes S(V_{m_0}\otimes Y_n^\ast)$,
\begin{align*}
    &([a,h],g)\cdot\phi_1(y+y^\ast)\otimes\phi_2(x)\\
    &\qquad\qquad=|\det a|^n|\lambda_{V}(h)|^{-\frac{1}{2}tn}\phi_1((g_1^{-1}\otimes a^{\ast})(y+\lambda_{V}(h)^{-1}y^\ast))
    \otimes \omega_{{m_0},n}(h,g)\phi_2(x)
\end{align*}
where $x\in V_{m_0}\otimes Y_n^\ast$, and $a^\ast\in\GL(X_t^\ast)$ is such that $(ax_1,x_2)=(x_1,a^\ast x_2)$ for all $x_1\in X_t$ and $x_2\in X_t^\ast$. Note that $\omega_{{m_0},n}$ is the Weil representation for the pair $(\GO(V_{m_0}),\GSp(W_n)^+)$. Also the action of $N(X_t)$ is described as follows: Let $\phi(y+y^\ast+x)\in S(\Y^\ast)=S(W_n\otimes X_t^\ast+V_{m_0}\otimes Y_n^\ast)$. Then
\[
    n(c,d)\cdot\phi(y+x+y^\ast)=\psi(\langle\langle y,dy^\ast \rangle\rangle)\rho(-c^\ast(y+y^\ast))\phi(y+x+y^\ast)
\]
where $\rho$ is the action of the Heisenberg group $H(\W)$ in $S(\Y^\ast)$. Those actions can be shown by looking at how $R_0$ acts on the Weil representation realized in the space
\[
    S(V_n\otimes Y_n^\ast)=S(Y_n^\ast\otimes X_t+V_{m_0}\otimes Y_n^\ast +Y_n^\ast\otimes X_t^\ast)
\]
and the partial Fourier transform
\[
    \mathcal{F}:S(Y_n^\ast\otimes X_t+V_{m_0}\otimes Y_n^\ast +Y_n^\ast\otimes X_t^\ast)
    \rightarrow S(\Y^\ast)=S(Y_n\otimes X_t^\ast+V_{m_0}\otimes Y_n^\ast+Y_n^\ast\otimes X_t^\ast)
\]
given by
\[
    \mathcal{F}(\varphi)(y+x+y^\ast)=\int_{Y_n^\ast\otimes X_t}
    \psi(\langle\langle y,z \rangle\rangle) \varphi\begin{pmatrix}z\\x\\y^\ast\end{pmatrix}\,dz,
\]
where $y+x+y^\ast\in Y_n\otimes X_t^\ast+V_{m_0}\otimes Y_n^\ast+Y_n^\ast\otimes X_t^\ast$ and $z\in Y_n^\ast\otimes X_t$. For example, let $[a,1]\in M(X_t)$. Then
\begin{align*}
[a,1]\cdot \mathcal{F}(\varphi)(y+x+y^\ast)
&=\mathcal{F}([a,1]\cdot\varphi)(y+x+y^\ast)\\
&=\int_{Y_n^\ast\otimes X_t}
    \psi(\langle\langle y,z \rangle\rangle) [a,1]\cdot\varphi\begin{pmatrix}z\\x\\y^\ast\end{pmatrix}\,dz\\
&=\int_{Y_n^\ast\otimes X_t}
    \psi(\langle\langle y,z \rangle\rangle) \varphi\begin{pmatrix}a^{-1}z\\x\\a^\ast y^\ast\end{pmatrix}\,dz\\
&=|\det a|^n\int_{Y_n^\ast\otimes X_t}
    \psi(\langle\langle y,az \rangle\rangle) \varphi\begin{pmatrix}z\\x\\a^\ast y^\ast\end{pmatrix}\,dz\\
&=|\det a|^n\int_{Y_n^\ast\otimes X_t}
    \psi(\langle\langle a^\ast y,z \rangle\rangle) \varphi\begin{pmatrix}z\\x\\a^\ast y^\ast\end{pmatrix}\,dz\\
&=|\det a|^n \mathcal{F}(\varphi)(a^\ast y+x+a^\ast y^\ast).
\end{align*}

In particular, the group $N(X_t)_0$ simply acts as multiplication by $\psi(\langle\langle y,dy^\ast \rangle\rangle)$. It is easy to see that $\langle\langle y,dy^\ast \rangle\rangle=\frac{1}{2}\langle\langle w,dw \rangle\rangle$, where $w=y+y^\ast\in W_n\otimes X_t^\ast$. Now let
\[
    W_0=\{w\in W_n\otimes X_t^\ast: \langle\langle w,dw \rangle\rangle=0 \text{ for all } d=-^td\}.
\]
If we write $w=\sum_{i=1}^t w_i\otimes v_i^\ast$, we have $\langle\langle w,dw \rangle\rangle=\sum_{i\neq j}\langle w_i,w_j \rangle(v_i^\ast,dv_j^\ast)$. Since for each pair $i$ and $j$ with $i\neq j$ one can choose $d$ so that $\sum_{i\neq j}\langle w_i,w_j \rangle(v_i^\ast,dv_j^\ast)=2\langle w_i,w_j\rangle$, we have
$W_0=\{w\in W_n\otimes X_t^\ast: \langle w_i, w_j \rangle=0\}$. Or by identifying $W_n\otimes X_t^\ast$ with $Hom(X_t, W_n)$ in the obvious way, one can see that
\[
    W_0=\{\phi\in Hom(X_t, W_n): \phi^\ast(\langle-,-\rangle)=0\},
\]
where $\phi^\ast(\langle-,-\rangle)$ is the pullback of the symplectic form $\langle-,-\rangle$ on $W_n$ to $X_t$ via $\phi$. Then it is clear that the restriction map
\[
    S(W_n\otimes X_t^\ast)\otimes S(V_{m_0}\otimes Y_n^\ast)\rightarrow S(W_0)\otimes S(V_{m_0}\otimes Y_n^\ast)
\]
induces an isomorphism
\[
    {(\omega_{m,n})}_{N(X_t)_0}\cong S(W_0)\otimes S(V_{m_0}\otimes Y_n^\ast).
\]
We can decompose the space $W_0$ as
\[
    W_0=\coprod_{k=0}^{\min\{t,n\}} W_{0,k}
\]
where
\[
    W_{0,k}=\{\phi\in W_0:\dim {\rm Im}  \phi=k\}.
\]
Define $T^{(k)}\subseteq S(W_0)\otimes S(V_{m_0}\otimes Y_n^\ast)$ by
\[
    T^{(k)}=\{\varphi\in S(W_0):\varphi|_{W_{0,i}}=0\text{ for all } i<k\}\otimes S(V_{m_0}\otimes Y_n^\ast).
\]
Clearly $T^{(k+1)}\subseteq T^{(k)}$. One can see that each $T^{(k)}$ is invariant under $R_0(P(X_t)\times\GSp(W_n)^+)$, and moreover we have a short exact sequence of $R_0(P(X_t)\times\GSp(W_n)^+)$ modules
\[
    0\rightarrow T^{(k+1)}\rightarrow T^{(k)}\rightarrow S(W_{0,k})\otimes S(V_{m_0}\otimes Y_n^\ast)\rightarrow 0,
\]
where the map $T^{(k)}\rightarrow S(W_{0,k})\otimes S(V_{m_0}\otimes Y_n^\ast)$ is the obvious restriction map. Hence we have an $R_0(P(X_t)\times\GSp(W_n)^+)$ invariant filtration
\[
    \{0\}\subseteq T^{(\min\{t,n\})}\subseteq \cdots\subseteq T^{(1)}\subseteq T^{(0)}=S(W_0)\otimes S(V_{m_0}\otimes Y_n^\ast),
\]
where the successive quotient is given by
\[
    T^{k}:=T^{(k)}/T^{(k+1)}=S(W_{0,k})\otimes S(V_{m_0}\otimes Y_n^\ast).
\]
Then we have
\begin{Lem}\label{L:filtration}
There is an isomorphism of $P(X_t)\times\GSp(W_n)^+$ modules
\[
    (\Omega_{m,n})_{N(X_t)_0}=(ind_{R_0}^R\omega_{m,n})_{N(X_t)_0}\overset{\sim}{\longrightarrow}
    ind_{R_0(P(X_t)\times\GSp(W_n)^+)}^{P(X_t)\times\GSp(W_n)^+}\;S(W_0)\otimes S(V_{m_0}\otimes Y_n^\ast).
\]
Moreover the above filtration induces a $P(X_t)\times\GSp(W_n)^+$ invariant filtration
\[
    \{0\}\subseteq \tT^{(\min\{t,n\})}\subseteq \cdots\subseteq \tT^{(1)}\subseteq \tT^{(0)}=S(W_0)\otimes S(V_{m_0}\otimes Y_n^\ast),
\]
where
\[
    \tT^{(k)}=ind_{R_0(P(X_t)\times\GSp(W_n)^+)}^{P(X_t)\times\GSp(W_n)^+}\;T^{(k)}
\]
and the successive quotient is given by
\[
    \tT^{k}:=\tT^{(k)}/\tT^{(k+1)}\cong
    ind_{R_0(P(X_t)\times\GSp(W_n)^+)}^{P(X_t)\times\GSp(W_n)^+} S(W_{0,k})\otimes S(V_{m_0}\otimes Y_n^\ast).
\]
\end{Lem}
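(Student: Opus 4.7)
The plan is to establish the lemma in three steps: a Mackey-style decomposition rewriting $\Omega_{m,n}$ as an induction from within $H := P(X_t) \times \GSp(W_n)^+$, an explicit computation of the $N(X_t)_0$-coinvariants of $\omega_{m,n}$ using the Schrodinger model already set up, and an application of exactness of compact induction to transport the filtration $\{T^{(k)}\}$.

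First, I would check that $R = R_0 \cdot H$. Since the similitude character $\lambda_V$ is surjective when restricted to the Levi $M(X_t) \cong \GL(X_t) \times \GO(V_{m_0})$ (the factor $\GO(V_{m_0})$ alone already maps onto $F^\times$), given any $(h,g) \in R$ one can choose $h_0 \in P(X_t)$ with $\lambda_V(h_0) = \lambda_V(h) \cdot \lambda_W(g)$, so that $(h h_0^{-1}, g) \in R_0$ and $(h_0, 1) \in H$. The standard Mackey-type identity for compact induction then gives
\[
    \Omega_{m,n}|_H \;\cong\; \ind_{R_0 \cap H}^H \omega_{m,n}.
\]

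Second, I would compute $(\omega_{m,n})_{N(X_t)_0}$ on the chosen Schrodinger model $S(W_n \otimes X_t^\ast) \otimes S(V_{m_0} \otimes Y_n^\ast)$. The action formulas derived above show that $n(0,d) \in N(X_t)_0$ multiplies a Schwartz function by $\psi(\langle\langle y, dy^\ast\rangle\rangle) = \psi(\tfrac{1}{2}\langle\langle w, dw\rangle\rangle)$, where $w = y + y^\ast \in W_n \otimes X_t^\ast$ and $d$ ranges over skew-adjoint endomorphisms, while $N(X_t)_0$ acts trivially on the $S(V_{m_0} \otimes Y_n^\ast)$ tensor factor. The common kernel of these characters as $d$ varies is exactly the subvariety $W_0 \subset W_n \otimes X_t^\ast$ identified in the excerpt, and a standard Fourier-theoretic argument shows that the restriction map
\[
    S(W_n \otimes X_t^\ast) \otimes S(V_{m_0} \otimes Y_n^\ast) \;\twoheadrightarrow\; S(W_0) \otimes S(V_{m_0} \otimes Y_n^\ast)
\]
realizes the $N(X_t)_0$-coinvariants equivariantly for $R_0 \cap H$.

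Third, I would note that $N(X_t)_0$ is normal in $H$: elements of $M(X_t)$ conjugate $n(0,d)$ to $n(0, a d a^\ast)$, while $\GSp(W_n)^+$ commutes with $\GO(V_m)$; moreover $N(X_t)_0 \subset R_0 \cap H$. Since compact induction from a subgroup containing a normal subgroup $N$ commutes with taking $N$-coinvariants, combining the first two steps yields
\[
    (\Omega_{m,n})_{N(X_t)_0} \;\cong\; \ind_{R_0 \cap H}^H \bigl(S(W_0) \otimes S(V_{m_0} \otimes Y_n^\ast)\bigr).
\]
The filtration assertion then follows by applying the exact functor $\ind_{R_0 \cap H}^H$ to each short exact sequence $0 \to T^{(k+1)} \to T^{(k)} \to T^k \to 0$, producing the claimed $\tT^{(k)}$ with the stated successive quotients. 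The main obstacle is the Fourier-analytic input in the second step: verifying that the restriction map truly realizes the coinvariants and is not merely a surjection onto them requires showing that its kernel is spanned by vectors of the form $(1 - n(0,d))\phi$, which follows from the observation that the map $w \mapsto (d \mapsto \tfrac{1}{2}\langle\langle w, dw\rangle\rangle)$ separates points of $(W_n \otimes X_t^\ast) \setminus W_0$.
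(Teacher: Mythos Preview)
Your proposal is correct and follows the same route as the paper, which disposes of the lemma in a single line (``This follows because an induction is an exact functor''); you have simply unpacked that line into its constituent steps (Mackey restriction, commutation of $N(X_t)_0$-coinvariants with compact induction, and exactness). Note, however, that your justification of $R = R_0 \cdot H$ via surjectivity of $\lambda_V$ on $\GO(V_{m_0})$ can fail (for instance when $V_{m_0}$ is a $2$-dimensional anisotropic space associated to a quadratic extension $K/F$, the image of the similitude is only $N_{K/F}(K^\times)$); the cleaner and always-valid argument is that the factor $\GSp(W_n)^+$ alone already surjects onto $R/R_0 \cong \operatorname{Im}(\lambda_V)$, since by definition $\lambda_W(\GSp(W_n)^+) = \operatorname{Im}(\lambda_V)$. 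Also, your second step re-derives the identification $(\omega_{m,n})_{N(X_t)_0} \cong S(W_0) \otimes S(V_{m_0}\otimes Y_n^\ast)$ which the paper has already established in the paragraphs immediately preceding the lemma, so that portion is redundant rather than new.
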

\begin{proof}
This follows because an induction is an exact functor.
\end{proof}

Now we will describe the representation $\tT^{k}$ of $P(X_t)\times\GSp(W_n)^+$ in terms of a certain induced representation, and then compute ${\tT^{k}}_{N(X_t)}$, which gives the desired filtration of the Jacquet module of the Weil representation $\Omega_{m_0,n}$. For that purpose, we need to describe the representation $T^{k}=S(W_{0,k})\otimes S(V_{m_0}\otimes Y_n^\ast)$ of $R_0(P(X_t)\times\GSp(W_n)^+)$ in terms of an induced representation. For this, let us write
\[
    R_0(P(X_t)\times\GSp(W_n)^+)=R_0(M(X_t)\times\GSp(W_n)^+)\ltimes N(X_t)
\]
and fix $w_0\in W_{0,k}$ given by
\[
    w_0=e_1\otimes v_{t-k+1}^\ast+e_2\otimes v_{t-k+2}^\ast+\cdots +e_k\otimes v_t^\ast.
\]
Let
\[
    H=\{([a,h],g)\in R_0(M(X_t)\times\GSp(W_n)^+):(g_1^{-1}\otimes a^\ast)w_0=w_0\}.
\]
Recalling
\[
    Y_k=\Span\{e_1,\dots,e_k\},
\]
we have
\[
    H\subseteq R_0(M(X_t)\times Q(Y_k)^+)
\]
where $Q(Y_k)^+\subseteq \GSp(W_n)^+$ is the maximal parabolic that preserves the flag $0\subseteq Y_k \subseteq W_n$. Recall we denote each element $q\in Q(Y_k)^+$ by
\[
    q=[b,g]n, \quad [b,g]\in M(Y_k)\cong \GL(Y_k)\times \GSp(W_{n-k})^+
\]
where $b\in\GL(Y_k)$, $g\in\GSp(W_{n-k})^+$ and $n$ is in the unipotent radical of $Q(Y_k)$. So each element in $H$ can be denoted by $([a,h], [b,g]n)$ where $[a,h]\in M(X_t)$ and $[b,g]n\in Q(Y_k)$. Then we define a representation
\[
    (\tau^k, S(V_{m_0}\otimes Y_n^\ast))
\]
of $H\ltimes N(X_t)$ on the space $S(V_{m_0}\otimes Y_n^\ast)$ as follows: For $([a,h], q)\in H$, we define
\[
    \tau^k([a,h],q)=\xi(\det a)|\lambda_{V}(h)|^{-\frac{1}{2}tn}\omega_{{m_0}, n}(h,q),
\]
where $\omega_{{m_0}, n}$ is the Weil representation of the pair $(\GO(V_{m_0}), \GSp(W_n)^+)$ and
\[
    \xi(\det a)=|\det a|^n.
\]
Also for $n(c,d)\in N(X_t)$,
\[
    \tau^k(n(c,d))=\rho_0(-c^\ast w_0)
\]
where $\rho_0$ is the action of the Heisenberg group $H(V_{m_0}\otimes W_n)$ on $S(V_{m_0}\otimes Y_n^\ast)$. Note that $-c^\ast w_0\in V_{m_0}\otimes W_n$ and hence the action of $\rho_0(-c^\ast w_0)$ on $S(V_{m_0}\otimes Y_n^\ast)$ makes sense. Then we have

\begin{Lem}
There is an isomorphism
\[
    T^k\cong ind_{H\ltimes N(X_t)}^{R_0(M(X_t)\times\GSp(W_n)^+)\ltimes N(X_t)}\tau^k
\]
of $R_0(M(X_t)\times\GSp(W_n)^+)\ltimes N(X_t)$-modules.
\end{Lem}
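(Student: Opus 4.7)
The plan is to realize $W_{0,k}$ as a homogeneous space for $R_0(M(X_t) \times \GSp(W_n)^+)$ with stabilizer $H$ at the base point $w_0$, and then to identify $T^k = S(W_{0,k}) \otimes S(V_{m_0} \otimes Y_n^\ast)$ with the induced representation via the standard ``fiber at the base point'' construction, finally extending the inducing data to include $N(X_t)$.

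First I would verify transitivity of the $R_0(M(X_t) \times \GSp(W_n)^+)$-action on $W_{0,k}$. Identifying $W_n \otimes X_t^\ast$ with $\Hom(X_t, W_n)$, the action of $([a,h],g)$ on $\phi \in \Hom(X_t, W_n)$ becomes $\phi \mapsto g_1^{-1} \circ \phi \circ a$. By Witt extension, $\Sp(W_n)$ acts transitively on the set of isotropic $k$-planes in $W_n$, and $\GL(X_t)$ acts transitively on the set of surjections $X_t \twoheadrightarrow Y_k$ with $(t-k)$-dimensional kernel. Thus $(g_1, a) \in \Sp(W_n) \times \GL(X_t)$ already acts transitively on $W_{0,k}$; the $R_0$-constraint $\lambda_V(h)\lambda_W(g) = 1$ can be arranged by choosing $h \in \GO(V_{m_0})$ of appropriate similitude (which is why the ``$+$'' condition on $\GSp(W_n)^+$ enters). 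Next, the stabilizer computation: $(g_1^{-1} \otimes a^\ast)w_0 = w_0$ translates to $\phi_0 \circ a = g_1 \circ \phi_0$, where $\phi_0$ has kernel $X_{t-k}$ and sends $v_{t-k+j} \mapsto e_j$. This forces $a \in R(X_{t-k}, X_t)$, forces $g_1$ to preserve $Y_k$, and forces $a_2 = g_1|_{Y_k}$ under the identification $X'_k \cong Y_k$ via $v_{t-k+j} \leftrightarrow e_j$. This is exactly the definition of $H$.

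Having $W_{0,k} \cong R_0(M(X_t) \times \GSp(W_n)^+)/H$ as a homogeneous space, the standard identification gives
\[
 S(W_{0,k}) \otimes S(V_{m_0} \otimes Y_n^\ast)
 \cong \ind_{H}^{R_0(M(X_t) \times \GSp(W_n)^+)} \bigl( S(V_{m_0} \otimes Y_n^\ast) \text{ with the fiber action at } w_0 \bigr).
\]
To match the inducing data with $\tau^k|_H$, I would specialize the action formula
\[
 ([a,h],g) \cdot \phi_1(w) \otimes \phi_2(x)
 = |\det a|^n \cdot |\lambda_V(h)|^{-\frac{1}{2}tn} \cdot \phi_1((g_1^{-1} \otimes a^\ast) w) \otimes \omega_{m_0,n}(h,g)\phi_2(x)
\]
to $w = w_0$. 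Since $(g_1^{-1} \otimes a^\ast)w_0 = w_0$ by hypothesis, the fiber action on $\phi_2$ is precisely $\xi(\det a) \cdot |\lambda_V(h)|^{-tn/2} \cdot \omega_{m_0,n}(h,g)$, matching $\tau^k([a,h],g)$.

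Finally, it remains to extend the inducing data to $H \ltimes N(X_t)$. The action of $n(c,d) \in N(X_t)$ on $T^k$ is by the function $\psi(\langle\langle y, dy^\ast\rangle\rangle) \cdot \rho(-c^\ast(y + y^\ast))$. Since $W_{0,k} \subset W_0$ consists of isotropic maps, $\langle\langle y,dy^\ast\rangle\rangle = \tfrac{1}{2}\langle\langle w,dw\rangle\rangle = 0$ for $w \in W_0$, so the $d$-factor is trivial. The remaining $c$-factor reduces to multiplication by the operator $\rho_0(-c^\ast w)$ on $S(V_{m_0} \otimes Y_n^\ast)$; at $w = w_0$ this is exactly $\tau^k(n(c,d)) = \rho_0(-c^\ast w_0)$. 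The main obstacle I anticipate is the transitivity step: one must carefully track the $R_0$ constraint and confirm that the similitude character $\lambda_V : \GO(V_{m_0}) \to F^\times$ is surjective on the relevant orbit, and the degenerate boundary cases with small $m_0$ must be checked directly. Once this is in hand, the remaining verifications are formal.
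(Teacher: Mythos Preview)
Your proposal is correct and follows essentially the same approach as the paper, which simply defers to Lemma~5.2 of \cite{K}. One remark on the obstacle you anticipate: the surjectivity of $\lambda_V : \GO(V_{m_0}) \to F^\times$ is not actually needed for transitivity, since the action of $([a,h],g)$ on $W_{0,k}$ factors through $(g_1,a) \in \Sp(W_n) \times \GL(X_t)$, so you may simply take $g = g_1 \in \Sp(W_n)$ and $h = 1$, for which the $R_0$-constraint holds trivially regardless of $m_0$.
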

\begin{proof}
The proof is almost identical to Lemma 5.2 of \cite{K}.
\end{proof}

We would like to compute ${T^k}_{N(X_t)}$. But as in \cite[Lemma 5.3]{K}, we have
\[
     {T^k}_{N(X_t)}\cong (ind_{H\ltimes N(X_t)}^{R_0(M(X_t)\times\GSp(W_n)^+)\ltimes N(X_t)}\tau^k)_{N(X_t)}
     \cong ind_{H}^{R_0(M(X_t)\times\GSp(W_n)^+)}({\tau^k}_{N(X_t)}).
\]
Hence we first compute ${\tau^k}_{N(X_t)}$. For this purpose, let us write
\[
    Y_n^\ast=Y_k^\ast+{Y'_{n-k}}^\ast
\]
where $Y_k^\ast=\Span\{e_1^\ast,\dots,e_k^\ast\}$ and ${Y'_{n-k}}^\ast=\Span\{e_{k+1}^\ast,\dots,e_n^\ast\}$, and consider the polarization
\[
    V_{m_0}\otimes W_n=(V_{m_0}\otimes Y_n^\ast)+(V_{m_0}\otimes Y_n).
\]
Then we can write the Schwartz space as
\[
    S(V_{m_0}\otimes Y_n^\ast)=S((V_{m_0}\otimes Y_k^\ast)+(V_{m_0}\otimes {Y'_{n-k}}^\ast))
    \cong S(V_{m_0}\otimes Y_k^\ast)\otimes S(V_{m_0}\otimes {Y'_{n-k}}^\ast),
\]
where $S(V_{m_0}\otimes {Y'_{n-k}}^\ast)$ provides a model of the Weil representation $\omega_{{m_0},n-k}$ for the pair $(\GO(V_{m_0}), \GSp(W_{n-k})^+)$. Note that for $\varphi_1(x_1)\otimes\varphi_2(x_2)\in S(V_{m_0}\otimes Y_k^\ast)\otimes S(V_{m_0}\otimes {Y'_{n-k}}^\ast)$, each $([1,h],[b,g])\in R_0(M(X_t)\times M(Y_k))$ acts as
\begin{align*}
    &\omega_{{m_0},n}([1,h],[b,g])\cdot \varphi_1(x_1)\otimes\varphi_2(x_2)\\
    &\qquad\qquad=\xi'(\det(\lambda_{W}(g)^{-1}b))|\lambda_V(h)|^{-\frac{1}{4}m_0k}
    \varphi_1((h^{-1}\otimes\lambda_{W}(g)^{-1}b^\ast) x_1)\otimes \omega_{{m_0},n-k}(h,g)\varphi_2 (x_2),
\end{align*}
where
\[
    \xi'(x)=\chi_V(x)|x|^{\frac{m_0}{2}}\quad\text{for $x\in F^\times$}.
\]

Then let us define a representation
\[
    (\omega_0, S(V_{m_0}\otimes {Y'_{n-k}}^\ast))
\]
of $H$ by
\[
    \omega_0([a,h], [b,g]n)\varphi
    =\xi(\det a)\xi'(\det(\lambda_{W}(g)^{-1}b))\eta(\lambda_V(h))\omega_{{m_0},n-k}(h,g)\varphi
\]
for $([a,h], [b,g]n)\in H\subseteq R_0(M(X_t)\times Q(Y_k)^+)$ and $\varphi\in S(V_{m_0}\otimes {Y'_{n-k}}^\ast)$, where
\[
    \eta(\lambda_V(h))=|\lambda_V(h)|^{-\frac{1}{2}tn-\frac{1}{4}m_0k}.
\]
Then
\begin{Lem}
There is a natural isomorphism of $H$-modules
\[
    {\tau^k}_{N(X_t)}\cong\omega_0
\]
induced by the surjection
\[
    S((V_{m_0}\otimes Y_k^\ast)+(V_{m_0}\otimes {Y'_{n-k}}^\ast))\twoheadrightarrow S(V_{m_0}\otimes {Y'_{n-k}}^\ast)
\]
defined by $\varphi(x_1+x_2)\mapsto \varphi(0+x_2)$ where $x_1\in V_{m_0}\otimes Y_k^\ast$ and $x_2\in V_{m_0}\otimes {Y'_{n-k}}^\ast$.
\end{Lem}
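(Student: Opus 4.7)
The plan is to compute the $N(X_t)$-coinvariants of $\tau^k$ directly in the Schr\"odinger model $S(V_{m_0}\otimes Y_n^\ast)$ and then match the resulting $H$-action with $\omega_0$. The key is to use the tensor decomposition
\[
 S(V_{m_0}\otimes Y_n^\ast) \cong
 S(V_{m_0}\otimes Y_k^\ast) \otimes S(V_{m_0}\otimes {Y'_{n-k}}^\ast)
\]
and show that the restriction to $x_1 = 0$ exactly kills all of $N(X_t)$.

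First I would analyze the $N(X_t)$-action. Since $N(X_t)_0$ has already been quotiented out, the residual action is through $c \in \Hom(V_{m_0}, X_t)$ via $\tau^k(n(c,0)) = \rho_0(-c^\ast w_0)$. Unpacking the definition of $w_0$ yields
\[
 -c^\ast w_0 = -\sum_{i=1}^{k} (c^\ast v_{t-k+i}^\ast) \otimes e_i
  \;\in\; V_{m_0}\otimes Y_k \;\subset\; V_{m_0}\otimes Y_n,
\]
which lies in the Lagrangian dual to the Schr\"odinger space. Hence $\rho_0(-c^\ast w_0)$ is multiplication by the character $x \mapsto \psi(\langle\langle x, -c^\ast w_0\rangle\rangle)$. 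Writing $x = x_1 + x_2$ with $x_1 \in V_{m_0}\otimes Y_k^\ast$ and $x_2 \in V_{m_0}\otimes {Y'_{n-k}}^\ast$, the orthogonality $\langle e_j^\ast, e_i\rangle = 0$ for $j > k \ge i$ shows this character depends only on $x_1$, and explicitly equals $\psi\bigl(-\sum_i (v_i,\,c^\ast v_{t-k+i}^\ast)\bigr)$ when $x_1 = \sum_i v_i \otimes e_i^\ast$.

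Second, I would show that the restriction map $r \colon \varphi(x_1+x_2) \mapsto \varphi(0+x_2)$ realizes the coinvariants. That $r$ annihilates $(\tau^k(n(c,0)) - 1)\varphi$ is immediate, since evaluation at $x_1 = 0$ makes the character trivial. For injectivity on the quotient, I would note that as $c$ varies, the tuple $(c^\ast v_{t-k+1}^\ast,\ldots,c^\ast v_t^\ast)$ ranges over all of $V_{m_0}^{\,k}$, and nondegeneracy of $(-,-)$ on $V_{m_0}$ makes the family $\{\psi_c\}$ identify with the full Pontryagin dual of $V_{m_0}\otimes Y_k^\ast$. The standard fact that the coinvariants of $S(M)$ under the action of its full character group are one-dimensional (realized by evaluation at $0$) then forces $\tau^k_{N(X_t)} \cong S(V_{m_0}\otimes {Y'_{n-k}}^\ast)$ via $r$.

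Third, I would verify $H$-equivariance and identify the resulting action with $\omega_0$. For $([a,h], [b,g]) \in H \cap (M(X_t) \times M(Y_k))$, I plug into the formula displayed just before the lemma for $\omega_{m_0, n}([1,h],[b,g])(\varphi_1\otimes\varphi_2)(x_1,x_2)$ and set $x_1 = 0$: the evaluation $\varphi_1((h^{-1}\otimes\lambda_{W}(g)^{-1}b^\ast) \cdot 0) = \varphi_1(0)$ is absorbed into the scalar, while the action on $\varphi_2$ reduces to $\omega_{m_0,n-k}(h,g)\varphi_2$. Combining the factors $\xi(\det a)$, $|\lambda_{V}(h)|^{-tn/2}$ from $\tau^k$ with $\xi'(\det(\lambda_W(g)^{-1}b))$ and $|\lambda_V(h)|^{-m_0 k/4}$ from the restricted Weil action produces exactly the scalar in $\omega_0$, with $\eta(\lambda_V(h)) = |\lambda_V(h)|^{-tn/2 - m_0 k/4}$. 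The main step to carry out carefully, which I expect to be the principal obstacle, is showing that the unipotent radical $n \in N(Y_k)$ portion of $[b,g]n \in Q(Y_k)^+$ acts trivially on the quotient: its elements act on $S(V_{m_0}\otimes Y_n^\ast)$ by translations and $\psi$-multiplications whose entire dependence on $x_1$ vanishes upon restriction to $x_1 = 0$, and this must be checked by unpacking the standard Weil representation formulas for the Heisenberg-type generators of $N(Y_k)$.
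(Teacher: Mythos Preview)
Your approach is correct and is precisely the one in the paper, only far more detailed: the paper's proof simply notes that $-c^\ast w_0 \in V_{m_0}\otimes Y_k$, writes down the resulting character $\psi(\langle\langle x_1,-c^\ast w_0\rangle\rangle)$, and then says ``by looking at this action, one can easily see that the lemma follows.'' Your explicit verification of injectivity via the Pontryagin dual, and your care about the unipotent radical of $Q(Y_k)$, fill in steps the paper leaves to the reader.
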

\begin{proof}
Since $-c^\ast x_0\in V_{m_0}\otimes Y_k$, we have
\[
    \tau^k(-c^\ast x_0)\varphi(x_1+x_2)=\rho_0(-c^\ast x_0)\varphi(x_1+x_2)
    =\psi(\langle\langle x_1,-c^\ast x_0\rangle\rangle)\varphi(x_1+x_2).
\]
By looking at this action, one can easily see that the lemma follows.
\end{proof}

Thus we have proven
\[
    {T^k}_{N(X_t)}\cong ind_{H}^{R_0(M(X_t)\times\GSp(W_n)^+)}\omega_0.
\]
Now let
\[
    \mu_{tk}:=ind_{H}^{R_0(P(X_{t-k}, X_t)\times Q(Y_k)^+)}\omega_0
\]
where
\[
    X_{t-k}=\Span\{v_1,\dots,v_{t-k}\}
\]
and $P(X_{t-k}, X_t)$ is the parabolic subgroup of $M(X_t)$ that preserves the flag
\[
    \{0\}\subseteq X_{t-k}\subseteq X_t\subseteq V_m.
\]
Note that
\[
    P(X_{t-k}, X_t)\cong R(X_{t-k}, X_t)\times\GO(V_{m_0}),
\]
where $R(X_{t-k}, X_t)$ is the parabolic subgroup of $\GL(X_t)$ that preserves $0\subseteq X_{t-k}\subseteq X_t$, i.e.
\[
    R(X_{t-k}, X_t)=\{a=\begin{pmatrix}a_1&\ast\\&a_2\end{pmatrix}:a_1\in\GL(X_{t-k}), a_2\in\GL(X'_k)\},
\]
where
\[
    X'_{k}=\Span\{v_{t-k+1},\dots,v_n\}.
\]
Then we have
\[
    {T^k}_{N(X_t)}\cong ind_{R_0(M(X_{t-k}, X_t)\times Q(Y_k)^+)}^{R_0(M(X_t)\times\GSp(W_n)^+)}\mu_{tk}.
\]

In what follows, we realize $\mu_{tk}$ in a more concrete space. For this, let $\Isom(X'_k,Y_k)$ be the space of isomorphisms from $X'_k$ to $Y_k$ as vector spaces. Note $\GL(X'_k)\times\GL(Y_k)$ acts on this space in the obvious way. One can also see that
\[
    H=\{([\begin{pmatrix}a_1&\ast\\&a_2\end{pmatrix},h],[b,g]n)\in P(X_{t-k}, X_t)\times Q(Y_k):
    (\lambda_{W}(g)b^{-1}\otimes a_2^\ast)\cdot w_0=w_0\}.
\]
Notice that if $I\in\Isom(X'_k,Y_k)$ is such that $I(v_{t-k+i})=e_i$, then
\[
    (\lambda_{W}(g)b^{-1}\otimes a_2^\ast)\cdot w_0=w_0 \Longleftrightarrow b=I\lambda_W(g)a_2 I^{-1}.
\]
Also notice that the set
\[
    \{([\begin{pmatrix}1&\ast\\&a_2\end{pmatrix},1],[1,1])\in P(X_{t-k}, X_t)\times Q(Y_k): a_2\in\GL(X'_k)\}
\]
is a set of representatives of
\[
    H\backslash P(X_{t-k}, X_t)\times Q(Y_k).
\]
Then we have
\begin{Lem}\label{L:mu_tk}
There is an isomorphism
\[
    \mu_{tk}=ind_{H}^{R_0(P(X_{t-k}, X_t)\times Q(Y_k)^+)}\omega_0\cong S(\Isom(X'_k,Y_k))\otimes\omega_0
\]
of $R_0(P(X_{t-k}, X_t)\times Q(Y_k)^+)$-modules, where each $([\left(\begin{smallmatrix}a_1&\ast\\&a_2\end{smallmatrix}\right),h],[b,g]n)$ acts in the following way:
\begin{align*}
    &([\begin{pmatrix}a_1&\ast\\&a_2\end{pmatrix},h], [b,g]n)\cdot\varphi(A)\\
    &\qquad\qquad\qquad=\xi(\det a_2)\xi'(\det a_2)\xi(\det a_1)\eta(\lambda_V(h))
    \omega_{m_0,n-k}(h,g)\varphi(\lambda_{W}(g)b^{-1} A a_2),
\end{align*}
where $A\in\Isom(X'_k,Y_k)$, and we identify each element $\varphi\in S(\Isom(X'_k,Y_k))\otimes\omega_0$ with
\[
    \varphi:\Isom(X'_k,Y_k)\rightarrow \omega_0.
\]
\end{Lem}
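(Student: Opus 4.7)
The plan is to realize the compactly induced representation $\mu_{tk}=\ind_H^{R_0(P(X_{t-k},X_t)\times Q(Y_k)^+)}\omega_0$ concretely via a choice of coset representatives and then transport the resulting action to a formula on $S(\Isom(X'_k,Y_k))\otimes\omega_0$. As indicated just before the lemma, the elements of the form $([\left(\begin{smallmatrix}1&\ast\\&a_2\end{smallmatrix}\right),1],[1,1])$, with $a_2\in\GL(X'_k)$, form a set of representatives for $H\backslash(P(X_{t-k},X_t)\times Q(Y_k))$. Writing $I\in\Isom(X'_k,Y_k)$ for the fixed isomorphism with $I(v_{t-k+i})=e_i$, the map
\[
 ([\left(\begin{smallmatrix}1&\ast\\&a_2\end{smallmatrix}\right),1],[1,1])\;\longmapsto\; I\cdot a_2 \in \Isom(X'_k,Y_k)
\]
identifies the coset space $H\backslash(P(X_{t-k},X_t)\times Q(Y_k)^+)$ with $\Isom(X'_k,Y_k)$. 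Under this identification, an element $\varphi$ of $\ind_H^{\cdots}\omega_0$ (realized as a function $\Phi$ on $P(X_{t-k},X_t)\times Q(Y_k)^+$ with values in $\omega_0$, of compact support modulo $H$ and satisfying $\Phi(h\cdot x)=\omega_0(h)\Phi(x)$) corresponds to the function $\varphi(A):=\Phi(r(A))$, where $r(A)$ is the chosen coset representative mapping to $A$.

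Next I would compute the action of a general element $([\left(\begin{smallmatrix}a_1&\ast\\&a_2\end{smallmatrix}\right),h],[b,g]n)$ by right-translating the representative and writing the result uniquely as $h_0\cdot r(A')$ with $h_0\in H$ and $A'\in\Isom(X'_k,Y_k)$. A direct calculation using the description
\[
 H=\{([\left(\begin{smallmatrix}a_1'&\ast\\&a_2'\end{smallmatrix}\right),h'],[b',g']n')\;:\;b'=I\lambda_W(g')a_2'I^{-1}\}
\]
yields $A'=\lambda_W(g)b^{-1}Aa_2$, and the Levi component of $h_0$ on the $\GO(V_{m_0})\times\GSp(W_{n-k})$ factor is exactly $(h,g)$, while the $\GL(X_t)$-component decomposes into an upper block $a_1$ and the lower block needed to match $A'$. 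The transformation rule then forces the prefactor to be the product of the characters coming from $\omega_0$, namely $\xi(\det a_2)\xi'(\det a_2)\xi(\det a_1)\eta(\lambda_V(h))\omega_{m_0,n-k}(h,g)$, which is precisely the claimed action.

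The principal technical point is bookkeeping: verifying that the twist one picks up while conjugating coset representatives agrees with the characters $\xi,\xi',\eta$ appearing in the definition of $\omega_0$, and that the $\omega_{m_0,n-k}$-part transforms correctly because the action of $([a,h],[b,g]n)\in H$ on $S(V_{m_0}\otimes{Y'_{n-k}}^\ast)$ involves only $(h,g)$ and scalar twists. Since the unipotent radical $n$ of $Q(Y_k)^+$ acts trivially on $\Isom(X'_k,Y_k)$ (it fixes $Y_k$ pointwise and preserves the flag) and its contribution is absorbed into $\omega_{m_0,n-k}(h,g)$ via the Heisenberg action already built into $\omega_0$, no additional terms appear, completing the identification.
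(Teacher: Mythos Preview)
Your approach is essentially the paper's: realize $\mu_{tk}$ on functions on a set of coset representatives identified with $\Isom(X'_k,Y_k)$, then right-translate and decompose. One bookkeeping slip is worth flagging. You use the $P$-side representatives $r(A)=([\left(\begin{smallmatrix}1&0\\0&I^{-1}A\end{smallmatrix}\right),1],[1,1])$ (as announced in the text just before the lemma), but if you actually carry out the factorization $r(A)\cdot\gamma=h_0\cdot r(A')$ with this choice, the $\GL(X'_k)$-block of $h_0$ is $\lambda_W(g)^{-1}I^{-1}bI$, not $a_2$. Feeding this into the definition of $\omega_0$ produces the prefactor $\xi(\det a_1)\,\xi\xi'(\lambda_W(g)^{-k}\det b)\,\eta(\lambda_V(h))\,\omega_{m_0,n-k}(h,g)$, not $\xi(\det a_1)\,\xi\xi'(\det a_2)\,\eta(\lambda_V(h))\,\omega_{m_0,n-k}(h,g)$ as stated in the lemma. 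The paper's proof quietly avoids this by using the $Q$-side evaluation $\alpha(F)(A)=F([1,1],[IA^{-1},1])$ instead; with that choice the $\GL(X'_k)$-block of $h_0$ is exactly $a_2$ and the $\GL(Y_k)$-block is $I\lambda_W(g)a_2I^{-1}$, so $\xi'(\det(\lambda_W(g)^{-1}b'))=\xi'(\det a_2)$ on the nose.

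The two realizations are of course isomorphic---the intertwiner is $\varphi(A)\mapsto\xi\xi'(\det A)\,\varphi(A)$---so your outline is not wrong, but your assertion that the prefactor is ``forced'' to be $\xi(\det a_2)\xi'(\det a_2)\xi(\det a_1)\eta(\lambda_V(h))$ does not follow from your chosen representatives without this extra twist. Either switch to the $Q$-side representatives as the paper does, or add the twist explicitly. Also, your last sentence about the unipotent $n$ being ``absorbed via the Heisenberg action built into $\omega_0$'' is slightly off: by its definition $\omega_0([a,h],[b,g]n)$ is already independent of $n$, so there is nothing to absorb.
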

\begin{proof}
Define
\[
    \alpha:ind_{H}^{R_0(P(X_{t-k}, X_t)\times Q(Y_k)^+)}\omega_0\rightarrow S(\Isom(X'_k,Y_k))\otimes\omega_0
\]
by
\[
    \alpha(F)(A)=F([1,1], [I A^{-1},1])\in \omega_0,
\]
where $F\in ind_{H}^{R_0(P(X_{t-k}, X_t)\times Q(Y_k)^+)}\omega_0$ and $([1,1], [I A^{-1},1])\in P(X_{t-k}, X_t)\times Q(Y_k)$, and also define
\[
    \beta:S(\Isom(X_k',Y_k))\otimes\omega_0\rightarrow ind_{H}^{R_0(P(X_{t-k}, X_t)\times Q(Y_k)^+)}\omega_0
\]
by
\[
    \beta(\varphi)([\begin{pmatrix}a_1&\ast\\&a_2\end{pmatrix},h], [b,g]n)
    =\omega_0([\begin{pmatrix}a_1&\ast\\&a_2\end{pmatrix},h], [I\lambda_{W}(g)a_2I^{-1}, g]n)\varphi(\lambda_{W}(g)b^{-1}Ia_2),
\]
where we identify each element $\varphi\in S(\Isom(X'_k,Y_k))\otimes\omega_0$ with $\varphi:\Isom(X'_k,Y_k)\rightarrow \omega_0$. Then it is straightforward to verify that $\alpha$ and $\beta$ are inverses to each other.

Also one can see that this map indeed intertwines the actions of $R_0(P(X_{t-k}, X_t)\times Q(Y_k)^+)$ by considering
\begin{align*}
&\alpha(\mu_{tk}([\begin{pmatrix}a_1&\ast\\&a_2\end{pmatrix},h], [b,g]n)F)(A)\\
&=\mu_{tk}([\begin{pmatrix}a_1&\ast\\&a_2\end{pmatrix},h], [b,g]n)F([1,1],[IA^{-1},1])\\
&=F([\begin{pmatrix}a_1&\ast\\&a_2\end{pmatrix},h], [IA^{-1}b,g]n)\\
&=\omega_0([\begin{pmatrix}a_1&\ast\\&a_2\end{pmatrix},h], [I\lambda_{W}(g)a_2I^{-1},g]n)
F([1,1],[I\lambda_{W}(g)^{-1}a_2^{-1}I^{-1}A^{-1}b,1])\\
&=\omega_0([\begin{pmatrix}a_1&\ast\\&a_2\end{pmatrix},h], [I\lambda_{W}(g)a_2I^{-1},g]n)
\alpha(F)(\lambda_{W}(g)b^{-1}Aa_2)\\
&=\xi(\det a_2)\xi'(\det(\lambda_{W}(g)^{-1}I\lambda_{W}(g)a_2I^{-1}))
\xi(\det a_1)\eta(\lambda_V(h))\omega_{m_0,n-k}(h,g)\alpha(F)(\lambda_{W}(g)b^{-1}Aa_2)\\
&=\xi(\det a_2)\xi'(\det a_2)\xi(\det a_1)\eta(\lambda_{V}(h))
\omega_{m_0,n-k}(h,g)\alpha(F)(\lambda_{W}(g)b^{-1}Aa_2).
\end{align*}
\end{proof}

This lemma gives
\[
    T^k_{N(X_t)}\cong ind_{R_0(P(X_{t-k}, X_t)\times Q(Y_k)^+)}^{R_0(M(X_t)\times\GSp(W_n)^+)}\mu_{tk}.
\]

Recall that we have been trying to compute $\tT^k_{N(X_t)}\cong (ind_{R_0(P(X_t)\times\GSp(W_n)^+)}^{P(X_t)\times\GSp(W_n)^+}\;T^{(k)})_{N(X_t)}$. But
note that
\begin{align*}
\tT^k_{N(X_t)}
&\cong (ind_{R_0(P(X_t)\times\GSp(W_n)^+)}^{P(X_t)\times\GSp(W_n)^+}\;T^{k})_{N(X_t)}\\
&\cong (ind_{R_0(M(X_t)\times\GSp(W_n)^+)\ltimes N(X_t)}^{(M(X_t)\times\GSp(W_n)^+)\ltimes N(X_t)}\;T^{k})_{N(X_t)}\\
&\cong ind_{R_0(M(X_t)\times\GSp(W_n)^+)}^{M(X_t)\times\GSp(W_n)^+}\;(T^{k}_{N(X_t)}),
\end{align*}
where the last isomorphism can be proven in the same way as \cite[Lemma 5.3]{K}. Therefore by inducing in stages, we obtain
\begin{align*}
    \tT^k_{N(X_t)}&\cong ind_{R_0(P(X_{t-k}, X_t)\times Q(Y_k)^+)}^{M(X_t)\times\GSp(W_n)^+}\mu_{tk}\\
    &\cong ind_{P(X_{t-k},X_t)\times Q(Y_k)^+}^{M(X_t)\times\GSp(W_n)^+}
    \left(ind_{R_0(P(X_{t-k}, X_t)\times Q(Y_k)^+)}^{P(X_{t-k},X_t)\times Q(Y_k)^+}\mu_{tk}\right).
\end{align*}

Now let $(\sigma_{tk},S(\Isom(X'_k,Y_k)))$ be the representation of $R_0(P(X_{t-k}, X_t)\times Q(Y_k)^+)$ defined by
\[
    \sigma_{tk}(([\begin{pmatrix}a_1&\ast\\&a_2\end{pmatrix},h], [b,g]n))\varphi(A)
    =\xi(\det a_2)\xi'(\det a_2)\xi(\det a_1)\eta(\lambda_{V}(h))\varphi(\lambda_{W}(g)b^{-1} A a_2)
\]
so that
\[
    \mu_{tk}=\sigma_{tk}\otimes\omega_{m_0, n-k}.
\]

\begin{Rmk}
At this point, one can derive the isometry version of the theorem simply by restricting $\mu_{tk}$ to the corresponding isometry group, and writing down the induction in the normalized form.
\end{Rmk}

We need to extend $\sigma_{tk}$ to a representation of $P(X_{t-k},X_t)\times Q(Y_k)^+$. Namely define the representation
\[
    (\tilde{\sigma}_{tk},S(\Isom(X'_k,Y_k)))
\]
of $P(X_{t-k},X_t)\times Q(Y_k)^+$
by
\begin{align*}
    \tilde{\sigma}_{tk}(([a,h], [b,g]n))\varphi(A)
    &=\eta(\lambda_{V}(h))\sigma_{tk}([a,1],[\lambda_W(g)^{-1}b,1])\varphi(A)\\
    &=\xi(\det a_2)\xi'(\det a_2)\xi(\det a_1)\eta(\lambda_{V}(h))
    \varphi(\lambda_{W}(g)b^{-1} A a_2),
\end{align*}
where
\[
    a=\begin{pmatrix}a_1&\ast\\&a_2\end{pmatrix}.
\]
Then we have
\begin{Lem}
There is a $P(X_{t-k},X_t)\times Q(Y_k)^+$-isomorphism
\[
    ind_{R_0(P(X_{t-k}, X_t)\times Q(Y_k)^+)}^{P(X_{t-k},X_t)\times Q(Y_k)^+}\sigma_{tk}\otimes\omega_{m_0,n-k}\cong
    \tilde{\sigma}_{tk}\otimes\Omega_{m_0,n-k},
\]
where $\Omega_{m_0,n-k}$ is the induced Weil representation of the pair $(\GO(V_{m_0}),\GSp(W_{n-k}))$.
\end{Lem}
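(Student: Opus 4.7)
The plan is to deduce this lemma from the standard projection formula for compact induction, after making two preliminary identifications.

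First I would check by direct inspection that the restriction of $\tilde{\sigma}_{tk}$ to $R_0(P(X_{t-k},X_t)\times Q(Y_k)^+)$ agrees with $\sigma_{tk}$. Unwinding the definition
\[
    \tilde{\sigma}_{tk}([a,h],[b,g]n)\varphi = \eta(\lambda_V(h)) \cdot \sigma_{tk}([a,1],[\lambda_W(g)^{-1}b,1])\varphi
\]
and using $\eta(1)=1$, $\lambda_W(1)=1$, the formula collapses to precisely the formula displayed for $\sigma_{tk}$; on the subgroup where $\lambda_V(h)\lambda_W(g)=1$ there is no further constraint needed, so $\tilde{\sigma}_{tk}|_{R_0(P\times Q^+)} = \sigma_{tk}$.

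Next I would invoke the projection (tensor) formula for compact induction: for any subgroup $H\subseteq G$, $G$-representation $\sigma$ and $H$-representation $\tau$, there is a natural $G$-equivariant isomorphism
\[
    \sigma \otimes ind_H^G(\tau) \xrightarrow{\sim} ind_H^G(\sigma|_H \otimes \tau),
    \qquad \alpha \otimes f \longmapsto \bigl(x \mapsto \sigma(x)\alpha \otimes f(x)\bigr),
\]
whose inverse is given by $F\mapsto (x\mapsto \sigma(x^{-1})\otimes 1)F(x)$ composed with the identification of the resulting right-hand side. Applying this with $G = P(X_{t-k},X_t)\times Q(Y_k)^+$, $H = R_0(P\times Q^+)$, $\sigma = \tilde{\sigma}_{tk}$ and $\tau = \omega_{m_0,n-k}$ reduces the claim to identifying
\[
    ind_{R_0(P\times Q^+)}^{P(X_{t-k},X_t)\times Q(Y_k)^+}\;\omega_{m_0,n-k}
    \;\cong\; \Omega_{m_0,n-k}
\]
as $P\times Q^+$-modules, where $\Omega_{m_0,n-k}$ is viewed on the right via the natural surjection $\pi: P(X_{t-k},X_t)\times Q(Y_k)^+ \twoheadrightarrow \GO(V_{m_0})\times \GSp(W_{n-k})^+$ (killing the $\GL$-factors of both Levis and the unipotent radical $N(Y_k)$).

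Finally, this last identification is formal: setting $R' = \GO(V_{m_0})\times \GSp(W_{n-k})^+$ and $R_0'=\{(h,g)\in R':\lambda_V(h)\lambda_W(g)=1\}$, the square
\[
\begin{CD}
R_0(P\times Q^+) @>>> P\times Q^+ \\
@VVV @VV\pi V \\
R_0' @>>> R'
\end{CD}
\]
is a pullback, $\ker\pi$ lies in $R_0(P\times Q^+)$, and $\omega_{m_0,n-k}$ is by definition the pullback through $\pi$ of the Weil representation of $R_0'$. Hence any function $f:P\times Q^+\to\omega_{m_0,n-k}$ satisfying the left-$R_0(P\times Q^+)$ equivariance is automatically left-$\ker\pi$-invariant and therefore descends to a function $\bar f:R'\to\omega_{m_0,n-k}$ satisfying the corresponding $R_0'$-equivariance; conversely any such $\bar f$ pulls back, and these operations are mutually inverse and equivariant for the right $P\times Q^+$ action factoring through $\pi$. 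Combining the three steps yields the desired isomorphism. The only genuinely technical point in the proof is the bookkeeping of the similitude characters $\xi$, $\xi'$ and $\eta$ in the first step—once that identification is settled, the rest is purely formal.
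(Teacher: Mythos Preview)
Your approach is correct and genuinely different from the paper's. The paper proceeds by writing down explicit mutually inverse maps
\[
\alpha(F)(A)(h',g')=\eta(\lambda_V(h'))^{-1}F([1,h'],[\lambda_W(g'),g'])(A),\qquad
\beta(\Phi)([a,h],[b,g]n)(A)=\eta(\lambda_V(h))\,\sigma_{tk}([a,1],[\lambda_W(g)^{-1}b,1])\,\Phi(A)(h,g),
\]
and then verifies by hand that $\alpha$ intertwines the two actions. Your argument instead factors the statement into two standard pieces: the projection formula $\tilde{\sigma}_{tk}\otimes ind_H^G\tau\cong ind_H^G(\tilde{\sigma}_{tk}|_H\otimes\tau)$, and the identification $ind_{R_0(P\times Q^+)}^{P\times Q^+}\omega_{m_0,n-k}\cong\pi^*\Omega_{m_0,n-k}$ coming from the pullback square for $\pi$. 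This is cleaner and explains \emph{why} the isomorphism holds, whereas the paper's proof simply exhibits it; on the other hand the paper's explicit formulas are what one actually uses when tracking characters through to the final Jacquet-module statement. One small point: your sentence ``using $\eta(1)=1$, $\lambda_W(1)=1$, the formula collapses\dots'' is slightly misleading---those values enter in unwinding $\sigma_{tk}([a,1],[\lambda_W(g)^{-1}b,1])$, not in restricting to $R_0$; once unwound, the formulas for $\tilde{\sigma}_{tk}$ and $\sigma_{tk}$ are literally identical on the whole group, so the restriction step is vacuous. Also, in the third step you should check (or note) that $\pi$ surjects onto $\GO(V_{m_0})\times\GSp(W_{n-k})^+$ and that the two possible meanings of ``$+$'' (relative to $V_m$ versus $V_{m_0}$) agree; the paper's explicit $\beta$ tacitly uses the same fact.
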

\begin{proof}
Define
\[
    \alpha:ind_{R_0(P(X_{t-k}, X_t)\times Q(Y_k)^+)}^{P(X_{t-k},X_t)\times Q(Y_k)^+}\sigma_{tk}\otimes\omega_{m_0,n-k}
    \rightarrow\tilde{\sigma}_{tk}\otimes\Omega_{m_0,n-k}
\]
by
\[
    \alpha(F)(A)(h',g')=\eta(\lambda_{V}(h'))^{-1}F([1,h'],[\lambda_{W}(g'),g'])(A)
\]
for $A\in\Isom(X'_k,Y_k)$ and $(h',g')\in\GO(V_{m_0})\times\GSp(W_{n-k})$. One can verify that indeed $\alpha(F)(A)\in\Omega_{m_0,n-k}$. Here note that we identify an element $\alpha(F)\in \Isom(X'_k,Y_k)\otimes\Omega_{m_0,n-k}$ with a map
\[
    \alpha(F):S(\Isom(X'_k,Y_k))\rightarrow\Omega_{m_0,n-k}
\]
and so $\alpha(F)(A)(h',g')\in\omega_{m_0,n-k}$. Also define
\[
    \beta:\tilde{\sigma}_{tk}\otimes\Omega_{m_0,n-k}\rightarrow
    ind_{R_0(P(X_{t-k}, X_t)\times Q(Y_k)^+)}^{P(X_{t-k},X_t)\times Q(Y_k)^+}\sigma_{tk}\otimes\omega_{m_0,n-k}
\]
by
\[
    \beta(\Phi)([a,h],[b,g]n)(A)
    =\eta(\lambda_{V}(h))\sigma_{tk}([a,1],[\lambda_{W}(g)^{-1}b,1])\Phi(A)(h,g),
\]
for $A\in\Isom(X'_k,Y_k)$ and $([a,h],[b,g])\in P(X_{t-k},X_t)\times Q(Y_k)^+$. One can verify that $\beta(\Phi)$ is indeed in the induced space. Then by direction computation, one can see that $\alpha$ and $\beta$ are inverses to each other.

To see that $\alpha$ is intertwining, consider
\begin{align*}
&\alpha(([a,h],[b,g]n)\cdot F)(A)(h',g')\\
&=\eta(\lambda_{V}(h'))^{-1}(([a,h],[b,g]n)\cdot F)([1,h'],[\lambda_{W}(g'),g'])(A)\\
&=\eta(\lambda_{V}(h'))^{-1}F([a,h'h],[\lambda_W(g')b,g'g]n)(A)\\
&=\eta(\lambda_{V}(h'))^{-1}\sigma_{tk}([a,1],[\lambda_{W}(g)^{-1}b,1])
    \eta(\lambda_{V}(h'h))\eta(\lambda_{V}(h'h))^{-1}F([1,h'h],[\lambda_{W}(g'g),g'g])(A)\\
&=\eta(\lambda_{V}(h))\sigma_{tk}([a,1],[\lambda_{W}(g)^{-1}b,1])\alpha(F)(A)(h'h,g'g)\\
&=\tilde{\sigma}_{tk}([a,h],[b,g]n)\alpha(F)(A)(h'h,g'g)\\
&=\tilde{\sigma}_{tk}([a,h],[b,g]n)\Omega_{m_0,n-k}(h,g)\alpha(F)(A)(h',g').
\end{align*}

\end{proof}
Note that the induction $ind$ has not been normalized. To express it in the normalized way, recall the modular characters for the parabolic subgroups involved:
\begin{align*}
\delta_{R(X_{t-k},X_t)}([\begin{pmatrix}a_1&\ast\\&a_2\end{pmatrix}])&=|\det a_1|^k|\det a_2|^{-(t-k)}\\
\delta_{P(X_t)}([a,h]n)&=|\det a|^{m-t-1}|\lambda_{V_{m_0}}(h)|^{-\frac{mt}{2}+\frac{1}{2}t(t+1)}\\
\delta_{Q(Y_k)}([b,g]n)&=|\det b|^{2n-k+1}|\lambda_{W}(g)|^{-kn+\frac{1}{2}k(k-1)}.
\end{align*}
Hence we obtain each quotient of the filtration of the normalized Jacquet module as
\[
    J^k=\Ind_{P(X_{t-k},X_t)\times Q(Y_k)^+}^{M(X_t)\times\GSp(W_n)^+}
    (\tilde{\sigma}_{tk}\otimes\Omega_{m_0,n-k})(\delta_{R(X_{t-k},X_t)}\delta_{P(X_t)}\delta_{Q(Y_k)})^{-\frac{1}{2}}
\]
where the induction is also normalized. By writing down the characters involved explicitly, we see that
\[
    J^k=\Ind_{P(X_{t-k},X_t)\times Q(Y_k)^+}^{M(X_t)\times\GSp(W_n)^+}
    (S(\Isom(X'_k,Y_k))\otimes\Omega_{m_0,n-k}),
\]
where the group $P(X_{t-k},X_t)\times Q(Y_k)^+$ acts on $S(\Isom(X'_k,Y_k))$ as follows: Let $\varphi(A)\in S(\Isom(X'_k,Y_k))$. Then each element $[\begin{pmatrix}a_1&\ast\\&a_2\end{pmatrix},h]\in P(X_{t-k},X_t)$ acts as
\[
    [\begin{pmatrix}a_1&\ast\\&a_2\end{pmatrix},h]\cdot\varphi(A)
    =\chi_V(\det a_2)|\lambda_{V}(h)|^{e_0}|\det a_1|^{e_1}|\det a_2|^{e_2}\varphi(A a_2)
\]
where
\begin{align*}
    e_0&=-\frac{1}{4}(m-2t)k-\frac{1}{2}tn+\frac{1}{4}mt-\frac{1}{4}t(t+1)\\
    e_1&=n-\frac{1}{2}m+\frac{1}{2}t-\frac{1}{2}(k-1)\\
    e_2&=n-\frac{1}{2}(k-1),
\end{align*}
and each element $[b,g]n\in Q(Y_k)$ acts as
\[
    [b,g]n\cdot\varphi(A)
    =|\lambda_{W}(g)|^{f'_0}|\det b|^{f_1}\varphi(\lambda_{W}(g)b^{-1}A),
\]
where
\begin{align*}
    f'_0&=\frac{1}{2}kn-\frac{1}{4}k(k-1)\\
    f_1&=-e_2=-n+\frac{1}{2}(k-1).
\end{align*}

This is essentially the similitude analogue of the formula obtained by Kudla (\cite{K}). However one can simplify it further by ``absorbing away" the characters $|\det a_2|^{e_2}$ and $|\det b|^{f_1}$ in the regular representation realized in $S(\Isom(X'_k,Y_k))$ by using the following lemma.

\begin{Lem}
Let $\chi$ be a character and $\sigma$ the representation of the group of elements of the form $([\begin{pmatrix}1&\ast\\&a_2\end{pmatrix},1], [b,g])\in P(X_{t-k},X_t)\times Q(Y_k)$ realized on the space $S(\Isom(X'_k,Y_k))$ defined by
\[
    \sigma([\begin{pmatrix}1&\ast\\&a_2\end{pmatrix},1], [b,g])\varphi(A)=\chi(\det a_2)\chi(\det b)^{-1}\varphi(\lambda_{W}(g)b^{-1}Aa_2).
\]
Then $\sigma$ is equivalent to the representation $\sigma'$ realized on the same space $S(\Isom(X'_k,Y_k))$ defined by
\[
    \sigma'([\begin{pmatrix}1&\ast\\&a_2\end{pmatrix},1], [b,g])\varphi(A)=\chi(\lambda_{W}(g))^{-k}\varphi(\lambda_{W}(g)b^{-1}Aa_2).
\]
\end{Lem}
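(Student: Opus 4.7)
The plan is to construct an explicit intertwining operator on the common underlying space $S(\Isom(X'_k,Y_k))$. Fix once and for all bases of $X'_k$ and of $Y_k$, so that any $A\in\Isom(X'_k,Y_k)$ may be viewed as an element of $\GL_k(F)$; in particular $\det A\in F^\times$ is well-defined once these bases are fixed (the construction of the intertwiner will depend on the choice of bases, but the conclusion of the lemma does not). Define
\[
    T:S(\Isom(X'_k,Y_k))\longrightarrow S(\Isom(X'_k,Y_k)),\qquad
    (T\varphi)(A)=\chi(\det A)\cdot\varphi(A).
\]
Since $\chi(\det A)\neq 0$ for all $A$, $T$ is clearly an isomorphism of vector spaces with inverse $\varphi\mapsto \chi(\det A)^{-1}\varphi(A)$.

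The only thing to check is that $T$ intertwines $\sigma$ and $\sigma'$. The key identity is the multiplicativity of the determinant: for $a_2\in\GL(X'_k)$, $b\in\GL(Y_k)$, $g\in\GSp(W_{n-k})$ and $A\in\Isom(X'_k,Y_k)$, one has
\[
    \det\!\bigl(\lambda_W(g)\cdot b^{-1}\cdot A\cdot a_2\bigr)
    \;=\;\lambda_W(g)^{k}\cdot(\det b)^{-1}\cdot(\det A)\cdot(\det a_2).
\]
Applying $\chi$ to both sides and using the definition of $\sigma'$, one computes
\[
    \bigl(\sigma'\bigl([\begin{smallmatrix}1&\ast\\&a_2\end{smallmatrix}],[b,g]\bigr)\,T\varphi\bigr)(A)
    =\chi(\lambda_W(g))^{-k}\cdot\chi\bigl(\det(\lambda_W(g)b^{-1}Aa_2)\bigr)\cdot\varphi(\lambda_W(g)b^{-1}Aa_2),
\]
and the $\chi(\lambda_W(g))^{\pm k}$ factors cancel, leaving
\[
    \chi(\det a_2)\cdot\chi(\det b)^{-1}\cdot\chi(\det A)\cdot\varphi(\lambda_W(g)b^{-1}Aa_2),
\]
which is precisely $T\bigl(\sigma\bigl([\begin{smallmatrix}1&\ast\\&a_2\end{smallmatrix}],[b,g]\bigr)\varphi\bigr)(A)$.

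There is essentially no obstacle here; the lemma is a bookkeeping identity about how the character $\chi$ can be shifted along the isomorphism $\det(\lambda_W(g)b^{-1}Aa_2)=\lambda_W(g)^k(\det b)^{-1}(\det A)(\det a_2)$. The only mild subtlety is that $\det A$ is only well-defined once bases of $X'_k$ and $Y_k$ have been fixed, but this affects $T$ only by a global scalar factor $\chi(c)$ (with $c$ depending on the change of bases), which commutes with everything and hence does not affect the intertwining property. The construction therefore absorbs the two unwanted characters $|\det a_2|^{e_2}$ and $|\det b|^{f_1}$ in the main filtration formula and replaces them with a power of $|\lambda_W(g)|$, as required to reach the cleaner normalization stated in Theorem \ref{T:Jacquet_module1}.
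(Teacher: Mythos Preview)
Your proof is correct and is essentially identical to the paper's own argument: the paper defines the same map $\varphi\mapsto\tilde{\varphi}$ with $\tilde{\varphi}(A)=\chi(\det A)\varphi(A)$ and simply asserts it intertwines $\sigma$ and $\sigma'$, whereas you spell out the verification explicitly. Your added remark about the basis-dependence of $\det A$ is a nice touch but not present in the paper.
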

\begin{proof}
Define a map $S(\Isom(X'_k,Y_k))\rightarrow S(\Isom(X'_k,Y_k))$ by $\varphi\mapsto\tilde{\varphi}$, where $\tilde{\varphi}$ is defined as
\[
    \tilde{\varphi}(A)=\chi(\det A)\varphi(A).
\]
One can see that this map is an intertwining map from $\sigma$ to $\sigma'$.
\end{proof}

By taking $\chi=|-|^{e_2}$ in this lemma, one can see that the exponents $e_2$ and $f_1$ can be absorbed away, and the similitude factor $\lambda_W(g)$ acts by the character $|-|^{f_0}$ where $f_0=f'_0-ke_2$. The theorem follows.

\end{document}